\documentclass[12pt]{amsart}
\usepackage{geometry} 
\usepackage{amsmath,amsthm,amsfonts,amssymb}
\usepackage{graphics,xy}
\usepackage{url}
\usepackage{xcolor}
\usepackage{graphicx}
\usepackage{ulem}
\usepackage{svg}
\usepackage{tikz}
\usepackage{hyperref}
\geometry{a4paper} 
\usepackage[utf8]{inputenc}
\usepackage{subfigure}


\newtheorem{lemma}{Lemma}[section]
\newtheorem{conjecture}{Conjecture}[section]
\newtheorem{proposition}[lemma]{Proposition}
\newtheorem{theorem}[lemma]{Theorem}

\newtheorem{corollary}[lemma]{Corollary}

\theoremstyle{definition}
\newtheorem{definition}[lemma]{Definition}
\newtheorem{remark}[lemma]{Remark}

\newcommand{\Z}{\mathbb{Z}}
\newcommand{\R}{\mathbb{R}}
\newcommand{\Q}{\mathbb{Q}}
\newcommand{\C}{\mathbb{C}}

\newcommand{\Sk}{\mathcal{S}}
\newcommand{\ra}{\longrightarrow}
\newcommand{\Tr}{\mathrm{Tr}}
\newcommand{\F}{F_{1,2}\simtimes S^1}

\newcommand{\slC}{\mathrm{SL}_2(\mathbb{C})}
\newcommand{\rp}{\mathbb{RP}^3}
\DeclareMathOperator{\tr}{Tr}

\newcommand{\thefuturetheoreminner}{} 
\newcommand\simtimes{\mathbin{%
		\stackrel{\sim}{\smash{\times}\rule{0pt}{0.9ex}}%
}}
\newcommand\maybesimtimes{\mathbin{%
		\stackrel{\left(\sim\right)}{\smash{\times}\rule{0pt}{0.9ex}}%
}}
\ExplSyntaxOn

\prop_new:N \g_alevel_future_prop

\NewDocumentEnvironment{futuretheorem}{ m o +b }
{
	\renewcommand{\thefuturetheoreminner}{\ref{#1}}
	\IfNoValueTF{#2}
	{\futuretheoreminner}
	{\futuretheoreminner[#2]}
	#3
}
{
	\endfuturetheoreminner
	\prop_gput:Nnn \g_alevel_future_prop { #1 } { #3 }
	\IfValueT{#2}{ \prop_gput:Nnn \g_alevel_future_prop { #1-attr } { #2 } }
}

\NewDocumentCommand{\pasttheorem}{m}
{
	\prop_if_in:NnTF \g_alevel_future_prop { #1-attr }
	{
		\begin{theorem}[\prop_item:Nn \g_alevel_future_prop { #1-attr }]
		}
		{
			\begin{theorem}
			}
			\label{#1}
			\prop_item:Nn \g_alevel_future_prop { #1 }
		\end{theorem}
	}
	
	\ExplSyntaxOff

\title[On torsion in Kauffman bracket skein modules]{On torsion in the Kauffman bracket skein module of $3$-manifolds}

\author{Giulio Belletti}
\address{Institut de Mathématiques de Bourgogne, UMR 5584 CNRS, Université Bourgogne Franche-Comté, F-2100 Dijon, France}
\email{gbelletti451@gmail.com}
\author{Renaud Detcherry}
\date{} 

\address{Institut de Mathématiques de Bourgogne, UMR 5584 CNRS, Université Bourgogne Franche-Comté, F-2100 Dijon, France}
\email{renaud.detcherry@u-bourgogne.fr}


\begin{document}

\begin{abstract}We study Kirby problems 1.92(E)-(G) of \cite{Kirby}, which, roughly speaking, ask for which compact oriented $3$-manifold $M$ the Kauffman bracket skein module $\Sk(M)$ has torsion as a $\Z[A^{\pm 1}]$-module. We give new criteria for the presence of torsion in terms of how large the $\slC$-character variety of $M$ is. This gives many counterexamples to question 1.92(G)-(i) in Kirby's list. For manifolds with incompressible tori, we give new effective criteria for the presence of torsion, revisiting the work of Przytycki and Veve \cite{Prz99}\cite{Veve}. We also show that $\mathcal{S}(\R P^3\# L(p,1))$ has torsion when $p$ is even. Finally, we show that for $M$ an oriented Seifert manifold, closed or with boundary, $\mathcal{S}(M)$ has torsion if and only if $M$ admits a $2$-sided non-boundary parallel essential surface.
\end{abstract}
\maketitle
\section{Introduction}
\label{sec:intro}  
In this paper we explore the problem of finding torsion in the Kauffman bracket skein module of a $3$-manifold, whose definition, given below, was originally introduced independently by Przytycki \cite{Przytycki} and Turaev \cite{Turaev}:

\begin{definition}\label{def:KBSM} For $M$ a compact oriented $3$-manifold, the Kauffman bracket skein of $M$, denoted with $\Sk(M)$, is the quotient of the free $\Z[A^{\pm 1}]$-module generated by isotopy classes of framed links in $M,$ by the Kauffman relations, which are the following relations between framed links that are identical in the complement of a ball:
\begin{center}
	\def \svgwidth{1.1\columnwidth}
\begingroup%
  \makeatletter%
  \providecommand\color[2][]{%
    \errmessage{(Inkscape) Color is used for the text in Inkscape, but the package 'color.sty' is not loaded}%
    \renewcommand\color[2][]{}%
  }%
  \providecommand\transparent[1]{%
    \errmessage{(Inkscape) Transparency is used (non-zero) for the text in Inkscape, but the package 'transparent.sty' is not loaded}%
    \renewcommand\transparent[1]{}%
  }%
  \providecommand\rotatebox[2]{#2}%
  \newcommand*\fsize{\dimexpr\f@size pt\relax}%
  \newcommand*\lineheight[1]{\fontsize{\fsize}{#1\fsize}\selectfont}%
  \ifx\svgwidth\undefined%
    \setlength{\unitlength}{310.27629089bp}%
    \ifx\svgscale\undefined%
      \relax%
    \else%
      \setlength{\unitlength}{\unitlength * \real{\svgscale}}%
    \fi%
  \else%
    \setlength{\unitlength}{\svgwidth}%
  \fi%
  \global\let\svgwidth\undefined%
  \global\let\svgscale\undefined%
  \makeatother%
  \begin{picture}(1,0.08365284)%
    \lineheight{1}%
    \setlength\tabcolsep{0pt}%
    \put(0,0){\includegraphics[width=\unitlength,page=1]{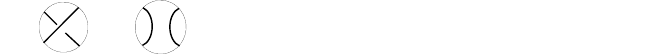}}%
    \put(0.1543921,0.02370045){\color[rgb]{0,0,0}\makebox(0,0)[lt]{\lineheight{0}\smash{\begin{tabular}[t]{l}$=A$\end{tabular}}}}%
    \put(0.29591666,0.02367871){\color[rgb]{0,0,0}\makebox(0,0)[lt]{\lineheight{0}\smash{\begin{tabular}[t]{l}$+A^{-1}$\end{tabular}}}}%
    \put(0.57577529,-0.02838106){\color[rgb]{0,0,0}\makebox(0,0)[lt]{\lineheight{0}\smash{\begin{tabular}[t]{l} \end{tabular}}}}%
    \put(0.53325876,0.02471831){\color[rgb]{0,0,0}\makebox(0,0)[lt]{\lineheight{0}\smash{\begin{tabular}[t]{l}$L \ \coprod$ \end{tabular}}}}%
    \put(0,0){\includegraphics[width=\unitlength,page=2]{kauffman.pdf}}%
    \put(0.67966381,0.02531714){\color[rgb]{0,0,0}\makebox(0,0)[lt]{\lineheight{0}\smash{\begin{tabular}[t]{l}$=(-A^2-A^{-2}) L$\end{tabular}}}}%
    \put(-0.00304793,0.0192645){\color[rgb]{0,0,0}\makebox(0,0)[lt]{\lineheight{0}\smash{\begin{tabular}[t]{l}K1:\end{tabular}}}}%
    \put(0.47835686,0.02531722){\color[rgb]{0,0,0}\makebox(0,0)[lt]{\lineheight{0}\smash{\begin{tabular}[t]{l}K2:\end{tabular}}}}%
    \put(0,0){\includegraphics[width=\unitlength,page=3]{kauffman.pdf}}%
  \end{picture}%
\endgroup%

\end{center}
\end{definition}

For some applications in this paper, we will also consider the Kauffman bracket skein module over $\Q(A)$, rather than $\Z[A,A^{-1}]$; in this case, we denote it with $\Sk(M,\Q(A))$. Moreover, for $\zeta\in \C^*,$ we also introduce the notation
$$S_{\zeta}(M):=S(M)\underset{A=\zeta}{\otimes}\C.$$

We will refer to the Kauffman bracket skein module of $M$ as simply the skein module of $M,$ since we will not consider any other kind of skein modules in this paper. 
For $M$ a $3$-manifold, we will also introduce its $\slC$-character scheme $\mathcal{X}(M)$ as
$$\mathcal{X}(M)=\mathrm{Hom}(\pi_1(M),\slC)//\slC,$$
where $//$ is the GIT quotient. We will denote by $X(M)$ the $\slC$-character variety of $M,$ which is the underlying affine algebraic set to $\mathcal{X}(M)$. One of the motivations of studying skein modules of $3$-manifolds is given by the following theorem, which states that skein modules are deformations by quantization of the $\slC$-character variety of $M.$ 
Recall that for $x$ an unoriented loop in $M,$ there is a well defined element $t_x\in \C[\mathcal{X}(M)],$ such that for any $\rho \in \mathrm{Hom}(\pi_1(M),\slC),$ we have $t_x(\rho)=-\mathrm{Tr}(\rho(x)).$
\begin{theorem}
	\cite{Bullock}\cite{PS00} There is a well-defined isomorphism of algebras $\Psi:S_{-1}(M)\longrightarrow \C[\mathcal{X}(M)]$ such that for any link $L=L_1\cup \ldots \cup L_n$ in $M,$ we have
	$$\Psi(L)=t_{L_1}\ldots t_{L_n}.$$
\end{theorem}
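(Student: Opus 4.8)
The plan is to build the map $\Psi$ directly on the free $\C$-module generated by framed links, specialised at $A=-1$, show that it descends to $S_{-1}(M)$ as an algebra homomorphism, prove surjectivity using classical invariant theory, and finally prove injectivity, which is the genuinely difficult step.

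First I would note that at $A=-1$ the crossing relation $\mathrm{K1}$ becomes $L_+=-L_0-L_\infty$, which is symmetric in the two strands; hence $L_+=L_-$ in $S_{-1}(M)$, so crossings and framings become immaterial, $S_{-1}(M)$ is the $\C$-vector space on immersed unoriented curves modulo the two local relations, and it carries a natural commutative product given by taking disjoint representatives. Define $\Psi(L_1\cup\dots\cup L_n)=t_{L_1}\cdots t_{L_n}$; since $\tr(g)=\tr(g^{-1})$ for $g\in\slC$ this is well defined on unoriented curves, and since trace functions depend only on the free-homotopy classes of the components it suffices to check that $\Psi$ kills the two generating relations. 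It kills $\mathrm{K2}$ because $t_{\bigcirc}(\rho)=-\tr(I)=-2$, the value of $-A^2-A^{-2}$ at $A=-1$. It kills $\mathrm{K1}$ because, writing the local picture as two strands reading loops $x$ and $y$, the two smoothings produce the split union $x\sqcup y$ and the curve $xy^{-1}$, so $\mathrm{K1}$ becomes the equation $t_xt_y+t_{xy}+t_{xy^{-1}}=0$, which is the $\slC$ trace identity $\tr(AB)+\tr(AB^{-1})=\tr(A)\tr(B)$ (a rewriting of the Cayley--Hamilton relation) transported through the substitution $t=-\tr$; it is precisely this sign convention that makes the $A=-1$ relations match. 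Since disjoint union of links corresponds to multiplication of trace monomials, $\Psi$ is a homomorphism of $\C$-algebras.

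For surjectivity, recall that by definition of the GIT quotient $\C[\mathcal{X}(M)]=\C[\mathrm{Hom}(\pi_1(M),\slC)]^{\slC}$, and that $\pi_1(M)$ is finitely generated since $M$ is compact. Presenting $\pi_1(M)$ as a quotient of a free group $F_n$ realises $\mathrm{Hom}(\pi_1(M),\slC)$ as a closed $\slC$-invariant subscheme of $\mathrm{Hom}(F_n,\slC)\cong\slC^n$; because $\slC$ is reductive, the invariant ring of the subscheme is a quotient of $\C[\slC^n]^{\slC}$, which by the first fundamental theorem of invariant theory for $\slC$ (Fricke--Vogt, Procesi) is generated as a $\C$-algebra by the trace functions of words. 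Each such generator is $\pm\Psi(\gamma)$ for a suitable loop $\gamma$, and $\Psi$ is a ring map, so $\Psi$ is surjective.

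The main obstacle is injectivity, and it is exactly here that the scheme $\mathcal{X}(M)$ must be used rather than the reduced variety $X(M)$: Bullock's argument gives an isomorphism only after dividing $S_{-1}(M)$ by its nilradical, and removing this caveat is the point of the Przytycki--Sikora refinement. I would argue by comparing presentations. Choose a finite presentation $\pi_1(M)=\langle x_1,\dots,x_n\mid r_1,\dots,r_m\rangle$, realised geometrically by a handle decomposition of $M$ with one $0$-handle, $n$ one-handles forming a handlebody $H_n$, and $m$ two-handles (handles of higher index do not affect the skein module). On $H_n$ the analogous map $S_{-1}(H_n)\to\C[\mathcal{X}(H_n)]=\C[\slC^n]^{\slC}$ is already an isomorphism --- the free-group case, due to Bullock and Przytycki, which amounts to comparing a multicurve basis of $S(H_n)$ with the classical bases of rings of $\slC$-invariants. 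Attaching the two-handles imposes, on the skein side, the relations obtained by sliding skeins across the belt spheres of the handles, and on the character side the equations that cut $\mathrm{Hom}(\pi_1(M),\slC)$ out of $\mathrm{Hom}(F_n,\slC)$. The crux --- which I expect to be the hard part --- is to show that these two families of relations generate the \emph{same} ideal, with the same non-reduced structure; this is done by factoring both descriptions through the universal $\slC$-character ring of Brumfiel--Hilden and Saito, whose defining relations are exactly the Cayley--Hamilton identities already built into $S_{-1}$ from the start. Once the ideals are matched, $\Psi$ is injective and the theorem follows.
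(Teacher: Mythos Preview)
The paper does not prove this theorem; it is stated in the introduction with citations to \cite{Bullock} and \cite{PS00} as a known background result, and is used later without further argument. So there is no proof in the paper to compare your attempt against.

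That said, your outline is a faithful sketch of the argument in those references: well-definedness of $\Psi$ via the $\slC$ trace identity and the sign convention $t_x=-\tr$, surjectivity from the first fundamental theorem of invariant theory for $\slC$ acting by conjugation, and injectivity by matching the handle-slide ideal in $S_{-1}(H_n)$ with the defining ideal of $\mathcal{X}(M)$ inside $\mathcal{X}(F_n)$ via the universal $\slC$-character ring. You are also correct that the passage from Bullock's result (isomorphism modulo nilpotents, i.e.\ with $\C[X(M)]$) to the full statement (isomorphism with $\C[\mathcal{X}(M)]$) is precisely the contribution of Przytycki--Sikora, and that the scheme structure is essential there.
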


Another motivating question for the study of skein modules of $3$-manifolds is to understand their relationship with the presence of $2$-sided essential surfaces in $M.$ Indeed, the following conjecture, which is essentially a conjecture of Przytycki that first appeared as Problem 1.92(E) in \cite{Kirby}, predicts that torsion in the skein module can detect incompressible surfaces: 
\begin{conjecture}\label{conj:torsion}\cite{Kirby}\cite{DKS2} Let $M$ be a compact oriented $3$-manifold. Then the following are equivalent:
	\begin{itemize}
		\item[(1)] every two-sided, closed, essential surface in $M$ is parallel to the boundary.
		\item[(2)]$\mathcal{S}(M)$ is torsion-free.
	\end{itemize}
	
\end{conjecture}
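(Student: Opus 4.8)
The plan is to treat the two implications by entirely different methods. I expect to be able to push the contrapositive of $(2)\Rightarrow(1)$ — producing torsion in $\mathcal{S}(M)$ from a two-sided closed essential non-boundary-parallel surface — quite far, while $(1)\Rightarrow(2)$ will only be within reach for classes of $M$ in which $\mathcal{S}(M)$ can actually be computed or tightly controlled, in particular Seifert fibered spaces.

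\emph{From an essential surface to torsion.} Let $S\subset M$ be such a surface. If $S$ is non-separating it is dual to a nonzero class in $H^1(M;\Z)$, so $b_1(M)\geq 1$ and the diagonal (reducible) representations already give $\dim X(M)\geq 1$. If $S$ separates, write $M=M_1\cup_S M_2$ with $\pi_1(M)=\pi_1(M_1)\ast_{\pi_1(S)}\pi_1(M_2)$; since $S$ is essential and not a collar, neither inclusion is an isomorphism, and one argues — case by case on the topology of $S$ and the $M_i$, the model case being $S=S^2$, where $\pi_1(M)$ is a nontrivial free product and $\mathcal{X}(\Z/2\ast\Z/2)$ is already a curve — that $X(M)$ is ``large'' in the precise sense of the paper's criterion. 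That criterion is then invoked: it translates the discrepancy between $S_{-1}(M)\cong\C[\mathcal{X}(M)]$ (by \cite{Bullock}\cite{PS00}), which is infinite-dimensional, and the finiteness of $\mathcal{S}(M,\Q(A))$ over $\Q(A)$ for closed $M$ (the recent finiteness theorem of Gunningham--Jordan--Safronov), into torsion in $\mathcal{S}(M)$. For $M$ containing an incompressible torus the same conclusion is obtained more effectively, by localizing Przytycki's torsion element for $F\times S^1$ near an essential vertical torus and writing it down explicitly, which is where the computations of \cite{Prz99} and \cite{Veve} are revisited and sharpened; explicit cases such as $\rp\#L(p,1)$ with $p$ even, where $\pi_1$ has an infinite dihedral quotient, are handled in the same spirit.

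\emph{From no essential surface to torsion-freeness, for Seifert manifolds.} Here I would use the classification of essential surfaces in a Seifert manifold $M$ (they are isotopic to vertical or horizontal ones): condition $(1)$ fails precisely when the base orbifold carries an essential two-sided $1$-orbifold with torus vertical preimage, or when $M$ fibers over $S^1$ with a positive-genus fiber. In the complementary ``small'' range $M$ is a circle bundle over a sphere orbifold with few exceptional fibers (plus a handful of closed exceptions), and there one computes $\mathcal{S}(M)$ directly — realizing it via the skein algebra of the base orbifold acting in the fiber direction — and checks that it is free over $\Z[A^{\pm1}]$, giving $(1)\Rightarrow(2)$; in the non-small range, the torsion produced above (from the vertical torus, or from the surface-bundle structure) gives $\neg(1)\Rightarrow\neg(2)$. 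Together these yield the full equivalence for Seifert manifolds, closed or with boundary.

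\emph{Main obstacle.} The conceptual obstacle — and the reason the conjecture remains open in general — is $(1)\Rightarrow(2)$ outside the computable classes: there is no structural description of $\mathcal{S}(M)$ for small hyperbolic $M$, nor for connected sums of rational homology spheres whose character variety is a single point, and finite generation of $\mathcal{S}(M)$ over $\Z[A^{\pm1}]$ is itself unknown. Within the present scope the hardest concrete step is the explicit verification that ``small'' Seifert manifolds have free skein module, for which one needs carefully controlled presentations of $\mathcal{S}(M)$ extracted from the Seifert structure; by contrast, the torsion side is comparatively soft once the character-variety criterion is available.
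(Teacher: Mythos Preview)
The statement is a \emph{conjecture}; the paper does not prove it and says so explicitly (``We will be concerned solely with the implication $(2)\implies(1)$''). There is therefore no proof in the paper to compare your proposal against, and your own final paragraph acknowledges the conjecture remains open. What can be compared is your plan against the partial results the paper actually obtains.

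Several points of your plan overshoot or misidentify what is done.

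\textbf{Separating surfaces and largeness of $X(M)$.} Your argument that a separating essential $S$ forces $X(M)$ to be ``large'' is only a heuristic (``one argues --- case by case''). The paper does not attempt this in general; for higher-genus separating $S$ the only tool offered is still $b_1>0$ via the construction in Section~\ref{sec:largeXapplications}, and for separating \emph{tori} the paper imposes explicit representation-theoretic hypotheses (Theorem~\ref{thm:torsiontorus}: irreducibility of $\rho$, non-abelian on each side, non-central on $T$) that must be verified manifold by manifold. Your sketch elides these hypotheses.

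\textbf{The $\rp\#L(p,1)$ cases.} You fold these into the ``same spirit'' as the Przytycki-type torus localization. They are not: there is no incompressible torus in these manifolds, and the paper's method (Section~\ref{sec:rp3}) is to show $\dim S_{\sqrt{-1}}(M)=\infty$ and compare with $\dim \Sk(M,\Q(A))<\infty$ via Theorem~\ref{thm:infiniteX(M)} at $\zeta=\sqrt{-1}$. This is a genuinely different mechanism from anything in your outline.

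\textbf{Manifolds with boundary.} Your invocation of the Gunningham--Jordan--Safronov finiteness theorem is correctly restricted to closed $M$, but you then give no substitute for the bounded case. The paper's substitute (Theorem~\ref{thm:largeX(M)_nclosed}) requires the strong finiteness conjecture as input, which is established here only for two specific Seifert families (Section~\ref{sec:finiteness}); this is a substantial technical ingredient your plan omits.

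\textbf{The direction $(1)\Rightarrow(2)$ for Seifert manifolds.} You propose to compute $\Sk(M)$ directly for small Seifert manifolds and check freeness. The paper does \emph{not} carry this out: Theorem~\ref{thm:SFStorsion} is one-directional, producing torsion whenever there is an essential closed surface. Whatever biconditional appears in the abstract relies on prior computations not reproduced here; your ``realizing it via the skein algebra of the base orbifold acting in the fiber direction'' is not an argument the paper supplies.

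In short: your outline is a reasonable research programme and correctly identifies the main obstruction, but as a proof it has genuine gaps in the separating case, misattributes the method for the $\rp\#L(p,1)$ family, and claims a direction for Seifert manifolds that the paper does not establish.
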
 

We note that, strictly speaking, Przytycki only conjectured $(2)\implies (1),$ while the other implication was recently conjectured by Kalfagianni, Sikora and the second author \cite{DKS2}; furthermore they conjecture that, if $M$ is closed, then both are equivalent to
\begin{itemize}
	\item[(3)]$\mathcal{S}(M)$ is finitely generated over $\Z[A^{\pm 1}].$
\end{itemize}

We will be concerned solely with the implication $(2)\implies (1)$; in other words, we will be concerned with showing that a two-sided, closed, non boundary parallel essential surface gives torsion in $\Sk(M)$.

The examples in the literature thus far mostly concern the case of low-genus surfaces. For genus $0$ an almost complete solution is known.

\begin{theorem}\cite[Theorem 1.7]{GSZ}\label{thm:GSZ}
	Let $M_1,M_2$ be two compact orientable manifolds, neither of which is homeomorphic to $\#^r \rp$ minus some balls (for $r\geqslant 0)$. Then $\Sk(M_1\# M_2)$ has torsion.
\end{theorem}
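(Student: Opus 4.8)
\smallskip
\emph{Sketch of a proof strategy.} The manifold $M=M_1\#M_2$ carries its connecting $2$-sphere $\Sigma$, a $2$-sided essential surface that is not boundary parallel since neither summand is $S^3$. The philosophy behind Conjecture~\ref{conj:torsion} is that torsion should be concentrated near such a surface, and the genus-$0$ prototype is Hoste and Przytycki's computation of $\Sk(S^1\times S^2)$, in which an essential sphere produces an infinite tower of torsion summands $\Z[A^{\pm1}]/(q_n(A))$. The plan is to reproduce that mechanism here. Write $M=N_1\cup_\Sigma N_2$ with $N_i=M_i\setminus B^3$, fix a collar $\Sigma\times[-1,1]\subset M$, and organise framed links by their intersection number with $\Sigma$; the skeins running through the collar are controlled by the Temperley--Lieb algebras $TL_{2n}$ acting on the relative skein modules $\Sk(N_i;2n)$, and it is the presence of idempotents in $TL_{2n}$ that is meant to manufacture the torsion.

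Because neither $M_i$ is a connected sum of copies of $\rp$ with balls removed, each $\pi_1(M_i)$ is nontrivial, and I would first argue that one can realise an essential loop by a knot $K_i\subset N_i$ whose iterated cables remain non-degenerate in the sense needed below; the case of $\rp$, where $\pi_1=\Z/2$ and the only essential loop squares to the trivial one, is exactly where this breaks down and compels the exclusion. Band-sum $K_1$ and $K_2$ along a band through the neck to get a knot $K\subset M$ meeting $\Sigma$ transversally in two points, and for each $n$ let $K^{(n)}$ be its $n$-cable with the Jones--Wenzl idempotent $f_n$ inserted on the portion crossing the collar; the classes $[K^{(n)}]$ are the candidates for supporting torsion.

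The relation itself would come from a handle slide across $\Sigma$: push the $f_n$-decorated bundle of $n$ strands entirely to one side of $\Sigma$ and then drag it once around the $2$-sphere before letting it return. This is realised by an ambient isotopy of $M$, hence changes nothing in $\Sk(M)$; but re-expanding the outcome in terms of the $K^{(m)}$ is not trivial, because $f_n$ diagonalises the resulting slide operator with eigenvalues that are framing monomials and quantum integers in $A$ (closing the idempotent around the sphere contributes a factor like $(-1)^nA^{n^2+2n}$ or $(-1)^n[n+1]_{A^2}$). Propagating these identities through the tower $\{K^{(n)}\}_n$ and using the Jones--Wenzl recursion should produce an honest relation $P(A)\cdot x=0$, with $0\neq P\in\Z[A^{\pm1}]$, for a class $x$ built out of the $K^{(n)}$.

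The crux, and what I expect to be the real obstacle, is to certify that $x\neq0$ in $\Sk(M)$ --- that it has not already collapsed under the Kauffman relations. I would detect it after a base change in which $P$ survives: in $S_\zeta(M)$ at a generic $\zeta$, or, when $P(-1)\neq0$, in $S_{-1}(M)\cong\C[\mathcal X(M)]$ via the isomorphism of \cite{Bullock,PS00}, by pairing $x$ with a representation $\rho\in\mathrm{Hom}(\pi_1(M_1),\slC)\times\mathrm{Hom}(\pi_1(M_2),\slC)=\mathrm{Hom}(\pi_1(M),\slC)$ that is nontrivial on each free factor and has $\rho(K)$ of suitable order, and checking that the associated trace polynomial does not vanish. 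Here again the hypothesis does real work: the character variety of the free product $\pi_1(M_1)\ast\pi_1(M_2)$ must be rich enough to separate $x$ from $0$, and for summands assembled out of $\rp$'s the available characters are too degenerate for this route --- which is precisely the kind of gap the character-variety criteria of the present paper are meant to fill.
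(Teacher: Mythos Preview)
This theorem is not proved in the present paper; it is quoted from \cite{GSZ} as background, and the paper offers no argument of its own for it. So there is no ``paper's own proof'' to compare against. What the paper does say (in the paragraph following Theorem~\ref{thm:rp3}) is that the argument in \cite{GSZ} proceeds via a criterion of Przytycki, namely \cite[Thm~4.2(b)]{Prz99}, and that this criterion is exactly what fails when one of the summands is $\#^r\rp$ minus balls --- hence the exclusion in the statement and the need for the new $A=\sqrt{-1}$ techniques of Section~\ref{sec:rp3}.

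As for your sketch on its own merits: it is a plausible heuristic but not a proof, and you say as much. The two places where it is genuinely incomplete are (i) the ``handle slide across $\Sigma$'' paragraph, where you assert that re-expanding after the slide ``should produce'' a relation $P(A)\cdot x=0$ without ever writing down $P$ or $x$ or checking that $P$ is nonzero; and (ii) the nonvanishing step, where you hope to detect $x$ at $A=-1$ via a character of $\pi_1(M_1)\ast\pi_1(M_2)$, but give no argument that the trace polynomial you would get is nonzero. Note also that your reason for excluding $\rp$ summands --- that $\pi_1(\rp)=\Z/2$ is ``too small'' --- is not quite the right diagnosis: the actual obstruction, as the paper explains, is that the $\slC$-character variety of such connected sums is finite, so character-variety detection at $A=\pm1$ cannot work there. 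Your instinct that the exclusion is tied to character-variety poverty is correct, but the mechanism in \cite{GSZ} is Przytycki's explicit sliding criterion rather than the Jones--Wenzl tower you describe.
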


Note that when $M$ has a non-boundary parallel essential sphere, either $M$ is a connected sum or $M=S^1\times S^2$; in the latter case, $\Sk(S^1\times S^2)$ is known explicitly and has torsion (see \cite{HP95}).

One of the results of this paper is the following:
\begin{theorem}\label{thm:rp3}
	Take $r\geq 1$ and $p$ even; then $\Sk(L(p,1)\#^r \rp)$ has torsion.
\end{theorem}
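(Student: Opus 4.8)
The plan is to produce an explicit torsion element of $\Sk(N)$, where $N=L(p,1)\#^r\rp$, and to see torsion enter through the involution $b^{p/2}\in\Z/p=\pi_1(L(p,1))$, which is available precisely because $p$ is even. First I would present $N$ as Dehn surgery on the $(r+1)$-component unlink $O_0\cup O_1\cup\cdots\cup O_r\subset S^3$, with surgery coefficient $p$ on $O_0$ and coefficient $2$ on each $O_i$, $1\le i\le r$. The exterior of this unlink is the genus-$(r+1)$ handlebody $H_{r+1}$, whose skein module is the free associative $\Z[A^{\pm1}]$-algebra on the cores $x_0,\dots,x_r$; consequently $\Sk(N)$ is the quotient of $\Sk(H_{r+1})$ by the submodule spanned by all differences $\beta-\beta'$, where $\beta'$ is obtained from a link $\beta$ by sliding a band over one of the surgered $2$-handles (the standard surgery presentation of skein modules). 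I would record links by their geometric intersection number with the cocore disks of the $1$-handles — equivalently with the connecting $2$-spheres of the connected sum — and use the resulting filtration to organize the handle-slide relations.

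I would first do the case $r=1$. Let $K\subset N$ be a knot carrying the conjugacy class of $b^{p/2}a\in\pi_1(N)=\Z/p*\Z/2$; since $b^{p/2}$ and $a$ are distinct nontrivial involutions of their respective factors, $b^{p/2}a$ has infinite order, and its class $z=[K]\in\Sk(N)$ is nonzero — this can be seen either from the explicit free skein modules of $L(p,1)$ and of $\rp$ (Hoste--Przytycki, Przytycki) cut along the connecting sphere, or from the fact that $\Psi(z)$ is the nonzero function $\rho\mapsto-\tr\rho(b^{p/2}a)$ on $\mathcal X(N)$. Now slide $K$ once over the $2$-handle along $O_0$, resolve all the crossings thus created, and slide once over the $2$-handle along $O_1$. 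Because $p$ is even, the portion of $K$ running through the $L(p,1)$-summand ``$p/2$ times'' is carried by this slide, modulo terms of strictly smaller intersection number, to a parallel copy of itself times an invertible Laurent polynomial, rather than to a strand of a different homotopy class; the analogous statement holds for the portion through $\rp$ and the $O_1$-handle. Comparing the two obvious ways of returning $K$ to its original position therefore yields, in the associated graded, a relation of the form $Q(A)\,z\equiv0$ with $Q(A)\in\Z[A^{\pm1}]$ nonzero. For $p=2$ this is Mroczkowski's torsion relation in $\Sk(\rp\#\rp)$.

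It remains to promote this to an honest relation: feeding the lower-complexity corrections back into the computation should produce $\widetilde Q(A)\,z'=0$ in $\Sk(N)$ with $\widetilde Q(A)\ne0$ and $z'$ differing from $z$ by terms of smaller intersection number, and the nonvanishing of $z'$ follows as before (or directly from a computation of the relevant piece of $\Sk(N)$), so $\Sk(N)$ has torsion. For general $r\ge1$ exactly the same argument applies using $O_0$ together with any single $O_i$; the other summands contribute only extra free generators of $\Sk(H_{r+1})$ and are inert in the slide computation, so $\Sk(L(p,1)\#^r\rp)$ has torsion for every even $p$ and every $r\ge1$. (One may instead work directly with the connecting spheres: cut a link along them into tangles in $L(p,1)\setminus B^3$ and the $\rp\setminus B^3$'s, reduce the marked spheres via the Temperley--Lieb relations, and substitute the known skein modules of the summands; this produces the same relation.) I expect the main difficulty to be the combinatorial control of the slide computation — choosing the complexity filtration so that both slides are strictly complexity-decreasing away from the distinguished term, and verifying that the leading coefficient $Q(A)$ is genuinely nonzero, which is exactly where the parity of $p$ is used and where, for $p=2$, the computation should specialize to the known one.
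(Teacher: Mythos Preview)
Your approach is genuinely different from the paper's, and the central step is not actually carried out. You assert that two sequences of handle slides on $K$ produce, in the associated graded of some intersection-number filtration, a relation $Q(A)\,z\equiv 0$ with $Q(A)\neq 0$; but you never perform this computation, and you yourself flag that verifying $Q(A)\neq 0$ is ``the main difficulty.'' That is the whole content of the theorem. Note also that since $\Psi(z)\neq 0$ at $A=-1$, any genuine relation $Q(A)z=0$ must have $Q(-1)=0$, so nothing prevents $Q$ from vanishing identically; you give no mechanism to rule this out. The phrase ``comparing the two obvious ways of returning $K$ to its original position'' is never made precise, and a slide over the $p$-framed handle followed by resolution of the $p$ new crossings produces $2^p$ terms whose leading behavior you do not analyze. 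The paper in fact remarks that the Przytycki-type criterion used in \cite{GSZ} fails for these connected sums, which is a warning that direct sphere/slide arguments are delicate here.

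The paper avoids all of this by a completely different, non-constructive route. It specializes to $A=\sqrt{-1}$, where crossing changes become sign changes, and explicitly computes a presentation of $S_{\sqrt{-1}}(L(p,1)\#\rp)$ as a quotient of $\C[x,y,z]$ by handle-slide relations (Proposition~\ref{prop:presentationlp1l21}). For $p$ even the relevant Chebyshev values satisfy $T_p(0)=\pm 2$, $S_{p-1}(0)=0$, etc., forcing every generator of the relation submodule (in the homologically trivial sector) to be divisible by $x$ or $y$; hence $\dim_\C S_{\sqrt{-1}}(L(p,1)\#\rp)=\infty$. Since $\dim_{\Q(A)}S(M,\Q(A))<\infty$ by the Gunningham--Jordan--Safronov finiteness theorem, Theorem~\ref{thm:infiniteX(M)} yields $(A-\sqrt{-1})$-torsion. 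The passage to $r\ge 1$ is handled by proving that $S_{\sqrt{-1}}(M)\hookrightarrow S_{\sqrt{-1}}(M\#\rp)$ for any closed $M$, so infinite-dimensionality propagates. If your explicit computation could be completed it would give more: an explicit torsion element and independence from the deep GJS theorem. But as written it is a plan, not a proof, and the paper's method is the one that actually closes.
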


Let us remark that those connected sums have finite $\slC$-character variety. The approach in \cite{GSZ} to prove Theorem \ref{thm:GSZ} can be used to find torsion in the skein module of these $3$-manifolds, as they used a criterion of Przytycki (\cite[Thm 4.2(b)]{Prz99}) which fails for these manifolds. To find torsion we introduce a new technique, which involves studying the skein module at a $4$th root of unity.
\\
\\
The first example of an irreducible manifold with torsion in its skein module was the double of the Figure Eight knot (as shown by \cite{Veve}); in this case the torsion arises due to an essential torus. 
In this paper we provide effective criteria, based on representation theory, to find torsion in skein modules arising from essential tori; in particular we are able to use them to prove the following theorem, settling the implication $(2)\implies(1)$ of Conjecture \ref{conj:torsion} for Seifert manifolds.

\begin{theorem}\label{thm:SFStorsion}
Suppose $M$ is an orientable Seifert manifold that contains an incompressible, non-boundary parallel, closed orientable surface. Then $\Sk(M)$ has $(A\pm 1)$-torsion.
\end{theorem}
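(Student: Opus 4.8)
The plan is to reduce the statement to a concrete, surface-by-surface analysis using the Jaco--Shalen--Johannson and Seifert structure theory, and then apply a torsion criterion built from $\slC$-representations. First I would invoke the classification of essential surfaces in Seifert fibered spaces (due to Waldhausen, see also Jaco's book): if $M$ is an orientable Seifert manifold containing an incompressible, non-boundary-parallel, closed orientable surface $\Sigma$, then $\Sigma$ is isotopic either to a vertical surface (a union of fibers, hence a torus or Klein bottle, but orientability forces a torus) or to a horizontal surface (a surface transverse to all fibers, which is a branched cover of the base orbifold). In the horizontal case $M$ fibers over $S^1$ with fiber $\Sigma$; in the vertical case $\Sigma$ is an essential torus coming from a loop in the base orbifold. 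So it suffices to produce $(A\pm 1)$-torsion from each of these two situations separately.

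The core tool is the representation-theoretic torsion criterion that the paper sets up for essential tori (the one used to revisit Przytycki--Veve, alluded to just before the theorem): roughly, if $\Sigma \subset M$ is a $2$-sided essential surface and one can build, near $A=\pm 1$, a family of elements of $\Sk(M,\Q(A))$ detected by $\slC$-characters whose restrictions to $\pi_1(\Sigma)$ form a positive-dimensional family that nonetheless does not extend to a comparably large family of characters of $\pi_1(M)$, then the skein module has $(A\pm1)$-torsion. Concretely, I would use the fact that $\Sk_{-1}(M)\cong \C[\mathcal X(M)]$ together with the gluing $M = M' \cup_\Sigma M''$ (or the mapping-torus decomposition) to compare the character variety of $M$ with those of the pieces: an essential $\Sigma$ produces either a jump in dimension or a non-reduced point, and this obstruction lifts to genuine torsion in $\Sk(M)$ over $\Z[A^{\pm1}]$ localized at $A\mp 1$. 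For the vertical torus case, the loop carrying $\Sigma$ has a whole $1$-parameter family of abelian (diagonal) characters on $\pi_1(\Sigma)=\Z^2$, and I would check that the Seifert relation (the fiber being central, with the $(a_i,b_i)$ surgery relations on the exceptional fibers) forces these to collapse onto a finite set in $X(M)$ — exactly the mismatch that the criterion converts into torsion. For the horizontal case, where $M$ is a $\Sigma$-bundle over $S^1$ with monodromy $\varphi$, I would instead use that $X(\Sigma)$ is positive-dimensional (genus $\geq 1$, as a closed incompressible surface in an irreducible $3$-manifold cannot be a sphere) while $X(M)$ sees only the $\varphi$-fixed locus, again a proper closed subset, and feed this into the same machine.

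The main obstacle I anticipate is the bookkeeping in degenerate sub-cases rather than the main idea: small Seifert manifolds over the sphere with three exceptional fibers, manifolds with boundary where "non-boundary-parallel" must be handled carefully, and base orbifolds that are themselves spheres or disks with few cone points, where the vertical torus may fail to be essential and one is forced into the horizontal case (or into a connected-sum/lens-space situation already covered by Theorems~\ref{thm:GSZ} and~\ref{thm:rp3}). I would organize the write-up by first disposing of the reducible and "exceptional" Seifert pieces (products $\Sigma_g\times S^1$, twisted $S^1$-bundles, and the $\widetilde{SL_2}$/Nil/Euclidean geometries) where $\Sk(M)$ or $X(M)$ is essentially known, and then treating the generic hyperbolic-base case with the torus criterion. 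A secondary technical point is ensuring the torsion is specifically at $A=+1$ or $A=-1$ (an $(A\pm1)$ factor) and not merely at some other cyclotomic value: this should follow because the character-variety obstruction lives at $A=-1$ by the Bullock--Przytycki--Sikora theorem, and a symmetry argument (or a direct computation with the $A^2$-appearing Kauffman relation) propagates it to $A=+1$. Once the case analysis is set up, each individual case is a short computation with the relevant one-relator or graph-of-groups presentation of $\pi_1(M)$.
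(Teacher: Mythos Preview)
Your broad outline---classify the incompressible surface as vertical or horizontal and then feed the situation into a representation-theoretic criterion---matches the paper's plan, but the mechanism you describe is not the one that actually works, and one case is missing a key ingredient.

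For the vertical torus, you say the torsion comes from a ``mismatch'': a $1$-parameter family of abelian characters on $\pi_1(T)=\Z^2$ that ``collapses onto a finite set in $X(M)$.'' This is backwards. The paper's criteria (Theorems~\ref{thm:torsiontorus} and~\ref{thm:nsep-torus}) require producing a \emph{single} representation $\rho:\pi_1(M)\to\slC$ with prescribed properties (irreducible, non-abelian on each side of a separating $T$, non-central on $T$; or, in the non-separating case, not of dihedral type). The torsion element is then an explicit difference of two skeins distinguished by $\Tr\rho$. There is no ``family that fails to extend'' in the argument; rather, one must \emph{construct} representations of $\pi_1(M)$ case by case from the Seifert presentation (Lemmas~\ref{lemma:positive_genus}--\ref{lemma:SFSnor}), choosing traces for the $q_i$ so that the exceptional-fiber relations $q_i^{\alpha_i}h^{\beta_i}=1$ hold while keeping the restriction to each side of $T$ irreducible. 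This is not a short computation in general.

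For the horizontal case (closed $M$), your description is again inverted: you want $X(M)$ to be a \emph{proper} subset of $X(\Sigma)$, but the criterion the paper uses (Corollary~\ref{cor:infiniteX(M)}) needs $X(M)$ itself to be positive-dimensional, which combined with the finiteness theorem for $\Sk(M,\Q(A))$ forces torsion. The relevant input is that closed Haken Seifert manifolds have $\dim X(M)>0$, cited from \cite{DKS2}.

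Finally, you flag the degenerate boundary cases as ``bookkeeping,'' but one of them---base a M\"obius band with $n+k=2$---is genuinely hard: neither torus criterion applies, and one must instead use Theorem~\ref{thm:largeX(M)_nclosed}, which requires the strong finiteness conjecture for these manifolds. The paper devotes all of Section~\ref{sec:finiteness} to proving this via arrowed-diagram computations; it is not a short step.
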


The criteria used in the proof of Theorem \ref{thm:SFStorsion} take several forms. As a first ingredient, we give effective criteria, depending on the $\slC$-character variety of $M,$ under which the presence of a separating torus in $M$ (Theorem \ref{thm:torsiontorus}), or of a non-separating torus in $M$ (Theorem \ref{thm:nsep-torus}), produces torsion in $\Sk(M).$ Those criteria are effective in the sense that one can work out explicit torsion elements in $\Sk(M)$ from them. We note that those criteria are a generalization of some results of Przytycki \cite{Prz99} and Veve \cite{Veve}.

Moreover, in some more complicated cases, we rely on some non-effective criteria (Theorem \ref{thm:infiniteX(M)}, Corollary \ref{cor:infiniteX(M)} and Theorem \ref{thm:largeX(M)_nclosed}), which morally speaking show that if the character variety $X(M)$ is "large" then $\Sk(M)$ has torsion. The first of these criteria concerns closed manifolds, and can be thought of as a consequence of the finiteness theorem for skein modules, a celebrated result of Gunningham, Jordan and Safronov which states the following:
\begin{theorem}
	\label{thm:finiteness}\cite{GJS19} For any closed compact oriented $3$-manifold $M,$ the skein module $\Sk(M,\Q(A))$ is a finite dimensional $\Q(A)$-vector space.
\end{theorem}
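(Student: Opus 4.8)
\emph{Strategy.} The theorem is due to Gunningham, Jordan and Safronov; what follows is an outline of the approach one is led to. The plan is to realize $\Sk(M,\Q(A))$ as a factorization homology invariant and then transport known finiteness properties of the input category. Over $\Q(A)$ the relevant quantum parameter $q$ (a fixed Laurent monomial in $A$) is transcendental, hence not a root of unity, so $\mathcal{A}:=\mathrm{Rep}_q(\mathrm{SL}_2)$, the category of finite-dimensional type-$1$ modules over $U_q(\mathfrak{sl}_2)$ with its standard ribbon structure, is a semisimple ribbon tensor category with simple objects the quantum analogues $V_0,V_1,V_2,\dots$ of the irreducibles of $\mathrm{SL}_2$ and the classical Clebsch--Gordan fusion rules. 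The first step is the skein-theoretic excision principle: for any oriented surface $\Sigma$ the skein category $\mathrm{SkCat}(\Sigma)$ — objects are finite configurations of framed points in $\Sigma$, morphisms are $\Q(A)$-linear combinations of framed tangles in $\Sigma\times[0,1]$ modulo the Kauffman relations K1--K2 — computes the factorization homology $\int_\Sigma\mathcal{A}$. By the general gluing principle for factorization homology, if the given closed $M$ is presented by a Heegaard splitting $M=H_g\cup_\Sigma H_g'$, then $\Sk(M,\Q(A))$ is the relative tensor product $\Sk(H_g,\Q(A))\otimes_{\mathrm{SkCat}(\Sigma)}\Sk(H_g',\Q(A))$ — a Hochschild-homology type colimit — with the two handlebody skein modules regarded as the natural right and left modules over the skein category of the splitting surface.

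The second step is to reduce the asserted finite-dimensionality to a structural property of $\mathcal{A}$ alone. The skein category of a once-punctured surface is compactly generated, equivalent to modules over an internal skein algebra, and the genus-$g$ handlebody skein module is governed, as a module over $\mathrm{SkCat}(\partial H_g)$, by $g$ copies of the canonical (ind-)algebra object $\mathcal{O}_q(\mathrm{SL}_2)\simeq\bigoplus_n V_n\otimes V_n^*$ (quantum Peter--Weyl). The relative tensor product computing $\Sk(M,\Q(A))$ is then the Hochschild homology of this algebra acting on itself through the Heegaard gluing, and it is finite-dimensional once one knows the handlebody module is \emph{perfect} (dualizable, equivalently smooth and proper) over the surface skein category. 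This is exactly where semisimplicity of $\mathcal{A}$ enters: every object of $\mathcal{A}$ is rigid, so $\mathcal{A}$ is ``cp-rigid'' in the sense of Gunningham--Jordan--Safronov, and their key lemma upgrades cp-rigidity of the braided input to dualizability of the handlebody module over $\int_\Sigma\mathcal{A}$; a standard Morita argument (smooth $+$ proper $\Rightarrow$ finite Hochschild homology) then yields $\dim_{\Q(A)}\Sk(M,\Q(A))<\infty$.

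The main obstacle is precisely this dualizability statement. One cannot argue by bounding the number of generating links of $\Sk(H_g,\Q(A))$ — that module is visibly infinite-dimensional, which is why the finiteness conjecture resisted elementary approaches — so the finiteness must come entirely from the categorical model, where the relevant ``size'' is controlled by the fact that $\mathcal{O}_q(\mathrm{SL}_2)$, although an infinite sum of simples, becomes a dualizable algebra object after passing to the factorization homology of the surface, with duality data assembled from the evaluation and coevaluation morphisms furnished by rigidity of $\mathcal{A}$. A second point requiring care is the comparison between this algebraic model and the honestly topological skein module of Definition~\ref{def:KBSM}: one must check that the excision equivalences are compatible with the presentation by framed links modulo K1--K2, which is routine but lengthy. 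Finally, since every closed orientable $3$-manifold admits a Heegaard splitting, the handlebody-gluing case established above yields the theorem in general; see \cite{GJS19} for the complete argument.
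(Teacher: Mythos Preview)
The paper does not give its own proof of this statement; Theorem~\ref{thm:finiteness} is quoted from \cite{GJS19} and used as a black box (it is invoked, for instance, in the proof of Theorem~\ref{thm:infiniteX(M)}). So there is no ``paper's own proof'' to compare your proposal against.

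That said, your outline is a fair high-level summary of the Gunningham--Jordan--Safronov argument: identifying skein categories with factorization homology of $\mathrm{Rep}_q(\mathrm{SL}_2)$, using cp-rigidity to obtain dualizability of the handlebody module over the surface skein category, and deducing finiteness of the relative tensor product over a Heegaard splitting. As you yourself note, this is a sketch rather than a proof: the dualizability step and the comparison between the categorical model and the topologically-defined skein module are the substantial parts, and you defer both to \cite{GJS19}. For the purposes of the present paper that is exactly what is intended --- the result is being cited, not reproved.
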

We note that a version of this theorem for manifolds with boundary has been conjectured to hold by the second author (\cite[Conjecture 3.3]{Det21}, or Conjecture \ref{conj:finiteness_boundary} below), and is known as the strong finiteness conjecture for manifolds with boundary.

\begin{conjecture}(Strong finiteness conjecture for manifolds with boundary \cite{Det21})
	\label{conj:finiteness_boundary} Let $M$ be a compact oriented $3$-manifold. Then there is a family $\mathcal{C}=\lbrace c_1,\ldots, c_n\rbrace$ of essential, disjoint, pairwise non parallel simple closed curves on $\partial M,$ such that $\mathcal{S}(M,\Q(A))$ is a finitely generated $\Q(A)[c_1,\ldots,c_n]$-module.
\end{conjecture}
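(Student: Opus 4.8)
The natural approach is to reduce the statement to the closed case, Theorem~\ref{thm:finiteness}, by capping off $\partial M$. Cap each torus component of $\partial M$ by a solid torus $V_i$ along some slope $\gamma_i$, and each higher-genus component $F_j$ by a handlebody $H_j$; this produces a closed oriented $3$-manifold $\widehat M$ with $\mathcal S(\widehat M,\Q(A))$ finite-dimensional. One takes the family $\mathcal C=\{c_1,\dots,c_n\}$ to consist of the filling slopes $\gamma_i$ on the torus components together with, on each $F_j$, the curves of a pants decomposition chosen compatibly with the meridian disks of $H_j$; these are essential, disjoint and pairwise non-parallel, and one expects $|\mathcal C|$ to equal the generic half-dimension of $X(\partial M)$, which classically is the expected transcendence degree of the image of the restriction map $X(M)\to X(\partial M)$. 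The precise choice of $\mathcal C$ is itself part of what must be proved, but the construction above is the model to keep in mind.

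\textbf{Key steps.} First one needs a Mayer--Vietoris type surjectivity: the obvious union-of-links map
$$\mathcal S(M,\Q(A))\otimes_{\mathcal S(\partial M\times I,\Q(A))}\mathcal S\big(\textstyle\coprod_i V_i\sqcup\coprod_j H_j,\Q(A)\big)\ \longrightarrow\ \mathcal S(\widehat M,\Q(A))$$
is onto, which should follow by isotoping an arbitrary link in $\widehat M$ transverse to $\partial M$ and resolving crossings near $\partial M$. Second, the skein module of each cap is a \emph{cyclic} module over the skein algebra of its boundary: $\mathcal S(V_i,\Q(A))=\Q(A)[z_i]$ is generated over $\mathcal S(T^2\times I,\Q(A))$ by the empty link, with annihilator containing $c_i+A^2+A^{-2}$, and similarly the empty link generates $\mathcal S(H_j,\Q(A))$ with annihilator containing $c+A^2+A^{-2}$ for each meridian curve $c$. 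Combining these, $\mathcal S(\widehat M,\Q(A))$ is a quotient of $\mathcal S(M,\Q(A))\big/\big(c_1+A^2+A^{-2},\dots,c_n+A^2+A^{-2}\big)$. Theorem~\ref{thm:finiteness} then shows this quotient is finite-dimensional — but \emph{a priori} only the quotient, not $\mathcal S(M,\Q(A))$ itself, since capping can merge links that were distinct in $M$, so one cannot yet conclude even that $\mathcal S(M,\Q(A))$ is finitely generated over $\Q(A)[c_1,\dots,c_n]$.

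\textbf{Main obstacle.} The crux is to upgrade ``the quotient by a single maximal ideal is finite-dimensional'' to ``finitely generated module''. The two routes I would try are: (a) produce a PBW-type filtration on $\mathcal S(M,\Q(A))$ compatible with the action of the $c_i$ whose associated graded deforms $\C[X(M)]$, and deduce finite generation from the classical statement that $\C[X(M)]$ is module-finite over the subring generated by the peripheral trace functions $t_{c_i}$ — which itself already forces $\mathcal C$ to be chosen with care, since when $M$ carries internal essential surfaces the restriction map has positive-dimensional fibers and one must argue that the quantum skein module is correspondingly smaller than $\C[X(M)]$; or (b) prove directly a relative version of the Gunningham--Jordan--Safronov theorem, that $\mathcal S(M,\Q(A))$ is finitely generated over the boundary skein algebra $\mathcal S(\partial M\times I,\Q(A))$, and then finish with the classical computation that $\mathcal S(\partial M\times I,\Q(A))$ is module-finite over $\Q(A)[c_1,\dots,c_n]$ for $\mathcal C$ as above. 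Both require running the factorization-homology / quantum-character-stack machinery of \cite{GJS19} in the presence of boundary — controlling the skein module as a module over the image of the quantum moment map coming from $\partial M$ — which is exactly the missing input, and indeed the content of Conjecture~\ref{conj:finiteness_boundary}. I would therefore expect a realistic outcome to be either the above reduction of Conjecture~\ref{conj:finiteness_boundary} to relative finiteness, or a hands-on proof in special cases — $\partial M$ a single torus, or $M$ a graph manifold — where the classical module-finiteness can be quantized explicitly.
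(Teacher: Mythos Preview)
The statement you are attempting to prove is a \emph{conjecture} in the paper, not a theorem: the paper does not give a proof of Conjecture~\ref{conj:finiteness_boundary} in general. What the paper does prove are special cases, collected in Theorem~\ref{thm:strongFinitenessMobius} (via Propositions~\ref{prop:f1,2case} and~\ref{prop:f1,1case}): the strong finiteness conjecture holds for $F_{1,2}\simtimes S^1$ and for Seifert manifolds over the M\"obius band with one singular fiber. Those proofs are entirely hands-on, using arrowed diagrams, explicit skein relations (Lemma~\ref{lem:F12relations}), and complexity arguments in the Frohman--Gelca basis of $\Sk(T^2)$ to reduce to finitely many generators --- exactly the ``hands-on proof in special cases'' you mention as a fallback in your last sentence.

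Your proposal is a reasonable strategic outline, and you are candid about its status: you correctly identify that capping off and invoking Theorem~\ref{thm:finiteness} only gives finite-dimensionality of a single quotient $\mathcal S(M,\Q(A))/(c_i+A^2+A^{-2})$, which does \emph{not} imply finite generation of $\mathcal S(M,\Q(A))$ as a $\Q(A)[c_1,\dots,c_n]$-module (a module can have every such specialization finite-dimensional and still fail to be finitely generated). Your route~(a) has a further issue: it is not true in general that $\C[X(M)]$ is module-finite over the subring generated by peripheral traces --- precisely when $M$ contains closed essential surfaces the character variety can have components on which the restriction to $\partial M$ is constant --- so the classical statement you would need to quantize is already false. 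Your route~(b), a relative version of \cite{GJS19}, is indeed the expected attack, but as you say this is the open content of the conjecture itself. In short: there is no proof here to compare, because the paper does not claim one; your write-up is a fair survey of why the conjecture is hard, not a proof of it.
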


We remark that strictly speaking, this conjecture is stronger than Conjecture 3.3 in \cite{Det21}. Note that for disjoint simple closed curves on $\partial M,$ the skein module $\mathcal{S}(M,\Q(A))$ can be considered a $\Q(A)[c_1,\ldots,c_k]$-module, since as elements of $\Sk(\partial M)$ the curves $c_1,\ldots,c_k$ commute, and $\mathcal{S}(M)$ has a natural structure of $\Sk(\partial M)$-module.
 
Our second criterion, Theorem \ref{thm:largeX(M)_nclosed}, is a variant of Corollary \ref{cor:infiniteX(M)} that applies to $3$-manifolds with boundary for which the strong finiteness conjecture is known. 

In order to apply Theorem \ref{thm:largeX(M)_nclosed} to the proof of Theorem \ref{thm:SFStorsion}, we will prove the strong finiteness conjecture for certain Seifert manifolds with boundary:
\begin{theorem}
	\label{thm:strongFinitenessMobius} Let $F_{1,1}$ be the M\"obius band and $F_{1,2}$ the M\"obius band with a disk removed. Then the strong finiteness conjecture holds for $\F$ and for any Seifert manifold over $F_{1,1}$ with one singular fiber.
\end{theorem}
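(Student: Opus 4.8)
The plan is to compute, or at least bound, the skein modules of these small Seifert-fibered pieces directly, using the fact that they fiber over surfaces built from the Möbius band. I would begin with $\F = F_{1,2} \simtimes S^1$, the twisted $S^1$-bundle over the Möbius-band-with-a-hole. Since $F_{1,2}$ is homotopy equivalent to a wedge of two circles, $\pi_1(\F)$ is a relatively simple group (a semidirect product or central-ish extension of a free group of rank $2$ by $\Z$), and its boundary is a union of tori. The strategy is to pick, on each boundary torus, the slope represented by the fiber $S^1$ (or by a section curve, whichever is more convenient), and show that $\Sk(\F,\Q(A))$ is finitely generated over the polynomial ring generated by these curves. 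Concretely, one produces a finite spanning set of links in $\F$ by a standard "position a link in a collar / bounded-position" argument: every framed link can be isotoped into a controlled form with respect to the fibering, and then repeatedly apply the Kauffman relations plus the commuting curves on the boundary to reduce the number of strands crossing a fixed vertical annulus. This type of argument appears already in the literature for surface$\times S^1$ manifolds, and the twisting only affects bookkeeping.

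For a Seifert manifold $N$ over $F_{1,1}$ (the Möbius band) with one singular fiber, I would first excise a fibered solid-torus neighborhood of the singular fiber, leaving a genuine circle bundle over $F_{1,1}$ with two boundary components — essentially $F_{1,2}\simtimes S^1$ again, or $F_{1,1}\times S^1$ — for which the above analysis (or a minor variant) applies. Then I would glue back the singular fibered solid torus and track how the skein module of the complement interacts with the skein module of the solid torus, which is the polynomial ring $\Q(A)[z]$ on the core. The key point is that gluing in a solid torus along a torus boundary component is, at the level of skein modules, a quotient-type operation (one kills the difference between a curve on that torus and its image under the gluing, up to lower terms), so finite generation is preserved, provided the curve we are quotienting by is not one of the distinguished curves $c_i$ we are keeping. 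One must choose the family $\mathcal{C}$ on $\partial N$ (the single remaining torus, plus possibly curves on it) compatibly so that this works; here the fact that there is only \emph{one} singular fiber keeps the combinatorics manageable, and the non-orientability of the base is what forces the relevant curves to be essential and pairwise non-parallel as required by Conjecture \ref{conj:finiteness_boundary}.

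The main obstacle, I expect, is the reduction step for $\F$ itself: controlling a framed link crossing the vertical annuli in the presence of the orientation-reversing monodromy of the Möbius band, and verifying that the Kauffman relations genuinely let one reduce to a finite list modulo the boundary-curve action, rather than merely to a finitely generated module over a larger ring. In particular one has to be careful that the $S^1$-fiber curve (or whichever slope is chosen) is the \emph{only} curve one needs to invert/adjoin on each boundary torus — if two independent slopes were needed the statement would fail to match the conjecture. I would handle this by an explicit skein-theoretic normal form: isotope the link so that it meets a chosen set of disjoint vertical annuli transversally and minimally, resolve all crossings, and observe that the resulting trivalent-free diagrams are, up to the relation that parallel copies of the fiber absorb into the polynomial variable, parametrized by a finite amount of combinatorial data (a bounded multicurve on $F_{1,2}$ together with bounded "heights"). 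Once this normal form is in hand, finite generation over $\Q(A)[c_1,\ldots,c_n]$ is immediate, and the singular-fiber gluing argument then upgrades it to the general Seifert manifold over $F_{1,1}$ with one exceptional fiber.
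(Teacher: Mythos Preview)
Your overall architecture matches the paper's: establish strong finiteness for $\F$ first, then obtain the $F_{1,1}$-with-one-singular-fiber case by Dehn filling one boundary torus. However, there is a genuine gap in your choice of boundary curves, and it propagates to the filling step.

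You propose taking the $c_i$ to be the fiber $S^1$ (vertical) or a section (horizontal). The paper does the opposite: Proposition~\ref{prop:f1,2case} proves finite generation of $\Sk(\F,\Q(A))$ over $\Q(A)[c_1,c_2]$ where each $c_i$ is \emph{neither} horizontal \emph{nor} vertical, and moreover the slope of $c_1$ must satisfy an explicit inequality in terms of the slope of $c_2$. This is not an aesthetic choice. First, in $\F$ the fibers over the two boundary components are isotopic (relation~\ref{rel:f12rel1} of Lemma~\ref{lem:F12relations}), so vertical curves give only one independent variable rather than two. Second, and decisively for your deduction, a Seifert manifold over $F_{1,1}$ with one singular fiber is obtained from $\F$ by filling one boundary torus along a slope that is neither horizontal nor vertical. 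The Dehn-filling lemma (Lemma~\ref{lemma:finitenessDehnSurgery}) says: if $\Sk(M,\Q(A))$ is finitely generated over $\Q(A)[c_1,\ldots,c_n]$ and one fills along the slope $c_1$, then the result is finitely generated over $\Q(A)[c_2,\ldots,c_n]$. The filling slope must \emph{coincide} with one of the $c_i$, not avoid them; your parenthetical ``provided the curve we are quotienting by is not one of the distinguished curves $c_i$ we are keeping'' has this exactly backwards. With fiber or section curves as your $c_i$, the lemma does not apply to the fillings you need.

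The paper's actual argument for $\F$ is also more delicate than a normal-form-and-absorb sketch. After reducing to three generators over $\Sk(\partial\F)$ (the empty link and two non-separating curves $x_1,x_2$, with arrows eliminated by a trick specific to non-separating curves), one writes down explicit relations in the Frohman--Gelca basis coming from the arrowed Reidemeister move $R_5$ (Lemma~\ref{lem:F12relations}), and then runs a complexity argument on $\Z^4$ whose complexity function is built from the chosen slopes $(a_i,b_i)$ of the $c_i$. The slope inequality on $c_1$ is precisely what makes the complexity strictly decrease under these relations (Lemmas~\ref{lem:reduceC2} and~\ref{lem:reduceC1}); a vertical or horizontal choice would make this complexity degenerate.
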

This extends results of Aranda and Ferguson \cite[Theorem 4.2]{AF22}, who proved the strong finiteness conjecture for Seifert manifolds with one boundary component and exceptional fibers of parameters $(1,p_i)$.

\textbf{Structure of the paper:} In Section \ref{sec:largeX(M)} we provide the non-effective criteria based on the largeness of $X(M)$. In Section \ref{sec:largeXapplications} we apply these criteria to obtain some examples of manifolds that have torsion in the skein module but only have incompressible surfaces of large genus. In Section \ref{sec:tori} we provide criteria to detect torsion arising from incompressible tori, both separating and non-separating. In Section \ref{sec:finiteness} we prove the strong finiteness conjecture for some Seifert manifolds with non-orientable base, in order to apply to them the criteria from Section \ref{sec:largeX(M)}. In Section \ref{sec:Seifert} we combine all previous results to prove Theorem \ref{thm:SFStorsion}. Finally in Section \ref{sec:rp3} we provide the proof of Theorem \ref{thm:rp3}, finding torsion in manifolds of the form $L(p,1)\#^r\rp$ for $p$ even and $r\geq 0$.

\textbf{Acknowledgements.} Over the course of this work, both authors were partially supported by by the ANR project "NAQI-34T" (ANR-23-ERCS-0008) and by the project "CLICQ" of the Région Bourgogne-Franche Comté. The IMB, host institution of the authors, receives support from the EIPHI Graduate School (contract ANR-17-EURE-0002).

\section{Torsion in the skein modules of manifolds with large character variety}
\label{sec:largeX(M)}
In this section, we will prove two results indicating that a compact oriented $3$-manifold $M$ with a large $\slC$-character variety $X(M)$ has torsion in its skein module.
Our first result concerns closed $3$-manifolds, and is a consequence of the finiteness theorem for skein modules. 
\begin{theorem}\label{thm:infiniteX(M)}
	Let $M$ be a closed compact oriented $3$-manifold and let $\zeta\in \C^*.$ If $\dim_{\C}(S_{\zeta}(M))>\dim_{\Q(A)}(S(M,\Q(A)))$ then $\mathcal{S}(M)$ has $A-\zeta$-torsion.
\end{theorem}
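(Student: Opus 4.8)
The plan is to argue by contradiction, using the structure theory of finitely generated modules over the PID $\Q(A)[A^{\pm 1}]$—or rather, to leverage the interplay between specialization at $A=\zeta$ and the generic rank. First I would pass to the quotient $\Sk(M) \otimes_{\Z[A^{\pm 1}]} \R$, where $\R = \Z[A^{\pm 1}]_{\mathfrak{m}}$ is the localization at the maximal ideal... actually, it is cleaner to work directly: let $V = \Sk(M)$ as a $\Z[A^{\pm 1}]$-module, let $d = \dim_{\Q(A)} \Sk(M,\Q(A))$ be the generic dimension, and let $e = \dim_{\C} S_\zeta(M)$. By hypothesis $e > d$. Suppose for contradiction that $\Sk(M)$ has no $(A-\zeta)$-torsion.

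The key step is to produce, from the finiteness theorem (Theorem \ref{thm:finiteness}), a finitely generated $\Z[A^{\pm 1}]$-submodule that accounts for the generic behaviour. Concretely: choose elements $x_1,\dots,x_d \in \Sk(M)$ whose images form a $\Q(A)$-basis of $\Sk(M,\Q(A))$. Then the submodule $N$ they generate has the property that $\Sk(M)/N$ is killed by some nonzero $f(A) \in \Z[A^{\pm 1}]$ (clearing denominators in the expressions of arbitrary generators in terms of the $x_i$; one must be slightly careful since $\Sk(M)$ need not be finitely generated over $\Z[A^{\pm 1}]$, but every finitely generated submodule containing $N$ has this property with a uniform $f$ only if... — here is the first subtlety, resolved by noting that it suffices to work with the image of $N$ together with finitely many more generators needed to hit a spanning set of $S_\zeta(M)$). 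Localize away from a finite set of primes including all roots of $f$ other than... — instead, the cleanest route: tensor the inclusion $N \hookrightarrow \Sk(M)$ with $\C$ at $A = \zeta$. Right-exactness of $\otimes$ gives a surjection is the wrong direction; instead use that $\mathrm{Tor}_1$ controls the failure. Since we assumed no $(A-\zeta)$-torsion in $\Sk(M)$, and $(A-\zeta)$ is a single element, $\Sk(M)$ is $(A-\zeta)$-torsion-free, hence $\mathrm{Tor}_1^{\Z[A^{\pm1}]}(\Sk(M), \C) = 0$ where $\C = \Z[A^{\pm1}]/(A-\zeta)$ (up to the subtlety that $\Z[A^{\pm1}]/(A-\zeta) \cong \Z[\zeta]$, so one should tensor further with $\C$ over $\Z[\zeta]$, which is flat being a localization-then-field-extension — harmless). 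Therefore the natural map realizes $S_\zeta(M)$ as a quotient with no extra contribution, and one gets $\dim_\C S_\zeta(M) \le \mathrm{rank}$, i.e. $e \le d$, contradicting the hypothesis.

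Let me restate the core inequality argument more carefully, since that is the heart of it. Pick any finitely generated $\Z[A^{\pm1}]$-submodule $W \subseteq \Sk(M)$ whose specialization $W \otimes_{A=\zeta} \C$ surjects onto $S_\zeta(M)$ (possible: lift a finite spanning set of $S_\zeta(M)$). Since $\Sk(M)$ is $(A-\zeta)$-torsion-free, so is $W$; being finitely generated and torsion-free over the PID... no — $\Z[A^{\pm1}]$ is not a PID, it is a 2-dimensional Noetherian ring. So instead localize: let $R = \Z[A^{\pm1}]$ localized at the prime ideal $(A-\zeta) \cap \Z[A^{\pm1}]$ — if $\zeta$ is transcendental this is just $(A-\zeta)$ and $R$ is a DVR; if $\zeta$ is algebraic one must be more careful but can base change to $\C$ first, working with $\C[A^{\pm1}]$, which IS a PID. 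I will do exactly that: set $\bar\Sk = \Sk(M) \otimes_{\Z[A^{\pm1}]} \C[A^{\pm1}]$; then $(A-\zeta)$-torsion in $\Sk(M)$ corresponds to $(A-\zeta)$-torsion in $\bar\Sk$ (faithful flatness), so $\bar\Sk$ is $(A-\zeta)$-torsion-free, the generic dimension is still $d$, and $\bar\Sk \otimes_{A=\zeta} \C = S_\zeta(M)$ has dimension $e$.

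The finishing move: over the PID $\C[A^{\pm1}]$, take the finitely generated submodule $W \subseteq \bar\Sk$ as above, $W \otimes_{A=\zeta}\C \twoheadrightarrow S_\zeta(M)$. Then $W$ is finitely generated and $(A-\zeta)$-torsion-free, hence $(A-\zeta)$ is a nonzerodivisor, so localizing at $(A-\zeta)$ gives a finitely generated torsion-free — hence free — module over the DVR $\C[A^{\pm1}]_{(A-\zeta)}$, of some rank $r$. Then $\dim_\C(W\otimes_{A=\zeta}\C) = r$, and $r$ equals the rank of $W$ over $\mathrm{Frac}(\C[A^{\pm1}]) = \C(A)$, which is $\le \dim_{\C(A)}(\bar\Sk \otimes \C(A)) = \dim_{\Q(A)}\Sk(M,\Q(A)) = d$. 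Hence $e \le \dim_\C(W\otimes_{A=\zeta}\C) = r \le d$, contradicting $e > d$. Therefore $\Sk(M)$ has $(A-\zeta)$-torsion. The main obstacle I anticipate is purely bookkeeping: making sure the submodule $W$ can be chosen to simultaneously (a) surject onto $S_\zeta(M)$ after specialization and (b) have rank at most $d$; the rank bound is automatic since $W \subseteq \bar\Sk$ and rank is monotone for submodules over a domain, so really the only thing to verify with care is that one can lift a spanning set of $S_\zeta(M)$ to $\Sk(M)$, which follows since $\Sk(M) \to S_\zeta(M)$ is surjective (specialization of a quotient of a free module is a quotient of the specialized free module).
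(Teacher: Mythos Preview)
Your argument is correct in substance, and it is the abstract repackaging of the same linear-algebra fact the paper uses directly. The paper's proof is constructive: it picks $n=d+1$ elements $\gamma_1,\dots,\gamma_n\in\Sk(M)$ whose images in $S_\zeta(M)$ are linearly independent, notes that their images in the $d$-dimensional space $\Sk(M,\Q(A))$ must be dependent, clears denominators to obtain a relation $\sum_i P_i(A)\gamma_i=0$ in $\Sk(M)$ with $P_i\in\Z[A^{\pm1}]$ not all zero, specializes at $A=\zeta$ to force $P_i(\zeta)=0$ for all $i$, and then factors out the maximal common power of $(A-\zeta)$ to exhibit an explicit nonzero element killed by $(A-\zeta)^k$. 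Your route---assume no $(A-\zeta)$-torsion, base-change to the PID $\C[A^{\pm1}]$, and use that for a finitely generated $(A-\zeta)$-torsion-free module the fibre dimension at $\zeta$ equals the generic rank---is the semicontinuity formulation of the same phenomenon. The paper's version is shorter and yields an explicit torsion element; yours makes the underlying principle transparent and would generalise more readily. Two minor remarks: your appeal to ``faithful flatness'' of $\C[A^{\pm1}]$ over $\Z[A^{\pm1}]$ is not literally correct ($\C$ is not faithfully flat over $\Z$), but this is harmless since for $\zeta\notin\Z$ the statement ``$\Sk(M)$ has $(A-\zeta)$-torsion'' can only mean torsion in $\bar\Sk$ anyway; and your choice of $W$ via a finite spanning set of $S_\zeta(M)$ presumes $e<\infty$---to cover the infinite-dimensional case simply lift $d+1$ linearly independent elements instead, which is exactly what the paper does.
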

\begin{proof}

Suppose $\dim_{\C}\left(S(M,\Q(A))\right)=n-1$, take $\gamma_1,\dots, \gamma_n$ elements of $\mathcal{S}(M)$ such that their images in $S_\zeta(M)$ are linearly independent, and denote with $\tilde{\gamma}_1,\dots, \tilde{\gamma}_n$ their image in $S(M,\mathbb{Q}(A))$.

Because the latter is $n-1$-dimensional, there must be a linear relation between them; up to reordering, we can assume that it is $\sum_{i=1}^N P_i(A) \tilde{\gamma}_i=0$, with each $P_i(A)\in \Q(A)$ different from $0$. After multiplying the $P_i$s by the least common multiple of their denominators, we can assume that each $P_i(A)$ is in $ \mathbb{Z}[A,A^{-1}]$. 
This implies that $\sum_{i=1}^N P_i(A) \gamma_i$ is also equal to $0$ as an element of $\mathcal{S}(M)$. When evaluated at $\zeta$, this sum becomes $\sum_{i=1}^N P_i(\zeta)\gamma_i$, and because the $\gamma_i$s are linearly independent in $S_\zeta(M)$,  we have that for $i=1,\ldots, N, \ P_i(\zeta)=0.$  Therefore, $A-\zeta$ divides all $P_i$s and we can write the sum as $(A-\zeta)^k\left(\sum_i^N Q_i(A)\gamma_i\right)=0$, where $Q_i(\zeta)\neq 0$ for at least one $i$. Additionally we can see that $\sum_i^N Q_i(A)\gamma_i\neq 0$ in $\mathcal{S}(M)$, because otherwise it would also be equal to $0$ in $S_\zeta(M)$, which would give a non-trivial linear relation between the $\gamma_i$s in $S_\zeta(M)$.
\end{proof}
\begin{corollary}\label{cor:infiniteX(M)}
Let $M$ be a closed oriented $3$-manifold such that $X(M)$ is positive dimensional. Then $\mathcal{S}(M)$ has $A-\zeta$ torsion, for any $\zeta$ root of unity of odd order or order $\equiv 2 \mod 4.$
\end{corollary}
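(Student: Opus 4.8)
The plan is to deduce the corollary from Theorem \ref{thm:infiniteX(M)}. Since $M$ is closed, the finiteness theorem (Theorem \ref{thm:finiteness}) guarantees that $\dim_{\Q(A)} S(M,\Q(A))$ is finite, so it is enough to prove that $\dim_\C S_\zeta(M)=\infty$ whenever $\zeta$ is a root of unity of odd order or of order $\equiv 2 \bmod 4$. For $\zeta=-1$ this is immediate from the Bullock--Przytycki--Sikora isomorphism $S_{-1}(M)\cong \C[\mathcal{X}(M)]$: this ring surjects onto the reduced coordinate ring $\C[X(M)]$ of the variety $X(M)$, which has infinite $\C$-dimension as soon as $\dim X(M)\geq 1$. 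The real content is to transport this infinite-dimensionality from $A=-1$ to $A=\zeta$.

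I would first reduce to the case $\mathrm{ord}(\zeta)=2N$ with $N$ odd (allowing $N=1$): if instead $\zeta$ has odd order $n$, then $-\zeta$ has order $2n\equiv 2\bmod 4$, and Barrett's theorem on skein spaces and spin structures --- applicable since every oriented $3$-manifold is parallelizable, hence admits a spin structure --- gives a $\C$-linear isomorphism $S_\zeta(M)\cong S_{-\zeta}(M)$, so treating $-\zeta$ suffices. With $\mathrm{ord}(\zeta)=2N$ and $N$ odd one has $\zeta^N=-1$ and $\zeta^{N^2}=(\zeta^N)^N=(-1)^N=-1$; this arithmetic is exactly what picks out roots of unity of odd order or order $\equiv 2\bmod 4$, as these are precisely the $\zeta$ for which $\zeta^{N^2}=\pm1$. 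Now the key input is the Chebyshev--Frobenius homomorphism for $3$-manifolds: cabling every component of a framed link by the $N$-th Chebyshev polynomial $T_N$ should induce a well-defined and injective $\C$-linear map $\Phi\colon S_{\zeta^{N^2}}(M)=S_{-1}(M)\longrightarrow S_\zeta(M)$. Granting this, $\dim_\C S_\zeta(M)\geq \dim_\C S_{-1}(M)=\infty$, and Theorem \ref{thm:infiniteX(M)} produces $(A-\zeta)$-torsion in $\mathcal{S}(M)$.

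I expect the main obstacle to be the injectivity of $\Phi$ in the closed case. For surfaces this is the theorem of Bonahon--Wong (see also Frohman--Kania-Bartoszy\'nska--L\^e), and it descends to handlebodies since these are thickenings of planar surfaces; but for closed $M$ one must control how injectivity behaves under a Heegaard gluing, so one should either quote an established $3$-manifold version of the Chebyshev--Frobenius embedding or prove only what is needed here. For the latter it would suffice to choose a simple closed curve $\gamma\subset M$ with $t_\gamma$ non-constant on some positive-dimensional component of $X(M)$ --- such a $\gamma$ exists because an $\slC$-character is determined by the functions $t_{\gamma'}$, so if all of these were constant on a given component that component would be a point --- and then to check that the images under $\Phi$ of the links consisting of $k$ parallel copies of $\gamma$, for $k\geq 0$, remain linearly independent in $S_\zeta(M)$; this independence should follow from a leading-term argument with respect to a suitable filtration on $S_\zeta(M)$.
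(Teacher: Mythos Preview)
Your overall strategy coincides with the paper's: reduce to Theorem~\ref{thm:infiniteX(M)} via the finiteness theorem, use the Bullock--Przytycki--Sikora isomorphism to get $\dim_\C S_{-1}(M)=\infty$, and pass between $\zeta$ and $-\zeta$ via Barrett. The only substantive step is then the inequality $\dim_\C S_\zeta(M)\geq \dim_\C S_{-1}(M)$ for $\zeta$ of order $\equiv 2\bmod 4$. The paper does not reprove this; it simply invokes \cite[Theorem~2.1]{DKS}, which states exactly that inequality. Your proposal to obtain it via injectivity of the Chebyshev--Frobenius map $S_{-1}(M)\to S_\zeta(M)$ is in fact the mechanism behind that cited result, so you are not taking a different route so much as reinventing the reference.

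As written, however, your argument has a genuine gap at precisely this point. You correctly flag injectivity of $\Phi$ in the closed case as the obstacle, but you neither cite a result establishing it nor supply a proof. The suggested fallback---showing that the images $\Phi(\gamma^k)$ are linearly independent via ``a leading-term argument with respect to a suitable filtration on $S_\zeta(M)$''---is not a proof: for a closed $3$-manifold there is no evident filtration playing the role that diagrammatic complexity plays for surfaces or handlebodies, and manufacturing one that interacts well with Chebyshev cabling is essentially the whole difficulty. The clean fix is to cite \cite[Theorem~2.1]{DKS} for the dimension inequality, exactly as the paper does, and drop the speculative paragraph.
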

\begin{proof}
When $X(M)$ is positive dimensional, the dimension of $\C[X(M)]$ must be infinite; since this is isomorphic to $S_{\pm 1}(M)$, it implies that the latter must also be infinite dimensional. Moreover, by \cite[Theorem 2.1]{DKS}, we have $\dim S_{\zeta}(M)\geq \dim S_{-1}(M)$ for any $\zeta$ of order $2 \mod4,$ and similarly for odd order roots, since $S_{x}(M)$ and $S_{-x}(M)$ are isomorphic for any $x\in \C$ by Barett \cite{Barrett}. Then the result is a consequence of Theorem \ref{thm:infiniteX(M)}.
\end{proof}
While the criterion given by Theorem \ref{thm:infiniteX(M)} is, as we shall see, powerful, it has two drawbacks. First, as it relies on the finiteness theorem, it does not apply to the case of manifolds with boundary. Moreover, it does not provide explicit torsion elements. While most of the applications concern the case $\zeta=\pm 1,$ we will see that Theorem \ref{thm:infiniteX(M)} can be used for other choices of $\zeta$ to detect torsion even when the character variety is finite.
	
Our next result is a refinement of Theorem \ref{thm:infiniteX(M)}, that is applicable to manifolds with boundary.

\begin{theorem}\label{thm:largeX(M)_nclosed}
	Let $M$ be a compact oriented $3$-manifold with boundary, and let $c_1,\ldots,c_k$ be disjoint simple closed curves on $\partial M.$ Assume that $\mathcal{S}(M,\Q(A))$ is finitely generated over $\Q(A)[c_1,\ldots,c_k],$ that the trace functions $t_{c_1},\ldots ,t_{c_k}$ are algebraically independent in $\C[X(M)],$  and that $\dim X(M)>k.$ 
	
	Then $\Sk(M)$ admits $(A\pm 1)$-torsion.
\end{theorem}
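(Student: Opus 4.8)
The plan is to prove the contrapositive of the $(A+1)$-torsion assertion: assuming $\Sk(M)$ has no $(A+1)$-torsion, I would deduce $\dim X(M)\le k$, contradicting the hypothesis. Note first that by Barrett's semilinear automorphism of $\Sk(M)$ (associated to a spin structure) the submodule of $(A+1)$-torsion and the submodule of $(A-1)$-torsion are isomorphic, so it suffices to treat $(A+1)$. The first step is to reorganise everything around a single discrete valuation ring: by a routine flat base change it is harmless to work over $\C[A^{\pm 1}]$, and $\mathcal{O}:=\C[A^{\pm 1}]_{(A+1)}$ is a DVR with uniformizer $A+1$, fraction field $\C(A)$, and residue field $\C$ (via $A\mapsto-1$). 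Put $\Sk_{\mathcal{O}}(M):=\Sk(M)\otimes_{\Z[A^{\pm 1}]}\mathcal{O}$; then ``no $(A+1)$-torsion'' is equivalent to $\Sk_{\mathcal{O}}(M)$ being torsion-free, hence flat, over $\mathcal{O}$. Its generic fibre is $\Sk(M,\C(A))$, which is finitely generated over $\C(A)[c_1,\dots,c_k]$ (base change the hypothesis along $\Q(A)\hookrightarrow\C(A)$) and so has finite generic rank $r:=\dim_{\C(A,c_1,\dots,c_k)}\big(\Sk(M,\C(A))\otimes_{\C(A)[c_1,\dots,c_k]}\C(A,c_1,\dots,c_k)\big)<\infty$. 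Its special fibre is $S_{-1}(M)\cong\C[\mathcal{X}(M)]$ by Bullock and Przytycki--Sikora, with each $c_i$ acting as multiplication by $t_{c_i}$.

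The technical core will be the following elementary fact. Let $V$ be flat over a DVR $\mathcal{O}$ with uniformizer $u$, equipped with commuting operators $c_1,\dots,c_k$, and suppose the generic fibre $V\otimes_{\mathcal{O}}\mathrm{Frac}(\mathcal{O})$, as a module over $\mathrm{Frac}(\mathcal{O})[c_1,\dots,c_k]$, has generic rank $r$; then any $r+1$ elements of the special fibre $V/uV$ satisfy a nontrivial linear relation over $(\mathcal{O}/u)[c_1,\dots,c_k]$. To prove this I would lift the elements to $V$, which embeds in $V\otimes_{\mathcal{O}}\mathrm{Frac}(\mathcal{O})$ by torsion-freeness, where the lifts are linearly dependent over $\mathrm{Frac}\big(\mathrm{Frac}(\mathcal{O})[c_1,\dots,c_k]\big)$; clearing denominators and then dividing through by the highest power of the prime $(u)$ of $\mathcal{O}[c_1,\dots,c_k]$ that divides all coefficients (this terminates since $\bigcap_n(u)^n=0$ in the Noetherian domain $\mathcal{O}[c_1,\dots,c_k]$) produces a relation with coefficients in $\mathcal{O}[c_1,\dots,c_k]$ not all divisible by $u$, which I reduce modulo $u$. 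Applied to $V=\Sk_{\mathcal{O}}(M)$ this shows that any $r+1$ elements of $\C[\mathcal{X}(M)]$ satisfy a nontrivial $\C[c_1,\dots,c_k]$-linear relation, the $c_i$ acting as the $t_{c_i}$. Since $t_{c_1},\dots,t_{c_k}$ are algebraically independent in $\C[X(M)]$, hence in $\C[\mathcal{X}(M)]$, the subring they generate is a polynomial ring $\C[t_{c_1},\dots,t_{c_k}]$, and the bound just obtained amounts to: $\C(t_{c_1},\dots,t_{c_k})\otimes_{\C[t_{c_1},\dots,t_{c_k}]}\C[\mathcal{X}(M)]$ has dimension at most $r$ over $\C(t_{c_1},\dots,t_{c_k})$.

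Finally I would extract $\dim X(M)=\dim\mathcal{X}(M)\le k$ from this. Geometrically it says the generic fibre of the morphism $\mathcal{X}(M)\to\mathbb{A}^k$ cut out by $(t_{c_1},\dots,t_{c_k})$ is finite, i.e.\ every irreducible component of $\mathcal{X}(M)$ dominating $\mathbb{A}^k$ has dimension exactly $k$; at least one component dominates because the $t_{c_i}$ are algebraically independent on all of $X(M)$. When $X(M)$ is irreducible --- the case relevant to the applications to Seifert manifolds --- this immediately gives $\dim X(M)=k$; in general one runs the argument component by component, passing to the quotient of $\C[\mathcal{X}(M)]$ by each minimal prime and using that a domain which is algebraic of bounded degree over a polynomial subring $\C[t_{c_1},\dots,t_{c_k}]$ has dimension $k$. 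Either way $\dim X(M)=k$ contradicts $\dim X(M)>k$, so $\Sk(M)$ must carry $(A+1)$-torsion, and hence $(A-1)$-torsion. The step I expect to be most delicate is this last translation from ``bounded generic rank over the $t_{c_i}$'' to the dimension inequality when $\mathcal{X}(M)$ is reducible; apart from that, the flat-module lemma is the main input, and everything else --- Barrett's symmetry, the identification of the special fibre with $\C[\mathcal{X}(M)]$, and the $\Z[A^{\pm 1}]$-versus-$\C[A^{\pm 1}]$ bookkeeping --- is standard.
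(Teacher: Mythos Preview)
Your proof uses the same underlying mechanism as the paper's—lift to $\Sk(M)$, find a relation forced by finite generation over $\Q(A)[c_1,\dots,c_k]$, clear denominators, factor out the maximal power of $(A+1)$, and reduce modulo $(A+1)$—but you package it abstractly as a flat-module lemma over a DVR and argue by contrapositive, whereas the paper works directly: it selects $\gamma\in\pi_1(M)$ with $t_\gamma$ transcendental over $\C[t_{c_1},\dots,t_{c_k}]$, considers the explicit family $\{c_1^{n_1}\cdots c_k^{n_k}K^{n_0}:0\le n_i\le n\}$ with $K$ a knot representing $\gamma$, notes that this family is $\Q(A)$-dependent in $\Sk(M,\Q(A))$ for large $n$ but $\C$-independent in $S_{-1}(M)$, and applies the clear-and-factor trick to it. The paper's version points toward a concrete torsion element; yours cleanly isolates the commutative-algebra content.

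The worry you flag about the reducible case is legitimate, and it is worth noting that the paper's argument shares it. The paper asserts that $\dim X(M)>k$ forces some $t_\gamma$ to be transcendental over $\C[t_{c_1},\dots,t_{c_k}]$ in $\C[X(M)]$, but this can fail when $X(M)$ is reducible: if the $t_{c_i}$ are independent only on a component of dimension $\le k$ while some $t_{c_j}$ vanishes on a higher-dimensional component, then multiplying by $t_{c_j}$ makes every trace function satisfy a nontrivial polynomial relation—exactly parallel to how the relation produced by your DVR lemma can become trivial on such a component after reducing modulo the corresponding minimal prime. Your proposed fix (pass to each minimal prime and invoke ``algebraic of bounded degree over a polynomial subring'') does not work as written, since on such a component the image of $\C[t_{c_1},\dots,t_{c_k}]$ is no longer a polynomial ring and the relation need not descend nontrivially. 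For the applications in the paper this does not cause trouble, but as a proof of the theorem in the stated generality both arguments leave the same small point unaddressed.
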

Compared to Theorem \ref{thm:infiniteX(M)}, Theorem \ref{thm:largeX(M)_nclosed} is trickier to apply; for one, Conjecture \ref{conj:finiteness_boundary} is still open. Furthermore, even if Conjecture \ref{conj:finiteness_boundary} were to be proven, to apply the criterion one would still need to prove algebraic independence of the trace functions. Nonetheless, we will see in Section \ref{sec:Seifert} an example of application of this theorem.
\begin{proof}
 Recall that $\C[X(M)]$ is generated as a ring by trace functions $t_{\gamma}$ where $\gamma \in \pi_1(M).$ If $\dim X(M)>k$ then there must be $\gamma\in \pi_1(M)$ such that $t_{\gamma}$ is transcendental over $\C[t_{c_1},\ldots,t_{c_k}].$ Consider a framed knot $K$ that represents the conjugacy class of $\gamma$ (or $\gamma^{-1}$) in $\pi_1(M).$ Let $K^n$ denote the link obtained by taking $n>0$ parallel copies of $K.$
 
 Since $\Sk(M)$ is finitely generated over $\Q(A)[c_1,\ldots,c_k],$ there exists $n>0$ such that the elements 
 $$\lbrace c_1^{n_1}\ldots c_k^{n_k} K^{n_0}  \ | \ \forall i, 0\leq n_i\leq n\rbrace$$ 
 are linearly dependent in $\Sk(M,\Q(A)).$ As the same elements, considered as elements of $\Sk_{-1}(M)$ are linearly independent by hypothesis, we can apply the same argument as in the proof of Theorem \ref{thm:infiniteX(M)} (clearing out denominators then factoring out the common $(A+1)$-factors) to conclude that $\Sk(M)$ has $(A+1)$-torsion. Then $\Sk(M)$ also has $(A-1)$-torsion by Barrett's isomorphism \cite{Barrett}. 
\end{proof}
	\section{Manifolds with positive dimensional character variety}\label{sec:largeXapplications}
	In this section, we give some examples of applications of Theorem \ref{thm:infiniteX(M)}.
	\subsection{Closed manifolds with torsion in \texorpdfstring{$\mathcal{S}(M)$}{S(M)}}
	\label{sec:infiniteX(M)}
	Using Theorem \ref{thm:infiniteX(M)}, we get the following corollary, which highlights that torsion in $\mathcal{S}(M)$ can be caused not only by spheres or tori, but also by higher genus surfaces:
	\begin{corollary}\label{cor:higherGenusSurf}
		For any closed hyperbolic $3$-manifold $M$ with $b_1(M)>0,$ there is $(A+1)$-torsion in $\mathcal{S}(M).$
		
		Moreover, for any $k\geq 0,$ there exists a closed compact oriented $3$-manifold $M$ such that $\mathcal{S}(M)$ has $(A+1)$-torsion but $M$ does not contain any incompressible surface of genus $\leq k.$
	\end{corollary}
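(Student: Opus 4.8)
The plan is to deduce both statements from Theorem \ref{thm:infiniteX(M)} by exhibiting closed $3$-manifolds whose $\slC$-character variety is positive dimensional (equivalently, whose coordinate ring $\C[X(M)]\cong S_{-1}(M)$ is infinite dimensional), while simultaneously controlling the genus of embedded incompressible surfaces. For the first statement, let $M$ be a closed hyperbolic $3$-manifold with $b_1(M)>0$. Then $H^1(M;\R)\neq 0$, so $\pi_1(M)$ surjects onto $\Z$, and composing with a path of representations $\Z\to\slC$, $1\mapsto \mathrm{diag}(t,t^{-1})$ for $t\in\C^*$, produces a one-parameter family of (reducible) representations with pairwise distinct characters. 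Hence $\dim X(M)\geq 1$, so $\dim_\C S_{-1}(M)=\dim_\C\C[X(M)]=\infty$. By Barrett's isomorphism \cite{Barrett} and \cite[Theorem 2.1]{DKS} (exactly as in the proof of Corollary \ref{cor:infiniteX(M)}), $\dim_\C S_{-1}(M)\leq\dim_\C S_\zeta(M)$ is not what we need here — rather we simply apply Theorem \ref{thm:infiniteX(M)} with $\zeta=-1$: since $S_{-1}(M)$ is infinite dimensional while $S(M,\Q(A))$ is finite dimensional by Theorem \ref{thm:finiteness}, we get $\dim_\C S_{-1}(M)>\dim_{\Q(A)}S(M,\Q(A))$, and therefore $\mathcal{S}(M)$ has $(A+1)$-torsion.

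For the second statement, the idea is that hyperbolic manifolds can be chosen with arbitrarily large injectivity radius (or, more robustly, arbitrarily large systole for incompressible surfaces), which forces any incompressible surface to have large genus. Concretely, I would take a sequence of hyperbolic $3$-manifolds $M_i$ with $b_1(M_i)>0$ and whose injectivity radius goes to infinity — for instance, cyclic covers, or congruence covers of a fixed arithmetic manifold with positive first Betti number, or the manifolds produced by Dehn filling along geodesics of growing length in a fixed cusped manifold with $b_1>0$ (choosing fillings that preserve $b_1>0$, which is possible since only finitely many fillings kill a given cohomology class). Since $\mathrm{vol}(M_i)$ is bounded below by a constant times $\mathrm{inj}(M_i)$, an incompressible surface $\Sigma\hookrightarrow M_i$ of genus $g$ is homotopic to a least-area (hence intrinsically negatively curved, by Gauss–Bonnet and the curvature assumption) surface with area $\leq 4\pi(g-1)$; but a surface of small area in a manifold of large injectivity radius lifts to an embedded disk in $\mathbb{H}^3$ once its diameter is smaller than $\mathrm{inj}(M_i)$, contradicting incompressibility unless its area — and hence its genus — is large. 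Thus for $i$ large enough, $M_i$ contains no incompressible surface of genus $\leq k$, while the first part gives $(A+1)$-torsion in $\mathcal{S}(M_i)$.

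The main obstacle is the geometric estimate in the second part: turning "large injectivity radius" into "no low-genus incompressible surface" cleanly. The cleanest route is to invoke the standard fact (e.g. via the work on least-area surfaces, or Hass–Scott, or the systole bounds of Gabai–Meyerhoff–Milley-type arguments) that in a hyperbolic $3$-manifold an incompressible surface $\Sigma$ satisfies $\mathrm{area}(\Sigma)\leq 2\pi|\chi(\Sigma)|$ for its least-area representative, combined with a monotonicity/collar-type lemma bounding $\mathrm{area}$ from below in terms of $\mathrm{inj}(M)$ when the surface is incompressible and non-separating — or, even more simply, citing that the genus of incompressible surfaces in $M_i$ grows with $\mathrm{vol}(M_i)$ because the Thurston norm is bounded below in terms of volume (via the relation between the $L^2$-Betti/Thurston norm and Gromov norm). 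I would prefer to phrase the argument using whichever of these is most standard and citable so as not to reprove a known geometric fact; the representation-theoretic input (positive-dimensional character variety from $b_1>0$) and the application of Theorem \ref{thm:infiniteX(M)} are the routine parts.
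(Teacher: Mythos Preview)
Your proof of the first claim is correct and matches the paper: $b_1(M)>0$ produces a one-parameter family of abelian characters, so $\dim_{\C}S_{-1}(M)=\infty$, and Theorem~\ref{thm:infiniteX(M)} with $\zeta=-1$ (equivalently, Corollary~\ref{cor:infiniteX(M)}) gives $(A+1)$-torsion.

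For the second claim your approach is genuinely different from the paper's, and as written it has real gaps. Two of the three constructions you propose do not obviously work: cyclic covers need not have injectivity radius tending to infinity (short geodesics not unwound by the cover persist), and the Dehn-filling sketch does not produce large injectivity radius either. Congruence covers of an arithmetic hyperbolic $3$-manifold $M_0$ with $b_1(M_0)>0$ \emph{would} work --- such $M_0$ exist, $b_1$ is non-decreasing in covers, and injectivity radius goes to infinity in congruence towers --- but you should commit to that route and cite the relevant facts rather than list three loosely-described options. More seriously, the step ``a surface of small area \ldots\ lifts to an embedded disk in $\mathbb{H}^3$ once its diameter is smaller than $\mathrm{inj}(M_i)$'' is wrong: the bound $\mathrm{Area}\leq 2\pi|\chi|$ for a least-area or pleated representative does not control the diameter, and an incompressible surface never becomes a disk. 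The correct bridge from large $\mathrm{inj}(M)$ to large genus is via systoles: a pleated representative of $\Sigma$ carries an intrinsic hyperbolic metric of area $4\pi(g-1)$, hence has an essential simple closed geodesic of length at most some constant $C(g)$; incompressibility makes this curve essential in $M$, and since the pleating map is a path-isometry its image has length at least the systole of $M$, giving $2\,\mathrm{inj}(M)\leq C(g)$. This can be made into a proof, but it is not the argument you wrote.

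The paper avoids hyperbolic geometry entirely. It takes $M_n=H\cup_{f^n}H$ where $H$ is a genus-$g$ handlebody ($g\geq 2$) and $f$ is a pseudo-Anosov in the \emph{Torelli} group of $\partial H$ satisfying Hempel's lamination condition. Since $f^n$ acts trivially on homology one gets $H_1(M_n,\Z)=\Z^g$, so $b_1>0$ automatically; by Hempel the Heegaard distance of this splitting tends to infinity with $n$, and by Hartshorn a splitting of distance greater than $2k$ rules out incompressible surfaces of genus $\leq k$. This yields the second claim with no analytic estimates and with $b_1$ as large as desired.
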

	To the authors' knowledge, those are the first examples in the literature of closed $3$-manifolds without spheres or tori such that $\mathcal{S}(M)$ has torsion.
	\begin{proof}
		The first claim is a direct consequence of Theorem \ref{thm:infiniteX(M)}, as $b_1(M)>0$ implies that $X(M)$ is positive dimensional: indeed if $b_1(M)>0$ then there is curve of abelian $\slC$-characters of $M.$
		
		Similarly, to prove the second claim, notice that it is sufficient to show that there exists a closed $3$-manifold $M$ such that $b_1(M)>0$ and $M$ does not contain any incompressible surface of genus $g\leq k.$ We prove the existence of such manifolds in the next section, as Proposition \ref{prop:curve_complex_construction}. 
	\end{proof}

	\subsection{Construction of manifolds with \texorpdfstring{$b_1(M)>0$}{b_1(M)>0} and no incompressible surfaces of low genus}

	In this section, which is independent of the rest of the paper, we will prove
	\begin{proposition}\label{prop:curve_complex_construction}
		For any $k,l\geq 0,$ there exists a closed $3$-manifold such that $b_1(M)>l$ and $M$ does not contain any incompressible surface of genus $g\leq k.$
	\end{proposition}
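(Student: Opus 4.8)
The plan is to build $M$ by Dehn surgery on a link (or by a mapping-torus-type construction) so that two competing requirements are met simultaneously: a homological one, $b_1(M) > l$, and a geometric one, that $M$ contains no incompressible surface of genus $\leq k$. The standard tool that controls the second requirement is the Casson--Gordon / Hatcher-style bound on surfaces in surgered manifolds, but the cleanest route is via the \emph{curve complex} (as the name of the proposition suggests): if one performs a fibered construction or a Heegaard splitting along a gluing map whose translation length in the curve complex $\mathcal{C}(\Sigma)$ is large, then any incompressible surface in the resulting manifold must, after an isotopy into a handlebody or a fibered piece, project to a subsurface whose boundary curves are ``short'' in $\mathcal{C}(\Sigma)$ relative to the gluing; pushing the translation length up forces such a surface either not to exist or to have large genus. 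Concretely, I would take a genus-$g$ surface $\Sigma$ with $g$ chosen large compared to $k$, fix a handlebody $H$ with $\partial H = \Sigma$, and glue two copies $H \cup_\phi H$ along a pseudo-Anosov (or just sufficiently complicated) $\phi \in \mathrm{Mod}(\Sigma)$; Hempel's result that high powers of a pseudo-Anosov have arbitrarily large distance in the curve complex gives Heegaard splittings of arbitrarily large distance, and a theorem of Scharlemann--Tomova (or Hartshorn's earlier bound) says that an incompressible surface of genus $h$ in a manifold with a distance-$d$ Heegaard splitting forces $d \leq 2h$ (with the appropriate conventions). So choosing $\phi$ with curve-complex distance $> 2k$ kills all incompressible surfaces of genus $\leq k$.

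Next I would arrange $b_1(M) > l$. The Heegaard-splitting construction above generically gives a rational homology sphere, so I would instead do the analogous construction relative to a surface with boundary, or glue handlebodies along a map $\phi$ acting trivially on a large-rank summand of $H_1(\Sigma;\Z)$ while still acting with large curve-complex translation length on the relevant (non-separating) curves. The key point is that the subgroup of $\mathrm{Mod}(\Sigma)$ acting trivially on homology (the Torelli group) still contains elements of arbitrarily large curve-complex distance — indeed pseudo-Anosov elements of the Torelli group exist and their powers have unbounded distance — so one can have $\phi$ in the Torelli group, making $H_1(M;\Q) \cong H_1(\Sigma;\Q)/(\text{image of gluing})$ of rank $2g$ or $g$, hence $> l$ once $g$ is large, while the distance of the splitting remains $> 2k$.

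The steps, in order: (1) fix $g > \max(k,l)$ (with a safe margin), and a genus-$g$ handlebody $H$; (2) choose $\phi$ in the Torelli group $\mathcal{I}(\Sigma_g)$ that is pseudo-Anosov, and replace it by a power $\phi^N$ so that the curve-complex distance $d(\phi^N) > 2k$ — here I invoke Hempel's theorem that $d(\phi^N)\to\infty$; (3) set $M = H \cup_{\phi^N} H$; (4) compute $H_1(M;\Q)$ and observe that since $\phi^N$ acts trivially on $H_1(\Sigma_g;\Z)$ the Mayer--Vietoris sequence gives $b_1(M) = g > l$; (5) invoke the Scharlemann--Tomova/Hartshorn bound: any closed incompressible surface $S\subset M$ satisfies $d(\phi^N) \leq 2\,\mathrm{genus}(S)$, hence $\mathrm{genus}(S) > k$.

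The main obstacle I anticipate is step (4) combined with step (2): one must ensure that imposing ``$\phi$ acts trivially on homology'' (needed for large $b_1$) is compatible with ``$\phi$ pseudo-Anosov with large curve-complex distance'' (needed to kill low-genus surfaces), and also that the resulting $M$ is genuinely closed and orientable with the splitting being the advertised one. The compatibility is fine in principle — the Torelli group is non-elementary and contains pseudo-Anosovs, and Hempel's distance estimate applies to any pseudo-Anosov — but making the homology computation precise (exactly which quotient of $H_1(\Sigma_g)$ one gets, and checking it has rank $\geq g$ rather than something smaller due to the two handlebody kernels overlapping) requires care. An alternative that sidesteps the Heegaard-splitting bookkeeping is to use a surface bundle over $S^1$ with pseudo-Anosov monodromy in the Torelli group: then $b_1 \geq 1 + \mathrm{rank}(\text{fixed part}) $ is automatically large, incompressible surfaces in fibered hyperbolic manifolds are constrained (e.g. by work of Fried or via the Thurston norm, combined with the fact that large-distance monodromy obstructs low-genus essential surfaces), and one iterates a fiber-sum construction to boost $b_1$ past $l$; I would keep this as a backup if the Heegaard computation proves unwieldy.
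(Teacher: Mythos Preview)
Your approach is essentially identical to the paper's: a Heegaard splitting $H\cup_{f^n}H$ with $f$ a pseudo-Anosov in the Torelli group, Hempel's theorem for distance growth, and Hartshorn's bound $d\le 2\,\mathrm{genus}(S)$ to exclude low-genus incompressible surfaces; the homology computation $b_1(M)=g$ via Mayer--Vietoris is also the same.

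The one point to tighten is your assertion that ``Hempel's distance estimate applies to any pseudo-Anosov.'' Hempel's theorem (Theorem~2.7 in \cite{Hempel}) requires the additional hypothesis that the stable lamination of $f$ lies outside the closure $\overline{C}\subset PL(\Sigma)$ of the disk set of $H$; without this, $f$ could for instance extend over the handlebody and the splitting distance would be zero for every power. It is not automatic that a given Torelli pseudo-Anosov satisfies this. The paper fills this gap by observing that $\overline{C}$ is nowhere dense in $PL(\Sigma)$ (Masur), while the set of stable laminations of Torelli pseudo-Anosovs is a full $\mathrm{MCG}(\Sigma)$-orbit (Torelli being normal) and hence dense by minimality of the $\mathrm{MCG}(\Sigma)$-action on $PL(\Sigma)$; thus some $\mathrm{MCG}$-conjugate of any chosen Torelli pseudo-Anosov does satisfy the hypothesis.
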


The proof will be based on curve complex techniques, in particular, the notion of Heegaard splitting distance, introduced by Hempel \cite{Hempel}, which we briefly recall below. For $\Sigma$ a closed connected oriented surface of genus at least $2,$ recall that the curve graph $\mathcal{C}(\Sigma)$ is the graph whose vertices are isotopy classes of simple closed curves on $\Sigma,$ and edges join vertices corresponding to two isotopy classes if and only if they have disjoint representatives. We write $d_{\mathcal{C}(\Sigma)}$ for the distance on the curve graph.

We recall that Thurston constructed a compactification of $\mathcal{C}(\Sigma)$ which is the set $PL(\Sigma)$ of projective measured laminations of $\Sigma,$ see \cite{FLP}.

For $M$ a $3$-manifold, a Heegaard splitting is a decomposition $M=H\underset{\Sigma}{\bigcup} H'$ where $\Sigma$ is a closed connected oriented surface and $H,H'$ are handlebodies. Its genus is the genus of $\Sigma.$ Hempel introduced a notion of distance of Heegaard splittings:

\begin{definition}\cite{Hempel}\label{def:splitting_dist}
	\label{def:splitting_distance}Let $M=H\underset{\Sigma}{\bigcup} H'$ be a Heegaard splitting of $3$-manifold $M$ of genus $g\geq 2.$ The splitting distance is $d_{\mathcal{C}(\Sigma)}(C,C'),$ where $C$ is the set of curves on $\Sigma$ that bound disks in $H$ and $C'$ is the set of curves on $\Sigma$ that bound disks in $H'.$
\end{definition}
Hempel proved that the set of splitting distances is unbounded for any surface of genus $g\geq 2.$
More precisely, he proved:
\begin{theorem}\label{thm:Hempel}
	\cite[Theorem 2.7]{Hempel} Let $H$ be a handlebody of genus at least $2$ with boundary $\Sigma,$ and $C$ the associated subset of $\mathcal{C}(\Sigma).$ If $f$ is a pseudo-Anosov element that satisfies the condition 
		\begin{center}(*) The closure in the set of projective measured laminations $PL(\Sigma)$ of $C$ does not contain the stable lamination of $f.$
	\end{center}
then the distance of the splitting $M=H\underset{f^n}{\bigcup}H$ tends to infinity.
\end{theorem}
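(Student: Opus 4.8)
The plan is to argue by contradiction, working directly at the level of the curve graph and of $PL(\Sigma)$. By definition the splitting distance of $M=H\cup_{f^n}H$ is $d_n:=d_{\mathcal{C}(\Sigma)}(C,f^{n}(C))$ (the set of curves bounding disks in the second handlebody, read on the Heegaard surface $\Sigma$, is $f^{\pm n}(C)$, the sign being immaterial since $f$ acts on $\mathcal{C}(\Sigma)$ by isometries). Suppose $d_n$ does not tend to infinity: there are $D\geq 0$ and $n_k\to\infty$ with $d_{n_k}\leq D$. Choose $\alpha_k,\beta_k\in C$ realizing this distance and a geodesic edge-path $\alpha_k=c^k_0,c^k_1,\dots,c^k_{m_k}=f^{n_k}(\beta_k)$ in $\mathcal{C}(\Sigma)$ of length $m_k=d(\alpha_k,f^{n_k}(\beta_k))\leq D$, with consecutive vertices represented by disjoint curves. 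After passing to a subsequence, $m_k$ is a constant $m$.

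Next I would invoke the dynamics of the pseudo-Anosov $f$. By Thurston's theory (see \cite{FLP}), $f$ acts on the compact space $PL(\Sigma)$ with north--south dynamics, the attracting fixed point being the projective class $[\lambda^u]$ of its unstable lamination and the repelling one the class $[\lambda^s]$ of its stable lamination; moreover $\lambda^u,\lambda^s$ are minimal, filling geodesic laminations with no closed leaves, and are uniquely ergodic. Since $\overline{C}$ (the closure of $C$ in $PL(\Sigma)$) is compact and, by hypothesis (*), contains no invariant lamination of $f$, north--south dynamics yields: for every neighbourhood $U$ of $[\lambda^u]$ there is $N$ with $f^{n}(\overline{C})\subseteq U$ for all $n\geq N$. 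In particular $f^{n_k}(\beta_k)\to[\lambda^u]$ in $PL(\Sigma)$.

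The core of the proof is then a propagation step. Using compactness of $PL(\Sigma)$, pass to a further subsequence so that $c^k_j\to\nu_j$ in $PL(\Sigma)$ for each $j\in\{0,\dots,m\}$; then $\nu_m=[\lambda^u]$, while $\nu_0=\lim_k\alpha_k\in\overline{C}$. Since $c^k_j$ and $c^k_{j+1}$ have disjoint representatives, their geometric intersection number vanishes, and by continuity and bihomogeneity of the intersection pairing on measured laminations, $i(\nu_j,\nu_{j+1})=0$ for all $j$. Now induct downward from $j=m$: if $\nu_{j+1}$ is projectively $[\lambda^u]$, then $i(\lambda^u,\nu_j)=0$ forces the support of $\nu_j$ to be a sublamination of the filling lamination $\lambda^u$, hence equal to $\lambda^u$ by minimality, hence $\nu_j=[\lambda^u]$ by unique ergodicity. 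Thus $\nu_0=[\lambda^u]\in\overline{C}$, contradicting (*); so $d_n\to\infty$.

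I expect the propagation step to be the main obstacle: a curve-graph geodesic of bounded length can move an arbitrarily large amount inside $PL(\Sigma)$, so one cannot merely take limits of the intermediate curves and hope they remain controlled — it is the interplay between disjointness of consecutive curves and the rigidity of the pseudo-Anosov lamination (filling, minimal, uniquely ergodic) that forces the entire limiting path to collapse onto the single point $[\lambda^u]$. A heavier but more conceptual alternative would use hyperbolicity of $\mathcal{C}(\Sigma)$ and quasiconvexity of the disk set (Masur--Minsky), together with Klarreich's identification of $\partial_\infty\mathcal{C}(\Sigma)$ with the space of ending laminations, to see that (*) keeps $[\lambda^u]$ and $[\lambda^s]$ out of the limit set of $C$ and that $f^n$ then carries this limit set uniformly away from itself; but the elementary argument above is enough for $g\geq 2$. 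One small bookkeeping point is worth flagging: (*) is used once for $[\lambda^s]$ (in the dynamics step) and once for $[\lambda^u]$ (to reach the contradiction), so for this conclusion it should be read as excluding both invariant laminations of $f$ — equivalently, one restricts to pseudo-Anosovs neither of whose laminations lies in $\overline{C}$, which form an abundant class.
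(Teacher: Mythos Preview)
The paper does not give its own proof of this statement: it is quoted verbatim from Hempel \cite[Theorem~2.7]{Hempel} and used as a black box in the proof of Proposition~\ref{prop:curve_complex_construction}. So there is no ``paper's proof'' to compare against. That said, the argument you wrote is essentially Hempel's original one --- bound a curve-graph geodesic, pass to limits in $PL(\Sigma)$, and use that a measured lamination with zero intersection with a filling, minimal, uniquely ergodic lamination must coincide with it --- so in that sense you have reproduced the intended proof.

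The reservation you raise in your final paragraph is accurate and not merely bookkeeping. Your north--south dynamics step consumes the hypothesis $[\lambda^s]\notin\overline{C}$, while your propagation step outputs $[\lambda^u]\in\overline{C}$; to turn the latter into a contradiction you genuinely need $[\lambda^u]\notin\overline{C}$ as well. Hempel's actual hypothesis is symmetric in the two invariant laminations, and the paper's application respects this: in the proof of Proposition~\ref{prop:curve_complex_construction} one chooses the pseudo-Anosov $f$ freely (up to conjugation by the mapping class group), so one may arrange that neither invariant lamination lies in $\overline{C}$. With (*) read in that symmetric sense, your argument is complete; as literally stated in the paper, (*) alone is not quite enough, and you were right to flag it.
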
 
\begin{proof}[Proof of Proposition \ref{prop:curve_complex_construction}]
 We construct $M_n$ as the manifold with Heegaard splitting $M_n=H\underset{f^n}{\bigcup}H,$ where $H$ is a handlebody of boundary $\Sigma,$ a surface of genus $g\geq l\geq 2,$ and $f$ is a pseudo-Anosov element of the Torelli group $T(\Sigma)$ of $\Sigma,$ that satisfies the additional condition (*) of Theorem \ref{thm:Hempel}.
	
	As noted in the proof of \cite[Theorem 2.7]{Hempel}, by \cite{Masur}, the closure of $C$ is nowhere dense in $PL(\Sigma),$ while the set of stable laminations of Torelli pseudo-Anosov is stable under the action of $MCG(\Sigma),$ and hence dense in $PL(\Sigma).$ Indeed, by a result of Thurston \cite[Theorem 6.1]{FLP}, the action of $MCG(\Sigma)$ on $PL(\Sigma)$ is minimal.
	
	Therefore, there exists a pseudo-Anosov element $f$ in $T(\Sigma)$ that satisfies (*), by considering a conjugate of a given pseudo-Anosov element in $T(\Sigma)$ by a suitable element of $MCG(\Sigma)$.
	
	Note that since $f^n$ is in the Torelli group of $\Sigma,$ we have $H_1(M_n,\Z)=\Z^g.$
	Moreover, by Hempel \cite[Theorem 2.7]{Hempel}, the set of distances of the Heegaard splittings $H\underset{f^n}{\bigcup}H$ tend to $+\infty$ when $n$ tends to $+\infty.$
	
	However, by Hartshorn \cite{Hartshorn}, if the distance of a splitting of $M$ is larger than $2k,$ then $M$ does not contain any incompressible surface of genus $\leq k.$
	
	Therefore, for large $n,$ the manifold $M_n$ is a $3$-manifold without incompressible surfaces of genus $\leq k$ but $b_1(M_n)=g\geq l.$
	
\end{proof}
	
	\section{Torsion in manifolds with incompressible tori}\label{sec:tori}
	The goal of this section is to explain how torsion in the skein module $\mathcal{S}(M)$ of a compact oriented $3$-manifold $M$ can be caused by the presence of incompressible non-boundary parallel tori in $M.$ We revisit and expand old classical work of Przytycki \cite{Prz99} and Veve \cite{Veve}.
	Note that contrary to Section \ref{sec:largeX(M)}, the criteria we give in this section will be effective, giving explicit torsion in $\mathcal{S}(M),$ and independent of whether $M$ is closed or with boundary.
	\label{sec:torsion_tori}
	\subsection{The case of separating tori}
	\label{sec:torsion_septori}
	Throughout this section, let $M$ be a compact connected oriented $3$-manifold that contains an incompressible separating torus $T,$ and let $M_1$ and $M_2$ be the connected components of $M\setminus T.$

\begin{theorem}\label{thm:torsiontorus}
Let $M_1,M_2, M,T$ be as above. Suppose that $\rho:\pi_1(M)\ra SL(2,\C)$ is a representation satisfying the following:

\begin{itemize}
\item $\rho$ is irreducible;
\item $\rho$ restricts to non-abelian representations of $\pi_1(M_1)$ and $\pi_1(M_2)$;
\item $\rho$ restricts to a non-central representation of $\pi_1(T)\subseteq \pi_1(M)$.
\end{itemize}

Then $\mathcal{S}(M)$ contains $(A\pm 1)$-torsion.
\end{theorem}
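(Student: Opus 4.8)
The plan is to produce a torsion element in $\mathcal{S}(M)$ by exhibiting a curve $\gamma \subset M$ — in fact a curve living close to $T$ — for which some polynomial $P(A) \in \Z[A^{\pm1}]$ with $P(\pm 1) = 0$ kills a nonzero skein class. The representation $\rho$ will serve as the obstruction certifying that the class is nonzero. Concretely, I would first pass to the classical specialization $A = -1$ and use the isomorphism $\Psi : S_{-1}(M) \to \C[\mathcal{X}(M)]$ of Bullock–Przytycki–Sikora: a skein element $x \in \mathcal{S}(M)$ is \emph{not} annihilated after specializing at $-1$ as soon as the function $\Psi(x \bmod (A+1))$ does not vanish identically on $\mathcal{X}(M)$, and to check this it suffices to evaluate at the single character $[\rho]$. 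So the strategy reduces to: find a skein-theoretic identity in $\mathcal{S}(M)$ of the form $P(A)\cdot x = 0$, valid over $\Z[A^{\pm1}]$, such that $P(-1) = 0$ but $\Psi(x)([\rho]) \neq 0$; then $x$ (or an appropriate multiple) is $(A+1)$-torsion, and $(A-1)$-torsion follows from Barrett's isomorphism $S_x \cong S_{-x}$.

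The source of the identity $P(A) x = 0$ is the algebra structure near the torus $T$. Let $\mu$ be a simple closed curve on $T$, viewed as a knot in a collar $T \times [-1,1] \subset M$; then inside $\mathcal{S}(T \times [-1,1])$ one has the Chebyshev/product relations, and crucially $\mathcal{S}(T\times[-1,1])$ maps to $\mathcal{S}(M)$ in two ways (pushing into $M_1$ or into $M_2$), which must agree in $\mathcal{S}(M)$. Following Przytycki \cite{Prz99} and Veve \cite{Veve}, the key algebraic input is that the curve $\mu$ satisfies a relation in $\mathcal{S}(M_i)$ — roughly, the minimal polynomial of the image of $\mu$ acting on the relevant skein module — and comparing the relations coming from the two sides of $T$ yields a nontrivial relation $P(A)\,x = 0$ in $\mathcal{S}(M)$ where $x$ is built from $\mu$ and curves parallel to it. The three hypotheses on $\rho$ are exactly what is needed to control the evaluation $\Psi(x)([\rho])$: irreducibility of $\rho$ globally, non-abelianness of $\rho|_{\pi_1(M_i)}$ (so that $M_1$ and $M_2$ each genuinely "see" $\mu$ and $x$ restricts nontrivially on each side), and non-centrality of $\rho|_{\pi_1(T)}$ (so that $\mathrm{Tr}(\rho(\mu)) \neq \pm 2$, which prevents the relevant trace identities from degenerating and guarantees $P(-1) = 0$ while keeping $x$ off the vanishing locus).

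In more detail, I expect the argument to run as follows. First, fix $\mu \subset T$ with $\rho(\mu)$ non-central, so $z := -\mathrm{Tr}(\rho(\mu)) = t_\mu([\rho])$ satisfies $z \neq \pm 2$; since $\rho|_{\pi_1(T)}$ is non-central the whole pencil of characters of $\pi_1(T)$ restricting appropriately is available, but we only need the one value. Second, on each side $M_i$ the subalgebra generated by $\mu$ and its parallel copies together with the gluing data gives, via $A = -1$, a finitely generated $\C[t_\mu]$-module structure on $\C[\mathcal{X}(M_i)]$; by non-abelianness of $\rho|_{\pi_1(M_i)}$ there is a character of $\pi_1(M_i)$ extending a chosen character of $\pi_1(T)$ with prescribed $t_\mu$-value, which is the classical-limit shadow of the skein relation we want. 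Third, and this is the technical heart, I would assemble from the two one-sided relations an explicit element $x \in \mathcal{S}(M)$ and polynomial $P(A)$ — the natural candidate for $P$ is a product of factors of the form $A^{2k} - 1$ or $(A^4 - 1)$-type expressions coming from resolving crossings of parallel copies of $\mu$ across $T$, which manifestly vanish at $A = -1$ — with $P(A) x = 0$ in $\mathcal{S}(M)$. Fourth, compute $\Psi(x)([\rho])$ as a polynomial in $z$ and in traces of words in $\pi_1(M_1)$ and $\pi_1(M_2)$, and check it is nonzero using irreducibility of $\rho$ (to rule out the "reducible" vanishing locus) together with $z \neq \pm 2$ (to rule out the degenerate locus where the relation trivializes). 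Conclude that $x$, or $x$ times a suitable factor coprime to $(A+1)$, is nonzero $(A+1)$-torsion, hence $\mathcal{S}(M)$ has $(A+1)$-torsion, and then $(A-1)$-torsion by Barrett.

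The main obstacle I anticipate is the third step: writing down the explicit relation $P(A)\, x = 0$ in $\mathcal{S}(M)$ that survives in a controlled way to the classical limit. One must carefully track how parallel copies of $\mu$ interact across the torus — the Chebyshev recursion on one side versus the other — and ensure that after clearing denominators the resulting integral relation is genuinely nontrivial and that the factored-out power of $(A+1)$ does not accidentally kill $x$ as well. Veve's and Przytycki's treatments handle special cases of this (e.g. when one side is a knot complement or a twisted $I$-bundle), and the generalization here is to package it so that only the representation-theoretic hypotheses on $\rho$ are needed; making that packaging clean, rather than case-by-case, is where the real work lies.
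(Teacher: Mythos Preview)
Your overall architecture (produce a skein relation $P(A)\cdot x=0$ with $P(\pm1)=0$, then use $\Psi$ evaluated at $[\rho]$ to certify $x\neq 0$) matches the paper, but the mechanism you propose for the relation is wrong, and this is a genuine gap rather than a missing detail. You want $x$ to be built from a curve $\mu\subset T$ and its parallel copies, with the relation coming from a ``minimal polynomial of $\mu$ acting on $\Sk(M_i)$'' on each side. There is no reason such a minimal polynomial exists: $\Sk(M_i)$ is not assumed finitely generated over $\Z[A^{\pm1}]$ (and typically is not, since $\partial M_i\neq\emptyset$), and likewise $\C[\mathcal{X}(M_i)]$ need not be a finitely generated $\C[t_\mu]$-module. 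So the comparison-of-minimal-polynomials idea has no input to work with. You also misread the non-centrality hypothesis: $\rho|_{\pi_1(T)}$ non-central does \emph{not} force $\Tr\rho(\mu)\neq\pm2$ for some $\mu$ (the restriction could be parabolic), which is why the paper splits into semisimple and non-semisimple cases.

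The paper's torsion element is not built from curves on $T$ at all: it uses arcs $x_i\subset M_i$ crossing $T$, joined through an arc $\gamma\subset T$, to form knots $\Gamma(x_1,\gamma,x_2)$ and $\Gamma(x_1,\gamma',x_2)$ (where $\gamma'$ is $\gamma$ with endpoints swapped). The relation $(A^2-A^{-2})\bigl(\Gamma(x_1,\gamma,x_2)-\Gamma(x_1,\gamma',x_2)\bigr)=0$ drops out of a single isotopy: slide the closed curve $\overline\gamma\subset T$ around the torus past $x_1\cup x_2$, and resolve the two crossings on each side. To show the difference is nonzero at $A=-1$, one needs $\Tr\rho(x_1\gamma x_2)\neq\Tr\rho(x_1 x_2\gamma)$ for some choice of $x_1,x_2,\gamma$; this is where all three hypotheses enter, via an analysis of which subalgebras of $M_2(\C)$ can contain $\C[\rho(\pi_1(T))]$, $\C[\rho(\pi_1(M_i))]$. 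The non-abelian hypothesis forces each $\C[\rho(\pi_1(M_i))]$ strictly larger than the (diagonal or upper-triangular-unipotent) algebra generated by $\rho(\pi_1(T))$, and irreducibility prevents both from sitting inside the same Borel; one then exhibits explicit $2\times2$ matrices with $\Tr(A_1A_3A_2)\neq\Tr(A_1A_2A_3)$.
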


The first step to prove this theorem is to obtain a torsion candidate; later we will prove that this element is actually non-zero.

\begin{figure}
    \centering
      \begin{minipage}{.45\textwidth}
 \centering   

\includegraphics[scale=0.45]{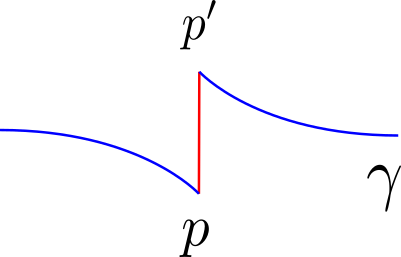}

   \end{minipage}   
  \begin{minipage}{.45\textwidth}
 \centering    
\includegraphics[scale=0.45]{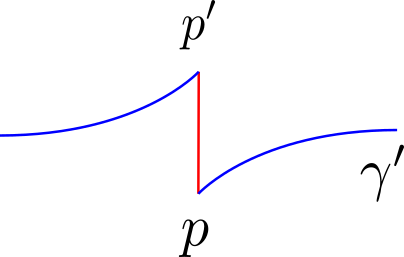}

   \end{minipage}
\caption{The curves $\gamma$ and $\gamma'$.}\label{fig:gamma}
\end{figure}
Fix a small segment in $T$ with endpoints $p$ and $p'$; alternatively we can think of this as a framed point in $T$. Consider $x_i$, for $i=1,2$, a properly embedded arc in $M_i$ with endpoints $p$ and $p'$; additionally consider $\gamma$ an embedded arc in $T$ with endpoints $p$ and $p'$, and call $\gamma'$ the arc obtained from $\gamma$ by switching endpoints (see Figure \ref{fig:gamma}). From these, we create two curves $\Gamma(x_1,\gamma,x_2)=x_1\cup \gamma \cup x_2$ and $\Gamma(x_1,\gamma',x_2)=x_1\cup\gamma'\cup x_2$.

\begin{proposition}
For any $x_1,\gamma,x_2$ as above, we have $$(A^4-1)\left(\Gamma(x_1,\gamma,x_2)-\Gamma(x_1,\gamma',x_2)\right)=0.$$
\end{proposition}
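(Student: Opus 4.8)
The plan is to show that the difference $\Gamma(x_1,\gamma,x_2)-\Gamma(x_1,\gamma',x_2)$ is annihilated by $A^4-1$ using only the Kauffman relations, localized near the torus $T$. The key observation is that $\gamma$ and $\gamma'$ are two arcs in $T$ with the same (switched) endpoints, so they differ by a loop in $T$; more precisely, if we push $\gamma'$ slightly off $T$ into, say, $M_1$, the arcs $\gamma$ and $\gamma'$ together with a small arc near $\{p,p'\}$ cobound a disk-with-a-band, i.e.\ $\gamma' $ is obtained from $\gamma$ by sliding one endpoint around a meridian-type loop. The essential point is that since $x_1,x_2$ are \emph{fixed}, the only thing that changes between the two curves is a small neighborhood in $T$ near the segment $pp'$, so it suffices to work in a neighborhood $T\times[-1,1]$ of the torus.

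First I would set up the local picture: inside $T\times[-1,1]$ we have the framed point (segment) on $T\times\{0\}$, and the two arcs $\gamma,\gamma'$ which together with the two endpoints form a configuration that can be isotoped so that $\gamma'$ is the image of $\gamma$ under a ``half-twist'' of the framed point — that is, $\Gamma(x_1,\gamma',x_2)$ is obtained from $\Gamma(x_1,\gamma,x_2)$ by replacing a trivial tangle near $pp'$ by a tangle containing a single full twist (a kink), because switching the two endpoints of $\gamma$ while keeping $x_1,x_2$ attached introduces a $2\pi$ rotation of the framing/strands. Then I would resolve this crossing difference using the Kauffman skein relation K1: a full positive twist on a single strand contributes a factor, and by the standard framing computation (resolve the kink with K1, then the resulting trivial circle via K2 with value $-A^2-A^{-2}$), one gets that adding a full twist multiplies a framed strand by $-A^{3}$. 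Applying this to both strands involved (or the net effect of switching endpoints) gives a factor of $(-A^3)^{\pm 1}$ on one side or, more carefully, $\Gamma(x_1,\gamma',x_2) = A^{\pm 4}\,\Gamma(x_1,\gamma,x_2)$ after accounting for the writhe change, whence $(A^4-1)(\Gamma(x_1,\gamma,x_2)-\Gamma(x_1,\gamma',x_2))=0$.

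Concretely, the cleanest route is: isotope so the two curves agree outside a ball $B$ meeting $T$; inside $B$, both $\Gamma$'s restrict to a tangle with the same four boundary points, and $\Gamma(x_1,\gamma',x_2)\cap B$ differs from $\Gamma(x_1,\gamma,x_2)\cap B$ by giving the band a full twist. Smoothing the two crossings of this twisted band with K1 and K2, one expresses $\Gamma(x_1,\gamma',x_2)$ as a linear combination of $\Gamma(x_1,\gamma,x_2)$ and a term with a reversed smoothing; tracking the coefficients and using that the ``other'' smoothing reproduces the same pair with a disjoint trivial circle absorbed via K2, the combination collapses to $A^4\,\Gamma(x_1,\gamma,x_2)$. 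I would double-check the sign/power by the baseline case where $M_1=M_2=$ handlebody and the arcs are unknotted, where $\Sk$ is torsion-free, forcing the coefficient to be exactly $A^{\pm4}$.

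The main obstacle I anticipate is the bookkeeping of framings and orientations: one must be careful that ``switching the endpoints of $\gamma$'' really does introduce a \emph{full} twist (factor $A^{4}$) rather than a half-twist, and that the framing of the point in $T$ is handled consistently on both sides so that no extra $\pm A^2$ slips in. This is exactly the kind of subtlety where a careful local picture is needed, but it is a finite, explicit diagrammatic computation with no conceptual difficulty once the local model in $T\times[-1,1]$ is fixed; the genuinely hard part of the theorem — showing this torsion candidate is actually \emph{nonzero} in $\Sk(M)$ — is deferred to the later steps using the representation $\rho$.
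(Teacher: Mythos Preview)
Your approach has a genuine gap stemming from a misreading of what $\gamma'$ is. The arc $\gamma'$ is not $\gamma$ with a local twist added near the segment $pp'$; rather, switching the endpoints changes \emph{how} $\gamma$ is spliced into $x_1\cup x_2$. Concretely, $\Gamma(x_1,\gamma,x_2)$ represents the conjugacy class of $x_1\gamma x_2$ in $\pi_1(M)$, while $\Gamma(x_1,\gamma',x_2)$ represents $x_1 x_2 \gamma$ (this is exactly what is used later, in the proof via $\Psi$). These are in general \emph{not} freely homotopic --- indeed Lemma~\ref{lem:repcriterion} is precisely the statement that their traces can differ --- so the two framed knots cannot agree outside any ball, and there is no local tangle replacement taking one to the other. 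Your proposed ``baseline check'' in a handlebody would already show this: there $\Sk$ is a free polynomial algebra and $\Gamma,\Gamma'$ are typically linearly independent, not proportional.

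There is also a logical slip: even if one had $\Gamma' = A^{\pm 4}\Gamma$, this would give $\Gamma-\Gamma' = (1-A^{\pm 4})\Gamma$, hence $(A^4-1)(\Gamma-\Gamma') = -(A^4-1)^2\Gamma$, which is not zero in general. So the implication you draw does not hold.

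The actual mechanism is global, not local, and uses the torus in an essential way. One considers the \emph{two-component} link consisting of $x_1\cup x_2$ together with the closed curve $\bar\gamma$ (the closure of $\gamma$) lying on $T$. Since $T\setminus\bar\gamma$ is an annulus, $\bar\gamma$ can be isotoped across $T$ to the other side of $x_1\cup x_2$; this produces two isotopic links whose diagrams differ only in whether $\bar\gamma$ over- or under-crosses the two intersection points with $T$. Resolving the two crossings in each picture yields the same four summands with coefficients $A^2,A^{-2},1,1$ permuted: the two ``mixed'' terms coincide, while the coefficients on the two ``pure'' smoothings --- which are exactly $\Gamma(x_1,\gamma,x_2)$ and $\Gamma(x_1,\gamma',x_2)$ --- get swapped. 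Equating gives
\[
(A^2-A^{-2})\bigl(\Gamma(x_1,\gamma,x_2)-\Gamma(x_1,\gamma',x_2)\bigr)=0,
\]
and $A^2-A^{-2}=A^{-2}(A^4-1)$.
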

\begin{proof}

\begin{figure}
    \centering
      \begin{minipage}{.35\textwidth}
 \centering    \begin{tikzpicture}[y=1cm, x=1cm, yscale=0.25,xscale=0.3, every node/.append style={scale=0.45}, inner sep=0pt, outer sep=0pt]
  \path[draw=black,line cap=butt,line join=miter,line width=0.0265cm] (3.0476, 15.5247) -- (6.7537, 20.0415) -- (17.5727, 20.0455) -- (14.5475, 15.5203) -- cycle;

  \path[draw=blue,line cap=butt,line join=miter,line width=0.0665cm,miter limit=4.0] (5.3774, 17.1049) -- (14.2404, 17.1212);

  \path[draw=blue,line cap=butt,line join=miter,line width=0.0665cm,miter limit=4.0] (8.3789, 24.5171) -- (8.4088, 17.5574);

  \path[draw=blue,line cap=butt,line join=miter,line width=0.0665cm,miter limit=4.0] (11.6818, 24.5182) -- (11.7029, 17.6365);

  \path[draw=blue,line cap=butt,line join=miter,line width=0.0665cm,miter limit=4.0] (8.4172, 16.5481) -- (8.419, 13.0696);

  \path[draw=blue,line cap=butt,line join=miter,line width=0.0665cm,miter limit=4.0] (11.6987, 16.5775) -- (11.7016, 13.2747);

\end{tikzpicture}
\caption{ }\label{fig:firstlink}
   \end{minipage}   
  \begin{minipage}{.35\textwidth}
 \centering    

\begin{tikzpicture}[y=1cm, x=1cm, yscale=0.25,xscale=0.3, every node/.append style={scale=0.45}, inner sep=0pt, outer sep=0pt]
  \path[draw=black,line cap=butt,line join=miter,line width=0.0265cm] (3.0476, 15.5247) -- (6.7537, 20.0415) -- (17.5727, 20.0455) -- (14.5475, 15.5203) -- cycle;

  \path[draw=blue,line cap=butt,line join=miter,line width=0.0565cm,miter limit=4.0] (8.3789, 24.5171) -- (8.4088, 17.5574);

  \path[draw=blue,line cap=butt,line join=miter,line width=0.0665cm,miter limit=4.0] (11.6818, 24.5182) -- (11.7029, 17.6365);

  \path[draw=blue,line cap=butt,line join=miter,line width=0.0665cm,miter limit=4.0] (11.6987, 16.5775) -- (11.7016, 13.2747);

  \path[draw=blue,line cap=butt,line join=miter,line width=0.0665cm,miter limit=4.0] (8.3789, 24.5171) -- (8.419, 13.0696) -- (8.419, 13.0696);

  \path[draw=blue,line cap=butt,line join=miter,line width=0.0765cm,miter limit=4.0] (11.6818, 24.5182) -- (11.7016, 13.2747) -- (11.7016, 13.2747);

  \path[draw=blue,line cap=butt,line join=miter,line width=0.0665cm,miter limit=4.0] (5.4897, 18.1495) -- (7.688, 18.1695);

  \path[draw=blue,line cap=butt,line join=miter,line width=0.0665cm,miter limit=4.0] (8.9991, 18.1677) -- (11.02, 18.1645);

  \path[draw=blue,line cap=butt,line join=miter,line width=0.0665cm,miter limit=4.0] (12.3499, 18.1619) -- (15.0782, 18.1459);

\end{tikzpicture}\caption{ }\label{fig:secondlink}
   \end{minipage}

\end{figure}
Consider the links depicted in Figures \ref{fig:firstlink} and \ref{fig:secondlink}; the plane represents a portion of the separating torus, the curve intersecting it twice represent $x_1\cup x_2$ and the curve lying on it is isotopic to the closure of $\gamma$ (meaning the union of $\gamma$ with the small segment with endpoints $p$ and $p'$ as above). These two links are isotopic; indeed, the complement of the closure of $\gamma$ is an annulus which we can use to push the closure of $\gamma$ "to the other side" of $x_1\cup x_2$. 

We now expand the two links using Kauffman relations. For Figure \ref{fig:firstlink} we have

$$\vcenter{\hbox{\includegraphics[width=0.12\textwidth]{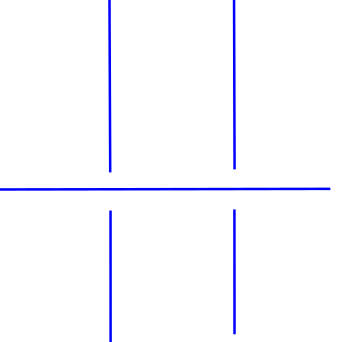}}}= A^2\vcenter{\hbox{\includegraphics[width=0.12\textwidth]{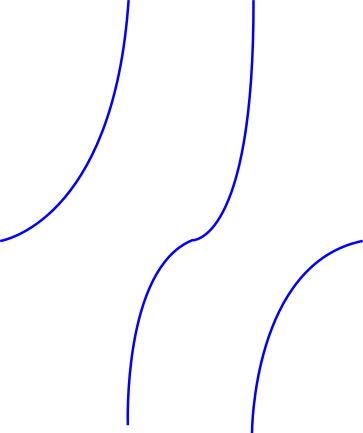}}}+A^{-2}\vcenter{\hbox{\includegraphics[width=0.12\textwidth]{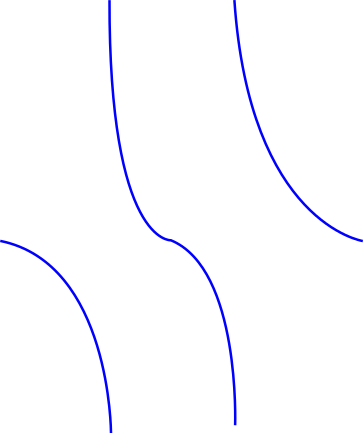}}}+\vcenter{\hbox{\includegraphics[width=0.12\textwidth]{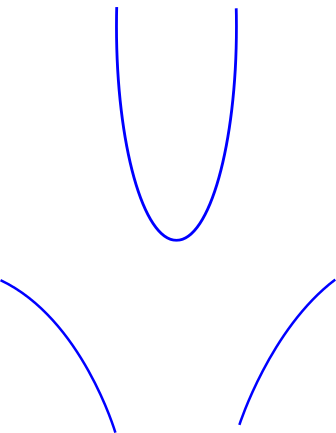}}}+ \vcenter{\hbox{\includegraphics[width=0.12\textwidth]{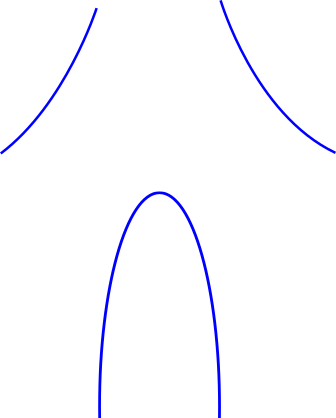}}}$$

and for Figure \ref{fig:secondlink} we have

$$\vcenter{\hbox{\includegraphics[width=0.12\textwidth]{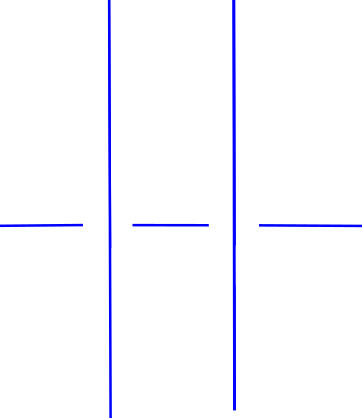}}}= A^2\vcenter{\hbox{\includegraphics[width=0.12\textwidth]{figures/summand1}}}+A^{-2}\vcenter{\hbox{\includegraphics[width=0.12\textwidth]{figures/summand2}}}+\vcenter{\hbox{\includegraphics[width=0.12\textwidth]{figures/summand3}}}+ \vcenter{\hbox{\includegraphics[width=0.12\textwidth]{figures/summand4}}}$$

These two must be equal in the Skein module, which means that 

$$(A^2-A^{-2})\left(\vcenter{\hbox{\includegraphics[width=0.12\textwidth]{figures/summand1}}}- \vcenter{\hbox{\includegraphics[width=0.12\textwidth]{figures/summand2}}} \right) =0$$

the diagrams in this equality correspond exactly to $\Gamma(x_1,\gamma,x_2)$ and $\Gamma(x_1,\gamma',x_2)$.
\end{proof}

We now give some auxiliary results about $M_2(\C)$.

Denote with $\mathcal{D}$ the subalgebra of $M_2(\C)$ given by diagonal matrices, with $\mathcal{U}$ and $\mathcal{L}$ the subalgebras given by upper and lower triangular matrices respectively, and with $\mathcal{J}$ the subalgebra of $M_2(\C)$ given by upper triangular matrices with equal diagonal entries.

\begin{lemma}\label{lem:triangularalgebra}
The only subalgebras of $M_2(\C)$ containing $\mathcal{U}$ are $\mathcal{U}$ and $M_2(\C)$; the only subalgebras of $M_2(\C)$ containing $\mathcal{L}$ are $\mathcal{L}$ and $M_2(\C)$
\end{lemma}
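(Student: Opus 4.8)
The plan is to work directly with a subalgebra $\mathcal{A}$ with $\mathcal{U}\subseteq\mathcal{A}\subseteq M_2(\mathbb{C})$, and argue that as soon as $\mathcal{A}$ contains a single matrix outside $\mathcal{U}$, it must be all of $M_2(\mathbb{C})$. First I would introduce the standard matrix units $E_{11},E_{12},E_{22}$, which all lie in $\mathcal{U}$ (note $E_{11}=\mathrm{diag}(1,0)$ and $E_{22}=\mathrm{diag}(0,1)$ are upper triangular, and $E_{12}$ obviously is), so $E_{21}$ is the only missing generator. Suppose $\mathcal{A}\neq\mathcal{U}$ and pick $X\in\mathcal{A}\setminus\mathcal{U}$; writing $X=(x_{ij})$ we have $x_{21}\neq 0$. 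Then the computation $E_{22}\,X\,E_{11}=x_{21}E_{21}$ shows $E_{21}\in\mathcal{A}$, hence $\mathcal{A}$ contains all four matrix units and therefore $\mathcal{A}=M_2(\mathbb{C})$. This handles the $\mathcal{U}$ case completely, and it only remains to check that $\mathcal{U}$ itself is a subalgebra, which is immediate since the product of two upper triangular matrices is upper triangular.

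For the statement about $\mathcal{L}$, I would simply transpose: conjugation by the permutation matrix $w=\left(\begin{smallmatrix}0&1\\1&0\end{smallmatrix}\right)$ (equivalently, the anti-automorphism $A\mapsto A^{T}$) is an algebra (anti-)isomorphism of $M_2(\mathbb{C})$ interchanging $\mathcal{U}$ and $\mathcal{L}$, so it carries subalgebras containing $\mathcal{L}$ bijectively onto subalgebras containing $\mathcal{U}$, and the $\mathcal{L}$ case follows from the $\mathcal{U}$ case. Alternatively one repeats the three-line matrix-unit argument with the roles of $E_{12}$ and $E_{21}$ swapped.

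This is an elementary linear-algebra statement and I do not anticipate a genuine obstacle; the only thing to be slightly careful about is remembering that $E_{11}$ and $E_{22}$ already belong to $\mathcal{U}$, so that producing the single off-diagonal unit $E_{21}$ really does suffice to generate everything. If one wanted to avoid matrix units entirely, one could instead argue that $\mathcal{A}/\mathrm{rad}$ considerations or the fact that $M_2(\mathbb{C})$ is simple force the conclusion, but the direct $E_{22}XE_{11}$ computation is the shortest route and is what I would write.
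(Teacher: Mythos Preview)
Your proof is correct, but the paper's argument is even shorter: it simply observes that $\mathcal{U}$ and $\mathcal{L}$ are $3$-dimensional vector subspaces of the $4$-dimensional space $M_2(\C)$, so any vector subspace (in particular any subalgebra) properly containing $\mathcal{U}$ must already equal $M_2(\C)$. No matrix-unit computation or transpose trick is needed.

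Your approach via $E_{22}XE_{11}=x_{21}E_{21}$ has the advantage of being constructive and of generalizing immediately to $M_n(\C)$ (where a pure dimension count would not suffice), but for the $2\times 2$ case it is more work than necessary. The paper's dimension argument also handles $\mathcal{U}$ and $\mathcal{L}$ simultaneously without invoking any symmetry.
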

\begin{proof}
As vector spaces, $\mathcal{U}$ and $\mathcal{L}$ have dimension $3,$ while $M_2(\C)$ has dimension $4.$ So a subspace of $M_2(\C)$ containing $\mathcal{U}$ (resp. $\mathcal{L}$) is either $\mathcal{U}$ (resp. $\mathcal{L}$) or $M_2(\C).$
\end{proof}

\begin{lemma}\label{lem:diagonalalgebra}
The only subalgebras of $M_2(\C)$ containing $\mathcal{D}$ are $\mathcal{D}$, $\mathcal{U}$, $\mathcal{L}$ and $M_2(\C)$.
\end{lemma}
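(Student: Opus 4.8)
The plan is to classify the subalgebras of $M_2(\C)$ that contain the diagonal algebra $\mathcal{D}$ by leveraging the structure forced by containing $\mathcal{D}$. Let $e_{11} = \begin{pmatrix} 1 & 0 \\ 0 & 0\end{pmatrix}$ and $e_{22} = \begin{pmatrix} 0 & 0 \\ 0 & 1\end{pmatrix}$ be the standard diagonal idempotents, which lie in $\mathcal{D}$. For any subalgebra $\mathcal{A}$ containing $\mathcal{D}$, we can decompose $\mathcal{A}$ using the Peirce decomposition: $\mathcal{A} = e_{11}\mathcal{A} e_{11} \oplus e_{11}\mathcal{A} e_{22} \oplus e_{21}\mathcal{A} e_{11} \oplus e_{22}\mathcal{A} e_{22}$. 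Since $M_2(\C)$ itself decomposes as $\C e_{11}\oplus \C e_{12}\oplus \C e_{21}\oplus \C e_{22}$ along these four Peirce components (each one-dimensional), and $\mathcal{A}$ is a sub-vector-space closed under this decomposition (because $e_{11}, e_{22}\in \mathcal{A}$ and $\mathcal{A}$ is closed under multiplication), we conclude $\mathcal{A}$ must contain $\C e_{11} \oplus \C e_{22} = \mathcal{D}$ and in each of the two off-diagonal slots $\C e_{12}$ and $\C e_{21}$ it either contains all of that slot or nothing.

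This immediately gives four candidates: $\mathcal{A}$ contains neither off-diagonal slot (giving $\mathcal{D}$), contains only $\C e_{12}$ (giving $\mathcal{U}$), contains only $\C e_{21}$ (giving $\mathcal{L}$), or contains both (giving $M_2(\C)$). The only thing left to check is that each of these four vector spaces is actually closed under multiplication, i.e., is genuinely a subalgebra — this is routine: $\mathcal{D}$, $\mathcal{U}$, $\mathcal{L}$ are the standard diagonal/upper-triangular/lower-triangular algebras, and $M_2(\C)$ is all of it. So no candidate is spurious, and conversely we have shown any subalgebra containing $\mathcal{D}$ is one of these four.

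I would write this up as follows. First observe $e_{11}, e_{22} \in \mathcal{D}\subseteq \mathcal{A}$. For any $a\in \mathcal{A}$, write $a = e_{11}ae_{11} + e_{11}ae_{22} + e_{22}ae_{11} + e_{22}ae_{22}$; each summand lies in $\mathcal{A}$ since $\mathcal{A}$ is closed under multiplication. Now $e_{11}ae_{11}$ and $e_{22}ae_{22}$ are scalar multiples of $e_{11}$ and $e_{22}$ respectively (already in $\mathcal{D}$), $e_{11}ae_{22}$ is a scalar multiple of $e_{12}$, and $e_{22}ae_{11}$ is a scalar multiple of $e_{21}$. Hence $\mathcal{A} = \mathcal{D} + (\mathcal{A}\cap \C e_{12}) + (\mathcal{A}\cap \C e_{21})$, and each of the last two terms is either $0$ or the full line. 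The four resulting possibilities are exactly $\mathcal{D}$, $\mathcal{U} = \mathcal{D}\oplus \C e_{12}$, $\mathcal{L} = \mathcal{D}\oplus \C e_{21}$, and $M_2(\C) = \mathcal{D}\oplus \C e_{12}\oplus \C e_{21}$.

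There is essentially no obstacle here — the statement is elementary linear algebra once one uses the idempotents $e_{11}, e_{22}$. The only mild subtlety worth a sentence is confirming that in each Peirce slot the intersection $\mathcal{A}\cap \C e_{ij}$ is indeed a sub-vector-space of a one-dimensional space and hence all-or-nothing, and that $\mathcal{D}\oplus\C e_{12}$ and $\mathcal{D}\oplus\C e_{21}$ really are closed under multiplication (they are, being the upper- and lower-triangular matrices). If one prefers, this lemma can also be deduced directly from Lemma \ref{lem:triangularalgebra}: a subalgebra $\mathcal{A}\supseteq\mathcal{D}$ that contains some strictly-upper-triangular nonzero element contains $\mathcal{U}$, hence is $\mathcal{U}$ or $M_2(\C)$ by the previous lemma, and symmetrically for lower-triangular; the remaining case (no off-diagonal elements at all) forces $\mathcal{A} = \mathcal{D}$.
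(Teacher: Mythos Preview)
Your proof is correct and uses essentially the same idea as the paper: multiply by the diagonal idempotents $e_{11},e_{22}$ (the paper calls $e_{11}$ by $E_1$) to isolate the off-diagonal entries of any element of $\mathcal{A}$. Your Peirce-decomposition phrasing is slightly cleaner and self-contained, whereas the paper subtracts the diagonal part of some $X\in A\setminus\mathcal{D}$, splits into the cases $a=0$, $b=0$, or both nonzero, and in the first two cases invokes Lemma~\ref{lem:triangularalgebra}; but the underlying manipulation is the same.
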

\begin{proof}
Let $A$ be a subalgebra of $M_2(\C)$ that contains $\mathcal{D}$ and suppose there is $X\in A\setminus \mathcal{D}$; we can assume that there is one such $X$ of the form $\begin{pmatrix}
0 & a \\
b & 0
\end{pmatrix}$ with either $a\neq 0$ or $b\neq0$. Suppose $a=0$; then clearly $A$ must contain $\mathcal{U}$. Viceversa if $b=0$ clearly $A$ must contain $\mathcal{L}$. In either case, Lemma \ref{lem:triangularalgebra} completes the proof. Therefore, assume that $a\neq 0$ and $b\neq 0$. However, because $X$ and $E_1:=\begin{pmatrix}
1 & 0 \\
0 & 0
\end{pmatrix}$ both belong to $A$, so must $E_1X=\begin{pmatrix}
0 & a \\
0 & 0
\end{pmatrix}$ and $XE_1=\begin{pmatrix}
0 & 0 \\
b & 0
\end{pmatrix}$, which implies that $A$ must be $M_2(\C)$.
\end{proof}

\begin{lemma}\label{lem:nonsemisimplealgebra}
The only subalgebras of $M_2(\C)$ containing $\mathcal{J}$ are $\mathcal{J}$, $\mathcal{U}$ and $M_2(\C)$. 
\end{lemma}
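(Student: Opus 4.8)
The plan is to mimic the strategy used for Lemmas \ref{lem:triangularalgebra} and \ref{lem:diagonalalgebra}: start from a subalgebra $A$ with $\mathcal{J}\subseteq A$, and analyze what happens as soon as $A$ contains an element outside $\mathcal{J}$. Recall that $\mathcal{J}$ is spanned by the identity $I$ and the nilpotent $N=\begin{pmatrix}0&1\\0&0\end{pmatrix}$, so it is $2$-dimensional; since $M_2(\C)$ is $4$-dimensional, if $A$ is not $\mathcal{J}$ then it contains some $X$ with $\dim(\mathcal{J}+\C X)=3$.

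First I would reduce to a normal form for such an $X$. Writing $X=\begin{pmatrix}\alpha&\beta\\ \gamma&\delta\end{pmatrix}$, after subtracting $\delta I$ and a multiple of $N$ we may assume $X=\begin{pmatrix}\alpha-\delta&0\\ \gamma&0\end{pmatrix}$ with $(\alpha-\delta,\gamma)\neq(0,0)$; call these entries $(a,b)$. The key computational step is then to multiply $X$ by $N$ on both sides: $NX=\begin{pmatrix}b&0\\0&0\end{pmatrix}$ and $XN=\begin{pmatrix}0&a\\0&b\end{pmatrix}$, and to also form $XNX$, etc. If $b\neq 0$ then $NX=\begin{pmatrix}b&0\\0&0\end{pmatrix}\in A$, hence $E_1=\begin{pmatrix}1&0\\0&0\end{pmatrix}\in A$, hence also $I-E_1=\begin{pmatrix}0&0\\0&1\end{pmatrix}\in A$, so $A\supseteq \mathcal{D}$; together with $N\in A$ this forces $A\supseteq\mathcal{U}$, and then Lemma \ref{lem:triangularalgebra} gives $A=\mathcal{U}$ or $A=M_2(\C)$ — but in fact since $b\neq0$, the lower-triangular entry $X-aN=\begin{pmatrix}a&0\\b&0\end{pmatrix}$ minus $aE_1$ gives $\begin{pmatrix}0&0\\b&0\end{pmatrix}\in A$, so $A=M_2(\C)$. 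If instead $b=0$, then $a\neq0$, $X$ is (a multiple of) $E_1$, so $A\supseteq\mathcal{D}$, and with $N\in A$ we get $A\supseteq\mathcal{U}$; Lemma \ref{lem:triangularalgebra} then forces $A=\mathcal{U}$ or $A=M_2(\C)$. Assembling: the only possibilities for $A$ are $\mathcal{J}$, $\mathcal{U}$, and $M_2(\C)$, which is exactly the claim.

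The main thing to be careful about — the only real obstacle — is making sure the case $b\neq 0$ genuinely lands in $M_2(\C)$ rather than stalling at $\mathcal{L}$, i.e.\ verifying that once $E_1$ and both an upper- and a lower-triangular strictly-off-diagonal matrix are present, the subalgebra is all of $M_2(\C)$; this is immediate because such a subalgebra contains $\mathcal{U}$ strictly (it contains the lower nilpotent too), so Lemma \ref{lem:triangularalgebra} finishes it. I would also double-check that $\mathcal{J}$, $\mathcal{U}$ and $M_2(\C)$ are indeed subalgebras (clear) and that $\mathcal{J}\subsetneq\mathcal{U}\subsetneq M_2(\C)$, so the list is non-redundant and complete.
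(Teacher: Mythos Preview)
Your argument is correct and follows essentially the same approach as the paper: take $X\in A\setminus\mathcal{J}$, multiply by the nilpotent $N$ (the paper's $U_1$) to extract $E_1$, and case-split on whether the $(2,1)$-entry vanishes, reducing to Lemma~\ref{lem:triangularalgebra}. The only cosmetic differences are that you normalize $X$ first and, in the $b=0$ case, pass directly to $\mathcal{U}$ rather than invoking Lemma~\ref{lem:diagonalalgebra} as the paper does; also note the slip ``$X-aN=\begin{pmatrix}a&0\\b&0\end{pmatrix}$'' should simply read ``$X=\begin{pmatrix}a&0\\b&0\end{pmatrix}$''.
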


\begin{proof}
Let $A$ be a subalgebra of $M_2(\C)$ containing $\mathcal{J}$, and let $X\in A\setminus \mathcal{J}$. Notice that $U_1:=\begin{pmatrix}
0 & 1 \\
0 & 0
\end{pmatrix}$ belongs to $\mathcal{J}$, as does the identity matrix. If $X=\begin{pmatrix}
a& b \\
c& d
\end{pmatrix}$, then $U_1X=\begin{pmatrix}
c & d \\
0 & 0
\end{pmatrix}$ must belong to $A$ also. If $c\neq 0$, then $E_1$ must belong to $A$, which in turn implies that $A$ must contain $\mathcal{U}$, which would conclude the proof. If instead $c=0$, it means that $X=\begin{pmatrix}
a& b \\
0& d
\end{pmatrix}$ with $a\neq d$ (otherwise $X\in \mathcal{J}$). Therefore $A$ must contain the diagonal matrix $X-bU_1$, and thus must contain $\mathcal{D}$; Lemma \ref{lem:diagonalalgebra} concludes the proof.
\end{proof}

\begin{lemma}\label{lem:repcriterion}
If $\rho$ satisfies the conditions of Theorem \ref{thm:torsiontorus}, then there exist some curves $x_i$ in $M_i$ ($i=1,2$) and $\gamma$ a curve in $T$ such that 
\begin{displaymath}
\mathrm{Tr}(\rho(x_1x_2\gamma))\neq \mathrm{Tr}(\rho(x_1\gamma x_2))
\end{displaymath}
\end{lemma}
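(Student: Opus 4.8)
The plan is to reduce the desired trace inequality to a statement about the subalgebras of $M_2(\C)$ generated by the various restrictions of $\rho$, and then derive a contradiction from the hypotheses. Since $\rho$ is a homomorphism and the trace is cyclic, for any loops $x_1,x_2,\gamma$ (based at $p$, say) one has
$$\mathrm{Tr}(\rho(x_1x_2\gamma))-\mathrm{Tr}(\rho(x_1\gamma x_2))=\mathrm{Tr}(abc)-\mathrm{Tr}(bac)=\mathrm{Tr}\big((ab-ba)c\big),$$
where $a=\rho(x_1)$, $b=\rho(x_2)$, $c=\rho(\gamma)$ and $ab-ba$ denotes the commutator bracket. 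Thus it suffices to produce $a\in\rho(\pi_1(M_1))$, $b\in\rho(\pi_1(M_2))$, $c\in\rho(\pi_1(T))$ with $\mathrm{Tr}((ab-ba)c)\neq 0$, and then to realize such $a,b,c$ by admissible curves $x_1,x_2,\gamma$ (which I address at the end).

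Suppose no such triple exists, i.e.\ $\mathrm{Tr}((ab-ba)c)=0$ for all $a\in\rho(\pi_1(M_1))$, $b\in\rho(\pi_1(M_2))$, $c\in\rho(\pi_1(T))$. Let $\mathcal{A}_1,\mathcal{A}_2,\mathcal{A}_T\subseteq M_2(\C)$ be the unital subalgebras generated by $\rho(\pi_1(M_1))$, $\rho(\pi_1(M_2))$, $\rho(\pi_1(T))$; each is the $\C$-linear span of the corresponding subgroup. The expression $\mathrm{Tr}((ab-ba)c)$ is trilinear in $(a,b,c)$, so the vanishing extends to all $a\in\mathcal{A}_1$, $b\in\mathcal{A}_2$, $c\in\mathcal{A}_T$; since the trace pairing on $M_2(\C)$ is nondegenerate this means $[\mathcal{A}_1,\mathcal{A}_2]\subseteq\mathcal{A}_T^{\perp}$. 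I claim this is contradictory, via: (i) $\mathfrak{sl}_2(\C)\not\subseteq\mathcal{A}_T^{\perp}$, because $\mathfrak{sl}_2(\C)^{\perp}=\C I$ under the trace pairing, so $\mathfrak{sl}_2(\C)\subseteq\mathcal{A}_T^{\perp}$ would force $\mathcal{A}_T\subseteq\C I$, i.e.\ $\rho|_{\pi_1(T)}$ central, which is excluded; and (ii) $[\mathcal{A}_1,\mathcal{A}_2]=\mathfrak{sl}_2(\C)$. For (ii): since $\rho|_{\pi_1(M_i)}$ is non-abelian, each $\mathcal{A}_i$ is a non-commutative unital subalgebra of $M_2(\C)$, hence of dimension $\geq 3$; by Lemmas \ref{lem:triangularalgebra}--\ref{lem:nonsemisimplealgebra} (or directly by Burnside's theorem) it is either $M_2(\C)$ or the stabilizer of a unique line $L_i\subseteq\C^2$ (a conjugate of $\mathcal{U}$). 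Moreover $\pi_1(M)$ is generated by $\pi_1(M_1)$ and $\pi_1(M_2)$ by van Kampen (as $T$ separates), so irreducibility of $\rho$ forbids $\mathcal{A}_1$ and $\mathcal{A}_2$ from sharing an invariant line. If one of the $\mathcal{A}_i$ is $M_2(\C)$, then $[\mathcal{A}_1,\mathcal{A}_2]=[M_2(\C),\mathcal{A}_j]=\mathfrak{sl}_2(\C)$ by a direct check together with conjugation-equivariance; otherwise both are stabilizers of distinct lines $L_1\neq L_2$, and in the basis $e_1\in L_1$, $e_2\in L_2$ we get $\mathcal{A}_1=\mathcal{U}$, $\mathcal{A}_2=\mathcal{L}$, with $[\mathcal{U},\mathcal{L}]=\mathfrak{sl}_2(\C)$ (e.g.\ $[E_{12},E_{11}]=-E_{12}$, $[E_{11},E_{21}]=-E_{21}$, $[E_{12},E_{21}]=E_{11}-E_{22}$). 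Comparing (i) and (ii) gives the contradiction, so the desired triple $(a,b,c)$ exists.

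It remains to realize a given $(a,b,c)$ by admissible curves. Every element of $\pi_1(M_i)$ is represented by a loop $x_i\cup s$ with $x_i$ a properly embedded arc from $p$ to $p'$ in $M_i$ and $s$ the fixed segment in $T$: start from any loop, make it embedded by general position in dimension $3$, and slide it so that it passes through $p'$ with endpoints at $p,p'$. For $\gamma$ we only need primitive classes in $\pi_1(T)$, which are represented by embedded arcs in $T$; and since $\rho|_{\pi_1(T)}$ is non-central, the $\rho$-images of the primitive classes already span $\mathcal{A}_T$, so these suffice to produce the needed $c$. Hence admissible $x_1,x_2,\gamma$ realize a triple with $\mathrm{Tr}((ab-ba)c)\neq 0$, which by the first paragraph gives $\mathrm{Tr}(\rho(x_1x_2\gamma))\neq\mathrm{Tr}(\rho(x_1\gamma x_2))$. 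The main obstacle is step (ii): correctly classifying the non-commutative subalgebras $\mathcal{A}_1,\mathcal{A}_2$ and using the irreducibility of $\rho$ (the "no common invariant line" input) to pin down $[\mathcal{A}_1,\mathcal{A}_2]$; the trace-identity reduction and the curve realizations are routine.
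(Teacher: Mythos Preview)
Your proof is correct and takes a somewhat different, more streamlined route than the paper's. Both arguments start the same way: assume the trace equality holds for all choices, extend by linearity to the group algebras $\mathcal{A}_1,\mathcal{A}_2,\mathcal{A}_T\subseteq M_2(\C)$, and seek a contradiction. From there the paper splits into two cases according to whether $\rho_T$ is semisimple; in each case it uses the inclusion $\pi_1(T)\subseteq\pi_1(M_i)$ to force $\mathcal{A}_i\supseteq\mathcal{D}$ (resp.\ $\mathcal{A}_i\supseteq\mathcal{J}$), invokes Lemmas~\ref{lem:triangularalgebra}--\ref{lem:nonsemisimplealgebra} to list the possibilities for $\mathcal{A}_i$, and then exhibits explicit $2\times 2$ matrices violating $\Tr(A_1A_2A_3)=\Tr(A_1A_3A_2)$. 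Your argument instead reformulates the assumed vanishing as $[\mathcal{A}_1,\mathcal{A}_2]\subseteq\mathcal{A}_T^{\perp}$ under the trace pairing, proves $[\mathcal{A}_1,\mathcal{A}_2]=\mathfrak{sl}_2(\C)$ directly from the classification of noncommutative unital subalgebras (via Burnside, with irreducibility of $\rho$ ruling out a common invariant line), and uses non-centrality of $\rho_T$ to get $\mathfrak{sl}_2(\C)\not\subseteq\mathcal{A}_T^{\perp}$.

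Your approach avoids the case split on $\rho_T$ and never uses the containment $\mathcal{A}_T\subseteq\mathcal{A}_i$; the contradiction is packaged as a single orthogonality statement. The paper's approach, in exchange, is more constructive: it produces explicit matrices $A_1,A_2,A_3$, which could in principle be traced back to explicit curves $x_1,x_2,\gamma$ (relevant since the torsion element in Theorem~\ref{thm:torsiontorus} is built from these). Your final paragraph on realizing $(a,b,c)$ by admissible embedded arcs is a point the paper glosses over; it is correct as written (in particular your observation that primitive classes in $\pi_1(T)$ already suffice, since $\mathcal{A}_T$ is $2$-dimensional and spanned by $I$ and any non-central $\rho(\gamma_0)$).
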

\begin{proof}
Suppose by contradiction that 
$\mathrm{Tr}(\rho(x_1x_2\gamma))= \mathrm{Tr}(\rho(x_1\gamma x_2))$ for all $x_1,x_2,\gamma$; in other words that $\mathrm{Tr}(A_1 A_2 A_3)=\mathrm{Tr}(A_1A_3A_2)$ for all $A_1\in \rho(\pi_1(M_1))$, $A_2\in \rho(\pi_1(M_2))$ and $A_3\in\rho(\pi_1(T))$; then by linearity this equality must also hold for all $A_1\in \C[\rho(\pi_1(M_1))]$, $A_2\in \C[\rho(\pi_1(M_2))]$ and $A_3\in\C[\rho(\pi_1(T))]$ where $\C[G]$ is the subalgebra of $M_2(\C)$ generated by the group $G\subseteq SL_2(\C)$. 

Our aim is now to show that we can always find $A_1,A_2,A_3$ such that this equality does not hold.
Let us write $\rho_T$ for the restriction of $\rho$ to $\pi_1(T)\subset \pi_1(M).$
The representation $\rho_T$ is either semisimple or not. 

\emph{Case 1: $\rho_T$ is semisimple.} After possibly substituting $\rho$ with a conjugate representation,  there exists $\gamma\subseteq T$ such that $\rho_T(\gamma)=\begin{pmatrix}
\lambda & 0 \\
0 & \lambda^{-1}
\end{pmatrix}$, and because $\rho_T$ is non-central we can assume that $\lambda\neq \pm 1$. In this case $\C[\rho_T(\pi_1(T))]$ must contain the algebra $\mathcal{D}$ of diagonal matrices. To see this, notice that the identity matrix $I$ is in $\C[\rho_T(\pi_1(T))]$ and because $\lambda\neq \lambda^{-1}$, an appropriate linear combination of $I$ and $\rho_T(\gamma)$ will give both the matrices $\begin{pmatrix}
1 & 0 \\
0 & 0
\end{pmatrix}$ and $\begin{pmatrix}
0 & 0 \\
0 & 1
\end{pmatrix}$ which generate $\mathcal{D}$. Notice that the listed properties of $\rho$ are invariant by conjugation.

Therefore, the algebras $\C[\rho_i(\pi_1(M_i))]$ must contain $\mathcal{D}$; since neither of them can be abelian, they must be strictly bigger, therefore by Lemma \ref{lem:diagonalalgebra} they are either $\mathcal{L},\mathcal{U}$ or $M_2(\C)$. They cannot both be $\mathcal{U}$, since this would imply that $\C[\rho(\pi_1(M))]=\mathcal{U}$ (which would imply that $\rho$ is reducible); likewise they cannot both be $\mathcal{L}$. Suppose without loss of generality that $\mathcal{U}\subseteq\C[\rho_1(\pi_1(M_1))]$ and $\mathcal{L}\subseteq\C[\rho(\pi_1(M_2))]$. Then pick 
$$A_1=\begin{pmatrix}
0 & 1 \\ 0 & 0
\end{pmatrix}, \ A_2=\begin{pmatrix}
0 & 0 \\ 1 & 0
\end{pmatrix} \ \textrm{and} \ A_3=\begin{pmatrix}
1 & 0 \\ 0 & 0
\end{pmatrix}$$
 and a simple calculation shows that 
 $$0=\Tr(A_1A_3A_2)\neq \Tr(A_1A_2A_3)=1.$$

\emph{Case 2: $\rho_T$ is not semisimple} After possibly conjugating, there must be $\gamma\subseteq T$ such that $\rho_T(\gamma)=\pm\begin{pmatrix}
1 & a \\
0 & 1
\end{pmatrix}$ with $a\neq 0$ (again, because we know that $\rho_T$ is not central). In this case a similar argument as Case 1 shows that $\C[\rho_T(\pi_1(T))]$ must contain the algebra $\mathcal{J}$.

The line of reasoning now is the same as Case 1 using Lemma \ref{lem:nonsemisimplealgebra}; we can assume that $\mathcal{U}\subseteq\C[\rho_1(\pi_1(M_1))]$ and $\mathcal{L}\subseteq\C[\rho(\pi_1(M_2))]$. We can now pick 
$$A_1=\begin{pmatrix}
1 & 0 \\ 0 & 0
\end{pmatrix} \ A_2=\begin{pmatrix}
0 & 0 \\ 1 & 0
\end{pmatrix}\ \textrm{and} \ A_3=\begin{pmatrix}
0 & 1 \\ 0 & 0
\end{pmatrix}$$ and show that 
$$1=\Tr(A_1A_3A_2)\neq \Tr(A_1A_2A_3)=0.$$
\end{proof}

\begin{proposition}
	Under the assumptions of Theorem \ref{thm:torsiontorus}, there exist $x_1,\gamma,x_2$ such that $\Gamma(x_1,\gamma,x_2)-\Gamma(x_1,\gamma',x_2)\neq 0$ in $\mathcal{S}(M)$.
\end{proposition}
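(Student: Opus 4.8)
The plan is to use the map $\Psi : S_{-1}(M) \to \C[\mathcal{X}(M)]$ from the Bullock--Przytycki--Sikora theorem to detect that the torsion candidate $\Gamma(x_1,\gamma,x_2) - \Gamma(x_1,\gamma',x_2)$ does not vanish in $\mathcal{S}(M)$. Concretely, recall that $\Psi(K) = t_K = -\tr \circ \rho_K$ for a knot $K$, evaluated on characters. So it suffices to exhibit a representation $\rho$ satisfying the hypotheses of Theorem~\ref{thm:torsiontorus} together with curves $x_1, x_2, \gamma$ for which $t_{\Gamma(x_1,\gamma,x_2)}$ and $t_{\Gamma(x_1,\gamma',x_2)}$ take different values at the character $[\rho]$; then the image under $\Psi$ of $\Gamma(x_1,\gamma,x_2) - \Gamma(x_1,\gamma',x_2)$ is a nonzero function in $\C[\mathcal{X}(M)]$, hence this element is nonzero already in $S_{-1}(M)$, and therefore also in $\mathcal{S}(M)$ (it cannot vanish in $\mathcal{S}(M)$ since specialization $A = -1$ is a well-defined map $\mathcal{S}(M) \to S_{-1}(M)$).

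The key point is that $t_{\Gamma(x_1,\gamma,x_2)}([\rho])$ is, up to sign, $\tr(\rho(x_1) \rho(\gamma) \rho(x_2))$ -- here I should be careful that $\Gamma(x_1,\gamma,x_2)$, being the union of the three arcs $x_1, \gamma, x_2$, represents the loop that traverses $x_1$, then $\gamma$, then $x_2$ (closing up through the common framed point), so its holonomy is the product of the three holonomies in the appropriate order; similarly $\Gamma(x_1,\gamma',x_2)$ represents the loop $x_1 \gamma' x_2$, and reversing the endpoints of $\gamma$ amounts to replacing $\gamma$ by the curve obtained by composing with the small segment, which after reorganizing corresponds to the element $x_1 x_2 \gamma$ up to conjugacy (this is exactly the bookkeeping that makes Lemma~\ref{lem:repcriterion} the relevant statement). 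Thus the two trace functions differ at $[\rho]$ precisely when $\tr(\rho(x_1)\rho(x_2)\rho(\gamma)) \neq \tr(\rho(x_1)\rho(\gamma)\rho(x_2))$ for suitable choices, which is exactly what Lemma~\ref{lem:repcriterion} provides. So the bulk of the work has already been done; what remains is to assemble it.

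Therefore the proof runs as follows. First, fix a representation $\rho$ as in the statement of Theorem~\ref{thm:torsiontorus} (the hypothesis of the Proposition is precisely that such a $\rho$ exists). Second, invoke Lemma~\ref{lem:repcriterion} to obtain curves $x_1 \subseteq M_1$, $x_2 \subseteq M_2$, and $\gamma \subseteq T$ with $\tr(\rho(x_1 x_2 \gamma)) \neq \tr(\rho(x_1 \gamma x_2))$. Third, identify these two quantities (up to the universal sign coming from $t_x = -\tr$) with $-t_{\Gamma(x_1,\gamma,x_2)}([\rho])$ and $-t_{\Gamma(x_1,\gamma',x_2)}([\rho])$ respectively, by tracking which loop each of $\Gamma(x_1,\gamma,x_2)$ and $\Gamma(x_1,\gamma',x_2)$ represents in $\pi_1(M)$. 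Fourth, conclude: $\Psi\big(\Gamma(x_1,\gamma,x_2) - \Gamma(x_1,\gamma',x_2)\big)$ is a function on $\mathcal{X}(M)$ that is nonzero at $[\rho]$, so the element is nonzero in $S_{-1}(M)$; since the reduction map $\mathcal{S}(M) \to S_{-1}(M)$ sends $\Gamma(x_1,\gamma,x_2) - \Gamma(x_1,\gamma',x_2)$ to this nonzero element, the element is nonzero in $\mathcal{S}(M)$ as well.

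The main obstacle -- and the only genuinely non-formal step -- is the third one: making precise the correspondence between the geometric curves $\Gamma(x_1,\gamma,x_2)$, $\Gamma(x_1,\gamma',x_2)$ obtained by gluing arcs at a framed point, and the group elements $x_1 \gamma x_2$, $x_1 x_2 \gamma$ (or conjugates thereof) appearing in Lemma~\ref{lem:repcriterion}. One must choose a basepoint, connect it to $p$ and $p'$ by fixed auxiliary paths, and check that switching the endpoints of $\gamma$ has the claimed effect on the conjugacy class of the resulting loop; since trace is a conjugacy invariant, only the conjugacy classes matter, which simplifies this check. Everything else is a direct application of $\Psi$ and of the fact that $A = -1$ specialization is well-defined on $\mathcal{S}(M)$.
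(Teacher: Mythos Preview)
Your proposal is correct and follows essentially the same approach as the paper: use the Bullock--Przytycki--Sikora map $\Psi$ to $\C[X(M)]$, evaluate at the given $\rho$, identify $\Psi(\Gamma(x_1,\gamma,x_2))(\rho)$ and $\Psi(\Gamma(x_1,\gamma',x_2))(\rho)$ with $-\Tr(\rho(x_1)\rho(\gamma)\rho(x_2))$ and $-\Tr(\rho(x_1)\rho(x_2)\rho(\gamma))$ respectively, and then invoke Lemma~\ref{lem:repcriterion}. The paper's proof is terser about the arc-to-loop bookkeeping (handled there by an explicit ``slight abuse of notation''), but the argument is the same.
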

\begin{proof}
	Consider the map $\Psi: \mathcal{S}(M)\ra \C[X(M)],$ which we recalled in the introduction, sending a link $L_1\sqcup \dots\sqcup L_n$ to $t_{L_1}\ldots t_{L_n}$. Then $$\Psi(\Gamma(x_1,\gamma,x_2))(\rho)=-\Tr(\rho(\Gamma(x_1,\gamma,x_2))=-\Tr(\rho_1(x_1)\rho_T(\gamma)\rho_2(x_2)),$$ where with a slight abuse of notation we use the symbol $x_i$ for the element $\pi_1(M_i)$ obtained by closing the arc $x_i$; likewise for $\gamma$. Similarly, $\Psi(\Gamma(x_1,\gamma',x_2)(\rho)=-\Tr(\rho(\Gamma(x_1,\gamma',x_2))=-\Tr(\rho_1(x_1)\rho_2(x_2)\rho_T(\gamma))$. We now use Lemma \ref{lem:repcriterion} to obtain that there must be $x_1,x_2,\gamma$ such that $\Psi(\Gamma(x_1,\gamma,x_2)-\Gamma(x_1,\gamma',x_2))(\rho)\neq 0$, which implies that $\Gamma(x_1,\gamma,x_2)-\Gamma(x_1,\gamma',x_2)$ is also not zero.
\end{proof}

As an immediate corollary of this criterion, we can prove that splicings of non-trivial knots have torsion in their skein module. First, the definition of splicing.

\begin{definition}
	Let $K_1,K_2\subseteq S^3$ be two knots, and $E_{K_1},E_{K_2}$ their respective exterior (i.e. $S^3$ with an open tubular neighborhood of the knots removed). The \emph{splicing} of $K_1$ and $K_2$ is the manifold $E_{K_1,K_2}$ obtained by gluing $E_{K_1}$ and $E_{K_2}$ along their torus boundary by identifying the longitude of $K_1$ to the meridian of $K_2$ and the meridian of $K_1$ to the longitude of $K_2$. In particular $E_{K_1,K_2}$ is an integral homology sphere.
\end{definition}
\begin{corollary}
	Let $K_1,K_2\subseteq S^3$ be two non-trivial knots. Then $\Sk(E_{K_1,K_2})$ has $(A\pm1)$-torsion.
\end{corollary}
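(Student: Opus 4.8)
The plan is to apply Theorem \ref{thm:torsiontorus} to $M=E_{K_1,K_2}$, taking $T$ to be the splicing torus and $M_1=E_{K_1}$, $M_2=E_{K_2}$. Since $K_1$ and $K_2$ are non-trivial, the boundary torus of each knot exterior is incompressible, so $\pi_1(T)$ injects into each $\pi_1(E_{K_i})$ and hence into $\pi_1(M)=\pi_1(E_{K_1})\ast_{\pi_1(T)}\pi_1(E_{K_2})$; thus $T$ is an incompressible separating torus, and all that remains is to exhibit a representation $\rho\colon\pi_1(M)\ra\slC$ satisfying the three bullet points of Theorem \ref{thm:torsiontorus}.

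The observation that makes this easy is that \emph{any} irreducible $\rho\colon\pi_1(M)\ra\slC$ does the job. Write $\mu_i,\lambda_i$ for the meridian and longitude of $K_i$; the splicing identifies $\mu_1=\lambda_2$ and $\lambda_1=\mu_2$ in $\pi_1(M)$. We use two standard facts: $\lambda_i$ is null-homologous in $E_{K_i}$, and $\mu_i$ normally generates $\pi_1(E_{K_i})$ (Dehn filling $E_{K_i}$ along $\mu_i$ gives $S^3$). If $\rho|_{\pi_1(E_{K_1})}$ were abelian it would factor through $H_1(E_{K_1})=\Z\langle[\mu_1]\rangle$, forcing $\rho(\lambda_1)=I$ since $[\lambda_1]=0$; as $\lambda_1=\mu_2$ normally generates $\pi_1(E_{K_2})$, this would make $\rho|_{\pi_1(E_{K_2})}$ trivial, so $\rho(\pi_1(M))=\rho(\pi_1(E_{K_1}))$ would be abelian, contradicting irreducibility. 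The symmetric argument rules out $\rho|_{\pi_1(E_{K_2})}$ abelian. Finally, if $\rho|_{\pi_1(T)}$ were central then $\rho(\mu_1)\in\{\pm I\}$, which (since $\mu_1$ normally generates $\pi_1(E_{K_1})$ and $\pm I$ is central) would give $\rho(\pi_1(E_{K_1}))\subseteq\{\pm I\}$, again abelian. So every irreducible $\rho$ satisfies all three hypotheses, and then Theorem \ref{thm:torsiontorus} yields the $(A\pm1)$-torsion.

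Hence the real content is to produce \emph{one} irreducible representation of $\pi_1(E_{K_1,K_2})$. I would build it by gluing: since $K_i$ is non-trivial, $X(E_{K_i})$ contains an irreducible character, hence (half-lives-half-dies) a component $Y_i$ of dimension $\geq 1$ which is generically irreducible, and the boundary restriction $r_i\colon Y_i\ra X(\partial E_{K_i})$ is non-constant (the meridian trace is non-constant on $Y_i$, e.g. because a Seifert surface of $K_i$ gives an ideal point of $Y_i$ along which it blows up), so its image $C_i$ is a curve. Using the splicing identification, which interchanges meridian and longitude, one regards $C_1$ and the image of $C_2$ as two curves in $X(\partial E_{K_1})$; a point of their intersection at which the corresponding boundary representation of $\Z^2$ is diagonalizable with eigenvalues $\neq\pm1$ gives compatible irreducible representations of $\pi_1(E_{K_1})$ and $\pi_1(E_{K_2})$ that glue to a representation $\rho$ of $\pi_1(M)$, and irreducibility of one of the pieces already forces $\rho$ irreducible.

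The hard part is precisely this last step: showing that the two restriction curves actually meet in $X(\partial E_{K_1})$, and at a point which is neither ``at infinity'' nor one of the finitely many bad points where an eigenvalue is $\pm1$ (where the gluing would degenerate to a non-central parabolic or to a central representation). In eigenvalue coordinates the curves are cut out by the non-abelian factors of $A_{K_1}(m,l)$ and of $A_{K_2}(l,m)$; a B\'ezout-type count in a toric compactification of $(\C^\ast)^2/\{\pm1\}$ shows the intersection is non-empty, and one must then rule out that every intersection point is bad — this is where non-triviality of the $A$-polynomial of each $K_i$ (Boden--Curtis, Dunfield--Garoufalidis) together with control on its Newton polygon enters. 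Alternatively one can short-circuit this by citing the known non-emptiness of the irreducible $\slC$-character variety of a splice of non-trivial knots, after which the first part of the argument applies verbatim.
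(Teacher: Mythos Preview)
Your second paragraph is correct and in fact cleaner than the paper's argument: you show that \emph{any} irreducible $\slC$-representation of $\pi_1(E_{K_1,K_2})$ automatically satisfies all three hypotheses of Theorem~\ref{thm:torsiontorus}, using only that meridians normally generate knot groups and longitudes are null-homologous. The paper does not isolate this reduction; it simply quotes \cite[Theorem~8.3]{zent18}, which already packages the existence of an irreducible $SU(2)$-representation of the splice that is irreducible on each $E_{K_i}$ and non-central on $T$. Your reduction buys a weaker requirement on the input---mere irreducibility of the global representation---at the cost of still having to produce that input.

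Producing it is exactly where you locate the difficulty, and that gap is real. The B\'ezout sketch in your third paragraph is not a proof: arranging that the two boundary-restriction curves in $X(T)$ meet at a point that is neither at infinity nor at a $\pm 1$ eigenvalue is the genuinely hard step, and completing it essentially amounts to reproving (the $\slC$ shadow of) Zentner's theorem; his actual argument runs through $SU(2)$ and holonomy perturbations in the pillowcase rather than a purely algebraic intersection count. Your ``alternative'' of citing the known non-emptiness is precisely what the paper does, with \cite{zent18} as the reference. So the proof is complete once you take that route, and your observation then streamlines the verification of the three bullets.
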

\begin{proof}
	Let $T\subseteq E_{K_1,K_2}$ be the torus arising from the boundaries of $E_{K_1}$ and $E_{K_2}$; clearly it is separating. We wish to apply the criterion of Theorem \ref{thm:torsiontorus}; to do so, we need to find a representation $\rho:\pi_1(E_{K_1,K_2})\ra \slC$. Using \cite[Theorem 8.3]{zent18} provides an irreducible representation $\rho:\pi_1(E_{K_1,K_2})\ra SU(2)$ that restricts to irreducible representations of $\pi_1(E_{K_i})$. The proof, roughly speaking,  finds two representations on $\pi_1(E_{K_1})$ and $\pi_1(E_{K_2})$ whose restrictions to $T$ agree and are not equal to the restriction of an abelian representation of either $\pi_1(E_{K_1})$ or $\pi_1(E_{K_2})$; this implies in particular that $\rho$ restricts to a non-central representation of $\pi_1(T)$. 
	
	Then, because $SU(2)$ embeds in $\slC$, we also obtain a representation $\rho$ with values in $\slC$ with all the same properties. Therefore we can apply Theorem \ref{thm:torsiontorus} to conclude. 
\end{proof}
 
\subsection{The case of non-separating tori}
\label{sec:torsion_nseptori}
Throughout this subsection $M$ is a compact connected manifold with a non-separating incompressible torus $T$. As before, for $\rho:\pi_1(M) \longrightarrow \slC,$ let $\rho_T$ denote the representation $\rho|_{\pi_1(T)}.$ Let $D$ be the following subgroup of $\slC:$ 
$$D=\lbrace \begin{pmatrix}
\lambda & 0 \\ 0 & \lambda^{-1}
\end{pmatrix} \ | \ \lambda \in \C^* \rbrace \cup \lbrace \begin{pmatrix}
0 & -\lambda \\ \lambda^{-1} & 0
\end{pmatrix} \ | \ \lambda \in \C^* \rbrace.$$
We say that a representation $\rho: \pi_1(M)\longrightarrow \slC$ is of \textit{dihedral type with respect to $T$} if either $\rho$ takes value in $D$ and $\rho(\gamma)$ is antidiagonal for any loop whose algebraic intersection with $T$ is odd, or if $\rho$ is conjugate to a representation with that property.
\begin{theorem}\label{thm:nsep-torus}
Suppose there exists a representation $\rho:\pi_1(M)\ra SL(2,\C)$ such that $\rho_T$ is non-central and either of the following two conditions holds:
\begin{itemize}
	\item[(1)]$\rho_T$ is semi-simple and $\rho$ is not of dihedral type with respect to $T.$
	\item[(2)]$\rho_T$ is not semi-simple and $\rho$ is irreducible.
\end{itemize} Then, $\Sk(M,\Z[A^{\pm 1}])$ has $(A\pm 1)$-torsion.
\end{theorem}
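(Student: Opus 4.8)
\emph{Overall strategy.} The plan is to follow the blueprint of Theorem~\ref{thm:torsiontorus}: first produce an explicit element of $\Sk(M)$ killed by $A^4-1$, and then show it is nonzero by pushing it into $\C[X(M)]$ via $\Psi$ and evaluating on the representation $\rho$ of the hypotheses. The feature forcing the extra ``dihedral type'' assumption is that a non-separating torus obliges us to work with a curve that crosses $T$ an odd number of times, so that the $\Z/2$--grading of $\pi_1(M)$ by intersection number with $T$ enters the picture.

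\emph{The torsion candidate.} Cut $M$ along $T$ to obtain a connected manifold $M'$ with two torus boundary components $T_1,T_2$, so that $\pi_1(M)$ is an HNN extension of $\pi_1(M')$; since $T$ is non-separating there is a knot $c$ meeting $T$ transversely in exactly one point (take a properly embedded arc in $M'$ joining $T_1$ to $T_2$ and close it up through a bicollar of $T$). Fixing a framed point in $T$ near this intersection, an embedded arc $\gamma$ on $T$ through it, and its endpoint-swapped companion $\gamma'$ exactly as in the separating case, one obtains framed links $\Gamma(c,\gamma)$ and $\Gamma(c,\gamma')$ which become isotopic after sliding the closure of $\gamma$ across the annulus $T\setminus\overline{\gamma}$ past the strand of $c$. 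The relation $(A^4-1)(\Gamma(c,\gamma)-\Gamma(c,\gamma'))=0$ then follows from the same local Kauffman-relation computation as in the separating case, since the diagram involved only records a neighbourhood of a disk of $T$ met by the link.

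\emph{Nonvanishing and the case split.} Set $\delta=\Gamma(c,\gamma)-\Gamma(c,\gamma')$. Applying $\Psi$ and evaluating at $\rho$ turns the two terms into traces of words in $\rho(\pi_1 M)$ built from $\rho_T(\gamma)$ and $T_0:=\rho(c)$ that differ only in the position of $\rho_T(\gamma)$ relative to the $T$-crossing; so $\Psi(\delta)(\rho)\neq 0$ reduces, after linearising as in Lemma~\ref{lem:repcriterion}, to a statement about the subalgebra of $M_2(\C)$ generated by $\rho_T(\pi_1 T)$ together with $T_0$. When $\rho_T$ is not semisimple, up to conjugation $\C[\rho_T(\pi_1 T)]\supseteq\mathcal{J}$, hypothesis~(2) gives $\C[\rho(\pi_1 M)]=M_2(\C)$, and since $\rho$ is automatically not of dihedral type here (a dihedral-type representation restricts semisimply to $\pi_1(T)$) one can choose $c$ with $T_0\notin\mathcal{J}$; then $\C[\mathcal{J},T_0]=M_2(\C)$ by Lemma~\ref{lem:nonsemisimplealgebra} and a suitable $\gamma$ yields the discrepancy. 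When $\rho_T$ is semisimple and non-central, up to conjugation $\C[\rho_T(\pi_1 T)]\supseteq\mathcal{D}$, and one argues that either $T_0$ --- and, more generally, the image of every loop of odd intersection number with $T$ --- lies in the normaliser $D$ of the diagonal torus and is antidiagonal, which is precisely the excluded behaviour of hypothesis~(1), or else $\C[\mathcal{D},T_0]$ strictly contains $\mathcal{D}$, hence equals $\mathcal{U}$, $\mathcal{L}$ or $M_2(\C)$ by Lemma~\ref{lem:diagonalalgebra}, and the same kind of explicit triangular/antidiagonal witnesses as in the proof of Lemma~\ref{lem:repcriterion} break the trace symmetry. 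Having $\delta\neq 0$ in $S_{-1}(M)$, one gets $(A+1)$-torsion because $(A-1)(A^2+1)\delta$ specialises at $A=-1$ to $-4\delta\neq 0$ yet is killed by $A+1$, and $(A-1)$-torsion follows by Barrett's isomorphism~\cite{Barrett}. This recovers and extends the criteria of Przytycki~\cite{Prz99} and Veve~\cite{Veve}.

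\emph{Main obstacle.} The delicate part is the semisimple sub-case, i.e.\ showing that excluding dihedral-type representations is exactly what is needed. Since $D$ is not a subalgebra of $M_2(\C)$ (indeed $\C[D]=M_2(\C)$), the obstruction is invisible to the subalgebra lemmas alone: in the dihedral case the relevant traces coincide because $\rho$ respects the $\Z/2$--grading of $D$ by intersection number with $T$, so the words in the two terms of $\delta$ differ only by a trace-preserving ``reflection'', whereas outside the dihedral case one genuinely leaves $D$ (or violates the grading) and can break this symmetry. Making this dichotomy precise, checking that the curves of the construction actually witness it, and dealing separately with a possibly reducible $\rho$ in case~(1) --- there one instead uses that a non-separating $T$ forces $b_1(M)>0$, so that $X(M)$ is positive-dimensional and Corollary~\ref{cor:infiniteX(M)} (or Theorem~\ref{thm:largeX(M)_nclosed} in the bounded case) applies --- is the crux; the remainder transcribes the separating-torus argument.
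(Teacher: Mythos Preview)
Your proposal has a genuine gap in the nonvanishing step. The correct trace obstruction in the non-separating case (the one coming from Przytycki's construction, and the one the paper invokes) is
\[
\tr\bigl(\rho(\gamma)\,D\bigr)\ \neq\ \tr\bigl(\rho(\gamma)^{-1}\,D\bigr)\qquad\text{for some }D\in\C[\rho_T(\pi_1T)],
\]
with $\gamma$ a loop meeting $T$ once. This is \emph{not} a statement about the subalgebra generated by $\C[\rho_T(\pi_1T)]$ and $T_0=\rho(\gamma)$, and Lemmas~\ref{lem:diagonalalgebra}--\ref{lem:nonsemisimplealgebra} do not detect it. Concretely, in the semisimple case with $\C[\rho_T(\pi_1T)]=\mathcal D$, the identity $\tr(T_0D)=\tr(T_0^{-1}D)$ for all diagonal $D$ is equivalent to $T_0$ having equal diagonal entries; an antidiagonal $T_0$ therefore satisfies it while generating all of $M_2(\C)$ together with $\mathcal D$, so ``$\C[\mathcal D,T_0]\supsetneq\mathcal D$'' does not break the symmetry. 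The paper instead computes directly: if every odd--intersection loop has equal diagonal entries, then replacing $\gamma$ by $\gamma\delta'$ with $\rho(\delta')\neq\pm I_2$ forces those entries to vanish, so every odd loop is antidiagonal and hence every even loop is diagonal --- i.e.\ $\rho$ is dihedral. (Similarly in case~(2): $T_0\notin\mathcal J$ is not enough, since $T_0\in\mathcal U\setminus\mathcal J$ still gives $\tr(T_0J)=\tr(T_0^{-1}J)$ for all $J\in\mathcal J$; one needs $T_0\notin\mathcal U$, and then the paper's entrywise computation plus irreducibility finishes.)

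There is a second problem: your fallback for reducible $\rho$ in case~(1) is both unnecessary and unavailable. Unnecessary, because the paper's matrix argument never uses irreducibility in case~(1). Unavailable, because Corollary~\ref{cor:infiniteX(M)} applies only to closed $M$, and Theorem~\ref{thm:largeX(M)_nclosed} requires the strong finiteness conjecture, which is open for general $M$; the theorem you are proving is stated for compact $M$ with or without boundary. Finally, your torsion candidate is not quite the separating--case one: the closure of your arc $c$ meets $T$ once, so the local picture has a single strand through $T$ rather than two, and the sliding argument yields a relation with one crossing, not two --- this still produces the right trace discrepancy $\tr(\rho(\gamma\delta))-\tr(\rho(\gamma^{-1}\delta))$, but you should say so explicitly rather than asserting the two-crossing computation transfers verbatim.
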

\begin{remark}
	\label{rk:nsep-torus-withH1} Let us note that if the image of the inclusion map $\iota : H_1(T,\Z)\longrightarrow H_1(M,\Z)$ is not $2$-torsion, then there is an abelian representation $\rho:\pi_1(M)\longrightarrow \slC$ that satisfies condition (1).
	
	Indeed, since $\iota(H_1(T,\Z))$ is not $2$-torsion, there will be an abelian representation of $\pi_1(M)$ which will be non-central (and semi-simple) on $\pi_1(T),$ and abelian representations are not of dihedral type.
\end{remark}
\begin{proof}
	By \cite[Figure 4.1]{Prz99}, it suffices to find a representation $\rho:\pi_1(M)\longrightarrow \slC$ and loops $\delta\in \pi_1(T),$ $\gamma \in \pi_1(M)$ with $\gamma$ intersecting $T$ geometrically once, such that 
	$$\tr (\rho(\gamma\delta))\neq \tr (\rho(\gamma^{-1} \delta)).$$
	
	\underline{Case 1:} We fix an element $\gamma$ as above. Up to conjugation we assume that $\rho_T$ has diagonal image, and let us write 
	$$\rho(\gamma)=\begin{pmatrix}
	u & v \\ w & z
	\end{pmatrix}.$$
	Note that since $\rho_T$ is non-central, $\C[\rho(\pi_1(T))]$ contains all diagonal matrices.
	If $\tr (\rho(\gamma\delta))= \tr (\rho(\gamma^{-1} \delta))$ for all $\delta \in \pi_1(T),$ then
	$$\tr (\begin{pmatrix}
	u & v \\ w & z
	\end{pmatrix}\begin{pmatrix}
	x & 0 \\ 0 & y
	\end{pmatrix} ) =\tr (\begin{pmatrix}
	z & -v \\ -w & u
	\end{pmatrix}\begin{pmatrix}
	x & 0 \\ 0 & y
	\end{pmatrix} ),$$
	for any $x,y\in \C,$ which gives $u=z.$ However, replacing $\gamma$ by $\gamma\delta',$ for some $\delta'\in \pi_1(T)$ such that $\rho(\delta')\neq \pm I_2,$ we can get that $u=z=0,$ i.e. we can get that $\rho(\gamma)$ is antidiagonal. 
	Therefore, unless $\rho$ is dihedral type with respect to $T,$ there is a choice of $\gamma,\delta$ such that $\tr \rho(\gamma\delta)\neq \tr(\rho(\gamma^{-1}\delta)),$ and thus there is $(A\pm 1)$-torsion in $\mathcal{S}(M).$
	
	\underline{Case 2:} Up to conjugation, we assume that $\rho_T$ takes value in upper triangular matrices, and since $\rho_T$ is non-central, $\C[\rho(\pi_1(T))]=\mathcal{J}.$ Let $\gamma$ be a loop that intersects $T$ geometrically once and write $\rho(\gamma)=\begin{pmatrix}
	u & v \\ w & z
	\end{pmatrix}.$
	If $\tr (\rho(\gamma\delta))\neq \tr (\rho(\gamma^{-1} \delta))$ for all $\delta \in \pi_1(T),$ then
	$$\tr (\begin{pmatrix}
	u & v \\ w & z
	\end{pmatrix}\begin{pmatrix}
	x & y \\ 0 & x
	\end{pmatrix} ) =\tr (\begin{pmatrix}
	z & -v \\ -w & u
	\end{pmatrix}\begin{pmatrix}
	x & y \\ 0 & x
	\end{pmatrix} ),$$
	for all $x,y\in \C,$ which gives $w=0.$ However, if $w=0$ for all $\gamma$ intersecting $T$ geometrically once, since $\pi_1(M)$ is generated by $\pi_1(T)$ and such elements, we would have $\rho(\pi_1(M))\subset \mathcal{U},$ which would contradict the irreducibility of $\rho.$
\end{proof}
\section{The strong finiteness conjecture for some Seifert manifolds with non-orientable base}
\label{sec:finiteness}
In this section, we will prove Conjecture \ref{conj:finiteness_boundary} for some Seifert manifolds; namely those with base a M\"obius band with one exceptional fiber and the one with base a M\"obius band with a disk removed and no exceptional fibers. The aim will be to apply Theorem \ref{thm:largeX(M)_nclosed} to these manifolds to show that their skein module contains torsion.

The proofs will rely on manipulating arrowed diagrams, which are a convenient way of representing framed links in $F\maybesimtimes S^1,$ (which means either $F\times S^1$ if $F$ is orientable, or $F\simtimes S^1$ if $F$ is not); this was introduced in \cite{MD09} for orientable surfaces and in \cite{Mro11} for non-orientable ones.

 An arrowed diagram $D$ on $F$ is a $4$-valent graph on $F$ with under-/overcrossing information at vertices, and with arrows possibly added on the strands. It gives rise to a framed link in $F\times S^1$ in the following way: if there are no arrows, we can just interpret the diagram as the diagram of a framed link in $F\times S^1,$ otherwise, if there are arrows we modify the link by making it wrap around the $S^1$ factor positively in the direction of the arrow.

Arrowed diagrams connected by the usual Reidemeister moves (on parts of the diagram without arrows) represent the same links, but in addition, we have the moves:
\begin{center}
	\def\svgwidth{0.5 \columnwidth}
\begingroup%
  \makeatletter%
  \providecommand\color[2][]{%
    \errmessage{(Inkscape) Color is used for the text in Inkscape, but the package 'color.sty' is not loaded}%
    \renewcommand\color[2][]{}%
  }%
  \providecommand\transparent[1]{%
    \errmessage{(Inkscape) Transparency is used (non-zero) for the text in Inkscape, but the package 'transparent.sty' is not loaded}%
    \renewcommand\transparent[1]{}%
  }%
  \providecommand\rotatebox[2]{#2}%
  \ifx\svgwidth\undefined%
    \setlength{\unitlength}{221.64836426bp}%
    \ifx\svgscale\undefined%
      \relax%
    \else%
      \setlength{\unitlength}{\unitlength * \real{\svgscale}}%
    \fi%
  \else%
    \setlength{\unitlength}{\svgwidth}%
  \fi%
  \global\let\svgwidth\undefined%
  \global\let\svgscale\undefined%
  \makeatother%
  \begin{picture}(1,0.1979595)%
    \put(0,0){\includegraphics[width=\unitlength]{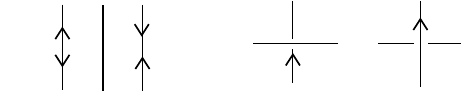}}%
    \put(0.1511898,0.08198808){\color[rgb]{0,0,0}\makebox(0,0)[lb]{\smash{$\sim$}}}%
    \put(0.23048952,0.08198808){\color[rgb]{0,0,0}\makebox(0,0)[lb]{\smash{$\sim$}}}%
    \put(-0.00102922,0.0792536){\color[rgb]{0,0,0}\makebox(0,0)[lb]{\smash{$(R_4)$}}}%
    \put(0.43557502,0.0792536){\color[rgb]{0,0,0}\makebox(0,0)[lb]{\smash{$(R_5)$}}}%
    \put(0.74001303,0.07469615){\color[rgb]{0,0,0}\makebox(0,0)[lb]{\smash{$\sim$}}}%
  \end{picture}%
\endgroup%

\end{center}

Moreover, for conveniency we will enrich arrowed diagrams to include dots, which represents a component of a framed link which is parallel to a fiber $S^1.$ Dots can be expressed in terms of arrowed unknots in the following way:
\begin{center}
	\def\svgwidth{0.4 \columnwidth}
\begingroup%
  \makeatletter%
  \providecommand\color[2][]{%
    \errmessage{(Inkscape) Color is used for the text in Inkscape, but the package 'color.sty' is not loaded}%
    \renewcommand\color[2][]{}%
  }%
  \providecommand\transparent[1]{%
    \errmessage{(Inkscape) Transparency is used (non-zero) for the text in Inkscape, but the package 'transparent.sty' is not loaded}%
    \renewcommand\transparent[1]{}%
  }%
  \providecommand\rotatebox[2]{#2}%
  \newcommand*\fsize{\dimexpr\f@size pt\relax}%
  \newcommand*\lineheight[1]{\fontsize{\fsize}{#1\fsize}\selectfont}%
  \ifx\svgwidth\undefined%
    \setlength{\unitlength}{110.95725352bp}%
    \ifx\svgscale\undefined%
      \relax%
    \else%
      \setlength{\unitlength}{\unitlength * \real{\svgscale}}%
    \fi%
  \else%
    \setlength{\unitlength}{\svgwidth}%
  \fi%
  \global\let\svgwidth\undefined%
  \global\let\svgscale\undefined%
  \makeatother%
  \begin{picture}(1,0.26880558)%
    \lineheight{1}%
    \setlength\tabcolsep{0pt}%
    \put(0,0){\includegraphics[width=\unitlength,page=1]{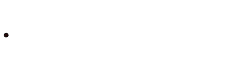}}%
    \put(0.0835308,0.10736947){\color[rgb]{0.16862745,0,0}\makebox(0,0)[lt]{\lineheight{1.25}\smash{\begin{tabular}[t]{l}$=-A^3$\end{tabular}}}}%
    \put(0,0){\includegraphics[width=\unitlength,page=2]{FiberNotation.pdf}}%
    \put(0.55523564,0.1058672){\color[rgb]{0.16862745,0,0}\makebox(0,0)[lt]{\lineheight{1.25}\smash{\begin{tabular}[t]{l}$=-A^{-3}$\end{tabular}}}}%
    \put(0,0){\includegraphics[width=\unitlength,page=3]{FiberNotation.pdf}}%
  \end{picture}%
\endgroup%

\end{center}

\subsection{The case of \texorpdfstring{$F_{1,2}\simtimes S^1$}{FxS¹}}
\label{sec:Mobiushole}
Consider now $F_{1,2}$ the M\"obius band with one hole (see Figure \ref{fig:F12}). The aim of this section is to prove the strong finiteness conjecture for $\F$; more precisely:

\begin{figure}
	\centering
	\includegraphics[scale=0.25]{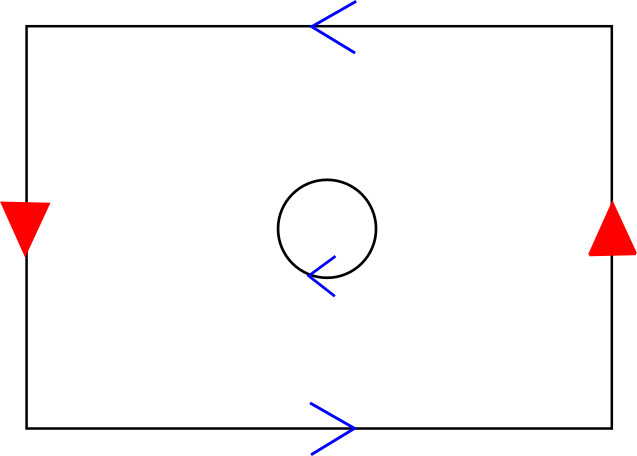}
	\caption{The $1$-holed M\"obius band; the solid red arrows represent a gluing, while the blue arrows represent a choice of orientation on the boundary.}\label{fig:F12}
\end{figure}

\begin{proposition}\label{prop:f1,2case}
	Suppose $c_2$ is a curve in $\partial \F$ that is neither horizontal (i.e. isotopic to a curve in $F_{1,2}\times\{x\}$) nor vertical (i.e. isotopic to $\{x\}\times S^1$). Then there exists $c_1$ a non-horizontal, non-vertical curve in the other component of $\partial \F$ such that $\Sk(\F,\Q(A))$ is finitely generated as a $\Q(A)[c_1,c_2]$-module.
\end{proposition}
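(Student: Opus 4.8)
The plan is to prove Proposition \ref{prop:f1,2case} by working with arrowed diagrams on $F_{1,2}$, adapting the non-orientable arrowed-diagram calculus of Mroczkowski \cite{Mro11} and the general reduction strategy of Aranda--Ferguson \cite{AF22}. Fix a decomposition of $F_{1,2}$ as a disk $D$ with a twisted band $B_\mu$ attached (its core being the one-sided curve $\mu$ carrying the crosscap) and an ordinary band $B_\beta$ attached, arranged so that one boundary curve of $F_{1,2}$ runs once over $B_\beta$ and not over $B_\mu$, while the other runs twice over $B_\mu$ and once over $B_\beta$. The first step is to put an arbitrary arrowed diagram into a normal form: using Reidemeister moves (on arrow-free parts) together with the arrow moves $(R_4)$, $(R_5)$, isotope so that all crossings and all arrows sit inside $D$ and each band is crossed by a family of parallel strands; then, since $D$ is a disk and $\Sk$ of a ball is $\Q(A)$, resolve the crossings via the Kauffman relations to write the diagram as a $\Q(A)$-combination of crossingless normal-form diagrams, each recorded by a crossingless matching of the band endpoints together with integer winding data around the $S^1$ factor.

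Next I would introduce a complexity for a crossingless normal-form diagram, something like the triple $(n_\mu, n_\beta, w)$ with $n_\mu, n_\beta$ the numbers of strands over $B_\mu$, $B_\beta$ and $w$ the total winding around $S^1$, suitably reduced and lexicographically ordered, and run an induction on complexity. The key input is the \emph{crosscap relation}: in $\pi_1(\F)$ one has $\mu f \mu^{-1} = f^{-1}$, where $f$ is the fiber, so at the level of links a strand running over $B_\mu$ together with some winding is equivalent, modulo lower complexity, to the same configuration with the winding negated (concretely $\mu$ "absorbs" the fiber, $t_{\mu f^k} = \pm t_{\mu}$ on the relevant characters). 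This collapses what a priori is a rank-$3$ lattice of "directions at infinity" (winding together with the two strand counts) down to an effective rank-$2$ lattice. On the other hand, multiplying a diagram by the sloped boundary curve $c_1$ (which wraps $p_1$ times horizontally and $q_1$ times around $S^1$, with $p_1 q_1\neq 0$) and resolving crossings shifts the complexity, as a leading term, by roughly $(0, p_1, q_1)$, while multiplying by $c_2$ shifts it by roughly $(2p_2, p_2, q_2)$; the hypothesis that $c_2$ is neither horizontal nor vertical guarantees $p_2 q_2\neq 0$, and one then picks $c_1$ (again neither horizontal nor vertical) so that these two shift-vectors span a finite-index sublattice of the reduced rank-$2$ lattice. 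With this in place, every crossingless diagram equals, modulo multiplication by $c_1, c_2$ and modulo strictly smaller complexity, one of a finite explicit set, and induction shows that $\Sk(\F,\Q(A))$ is finitely generated over $\Q(A)[c_1, c_2]$.

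I expect the heart of the difficulty to be exactly this complexity-reduction step, for two reasons. First, because $B_\mu$ is twisted, straightening strands across it and pushing arrows past it creates crossings and sign issues that could a priori \emph{increase} the complexity; the normal form and the complexity function must be chosen so that this does not happen — for instance by recording strands over $B_\mu$ in pairs so as to see the two-sided curve $\partial_0 = \mu^2$ rather than $\mu$ itself, and by using a reduced winding invariant on which the crosscap relation acts transparently — so that multiplication by $c_1$, $c_2$ is genuinely triangular with respect to it. Second, one must verify that \emph{two} sloped curves really do suffice: a single sloped curve on a boundary torus only moves along one direction there, and $\pi_1(F_{1,2})$ is free of rank $2$, so without the crosscap relation a third generator would be needed; checking that the crosscap relation collapses precisely the missing direction, and that the fiber class being common to both boundary tori takes care of the rest, is the delicate point — this is also exactly where the hypothesis that $c_2$ (hence $c_1$) is neither horizontal nor vertical is used. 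Once Proposition \ref{prop:f1,2case} is established (together with the companion statement for Seifert manifolds over $F_{1,1}$, i.e.\ Theorem \ref{thm:strongFinitenessMobius}), it supplies precisely the finite-generation hypothesis required to apply Theorem \ref{thm:largeX(M)_nclosed}.
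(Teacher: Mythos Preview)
Your plan differs substantially from the paper's argument, and as written it has a genuine gap at exactly the point you flag.

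The paper does \emph{not} run a single complexity induction on general arrowed diagrams. Instead it first proves, by an elementary classification of simple closed curves on $F_{1,2}$ together with a short arrow-elimination trick, that $\Sk(\F,\Q(A))$ is generated over the full boundary algebra $\Sk(\partial\F,\Q(A))$ by just three elements: the empty skein and two specific non-separating curves $x_1,x_2$. (The arrow trick is: since $x_i$ is non-separating, pushing a fiber across it from either side gives isotopic links, and the two Kauffman resolutions of those links force $x_i$-with-arrow $=\tfrac{1}{A+A^{-1}}\,\lambda\cdot x_i$.) This collapses the problem to studying the $\Sk(\partial\F)$-orbits $\mathcal F_0=\Sk(\partial\F)\cdot\varnothing$ and $\mathcal F_1=\Sk(\partial\F)\cdot\{x_1,x_2\}$ separately. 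On each orbit the paper then works entirely in Frohman--Gelca coordinates $(u,v,w,z)\in\Z^4$ on the two boundary tori, writes down six explicit skein identities (Lemma~\ref{lem:F12relations}) relating these coordinates, and uses a bespoke complexity $\mathcal C=(\tfrac{|a_1v-b_1u|}{a_1}+\tfrac{|a_2z-b_2w|}{a_2},\,-|a_1v-b_1u|)$ that literally measures distance to the sublattice $\Z(a_1,b_1,0,0)\oplus\Z(0,0,a_2,b_2)$. The choice of $c_1$ is explicit: $a_1>0$, $b_1<0$, and $1-b_1/a_1>\max(|1\pm b_2/a_2|)$, chosen precisely so that the cross-terms arising from relation~(2) of Lemma~\ref{lem:F12relations} strictly decrease $\mathcal C$.

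The gap in your plan is that the two load-bearing claims---that multiplication by $c_i$ ``shifts the complexity, as a leading term'' by a fixed lattice vector, and that the crosscap relation ``collapses'' one lattice direction---are asserted at a heuristic level (the second even phrased in terms of trace functions rather than skein elements) and are not obviously true for the complexity $(n_\mu,n_\beta,w)$ you propose. Resolving the crossings created by multiplying a diagram by a boundary curve of slope $(p,q)$ does not produce a single leading term plus lower terms in your ordering; in the Frohman--Gelca picture it produces two terms $(p+r,q+s)$ and $(p-r,q-s)$ of comparable size, and only a complexity that measures \emph{angular} distance from the $(a_i,b_i)$ direction sees one as smaller. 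Likewise, the $\pi_1$-identity $\mu f\mu^{-1}=f^{-1}$ does not by itself give a skein relation; the paper's substitute is the concrete identity (relation~(1)) that the fiber on one boundary torus equals the fiber on the other, together with relation~(2), a four-term identity among $(1,\pm1,0,0)$ and $(0,0,1,\pm1)$ obtained from the $R_5$ move. Without analogues of these explicit identities your induction has nothing to feed on. If you want to salvage your approach, the first step would be to imitate the paper's reduction to finitely many generators over $\Sk(\partial\F)$ and then switch to Frohman--Gelca coordinates on the boundary; the diagrammatic complexity on $F_{1,2}$ is a red herring.
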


Before proving this proposition, it will be convenient to recall the Frohman-Gelca basis \cite{FG00} of the skein algebra of the torus. For $T=S^1\times S^1,$ let $(0,0)_T \in \Sk(T)$ be the empty multicurve. For $p,q\in \Z^2$ coprime, we denote by  $(p,q)_T \in \Sk(T)$ the simple closed curve on $T$ of slope $p/q.$ Finally for $p,q\in \Z$ not both zero, let $(p,q)_T=T_d((p/d,q/d)_T)\in \Sk(T),$ where $d=\gcd(p,q)$ and $T_d(X)$ is the $d$-th Chebychev polynomial of the first type, that is, the unique polynomial in $\Z[X]$ satisfying $T_d(x+x^{-1})=x^d+x^{-d}.$

It is proved in \cite{FG00} that the set of $(p,q)_T$ for $(p,q)\in \Z^2/_{\lbrace \pm 1 \rbrace}$ is a basis of $\Sk(T),$ and that we have the product to sum formula:
$$(p,q)_T\cdot (r,s)_T=A^{ps-qr}(p+r,q+s)_T + A^{qr-ps}(p-r,q-s)_T.$$

\begin{proof}[Proof of Proposition \ref{prop:f1,2case}]
		\begin{figure}
			\begin{minipage}{0.4\textwidth}\centering
			\includegraphics[scale=0.25]{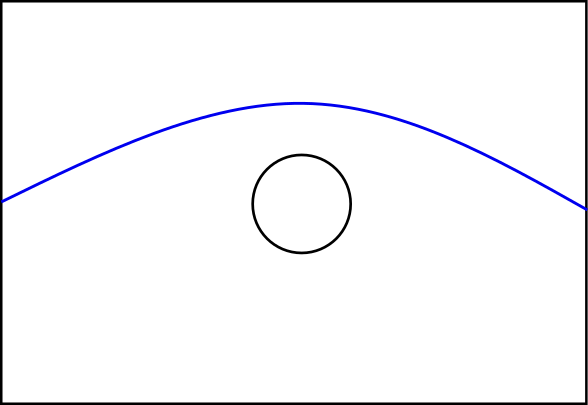}
		\end{minipage}
	\begin{minipage}{0.4\textwidth}\centering
	\includegraphics[scale=0.25]{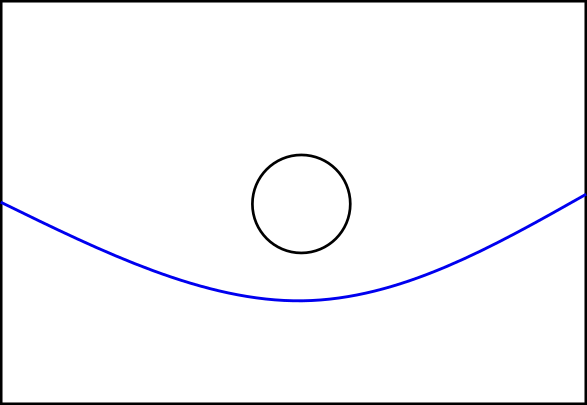}
\end{minipage}
\label{fig:F12x12}
\caption{The curves $x_1$ (on the left) and $x_2$ (on the right)}
		\end{figure}
	First notice that $\Sk(\F,\Q(A))$ is generated, over $\Sk(\partial \F,\Q(A))$, by three elements: the empty skein and the two curves $x_1$ and $x_2$ depicted in Figure \ref{fig:F12x12}. To see this, first notice the skein module of $S\times S^1$, where $S$ is any surface, is generated by arrowed multicurves in $S$. All curves in $F_{1,2}$ either bound a disk, are isotopic to the boundary, or are isotopic to either $x_1$ or $x_2$; notice further all curves isotopic to $x_1$ or $x_2$ intersect each other, therefore a multicurve in $F_{1,2}$ can contain at most one. Therefore $\Sk(\F,\Q(A))$ is generated, as a $\Sk(\partial \F,\Q(A))$-module, by the empty skein and by $x_1,x_2$, possibly with arrows. To see that arrows are not needed we apply a standard trick. Consider the identities
	
		$$\hspace*{1mm}\vcenter{\hbox{\includegraphics[width=0.18\textwidth]{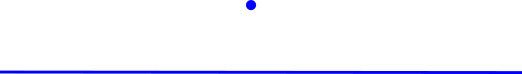}}}=A\hspace*{1mm}\vcenter{\hbox{\includegraphics[width=0.18\textwidth]{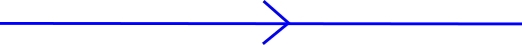}}}+A^{-1}\hspace*{1mm}\vcenter{\hbox{\includegraphics[width=0.18\textwidth]{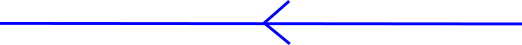}}}$$
	
and 
	
	$$\hspace*{1mm}\vcenter{\hbox{\rotatebox[origin=c]{180}{\includegraphics[width=0.18\textwidth]{figures/linefiber}}}}=A\hspace*{1mm}\vcenter{\hbox{\includegraphics[width=0.18\textwidth]{figures/larrow}}}+A^{-1}\hspace*{1mm}\vcenter{\hbox{\includegraphics[width=0.18\textwidth]{figures/rarrow}}}$$
	
	where the horizontal lines are either $x_1$ or $x_2$. Notice that the left hand sides are actually isotopic (since $x_1$ and $x_2$ are non-separating); this implies that we can invert an arrow at no cost. Then applying this to one of the previous identities we find that

	$$\frac{1}{A+A^{-1}}\hspace*{1mm}\vcenter{\hbox{\includegraphics[width=0.18\textwidth]{figures/linefiber}}}=\hspace*{1mm}\vcenter{\hbox{\includegraphics[width=0.18\textwidth]{figures/rarrow}}}$$
	
	which shows that arrowed curves are not needed to generate over $\Sk(\partial \F,\Q(A))$ (since the fibers can be isotoped into the boundary).
	
	Consider $\mathcal{F}_0= \Sk(\partial \F,\Q(A))\cdot \varnothing$, $\mathcal{F}_1=\Sk(\partial \F,\Q(A))\cdot \{x_1,x_2\}$. Then $\mathcal{F}_0$ is obviously generated over $\Q(A)$ by the elements $((p_1,q_1)_T,(p_2,q_2)_T)\cdot\varnothing$, $p_i,q_i\in \Z$, where $(p_i,q_i)_T$ is a skein element in $\partial_i\F$, here $p$ represents the horizontal coordinate and $q$ the vertical one. An analogous result holds for $\mathcal{F}_1$.
	
	To simplify notation, from now on we write $(a,b,c,d)$ to mean both the element of $\Z^4$, and the skein element $((a,b)_T,(c,d)_T)$; which one is being considered will always be clear from context.
	
The proof of the following lemma is postponed until the end of the section.
	\begin{lemma}\label{lem:F12relations}
		The following relations hold in $\Sk(\F,\Q(A))$:
		\begin{enumerate}
			\item $(0,0,0,1)\cdot\varnothing= (0,1,0,0)\cdot\varnothing$;\label{rel:f12rel1}
			\item $\left(A(1,1,0,0)+A^{-1}(0,0,1,-1)-A(0,0,1,1)-A^{-1}(1,-1,0,0)\right)\cdot\varnothing=0$;\label{rel:f12rel2}
			\item $(0,1,0,0)\cdot x_1=(0,0,0,1)\cdot x_1$;\label{rel:f12rel3}
			\item $(0,1,0,0)\cdot x_2=(0,0,0,1)\cdot x_2$;\label{rel:f12rel4}
			\item $A^2(1,1,0,0)\cdot x_1-A^{-2}(1,-1,0,0)\cdot x_1=(A-A^{-1})(0,0,0,1)\cdot x_2$;\label{rel:f12rel5}
			\item $A^2(1,1,0,0)\cdot x_2-A^{-2}(1,-1,0,0)\cdot x_2=(A-A^{-1})(0,0,0,1)\cdot x_1$;\label{rel:f12rel6}
		\end{enumerate}
	\end{lemma}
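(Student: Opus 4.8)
The plan is to prove each of the six identities by exhibiting two explicitly isotopic framed links in $\F$, drawing both as arrowed diagrams on $F_{1,2}$, resolving any crossings with the Kauffman relation together with the arrowed-diagram moves $(R_4),(R_5)$ and the ``dot'' identities recalled above, and finally rewriting the resulting boundary multicurves in the Frohman--Gelca basis via the product-to-sum formula. Recall the convention that $(a,b,c,d)$ denotes $(a,b)_{\partial_1\F}\cdot(c,d)_{\partial_2\F}$, where the first coordinate is the horizontal direction and the second the fiber direction; thus $(0,1)_{\partial_i\F}$ is a regular fiber isotoped into $\partial_i\F$, and $(1,\pm 1)_{\partial_i\F}$ are the two curves of slope $\pm 1$ there.

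\emph{Relations (1), (3) and (4).} These say that a regular fiber $\{x\}\times S^1$ defines the same element of $\Sk(\F,\Q(A))$ whether it is isotoped into $\partial_1\F$, where it reads as $(0,1)_{\partial_1\F}$, or into $\partial_2\F$, where it reads as $(0,1)_{\partial_2\F}$. Fix a properly embedded arc $\alpha\subset F_{1,2}$ from $\partial_1$ to $\partial_2$; then $\alpha\times S^1\subset\F$ is an embedded annulus whose two boundary circles are precisely these two vertical curves, which are therefore isotopic --- this is relation (1). For (3) and (4) one needs the same conclusion in the presence of $x_1$, resp.\ $x_2$, so it suffices to choose $\alpha$ disjoint from $x_1$, resp.\ $x_2$. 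This is possible because, $x_i$ being non-separating in $F_{1,2}$, the complement $F_{1,2}\setminus x_i$ is connected, hence still contains both boundary circles and thus an arc joining them. Putting $x_i$ at a generic $S^1$-level makes it disjoint from $\alpha\times S^1$, and the isotopy goes through unchanged, giving (3) and (4).

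\emph{Relations (2), (5) and (6).} These encode what happens when a curve is pushed across the orientation-reversing core of the Möbius band: travelling once around that core reverses the $S^1$-direction, which forces an arrow (a fiber-wrapping) to flip, and undoing the resulting crossings produces curves of slope $(1,\pm 1)$ on the boundary tori. Concretely, for (2) one starts from a $(1,1)$-curve on $\partial_1\F$, i.e.\ a boundary-parallel horizontal curve carrying one positive arrow, slides it through the crosscap so that it returns (to a collar of $\partial_1\F$, and then via the annulus over an arc joining the boundary circles to a collar of $\partial_2\F$) with a reversed arrow; applying $(R_4)$, $(R_5)$ and the dot identities turns this into a $\Q(A)$-linear combination of $(1,1)$- and $(1,-1)$-curves on $\partial_1\F$ and $\partial_2\F$, and equating the two expansions of the isotopic links yields relation (2). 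Relations (5) and (6) come from running the same manipulation with $x_1$, resp.\ $x_2$, also present: the curve dragged across the crosscap now crosses $x_i$ (since $x_1$ and $x_2$ have essential intersection with every curve of the other isotopy type), and resolving that single crossing via the Kauffman relation is exactly what converts $x_i$ into the curve of the other type appearing on the right-hand side, while producing the coefficients $A^{\pm 2}$ and the scalar $A-A^{-1}$. Relation (6) then follows from (5) by repeating the argument with the roles of $x_1$ and $x_2$ exchanged (equivalently, by the symmetry of $F_{1,2}$ fixing each boundary component and swapping $x_1$ with $x_2$).

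The real content of the argument lies entirely in the bookkeeping for the second group, and this is the step I expect to be the main obstacle: one must track carefully (i) the arrows --- how many times, and in which sense, each strand wraps the $S^1$-factor --- as a strand is dragged through the twisting region; (ii) the writhe/framing corrections incurred there, which are the source of the $-A^{\pm 3}$ in the dot identities and hence of the $A^{\pm 2}$ in (5) and (6); and (iii) the powers of $A$ created when the resulting boundary multicurves are re-expressed in the Frohman--Gelca basis via the product-to-sum formula. A subordinate point, used to justify all three isotopies, is that $x_1$ and $x_2$ are non-separating: as already exploited in the proof of Proposition \ref{prop:f1,2case}, this is what lets one reverse an arrow on a strand running parallel to $x_i$ at no cost, which is precisely what makes the ``other side of the crosscap'' isotopy available.
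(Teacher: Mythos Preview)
Your approach matches the paper's: relations (1), (3), (4) are exactly isotopies of a regular fiber between the two boundary components, using that $x_i$ is non-separating to find a disjoint joining arc for (3) and (4); relations (2), (5), (6) come from an arrowed $R_5$ identity followed by Kauffman resolution and rewriting in the Frohman--Gelca basis, with (6) obtained as the mirror of (5). The only caveat is that for (2), (5), (6) you present a plan rather than a proof --- you correctly flag the arrow/framing/coefficient bookkeeping as ``the main obstacle'' and do not carry it out --- whereas the paper exhibits the two isotopic arrowed diagrams explicitly, resolves the crossings, and (for (5)) invokes the arrow-reversal trick before a final Kauffman resolution to reach the stated identity; your ``slide through the crosscap to flip an arrow'' is the right heuristic for what the $R_5$ move accomplishes, but the argument is only complete once those concrete diagrams are drawn and resolved.
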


Suppose that that the curve $c_2$ on $\partial_2 \F$ has slope $b_2/a_2,$ that is,  $c_2=(a_2,b_2)_T$ in $\Sk(\partial_2 \F,\Q(A))$; choose $c_1=(a_1,b_1)_T$ such that $a_1>0,b_1<0$ and $1-\frac{b_1}{a_1}$ is greater than $\max\left(\lvert 1+\frac{b_2}{a_2}\rvert, \lvert 1-\frac{b_2}{a_2}\rvert\right)$.
	
	We now work on each $\mathcal{F}_i$ separately and prove that each is finitely generated over $\Q(A)[c_1,c_2]$. We begin with $\mathcal{F}_0$. 
	
	First we prove the following lemma.
	
	\begin{lemma}\label{lem:f0affine}
		The space $\mathcal{F}_0$ is generated, over $\Q(A)$, by elements of the form $((p_1,q_1)_T,(p_2,q_2)_T)$ with $(p_1,q_1,p_2,q_2)$ belonging to a finite union of affine subspaces in $\Z^4$ directed by the subspace generated by $(a_1,b_1,0,0)$ and $(0,0,a_2,b_2)$.
	\end{lemma}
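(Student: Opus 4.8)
The goal is to show that $\mathcal{F}_0 = \Sk(\partial\F,\Q(A))\cdot\varnothing$, which \emph{a priori} is spanned over $\Q(A)$ by all $(p_1,q_1,p_2,q_2)$ with $(p_i,q_i)\in\Z^2$, is actually spanned by those quadruples lying in finitely many cosets of the lattice $\Lambda$ generated by $v_1 = (a_1,b_1,0,0)$ and $v_2 = (0,0,a_2,b_2)$. The idea is to use the relations of Lemma \ref{lem:F12relations}\eqref{rel:f12rel1} and \eqref{rel:f12rel2} — the only ones involving just $\varnothing$ — as rewriting rules that let us reduce any quadruple to a $\Q(A)$-combination of quadruples in a bounded region plus things already in the $\Q(A)[c_1,c_2]$-span. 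First I would observe that relation \eqref{rel:f12rel1} says $(p_1,q_1,p_2,q_2)\cdot\varnothing$ only depends, in a sense, on how much "vertical weight" is distributed: more precisely, using the product-to-sum formula for the torus together with \eqref{rel:f12rel1}, one shows that pushing a vertical curve from the $\partial_2$-side to the $\partial_1$-side (or vice versa) is allowed, so only the total is relevant; this already collapses one degree of freedom.

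\textbf{Key steps.} The main engine is relation \eqref{rel:f12rel2}, which after multiplying by an arbitrary Frohman--Gelca element $((p_1,q_1)_T,(p_2,q_2)_T)$ and expanding via the product-to-sum formula, becomes a four-term linear recursion among quadruples of the form $(p_1\pm 1, q_1\pm 1, p_2, q_2)$ and $(p_1, q_1, p_2\pm 1, q_2\mp 1)$. The point is that this recursion lets us express the quadruple that is "most extreme" in some direction in terms of the other three. Concretely, I would set up a monovariant: think of $(p_1,q_1,p_2,q_2)$ and measure it by a linear functional $\ell$ chosen so that $\ell(v_1)=\ell(v_2)=0$ (so $\ell$ only sees the transverse directions) and so that the recursion from \eqref{rel:f12rel2} strictly decreases $\ell$ on the dominant term. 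The inequality imposed on $c_1 = (a_1,b_1)_T$ — namely $1 - b_1/a_1 > \max(|1+b_2/a_2|,|1-b_2/a_2|)$ — is precisely what guarantees that the "slope" of the reduction move coming from $c_1$ dominates the slopes of the two moves $(0,0,1,1)$ and $(1,-1,0,0)$ appearing in \eqref{rel:f12rel2}, so that iterating the recursion genuinely pushes quadruples toward a bounded fundamental domain for $\Lambda$ rather than cycling. Thus every quadruple is, modulo the $\Q(A)[c_1,c_2]$-action (which realizes translation by $v_1$ and $v_2$ up to lower-order terms), reducible to one in a finite set; collecting these finitely many cosets gives the statement.

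\textbf{Expected main obstacle.} The delicate point is bookkeeping the "up to lower-order terms" in the claim that multiplication by $c_1$ or $c_2$ implements translation by $v_1$ or $v_2$: by the product-to-sum formula, $(a_1,b_1)_T\cdot(p_1,q_1)_T = A^{\bullet}(p_1+a_1,q_1+b_1)_T + A^{-\bullet}(p_1-a_1,q_1-b_1)_T$, so multiplying by $c_1$ gives the desired translate plus an \emph{unwanted} term translated by $-v_1$. One must therefore run the reduction in a way that is compatible with this two-sided spreading — e.g. by choosing the functional $\ell$ so that both $\pm v_1$ directions are "neutral" and controlling instead a genuinely transverse quantity, and by doing an induction on that transverse quantity where at each stage the $c_1,c_2$-multiplications only move things within the current level or to already-handled levels. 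Making this induction precise, and checking that the chosen inequality on $(a_1,b_1)$ makes the finitely many base cases genuinely finite (i.e. that the region one is reducing into is compact transverse to $\Lambda$), is the technical heart; the rest is the product-to-sum bookkeeping, which is routine but must be done carefully to track the powers of $A$ and confirm no denominators beyond $\Q(A)$ are introduced.
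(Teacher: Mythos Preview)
Your overall strategy --- use relations \eqref{rel:f12rel1} and \eqref{rel:f12rel2} as rewriting rules driven by a complexity function vanishing on $\Lambda$ --- is the same as the paper's. But two points in your write-up are genuine gaps rather than details to be filled in.

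\textbf{First, your ``expected main obstacle'' is not part of this lemma.} The statement is purely about $\Q(A)$-generation: show that every $(p_1,q_1,p_2,q_2)\cdot\varnothing$ is a $\Q(A)$-combination of such elements with the quadruple in finitely many $\Lambda$-cosets. Multiplication by $c_1$ or $c_2$ plays no role here; the two-sided spreading you worry about is dealt with \emph{after} this lemma, in the separate (and easy) argument that each individual affine subspace is finitely generated over $\Q(A)[c_1,c_2]$. So your obstacle is real, but it belongs to a different step; importing it into the proof of this lemma muddles what has to be shown.

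\textbf{Second, a single linear functional $\ell$ cannot do the job.} Since $\Z^4/\Lambda$ has rank $2$, bounding one $\Lambda$-invariant functional still leaves a one-parameter family of cosets; you need two independent controls. The paper sets $c_1(u,v,w,z)=|a_1v-b_1u|$ and $c_2(u,v,w,z)=|a_2z-b_2w|$, takes $c = c_1/a_1 + c_2/a_2$, and works with the lexicographic pair $\mathcal{C}=(c,-c_1)$. Relation \eqref{rel:f12rel1}, multiplied by $(u,v,w,z-1)$, expresses $(u,v,w,z)\cdot\varnothing$ in terms of $(u,v,w,z-2)$ and $(u,v\pm 1,w,z-1)$; each has strictly smaller $\mathcal{C}$, so this drives $c_2$ down. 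Relation \eqref{rel:f12rel2}, multiplied by $(u-1,v-1,w,z)$, yields \emph{eight} terms (not four: each of the four summands splits via product-to-sum), and one checks that all seven besides $(u,v,w,z)$ have smaller $\mathcal{C}$ --- this is exactly where the slope inequality on $(a_1,b_1)$ is used, to ensure the drop in $c_1/a_1$ beats any possible rise in $c_2/a_2$. Your sketch gestures at ``collapsing one degree of freedom'' via \eqref{rel:f12rel1} and then running a monovariant via \eqref{rel:f12rel2}, which is morally right, but without specifying both complexities and their lexicographic interaction you cannot conclude finiteness of the coset set.
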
 
	\begin{proof}
		Define the following auxiliary complexities:
		
		\begin{gather*}
			c_1(u,v,w,z)=\lvert a_1v-b_1u\rvert;\\
			c_2(u,v,w,z)=\lvert a_2z-b_2w\rvert;\\
			c(u,v,w,z)=\frac{c_1(u,v,w,z)}{a_1}+\frac{c_2(u,v,w,z)}{a_2};
		\end{gather*}
		then the complexity function $\mathcal{C}$ is defined as $$\mathcal{C}(u,v,w,z)=(c(u,v,w,z),-c_1(u,v,w,z))$$ with lexicographic ordering. With a slight abuse of notation, if $x\in \Sk(\partial \F,\Q(A))$ is equal to $(p_1,q_1,p_2,q_2)$ we write $\mathcal{C}(x)$ to denote $\mathcal{C}(p_1,q_1,p_2,q_2)$ (and similarly for $c$, $c_1$ and $c_2$).
		
		Notice that because of the way $\mathcal{C}$ is defined, given $\mathcal{C}(x)$ there is only a finite number of values of $\mathcal{C}(x')$ such that $\mathcal{C}(x')<\mathcal{C}(x)$: this is because $c_2(x)\leq a_2c(x)$. Furthermore, notice that if $x=(u,v,w,z)$ is such that $c_1(x)=d$, then for any $k\in \Z$, if $x'=(u-kb_1,v-ka_1,w,z)$ is also such that $c_1(x)=d$; viceversa if $x$ and $x'$ are such that $c_1(x)=c_1(x')$, their first two components must satisfy the above relation for some $k$. This implies that for a given pair $(d_1,d_2)$, the space of $x\in \Z^4$ such that $\mathcal{C}(x)=(d_1,-d_2)$ is an affine space directed by the subspace generated by $(a_1,b_1,0,0)$ and $(0,0,a_2,b_2)$.
		
		Now the fact that $\mathcal{F}_0$ is generated by  elements of the form $(p_1,q_1,p_2,q_2)$ with $(p_1,p_2,q_1,q_2)$ belonging to a finite union of affine subspaces in $\Z^4$ directed by $(a_1,b_1,0,0)$ and $(0,0,a_2,b_2)$ is contained in the following two lemmas, whose proof is postponed to the end of the section.
		
		\begin{lemma}\label{lem:reduceC2}
			Suppose $x$ is such that $c_2(x)>2a_2$. Then $x\cdot \varnothing$ is a linear combination of elements of the form $x'\cdot \varnothing$, with $\mathcal{C}(x')<\mathcal{C}(x)$.
		\end{lemma} 
		\begin{lemma}\label{lem:reduceC1}
			Assume $x$ is such that $c_1(x)>2(a_1-b_1)$, that $a_1>0$, $b_1<0$ and that $1-\frac{b_1}{a_1}> \max\left(\lvert 1+\frac{b_2}{a_2}\rvert,\lvert1-\frac{b_2}{a_2}\rvert\right)$. Then, $x\cdot \varnothing$ is a linear combination of elements of the form $x'\cdot \varnothing$ with $\mathcal{C}(x')<\mathcal{C}(x)$.
		\end{lemma}
		Given 
		Using Lemmas \ref{lem:reduceC2} and \ref{lem:reduceC1} we can see that $\mathcal{F}_0$ is generated by elements of the form $x\cdot \varnothing$ with $\mathcal{C}(x)$ bounded by some constant; therefore the statement of the lemma is satisfied.
	\end{proof}
	
	Now we can prove that for each affine subspace $V$ as above, there is a finite number of elements in $V$ such that the subspace of $\Sk(\F,\Q(A))$ they generate (over $\Q(A)[c_1,c_2]$) is the same as the subspace generated by $V$. Suppose $V$ is given by $\mathcal{C}(x)=(d_1,d_2)$; then as we have seen, a generic element of $V$ is going to be equal to $(u-k_1a_1,v-k_1b_1,w-k_2a_2,z-k_2b_2)$, for some $(u,v,w,z)$ with complexity $(d_1,d_2)$ and for any $(k_1,k_2)$ in $\Z^2$. Then the elements $(u,v,w,z)$, $(u+a_1,v+b_1,w,z)$, $(u,v,w+a_2,z+b_2)$ and $(u+a_1,v+b_1,w+a_2,z+b_2)$ generate, over $\Q(A)[c_1,c_2]$ the same subspace of $\mathcal{F}_0$ as the whole of $V$. To see this, notice that $c_1\cdot (u+a_1,v+b_1,w,z)= A^*(u+2a_1,v+2b_1,w,z)+A^*(u,v,w,z)$ (where $A^*$ is a suitable power of $A$); using this identity (and the analogous one for $c_2$) we can see that the space that those two elements generate contains $(u-k_1a_1,v-k_1b_1,w-k_2a_2,z-k_2b_2)$ for any $(k_1,k_2)\in\Z^2$, so it contains the space generated by $V$. This is enough to prove that $\mathcal{F}_0$ is finitely generated over $\Q(A)[c_1,c_2]$.
	
	We now carry out the same procedure for $\mathcal{F}_1$. The analog of Lemma \ref{lem:f1affine} is the following:
	\begin{lemma}\label{lem:f1affine}
		The space $\mathcal{F}_1$ is generated, over $\Q(A)$, by elements of the form $(p_1,q_1,p_2,q_2)\cdot x_i$ with $i=1,2$ and  $(p_1,p_2,q_1,q_2)$ belonging to a finite union of affine subspaces in $\Z^4$ directed by the subspace generated by $(a_1,b_1,0,0)$ and $(0,0,a_2,b_2)$, where $(a_i,b_i)_T=c_i$.
	\end{lemma}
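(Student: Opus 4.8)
The plan is to follow the proof of Lemma~\ref{lem:f0affine} essentially verbatim, with the relations (1)--(2) of Lemma~\ref{lem:F12relations} (which describe the action of boundary curves on the empty skein) replaced by the relations (3)--(6) (which describe the action on $x_1$ and $x_2$). As a first step I would observe that, just as for $\mathcal{F}_0$, the module $\mathcal{F}_1$ is spanned over $\Q(A)$ by the elements $((p_1,q_1)_T,(p_2,q_2)_T)\cdot x_i$ with $(p_1,q_1,p_2,q_2)\in\Z^4$ and $i\in\{1,2\}$: this follows at once from the Frohman--Gelca basis of $\Sk(\partial\F,\Q(A))\cong\Sk(T,\Q(A))\otimes\Sk(T,\Q(A))$ and the definition $\mathcal{F}_1=\Sk(\partial\F,\Q(A))\cdot\{x_1,x_2\}$. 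I would then reuse the same complexity function $\mathcal{C}(u,v,w,z)=\bigl(c(u,v,w,z),\,-c_1(u,v,w,z)\bigr)$ with its lexicographic order, together with the observation already used for $\mathcal{F}_0$ that $\{x'\in\Z^4:\mathcal{C}(x')<\mathcal{C}(x)\}$ meets only finitely many of the affine subspaces directed by $(a_1,b_1,0,0)$ and $(0,0,a_2,b_2)$ (since $c_2\le a_2 c$ and $c_1\ge 0$), so that a descending induction on $\mathcal{C}$ terminates.

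The heart of the argument is then to establish the analogues for $x\cdot x_i$ of the reduction Lemmas~\ref{lem:reduceC2} and~\ref{lem:reduceC1}: if $c_2(x)$ (resp.\ $c_1(x)$) exceeds a suitable bound, then $x\cdot x_i$ is a $\Q(A)$-linear combination of terms $x'\cdot x_j$, $j\in\{1,2\}$, with $\mathcal{C}(x')<\mathcal{C}(x)$. As in the $\mathcal{F}_0$ case one obtains these by multiplying $x\cdot x_i$ by the boundary curve $c_2=(0,0,a_2,b_2)_T$ (resp.\ $c_1=(a_1,b_1,0,0)_T$), expanding the resulting product on $\partial_2\F$ (resp.\ $\partial_1\F$) with the product-to-sum formula, and then invoking relations (3)--(4) (resp.\ (5)--(6)) to rewrite the highest-complexity monomials; every one of these operations keeps us inside $\mathcal{F}_1$, since it only alters the boundary factor. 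The one genuinely new feature compared with the empty-skein case is that relation (5) rewrites $(1,1,0,0)\cdot x_1$ and $(1,-1,0,0)\cdot x_1$ in terms of $(0,0,0,1)\cdot x_2$, and relation (6) does the same with $x_1$ and $x_2$ interchanged; so a reduction step applied to a term attached to $x_i$ may output a term attached to $x_{3-i}$. Because the output term then carries a purely vertical curve on $\partial_1$ (so $c_1=0$, strictly smaller) while its $\partial_2$-factor is unchanged, and using the standing hypotheses $a_1>0$, $b_1<0$ and $1-\tfrac{b_1}{a_1}>\max\bigl(|1+\tfrac{b_2}{a_2}|,\,|1-\tfrac{b_2}{a_2}|\bigr)$, one checks as in Lemma~\ref{lem:reduceC1} that $\mathcal{C}$ strictly decreases. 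Iterating, $\mathcal{F}_1$ is spanned by the $x\cdot x_i$ with $\mathcal{C}(x)$ bounded, and by the finiteness observation above these $x$ range over a finite union of affine subspaces directed by $(a_1,b_1,0,0)$ and $(0,0,a_2,b_2)$, which is exactly the claim.

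I expect the main obstacle to be the case analysis hidden in the two reduction steps, and in particular controlling the alternation between $x_1$ and $x_2$ produced by relations (5)--(6): since the index $i\in\{1,2\}$ is not recorded in the complexity $\mathcal{C}$, one has to verify that every monomial appearing after the product-to-sum expansion and the substitutions --- including the mixed monomials coming from the Chebyshev recursion on the two boundary tori --- really does have strictly smaller $\mathcal{C}$, no matter which of $x_1,x_2$ it is attached to. This is bookkeeping-heavy but not conceptually new: it is the same computation carried out in the proofs of Lemmas~\ref{lem:reduceC2} and~\ref{lem:reduceC1}, now repeated once more in the presence of the second generator.
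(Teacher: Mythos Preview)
Your overall plan is exactly the paper's: it simply states that the proof of Lemma~\ref{lem:f1affine} ``proceeds exactly the same as for Lemma~\ref{lem:f0affine}; the analogs of Lemmas~\ref{lem:reduceC2} and~\ref{lem:reduceC1} use the same complexities but use Relations~(\ref{rel:f12rel3})--(\ref{rel:f12rel6}) instead of Relations~(\ref{rel:f12rel1}) and~(\ref{rel:f12rel2}).'' So the strategy---same $\mathcal{C}$, same descending induction, relations (3)--(6) in place of (1)--(2), and allowing the index $i\in\{1,2\}$ to flip---is correct and matches the paper.

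Two points in your description are inaccurate, though, and would not work as written. First, in neither the $\mathcal{F}_0$ nor the $\mathcal{F}_1$ case does one ``multiply $x\cdot x_i$ by the boundary curve $c_2$ (resp.\ $c_1$)''; that operation produces $(u,v,w\pm a_2,z\pm b_2)\cdot x_i$, which does not help reduce $(u,v,w,z)\cdot x_i$. What one actually does (as in Lemmas~\ref{lem:reduceC2}--\ref{lem:reduceC1}) is multiply the \emph{relation} by a shifted boundary element: relation~(3) or~(4) by $(u,v,w,z-1)$ for the $c_2$-reduction, and relation~(5) or~(6) by $(u-1,v-1,w,z)$ for the $c_1$-reduction. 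Second, after multiplying relation~(5) by $(u-1,v-1,w,z)$, the $x_2$-terms produced are $(u-1,v-1,w,z\pm 1)\cdot x_2$: their $\partial_1$-factor is $(u-1,v-1)$, not $(0,0)$, and their $\partial_2$-factor is $(w,z\pm 1)$, not $(w,z)$. So your justification ``$c_1=0$, $\partial_2$-factor unchanged'' is wrong. The complexity nonetheless strictly decreases, but for the reason in Lemma~\ref{lem:reduceC1}: $c_1$ drops by $a_1-b_1$ while $c_2$ changes by at most $a_2$, and $1-\tfrac{b_1}{a_1}>1$ since $b_1<0$. With these two corrections your sketch is complete.
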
 

The proof proceeds exactly the same as for Lemma \ref{lem:f0affine}; the analogs of Lemmas \ref{lem:reduceC2} and \ref{lem:reduceC1} uses the same complexities but uses Relations \ref{rel:f12rel3}-\ref{rel:f12rel6} instead of Relations \ref{rel:f12rel1} and \ref{rel:f12rel2} in its proof. It is then possible to prove that $\mathcal{F}_1$ is finitely generated over $\Q(A)[c_1,c_2]$ from Lemma \ref{lem:f1affine} using the same reasoning as before.
\end{proof}

Finally we prove the Lemmas used in Proposition \ref{prop:f1,2case}.

\begin{proof}[Proof of Lemma \ref{lem:F12relations}]
	Relation \ref{rel:f12rel1} can be achieved via an isotopy: $(0,1,0,0)\varnothing$ is a fiber in one boundary component and $(0,0,0,1)\varnothing$ is a fiber in the other, therefore they are isotopic. Relations $\ref{rel:f12rel3}$ and $\ref{rel:f12rel4}$ are obtained in the same way (noticing that both $x_1$ and $x_2$ are non-separating in $F_{1,2}$).
	
	Relation \ref{rel:f12rel2} comes from the following identity, a consequence of the arrowed Reidemeister move $R_5$:
	
	$$\vcenter{\hbox{\includegraphics[width=0.2\textwidth]{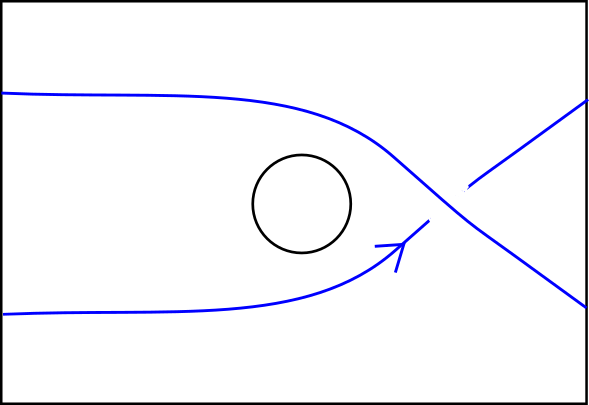}}}=\vcenter{\hbox{\includegraphics[width=0.2\textwidth]{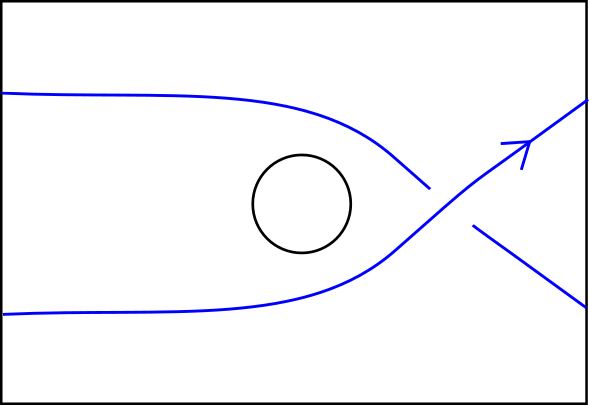}}}$$
	
	Resolving the crossing on both sides using the Kauffman relation gives:
	
		$$A\hspace*{1mm}\vcenter{\hbox{\includegraphics[width=0.18\textwidth]{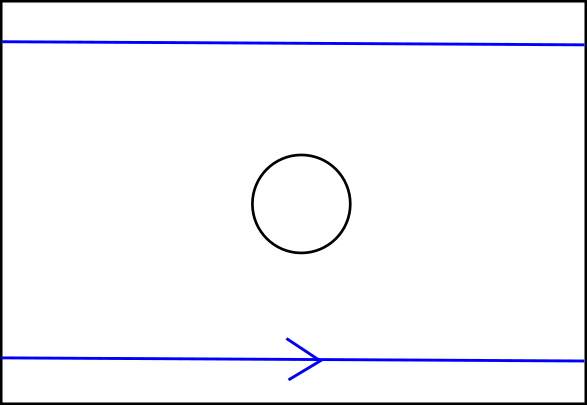}}}+A^{-1}\hspace*{1mm}\vcenter{\hbox{\includegraphics[width=0.18\textwidth]{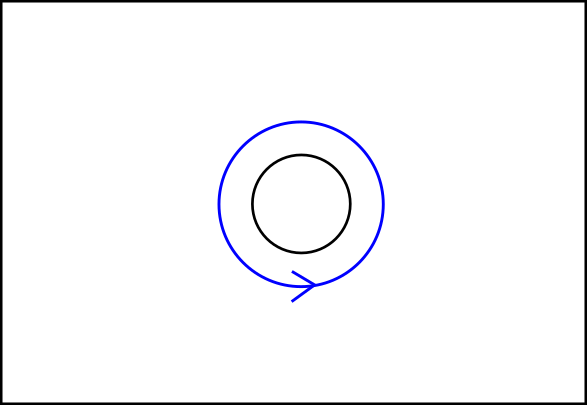}}}=A\hspace*{1mm}\vcenter{\hbox{\includegraphics[width=0.18\textwidth]{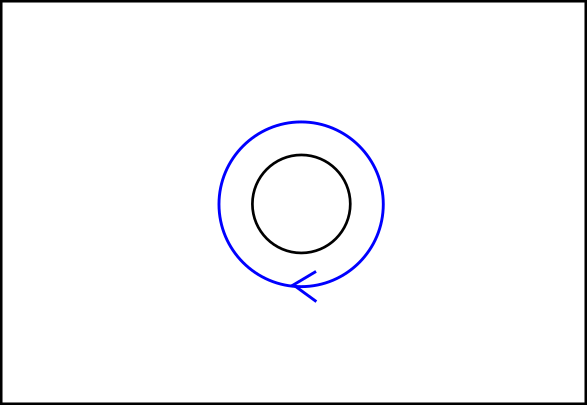}}}+A^{-1}\hspace*{1mm}\vcenter{\hbox{\includegraphics[width=0.18\textwidth]{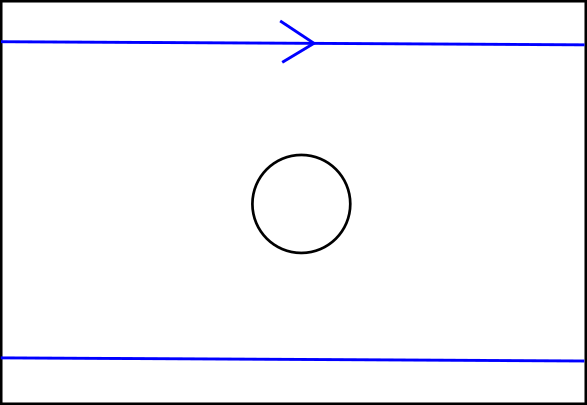}}}$$
	All these curves can be isotoped into the boundary; when written in the Frohman-Gelca basis, the relation reads $$\left(A(1,1,0,0)+A^{-1}(0,0,1,-1)-A(0,0,1,1)-A^{-1}(1,-1,0,0)\right)\cdot\varnothing=0.$$
	
	Relation \ref{rel:f12rel5} follows from the identity 
	
	$$\vcenter{\hbox{\includegraphics[width=0.2\textwidth]{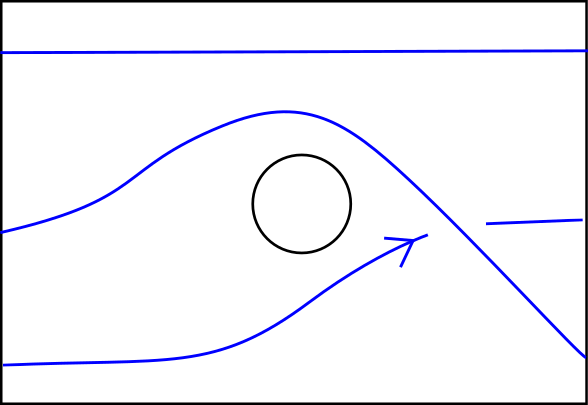}}}=\vcenter{\hbox{\includegraphics[width=0.2\textwidth]{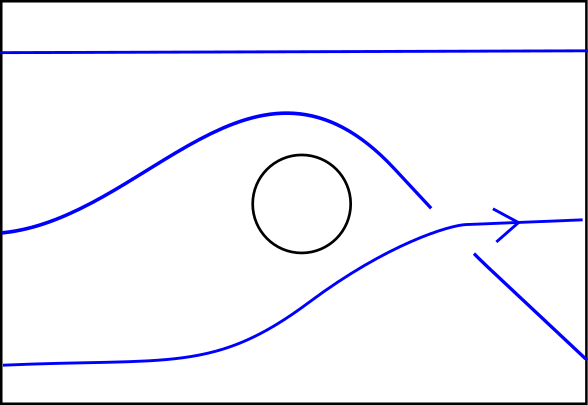}}}$$
	which is once again a consequence of the arrowed Reidemeister move $R_5$. Expanding gives 		$$A\hspace*{1mm}\vcenter{\hbox{\includegraphics[width=0.18\textwidth]{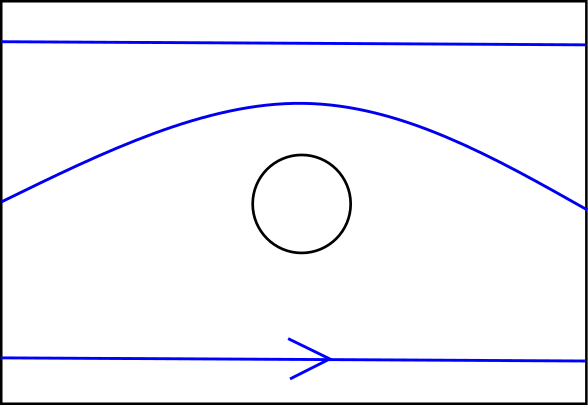}}}+A^{-1}\hspace*{1mm}\vcenter{\hbox{\includegraphics[width=0.18\textwidth]{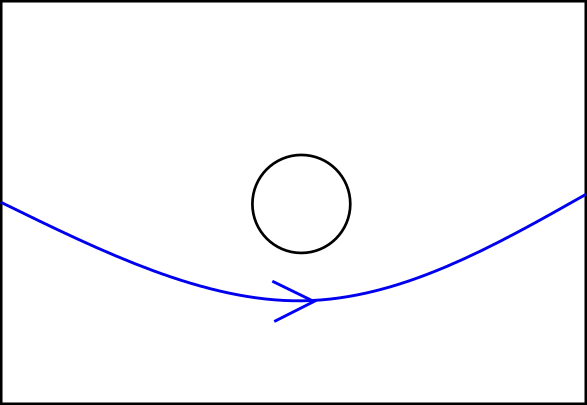}}}=A\hspace*{1mm}\vcenter{\hbox{\includegraphics[width=0.18\textwidth]{figures/f12r4r2}}}+A^{-1}\hspace*{1mm}\vcenter{\hbox{\includegraphics[width=0.18\textwidth]{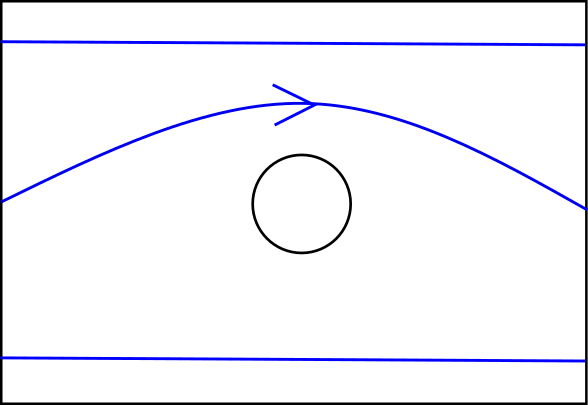}}}$$
	
	This identity can be rewritten as
		$$A\hspace*{1mm}\vcenter{\hbox{\includegraphics[width=0.18\textwidth]{figures/f12r4r1}}}+\frac{A^{-1}-A}{A+A^{-1}}\hspace*{1mm}\vcenter{\hbox{\includegraphics[width=0.18\textwidth]{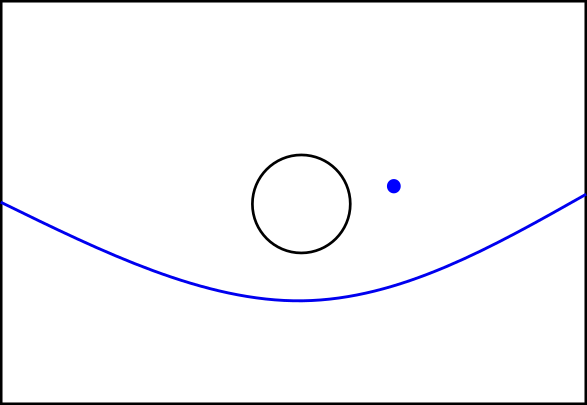}}}=\frac{A^{-1}}{A+A^{-1}}\hspace*{1mm}\vcenter{\hbox{\includegraphics[width=0.18\textwidth]{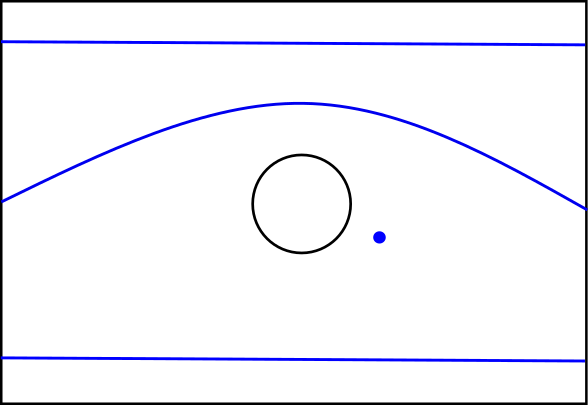}}}$$
		
		using the trick from the start of the proof of Proposition \ref{prop:f1,2case}. 
		
		After rearranging and applying the Kauffman bracket relation to the fiber in the right hand side, we obtain
		
		$$A^2\hspace*{1mm}\vcenter{\hbox{\includegraphics[width=0.18\textwidth]{figures/f12r4r1}}}-A^{-2}\hspace*{1mm}\vcenter{\hbox{\includegraphics[width=0.18\textwidth]{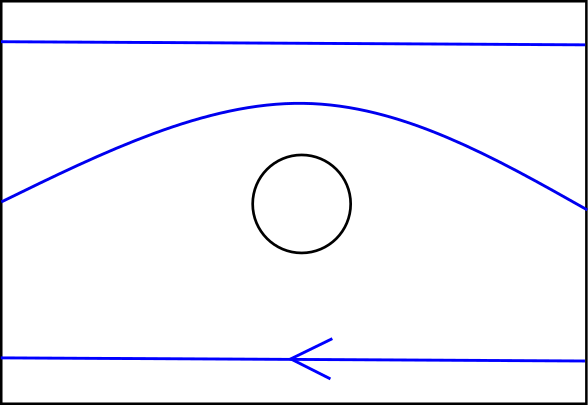}}}=(A-A^{-1})\hspace*{1mm}\vcenter{\hbox{\includegraphics[width=0.18\textwidth]{figures/f12r4r6}}}$$
		
		which gives exactly	$A^2(1,1,0,0)\cdot x_1-A^{-2}(1,-1,0,0)\cdot x_1=(A-A^{-1})(0,0,0,1)\cdot x_2$.
		
		Finally Relation \ref{rel:f12rel6} is the mirror of Relation \ref{rel:f12rel5}.
	
\end{proof}

\begin{proof}[Proof of Lemma \ref{lem:reduceC2}]
	Denote with $x=(u,v,w,z)$ and assume that $a_2z-b_2w\geq 0$ and  that $a_1v-b_1u\geq 0$ (this can be done because $(w,z)_T=(-w,-z)_T$). Then multiplying Relation \ref{rel:f12rel1} of Lemma \ref{lem:F12relations} by the boundary skein element $(u,v,w,z-1)$ allows us to write $x$ as a linear combination of the skein elements $(u,v+1,w,z-1)\cdot \varnothing$, $(u,v-1,w,z-1)\cdot \varnothing$ and $(u,v,w,z-2)\cdot \varnothing$. We show that each of these tuples has lower complexity than $x$. Indeed, $c_2(u,v,w,z-2)=a_2z-b_2w-2a_2<c_2(x)$, and its $c_1$ is unchanged. Furthermore, when compared to $c(u,v,w,z)$, we have that in
	$$c(u,v+1,w,z-1)=\frac{\lvert a_1v-b_1u+a_1\rvert}{a_1}+\frac{a_2z-b_2w-a_2}{a_2}$$
	the first summand can increase by at most $1$, whereas the second summand surely decreases by $1$; this shows that $c(u,v+1,w,z-1)\leq c(u,v,w,z)$ and when there is equality $c_1(u,v+1,w,z-1)> c_1(u,v,w,z)$, which means that $\mathcal{C}(u,v+1,w,z-1)<\mathcal{C}(u,v,w,z)$ in all cases. The remaining case follows from the same line of reasoning.
\end{proof}

\begin{proof}[Proof of Lemma \ref{lem:reduceC1}]
		As in the proof of the previous lemma, we want to fix the signs of $a_2z-b_2w$ and of $a_1v-b_2u$ so that they are positive.
		Take now Relation \ref{rel:f12rel2} of Lemma \ref{lem:F12relations} and multiply it to the left by $(u-1,v-1,w,z)$. After applying the product to sum formula we obtain that $(u,v,w,z)\varnothing$ is a linear combination of the terms
$(u-2,v-2,w,z)\varnothing$,$(u-1,v-1,w+1,z-1)\varnothing$, $(u-1,v-1,w-1,z+1)\varnothing$, $(u-1,v-1,w+1,z+1)\varnothing$, $(u-1,v-1,w-1,z-1)\varnothing$, $(u,v-2,w,z)\varnothing$ and $(u-2,v,w,z)\varnothing$.

	We claim that all these terms have a lower complexity than $(u,v,w,z)$. Indeed, the terms $(u-2,v-2,w,z)$, $(u-2,v,w,z)$ and $(u,v-2,w,z)$ have a lower $c_1$ complexity and same $c_2$ complexity. For example, $c_1(u,v-2,w,z)=\lvert a_1v-b_1u-2a_1\rvert<\lvert a_1v-b_1u\rvert=c_1(u,v,w,z)$ because $c_1(u,v,w,z)>2(a_1-b_1)>2a_1$ (recall that $b_1$ is negative). The other two cases follow the same reasoning. 
	
	The remaining terms are more complicated but they all follow the same reasoning. For example, consider the term $(u-1,v-1,w+1,z+1)$. Its $c_2$ complexity might actually increase by at most $\lvert b_2-a_2\rvert$: $c_2(u-1,v-1,w+1,z+1)=\lvert a_2z-b_2w-a_2+b_2\rvert \leq \lvert a_2z-b_2w\rvert + \lvert b_2-a_2\rvert$. However, a similar calculation shows that its $c_1$ complexity must decrease by at least $a_1-b_1$. Therefore when calculating $\mathcal{C}$, the $c_1$ term decreases by at least $1-\frac{b_1}{a_1}$, while the $c_2$ term increases by at most $\lvert 1-\frac{a_2}{b_2}\rvert$; the assumption on $a_1$ and $b_1$ then implies that $\mathcal{C}$ must decrease. All other summands follow the same reasoning.
	\end{proof}

\subsection{Strong finiteness conjecture for manifolds with M\"obius band base and one exceptional fiber}

In this short section we prove the following:

\begin{proposition}\label{prop:f1,1case}
Let $M$ be a Seifert manifold with base the M\"obius band $F_{1,1}$ and one exceptional fiber; then there exists a non-horizontal, non-vertical curve $c\subseteq \partial M$ such that $\Sk(M,\Q(A))$ is finitely generated as a $\Q(A)[c]$-module.
\end{proposition}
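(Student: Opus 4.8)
The plan is to reduce the case of $M$, a Seifert manifold with base the M\"obius band $F_{1,1}$ and one exceptional fiber, to the case of $\F = F_{1,2}\simtimes S^1$ handled in Proposition \ref{prop:f1,2case}. First I would observe that if we remove a fibered solid torus neighborhood $N$ of the exceptional fiber from $M$, what remains is $F_{1,2}\simtimes S^1$, i.e. the Seifert manifold over the M\"obius band with a disk removed and no exceptional fibers. Thus $M = (F_{1,2}\simtimes S^1)\cup_T N$, where $T$ is a torus: the new boundary component created by removing $N$, which is glued to $\partial N$. Under this decomposition, $\Sk(M,\Q(A))$ is a quotient of $\Sk(\F,\Q(A))$; more precisely, there is a surjection $\Sk(\F,\Q(A))\to \Sk(M,\Q(A))$ coming from the inclusion $\F\hookrightarrow M$ (filling in the solid torus can only add relations, namely the relations coming from skeins in $N$ that become trivial or Chebyshev-expandable there).

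The key steps, in order, would be: (1) Identify $M\setminus N \cong \F$ and fix the gluing data so that one boundary component of $\F$ (say $\partial_1$) is the one capped off by $N$, and the other ($\partial_2$) becomes $\partial M$. (2) Let $c_2$ be any non-horizontal, non-vertical curve on $\partial_2\F = \partial M$, and apply Proposition \ref{prop:f1,2case} to obtain a non-horizontal, non-vertical curve $c_1$ on $\partial_1\F$ such that $\Sk(\F,\Q(A))$ is finitely generated over $\Q(A)[c_1,c_2]$. (3) Push forward to $M$: since $\Sk(M,\Q(A))$ is a quotient of $\Sk(\F,\Q(A))$, it is finitely generated over the image of $\Q(A)[c_1,c_2]$. (4) Finally, show that the image of $c_1$ in $\Sk(M,\Q(A))$ lies in $\Q(A)[c]$ for $c = c_2$ viewed in $\partial M$ — indeed, since $c_1$ bounds (up to the Seifert fibration data) inside the filling solid torus $N$, the curve $c_1$ can be isotoped through $N$, and by resolving it in the solid torus via the standard slide/Chebyshev relations (as in the product-to-sum manipulations of Section \ref{sec:Mobiushole}), $c_1$ becomes a $\Q(A)$-linear combination of powers of the core of $N$ and fiber curves, all of which can in turn be pushed back to $\partial M$ and expressed in terms of $c$ (or absorbed into $\Q(A)$). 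Hence $\Sk(M,\Q(A))$ is finitely generated over $\Q(A)[c]$.

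The main obstacle I expect is step (4): controlling precisely what happens to the curve $c_1$ (and to the ambient skein elements multiplied by it) when it is pushed into the filling solid torus $N$. The curve $c_1$ on $T = \partial N$ has some slope with respect to the meridian of $N$; if that slope is the meridian itself then $c_1$ bounds a disk and one gets Chebyshev relations immediately, but in general $c_1$ wraps around the core, so one needs to argue that the relevant skein elements $c_1\cdot(\text{generator})$ in $\Sk(N)$ already lie in the span of finitely many fiber-and-core monomials over $\Q(A)[c]$. This amounts to understanding $\Sk(\text{solid torus})$ as a module, with its $\Sk(T)$-action, which is classical (the skein module of the solid torus is $\Q(A)[z]$ where $z$ is the core), but one has to make sure the slope of $c_1$ chosen in step (2) is compatible — i.e. that we have freedom in Proposition \ref{prop:f1,2case} to choose $c_1$ of a convenient slope relative to the meridian of $N$. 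If Proposition \ref{prop:f1,2case} only grants existence of \emph{some} suitable $c_1$, one may need the remark (currently commented out in the excerpt) that a fiber-preserving self-diffeomorphism of $\F$ lets us adjust the slope of $c_1$ freely among non-horizontal, non-vertical curves; with that freedom, one chooses $c_1$ so that it is a longitude of $N$ (intersecting the meridian once), whence $c_1$ is isotopic in $M$ to a curve on $\partial M$ together with a controlled fiber contribution, and the finiteness over $\Q(A)[c]$ follows.
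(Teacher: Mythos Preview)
Your overall strategy—realize $M$ as a Dehn filling of $\F$ along one boundary torus, invoke Proposition~\ref{prop:f1,2case}, then pass to the quotient—is exactly the paper's. However, you apply Proposition~\ref{prop:f1,2case} with the roles of the two boundary components reversed, and this is what creates the obstacle in step~(4), which your proposed fix does not resolve.

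In Proposition~\ref{prop:f1,2case} the curve $c_2$ is the \emph{input} (an arbitrary non-horizontal, non-vertical slope on one component) and $c_1$ is the \emph{output} on the other component. You place the input $c_2$ on $\partial M$ and receive $c_1$ on the torus $T=\partial N$ that gets filled. Since $c_1$ has no reason to be the meridian of $N$, it does not bound a disk in $M$, and your suggested workaround—take $c_1$ to be a longitude of $N$—does not help: a longitude of $N$ is isotopic in $M$ to the core of $N$, i.e.\ the exceptional fiber, which is \emph{not} isotopic to any curve on $\partial M$. There is then no evident reason the elements $c_1^n\cdot g_i$ (equivalently, parallel copies of the exceptional fiber together with the $g_i$) should be finitely generated over $\Q(A)[c_2]$; your claim that ``the image of $c_1$ lies in $\Q(A)[c]$'' is not what is needed and is not true in general.

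The paper's proof sidesteps all of this by feeding the \emph{filling slope} itself into Proposition~\ref{prop:f1,2case} as the input $c_2$ (it is non-horizontal and non-vertical because it comes from an exceptional fiber). The output $c_1=c$ then lives on $\partial M$. Since the filling slope bounds a disk in $M$, it acts on every element of $\Sk(M,\Q(A))$ as the scalar $-A^2-A^{-2}$; this is packaged as Lemma~\ref{lemma:finitenessDehnSurgery}. Finite generation over $\Q(A)[c]$ is then immediate, with no analogue of your step~(4) required.
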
 

The proof relies on the following simple lemma:
\begin{lemma}
	\label{lemma:finitenessDehnSurgery} Assume that $M$ is a compact oriented $3$-manifold with boundary $\partial M=T \cup \Sigma,$ where $T$ is a $2$-torus, and that $\Sk(M,\Q(A))$ is finitely generated $\Q(A)[c_1,\ldots,c_n]$-module, for some disjoint simple closed curves $c_1,\ldots,c_n$ on $\partial M,$ with $c_1$ a curve on $T.$
	
	If $M'$ is obtained from $M$ by Dehn surgery on $T$ of slope $c_1,$ then $\Sk(M',\Q(A))$ is a finitely generated $\Q(A)[c_2,\ldots,c_n]$-module
\end{lemma}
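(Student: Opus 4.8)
The plan is to realize $M'$ as $M' = M\cup_T V$, where $V$ is a solid torus glued to $M$ along $T$ so that the meridian $\partial D$ of a meridian disk $D\subseteq V$ is identified with $c_1$, and then to analyse the map $\iota_*\colon \Sk(M,\Q(A))\to\Sk(M',\Q(A))$ induced by the inclusion $\iota\colon M\hookrightarrow M'$. Everything will follow once we know three things about $\iota_*$: that it is surjective, that it is linear over the curves on the remaining boundary, and that it sends the surgery slope $c_1$ to a scalar.

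First I would check that $\iota_*$ is surjective. For this I would combine two standard facts: the gluing homomorphism $\Sk(M,\Q(A))\otimes_{\Sk(T,\Q(A))}\Sk(V,\Q(A))\to\Sk(M',\Q(A))$ is surjective, and $\Sk(V,\Q(A))$ is generated as an $\Sk(T,\Q(A))$-module by the empty link — indeed the core of $V$ is isotopic in $V$ to a longitude of $T=\partial V$, so parallel copies of the core are obtained by pushing parallel copies of that longitude off $T$, and the powers of the core span $\Sk(V,\Q(A))$. Composing, $\xi\mapsto\iota_*(\xi)$ is precisely the composite $\Sk(M,\Q(A))\to\Sk(M,\Q(A))\otimes_{\Sk(T,\Q(A))}\Sk(V,\Q(A))\to\Sk(M',\Q(A))$, which is onto.

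Next I would record the two compatibilities of $\iota_*$. Since $\partial M'=\Sigma$ and the relevant module structures are defined by pushing curves on $\Sigma$ slightly into the manifold, $\iota_*$ is $\Sk(\Sigma,\Q(A))$-linear, hence in particular $\Q(A)[c_2,\dots,c_n]$-linear (we may assume $c_2,\dots,c_n$ lie on $\Sigma$, since a $c_j$ with $j\geq 2$ lying on $T$ would be parallel to $c_1$). Moreover $\iota_*(c_1\cdot\xi)=(-A^2-A^{-2})\,\iota_*(\xi)$ for every $\xi$: a representative of $c_1\cdot\xi$ consists of a representative of $\xi$ together with $c_1$ pushed slightly into $M$, and inside $M'$ this last component can be isotoped across $T$ onto the $V$-side, where it bounds the disk $D$ and is disjoint from the rest, so the Kauffman relation replaces it by the scalar $-A^2-A^{-2}$ (the boundary framing of $c_1$ agrees with its disk framing, so no extra power of $A$ appears; in any case the only point that matters is that $c_1$ acts on $\Sk(M',\Q(A))$ through multiplication by an element of $\Q(A)$).

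Finally, let $e_1,\dots,e_m$ generate $\Sk(M,\Q(A))$ over $\Q(A)[c_1,\dots,c_n]$. By surjectivity of $\iota_*$, the elements $\iota_*(e_1),\dots,\iota_*(e_m)$ generate $\Sk(M',\Q(A))$ over $\Q(A)[c_1,\dots,c_n]$; since $c_1$ acts there as the scalar $-A^2-A^{-2}\in\Q(A)$, the first variable is redundant and these elements already generate $\Sk(M',\Q(A))$ over $\Q(A)[c_2,\dots,c_n]$, which is the assertion. I expect the surjectivity of $\iota_*$ (equivalently, the surjectivity of the gluing map for a Dehn filling) to be the only point that needs genuine care; the two compatibilities and the concluding counting step are routine bookkeeping.
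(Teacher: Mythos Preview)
Your proposal is correct and follows essentially the same approach as the paper: surjectivity of $\iota_*$, the observation that $c_1$ acts on $\Sk(M',\Q(A))$ as the scalar $-A^2-A^{-2}$ since it bounds a disk in $M'$, and then the routine deduction of finite generation over $\Q(A)[c_2,\ldots,c_n]$. The only difference is cosmetic: the paper simply declares it ``well known'' that links in $M'$ can be isotoped into $M$ (general position with respect to the core of the filling torus), whereas you unpack this via the gluing map and the fact that $\Sk(V,\Q(A))$ is generated over $\Sk(T,\Q(A))$ by the empty link.
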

\begin{proof}
	It is well known that links in $M'$ can always be isotoped to lie in $M,$ and therefore $\Sk(M',\Q(A))$ is spanned by links in $M$ and is a quotient of $\Sk(M,\Q(A)).$ Moreover, since $c_1$ bounds a disk in $M',$ for any $z\in \Sk(M',\Q(A))$ one has the relation $$c_1\cdot z=(-A^2-A^{-2})z.$$  Since $\Sk(M,\Q(A))$ is a finitely generated $\Q(A)[c_1,\ldots,c_n]$-module, its quotient by those relations is a finitely generated $\Q(A)[c_2,\ldots,c_n]$-module, and thus so is $\Sk(M',\Q(A)).$
\end{proof}
\begin{proof}[Proof of Proposition \ref{prop:f1,1case}]
	If $M$ is as in the hypothesis, it can be obtained by Dehn surgery on $\F$ along some non-horizontal,non-vertical curve $c'\subseteq \partial F$. Then by Proposition \ref{prop:f1,2case} there must be $c\subseteq \partial F$ such that $\Sk(\F,\Q(A))$ is finitely generated as a $\Q(A)[c,c']$-module. Then Lemma \ref{lemma:finitenessDehnSurgery} gives the desired result.
\end{proof}
\section{Torsion in the skein modules of Seifert manifolds}
\label{sec:Seifert}
In this section, we will make use of the various criteria of the previous sections, namely Theorems \ref{thm:infiniteX(M)}, \ref{thm:largeX(M)_nclosed}, \ref{thm:torsiontorus} and \ref{thm:nsep-torus}, to show Theorem \ref{thm:SFStorsion} of the introduction, which constitutes a verification of Conjecture \ref{conj:torsion} for all orientable Seifert manifolds.

Whenever possible, we will rely on the latter two criterions, Theorem \ref{thm:torsiontorus} and \ref{thm:nsep-torus}, as they provide explicit torsion elements.

To begin with, let us first recall the presentation of $\pi_1(M)$ when $M$ is a Seifert fibert manifold (possibly with boundary). As is customary, let us denote with $M(g,n;(\beta_1,\alpha_1),\ldots ,(\beta_k,\alpha_k))$ the Seifert manifold with base a surface of genus $g$ (with the convention that $g<0$ means that the base is not orientable) and $n$ boundary components, and with $k$ exceptional fibers of parameters $(\alpha_i,\beta_i).$
\begin{theorem}
\label{thm:SFSpi1}\cite[Theorem 6.1]{JN83}
Let $M=M(g,n;(\beta_1,\alpha_1),\ldots ,(\beta_k,\alpha_k))$ be a Seifert manifold. If $g\geq 0,$ then

\begin{multline}\label{eq:presSFSor}\pi_1(M)=\langle a_1,b_1,\ldots ,a_g,b_g,q_1,\ldots ,q_k,c_1,\ldots,c_n,h| 
	 [h,a_i]=[h,b_i]=1, \\ [h,c_j]=[h,q_l]=1,  q_l^{\alpha_l}h^{\beta_l}=1, q_1\ldots q_k c_1\ldots c_n[a_1,b_1]\ldots [a_g,b_g]=1 \rangle,
\end{multline}

and if $g<0$ then
\begin{multline}\label{eq:presSFSnonor}\pi_1(M)=\langle a_1,\ldots,a_{|g|},q_1,\ldots ,q_k,c_1,\ldots,c_n,h| 
a_iha_i^{-1}=h^{-1},[h,c_j]=[h,q_l]=1, \\ q_l^{\alpha_l}h^{\beta_l}=1, q_1\ldots q_k c_1\ldots c_n a_1^2 \ldots a_{|g|}^2=1 \rangle,
\end{multline}

\end{theorem}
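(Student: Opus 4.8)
The statement is the classical computation of $\pi_1$ of a Seifert fibered space, going back to Seifert and recorded in this precise form by Jankins--Neumann; I would obtain it by decomposing $M$ along its Seifert structure and applying van Kampen's theorem. Recall that $M=M(g,n;(\beta_1,\alpha_1),\ldots,(\beta_k,\alpha_k))$ is built as follows: let $F$ be the surface of genus $g$ (orientable if $g\geq 0$, non-orientable of genus $|g|$ if $g<0$) with $n+k$ open disks removed; form the circle bundle $M_0\to F$ whose total space is orientable; then Dehn fill $k$ of the boundary tori so as to create exceptional fibers of the prescribed types, leaving $n$ torus boundary components. The plan is: (i) compute $\pi_1(M_0)$ from the bundle structure; (ii) identify on each of the $k$ filling tori the slope that becomes trivial; (iii) assemble everything by van Kampen.

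For step (i), since $F$ has nonempty boundary, $\pi_1(F)$ is free, and the boundary loops give a convenient (redundant) presentation: when $g\geq 0$ one has free generators $a_1,b_1,\ldots,a_g,b_g,q_1,\ldots,q_k,c_1,\ldots,c_n$ subject to $q_1\cdots q_k c_1\cdots c_n[a_1,b_1]\cdots[a_g,b_g]=1$ (which merely expresses one generator, say $c_n$, in terms of the others), the $q_l$ being loops around the holes destined to carry exceptional fibers and the $c_j$ the loops around the remaining holes; when $g<0$ one replaces the product of commutators by $a_1^2\cdots a_{|g|}^2$. Because $F$ has boundary, the bundle $M_0\to F$ is a product when $F$ is orientable and the orientable twisted $S^1$-bundle when $F$ is non-orientable, and in either case the sequence $\pi_1(S^1)\hookrightarrow \pi_1(M_0)\twoheadrightarrow\pi_1(F)$ is exact. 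Writing $h$ for the class of a regular fiber, one gets $\pi_1(M_0)$ by adjoining $h$ to the generators of $\pi_1(F)$ with the relations: $h$ central when $F$ is orientable, i.e. $[h,a_i]=[h,b_i]=[h,q_l]=[h,c_j]=1$; and, when $F$ is non-orientable, $a_iha_i^{-1}=h^{-1}$ for the orientation-reversing generators $a_i$ while still $[h,q_l]=[h,c_j]=1$ --- here one uses that the $q_l,c_j$ may be taken to be two-sided (orientation-preserving) curves on $F$, and that orientability of the total space forces the fiber-monodromy to be inversion exactly along the orientation-reversing base loops. One also uses the normalization in which the surface relation lifts to itself, with no extra power of $h$: all of the Euler-number data is recorded in the $\beta_l/\alpha_l$.

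For steps (ii)--(iii): on the $l$-th filling torus, take the basis consisting of $q_l$ (a curve covering a boundary circle of $F$) and $h$ (the regular fiber); creating an exceptional fiber of invariant $(\beta_l,\alpha_l)$ means Dehn filling so that the meridian disk is bounded by $q_l^{\alpha_l}h^{\beta_l}$. Van Kampen then adjoins precisely the relations $q_l^{\alpha_l}h^{\beta_l}=1$ for $l=1,\ldots,k$, and leaves the other boundary tori and the rest of the group unaffected. Combining (i) with the $k$ fillings yields the presentations \eqref{eq:presSFSor} and \eqref{eq:presSFSnonor}. The only place where genuine care is needed --- and the main obstacle to a fully rigorous writeup --- is the orientation and slope bookkeeping: verifying that the bundle making the total space orientable has fiber-monodromy $\pm1$ with the sign dictated by one- or two-sidedness of the corresponding base loop, that the chosen section lifts the surface relation with trivial $h$-power, and that it is $q_l^{\alpha_l}h^{\beta_l}$ (rather than $q_l^{\beta_l}h^{\alpha_l}$ or an inverse) that bounds the meridian disk in the Jankins--Neumann normalization. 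These are exactly the points carried out in Section 6 of \cite{JN83}, whose conventions we adopt.
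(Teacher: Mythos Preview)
The paper does not prove this theorem; it simply quotes it from Jankins--Neumann \cite[Theorem 6.1]{JN83} and uses the presentation as a black box. Your sketch is the standard van Kampen argument behind that result, and as an outline it is correct, with the caveats you yourself flag about normalizations (sign of the fiber-monodromy along one-sided base loops, choice of section so that the surface relation lifts without an extra $h$-power, and the $(\alpha_l,\beta_l)$ slope convention). Since the paper treats the statement as a citation, there is nothing to compare: your write-up goes beyond what the paper does here.
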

To prove Theorem \ref{thm:SFStorsion}, we will need a few lemmas:
\begin{lemma}
	\label{lemma:positive_genus} Let $M=M(g,n;(\beta_1,\alpha_1),\ldots ,(\beta_k,\alpha_k))$ and assume $g>0,$ or $g<-1$ Then $\mathcal{S}(M)$ has $(A\pm 1)$-torsion.
\end{lemma}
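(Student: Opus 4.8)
The strategy is to produce, for $M = M(g,n;(\beta_1,\alpha_1),\ldots,(\beta_k,\alpha_k))$ with $g > 0$ or $g < -1$, a surface and representation to which one of the torus criteria (Theorem \ref{thm:torsiontorus} or \ref{thm:nsep-torus}) applies, or else to directly invoke the largeness criteria (Theorem \ref{thm:infiniteX(M)}, Corollary \ref{cor:infiniteX(M)}, Theorem \ref{thm:largeX(M)_nclosed}) via a dimension count on $X(M)$. The key observation is that when the genus is large in absolute value, $\pi_1(M)$ surjects onto a large group — in the orientable case $g>0$ the presentation \eqref{eq:presSFSor} shows that killing $h$ and the $q_l, c_j$ gives a surjection $\pi_1(M) \twoheadrightarrow \pi_1(\Sigma_g)$, the surface group of genus $g \geq 1$; in the non-orientable case $g < -1$, killing $h$ gives a surjection onto $\langle a_1,\ldots,a_{|g|} \mid a_1^2\cdots a_{|g|}^2 = 1\rangle = \pi_1(N_{|g|})$, the fundamental group of the closed non-orientable surface of genus $|g| \geq 2$. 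Both of these target groups have positive-dimensional (indeed high-dimensional) $\slC$-character varieties.

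First I would split into the closed and boundary cases. If $n = 0$ ($M$ closed): the surjection above gives $X(M) \supseteq X(\Sigma_g)$ or $X(N_{|g|})$ as a closed subvariety (pulled back along the surjection $\pi_1(M)\twoheadrightarrow \Gamma$), so $\dim X(M) > 0$ since $\dim X(\Sigma_g) = 6g-6 > 0$ for $g \geq 1$ and $\dim X(N_h) = 3h-3 > 0$ for $h\geq 2$ (here using that the relevant character varieties of surface groups are positive-dimensional — for $g=1$ one uses the curve of abelian characters, which already suffices). Then Corollary \ref{cor:infiniteX(M)} immediately gives $(A\pm 1)$-torsion (taking $\zeta = \pm 1$, which are roots of unity of order $1$ and $2 \equiv 2 \bmod 4$). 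If $n > 0$ ($M$ has boundary): one cannot use Corollary \ref{cor:infiniteX(M)} directly, so instead I would locate an essential torus inside $M$ and apply Theorem \ref{thm:nsep-torus} or \ref{thm:torsiontorus}. A vertical torus sitting over a non-separating simple closed curve on the base (which exists whenever $g > 0$ or $g < -1$, since such bases have non-separating scc's) is incompressible and non-boundary-parallel; one then needs a representation $\rho$ with $\rho_T$ non-central satisfying condition (1) or (2) of Theorem \ref{thm:nsep-torus}. Using the surjection to the surface group, one can build an abelian representation of $\pi_1(M)$ (factoring through $H_1$) whose restriction to this vertical torus is non-central and semisimple; since abelian representations are never of dihedral type, condition (1) of Theorem \ref{thm:nsep-torus} holds — here I would invoke Remark \ref{rk:nsep-torus-withH1}, checking that the image of $H_1(T,\Z) \to H_1(M,\Z)$ is not $2$-torsion, which follows because the horizontal generator of the vertical torus maps to a non-torsion class in $H_1(M,\Z) \cong \Z^{b_1}$ with $b_1$ large (the surface-group abelianization contributes $\Z^{2g}$ or $\Z^{|g|-1}$).

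The main obstacle I anticipate is the bookkeeping in the boundary case: one must verify that the chosen vertical torus is genuinely incompressible and not boundary-parallel (standard for Seifert manifolds but needs the base curve to be essential and non-peripheral, which is why $g>0$ or $g<-1$ rather than just "not a disk/annulus/Möbius band"), and one must track the fiber class $h$ carefully in $H_1(M,\Z)$ — in some Seifert manifolds $h$ is torsion or even trivial in homology, so the non-$2$-torsion condition of Remark \ref{rk:nsep-torus-withH1} must be checked using the horizontal direction of the torus rather than the fiber. A cleaner uniform alternative, which I would fall back on if the torus argument gets delicate, is to use the strong finiteness results of Section \ref{sec:finiteness} together with Theorem \ref{thm:largeX(M)_nclosed}: decompose $M$ along separating tori into standard pieces, establish strong finiteness with $k$ boundary curves whose trace functions are algebraically independent, and observe $\dim X(M) > k$ because the surface-group quotient already forces extra dimensions beyond the peripheral ones. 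Either route reduces the lemma to the character-variety dimension count, which is the genuinely robust input here.
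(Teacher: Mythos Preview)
Your proposal is correct, but it takes a more roundabout route than the paper. The paper's proof is uniform and essentially two lines: for either sign of $g$, pick a non-separating simple closed curve on the base (the curve representing $a_1$ when $g>0$, or a suitable two-sided non-separating curve when $g<-1$), lift it to a non-separating vertical torus $T\subset M$, observe that the image of $H_1(T,\Z)\to H_1(M,\Z)$ is not $2$-torsion (the horizontal class of $T$ is non-torsion in $H_1(M)$), and invoke Remark~\ref{rk:nsep-torus-withH1} together with Theorem~\ref{thm:nsep-torus}. No case split on $n$ is needed---this argument works whether $M$ is closed or has boundary.

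Your boundary case is exactly this argument, so there you agree with the paper. Your closed-case detour via Corollary~\ref{cor:infiniteX(M)} is valid but buys nothing: it is non-effective (producing no explicit torsion element), whereas the torus argument you already wrote down for $n>0$ applies verbatim when $n=0$. Finally, your proposed fallback via strong finiteness and Theorem~\ref{thm:largeX(M)_nclosed} would not actually be available: Section~\ref{sec:finiteness} only establishes strong finiteness for a handful of Seifert manifolds over the M\"obius band, not for any with $g>0$ or $g<-1$. Fortunately, as you note yourself, the primary torus argument already suffices.
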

\begin{proof}
	If $g>0$, a simple closed curve on the base of $M$ representing the generator $a_1$ gives rise to a vertical torus which is non-separating and not $2$-torsion in $H_1(M,\Z).$ Hence Theorem \ref{thm:nsep-torus} applies.
	Similarly, if $g<-1,$ a simple closed curve on the base $F$ of $M$ representing $a_1$ is non-zero in $H_1(F,\Q),$ hence non-separating in $F.$ It gives rise to a non-separating torus in $M$ which is not $2$-torsion in $H_1(M,\Z),$ so Theorem \ref{thm:nsep-torus} applies again.
\end{proof}
\begin{lemma}
	\label{lemma:SFSor} Let $M=M(0,n;(\beta_1,\alpha_1),\ldots ,(\beta_k,\alpha_k))$ and assume $n+k\geq 4.$ Then $\mathcal{S}(M)$ has $(A\pm 1)$-torsion.
\end{lemma}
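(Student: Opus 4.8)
The statement concerns $M=M(0,n;(\beta_1,\alpha_1),\ldots,(\beta_k,\alpha_k))$ with $n+k\geq 4$; the goal is to produce $(A\pm 1)$-torsion in $\Sk(M)$. The strategy is to realize an incompressible vertical torus in $M$ and verify the hypotheses of one of the two torus criteria, Theorem \ref{thm:torsiontorus} (separating case) or Theorem \ref{thm:nsep-torus} (non-separating case), depending on the combinatorics of $n$ and $k$. The key observation is that the base surface $\Sigma_{0,n}$ with $k$ cone points, having $n+k\geq 4$ boundary/exceptional points, always contains an essential simple closed curve $\delta$ that separates it into two pieces each carrying at least two of the distinguished points (either genuine boundary circles or cone points). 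The preimage of $\delta$ under the Seifert fibration is a vertical torus $T$ in $M$, which is incompressible and non-boundary-parallel precisely because $\delta$ is essential on the base orbifold; one must check it is not boundary-parallel, which follows since both complementary pieces contain $\ge 2$ distinguished points and hence are not annuli.

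\textbf{Finding the representation.} The main work is to produce a representation $\rho\colon\pi_1(M)\to\slC$ satisfying the hypotheses of Theorem \ref{thm:torsiontorus} (if $T$ separates) or Theorem \ref{thm:nsep-torus} (if not). Using the presentation \eqref{eq:presSFSor} with $g=0$, the fiber class $h$ is central, so $\rho(h)=\pm I$ for any irreducible $\rho$; the vertical torus $T=\delta\times S^1$ has $\pi_1(T)$ generated by $h$ and a lift $\tilde\delta$ of $\delta$. For the criteria we need $\rho|_{\pi_1(T)}$ non-central, so we need $\rho(\tilde\delta)\ne\pm I$; since $\tilde\delta$ is (up to powers of $h$) a product of some of the $q_l$'s and $c_j$'s, this is an open, generically-satisfied condition. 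The plan is to build $\rho$ by choosing the images of $q_1,\ldots,q_k,c_1,\ldots,c_n$ in $\slC$ subject only to: (i) $q_l^{\alpha_l}=(\pm I)^{-\beta_l}$ up to the sign forced by $\rho(h)$, so $\rho(q_l)$ has eigenvalue a suitable root of unity — always solvable; (ii) the single relation $q_1\cdots q_k c_1\cdots c_n=1$; and (iii) irreducibility plus the restriction conditions on the two sides of $T$ (or the non-dihedral / irreducibility condition in the non-separating case). Because $n+k\geq 4$, after accounting for the one global relation there are enough free parameters (at least three $\slC$-matrices' worth, minus conjugation, minus one relation, still a positive-dimensional family) to arrange irreducibility and non-abelianity of both restrictions; one can exhibit an explicit choice, e.g.\ taking the generators on one side to generate a non-abelian subgroup containing $\mathcal{U}$ and on the other side one containing $\mathcal{L}$, as in the proof of Lemma \ref{lem:repcriterion}.

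\textbf{Casework and the main obstacle.} I would split into cases by whether $\delta$ can be chosen separating. On $\Sigma_{0,n}$ (a planar surface) every simple closed curve is separating, so if $k=0$ we are always in the separating case and apply Theorem \ref{thm:torsiontorus}; here $n\geq 4$ gives the needed two-plus-two split of boundary components. When $k\geq 1$ one can still choose $\delta$ separating in the underlying surface; the only subtlety is ensuring each side is not just a disk with one cone point (which would make $T$ compressible). Since $n+k\geq 4$ one can always distribute so each side gets $\ge 2$ distinguished points, so $T$ is genuinely incompressible and non-boundary-parallel. The main obstacle is the bookkeeping in constructing $\rho$: one must simultaneously satisfy the torsion relations $q_l^{\alpha_l}h^{\beta_l}=1$ (which constrain eigenvalues of $\rho(q_l)$), the global product relation, and the irreducibility-plus-nonabelian-restrictions needed by Theorem \ref{thm:torsiontorus}. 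The cleanest route is to first pick $\rho(h)=\pm I$ and diagonal (or parabolic) $\rho$ on $\pi_1(T)$, then choose the remaining generators freely on each side of $T$; the constraint $q_l^{\alpha_l}=\pm I$ still leaves each $\rho(q_l)$ free to lie in a full conjugacy class (any matrix with the prescribed eigenvalues), so genericity within these families yields the required non-abelian images on both sides and overall irreducibility. Once such $\rho$ exists, Theorem \ref{thm:torsiontorus} (or \ref{thm:nsep-torus}) immediately gives $(A\pm1)$-torsion, completing the proof.
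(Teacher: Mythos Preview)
Your proposal is correct and follows essentially the same approach as the paper: pick a separating vertical torus $T$ over an essential curve $\delta$ on the planar base that splits the $n+k\geq 4$ distinguished points two-and-two, then build an $\slC$-representation with $\rho(h)=\pm I_2$, the $\rho(q_l)$ having prescribed finite order, and generic enough trace data to force irreducibility and non-abelian restrictions on both sides, so that Theorem~\ref{thm:torsiontorus} applies. The paper streamlines the explicit construction by first reducing to the closed case $n=0$ via Dehn filling (any representation of the filled manifold restricts to one of $M$), and your mention of the non-separating criterion is superfluous since every simple closed curve on a planar base is separating---but these are cosmetic differences, not substantive ones.
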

\begin{proof}
	Let $\delta$ be a simple closed curve on the base of $M,$ representing either $c_1c_2,$ $q_1c_1$ or $q_1q_2$ in $\pi_1(M)$ (depending whether $n\geq 2,$ $n=1$ or $n=0$). The simple closed curve $\delta$ gives rise to a separating vertical torus $T$ in $M.$ 
	We will construct a representation $\rho:\pi_1(M)\longrightarrow \slC$ that satisfies the hypothesis of Theorem \ref{thm:torsiontorus}.
	
	We note that, without loss of generality, we can assume $n=0.$ Indeed, if $n\geq 1,$ performing a Dehn-surgery along the boundary components of $M,$ one may obtain a closed Seifert manifold $M'$ over $S^2$ with at least $4$ exceptional fibers, and any $\slC$ representation of $\pi_1(M')$ that satisfies the hypothesis of Theorem \ref{thm:torsiontorus} with respect to the torus $T$ will also give a suitable representation of $\pi_1(M).$
	
	Therefore, we assume $n=0,k\geq 4.$ Note that the two component $M_1$ and $M_2$ of $M\setminus T$ are again Seifert fibered, with $M_1$ having base a disk with $2$ cone points, and $M_2$ having base a disk with $k-2$ cone points.
	
	We construct a representation $\rho_1$ of $\pi_1(M_1)$ by $\rho_1(h)=-I_2,$ $\rho_1(q_i)$ of order $d_i=\alpha_i$ (if $\beta_i$ is even) or $d_i=2\alpha_i$ (if $\beta_i$ is odd) and $\rho_1(q_1q_2)$ of trace $t$ where $t\neq \pm 2\in \C$ is a parameter that we will fix later. Note that $q_1$ and $q_2$ have order at least $3$ since if $\alpha_i=2$ then $\beta_i$ is odd. Asking $\rho_1(q_i)$ of order $d_i>2$ can be achieved by asking $\tr(\rho_1(q_i))=\zeta+\zeta^{-1}$ where $\zeta$ is a primitive root of order $d_i.$
	We claim that there is such a representation, since for a free group $F_2=\langle a,b\rangle$ there is a representation achieving $\tr(a)=x,\tr(b)=y,\tr(ab)=z$ for any $x,y,z\in \C.$ Moreover, this representation is irreducible for almost all values of $t.$
	
	Next we construct similarly a representation $\rho_2$ of $\pi_1(M_2)$ asking that $\rho_2(h)=-1,\rho_2(q_i)=\pm I_2$ for $i\geq 5$ (the sign being determined by the parities of $\alpha_i$ and $\beta_i$) and $\tr(\rho_2(q_i))=\zeta_i+\zeta_i^{-1}$ for $i=3,4,$ where $\zeta_i$ has order $d_i=\alpha_i$ or $2\alpha_i$ as before.
	
	We have $\rho_2(q_3\ldots q_k)=\varepsilon \rho_2(q_3q_4)$ for some sign $\varepsilon \in \lbrace \pm 1 \rbrace.$ The representations $\rho_1$ and $\rho_2$ induce a representation $\rho$ of $\pi_1(M)$ if and only if
	$$\rho_1(q_1q_2)=\rho_2(q_3\ldots q_k)^{-1},$$
	which will be the case up to conjugating $\rho_2$ if $\tr(\rho(q_3q_4))=\varepsilon t,$ which can again be realized for some choice of $\rho_2(q_3),\rho_2(q_4).$ 
	
	Moreover, for most values of $t,$ the induced representation $\rho$ will be non-central on $T$ (since $t\neq \pm 2,$) and irreducible on $\pi_1(M_i)$ for $i=1$ or $2,$ therefore it satisfies the criterion of Theorem \ref{thm:torsiontorus}, and $\mathcal{S}(M)$ will have $(A\pm 1)$-torsion.

\end{proof}
\begin{lemma}
	\label{lemma:SFSnor} Let $M=M(-1,n;(\beta_1,\alpha_1),\ldots ,(\beta_k,\alpha_k))$ where $n+k\geq 3.$ Then $\mathcal{S}(M)$ has $(A\pm 1)$-torsion.
\end{lemma}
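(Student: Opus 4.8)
The plan is to follow the same general strategy as in Lemma \ref{lemma:SFSor}: we locate a vertical torus in $M$ coming from a simple closed curve $\delta$ on the base surface $F_{-1,n}$, and then build an $\slC$-representation of $\pi_1(M)$ satisfying the hypotheses of Theorem \ref{thm:torsiontorus} (if $\delta$ is separating) or Theorem \ref{thm:nsep-torus} (if $\delta$ is non-separating). First I would reduce to the closed case $n=0$, $k\geq 3$ exactly as in Lemma \ref{lemma:SFSor}: if $n\geq 1$, perform Dehn surgery along the boundary tori to obtain a closed Seifert manifold $M'=M(-1,0;\ldots)$ with at least $3$ exceptional fibers, and note that a representation of $\pi_1(M')$ with the required properties relative to the vertical torus pulls back to a suitable representation of $\pi_1(M)$. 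So assume $M=M(-1,0;(\beta_1,\alpha_1),\ldots,(\beta_k,\alpha_k))$ with $k\geq 3$, with $\pi_1(M)$ given by presentation \eqref{eq:presSFSnonor}: generators $a_1,q_1,\ldots,q_k,h$ with $a_1 h a_1^{-1}=h^{-1}$, $h$ central among the $q_l$, $q_l^{\alpha_l}h^{\beta_l}=1$, and $q_1\cdots q_k a_1^2=1$.

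Next I would choose the vertical torus. Take $\delta$ the simple closed curve on the base $F_{-1,0}=\mathbb{RP}^2$ (well, $\mathbb{RP}^2$ minus the cone-point disks) representing $q_1 q_2$ in $\pi_1$; this is a separating curve on the base when $k\geq 3$ (it splits off a disk with two cone points from a Möbius band with $k-2$ cone points), hence $T$ is a separating vertical torus, and $M\setminus T = M_1\cup M_2$ with $M_1$ fibered over a disk with $2$ cone points and $M_2$ fibered over a Möbius band with $k-2$ cone points. Now build $\rho$: set $\rho(h)=-I_2$; on the $M_1$ side define $\rho(q_1),\rho(q_2)$ to have the correct finite orders $d_i\in\{\alpha_i,2\alpha_i\}$ (the sign of $\rho(q_i^{\alpha_i})=\rho(h)^{-\beta_i}$ forces $d_i=\alpha_i$ if $\beta_i$ even, $2\alpha_i$ if odd), arranging $\tr\rho(q_1q_2)=t$ with $t\neq\pm2$ a free parameter — possible since a rank-$2$ free group has representations with any prescribed triple of traces — and for generic $t$ this is irreducible on $\pi_1(M_1)$. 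On the $M_2$ side, the subtlety is the generator $a_1$ with $a_1 h a_1^{-1}=h^{-1}$: since $\rho(h)=-I_2$, the relation $\rho(a_1)\rho(h)\rho(a_1)^{-1}=\rho(h)^{-1}=-I_2$ is automatically satisfied, so $\rho(a_1)$ may be chosen freely in $\slC$. The gluing constraint is $\rho(q_1 q_2)=\rho(q_3\cdots q_k a_1^2)^{-1}$; I would define $\rho(q_i)$ for $i\geq 5$ to be $\pm I_2$ (sign from the parities), choose $\rho(q_3),\rho(q_4)$ and $\rho(a_1)$ so that $\rho(q_3 q_4)\rho(a_1)^2$ is conjugate to $\rho(q_1q_2)^{-1}$ — a trace-matching that can be achieved for generic $t$ — and then conjugate $\rho_2$ so the two restrictions literally agree on $\pi_1(T)=\langle q_1q_2, h\rangle$. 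For generic $t$, $\rho$ is non-central on $T$ (as $t\neq\pm2$) and irreducible and non-abelian on $\pi_1(M_1)$ and $\pi_1(M_2)$, so Theorem \ref{thm:torsiontorus} gives $(A\pm1)$-torsion.

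The main obstacle I anticipate is handling the smallest cases, $k=3$ (and the degenerate possibility that $q_1q_2$ is not separating or that $M_1$ or $M_2$ fails to admit a non-abelian representation because a cone-point order is too small). When $k=3$, the piece $M_2$ fibers over a Möbius band with one cone point, whose fundamental group is $\langle a_1, q_3, h\mid a_1ha_1^{-1}=h^{-1}, [h,q_3]=1, q_3^{\alpha_3}h^{\beta_3}=1\rangle$, and I need its $\slC$-representations to be rich enough to realize the required conjugacy class of $\rho(q_1q_2)^{-1}$ while staying non-abelian — this should follow because the presence of the free generator $a_1$ (with $\rho(a_1)$ essentially unconstrained once $\rho(h)=-I_2$) gives enough room, but it needs to be checked carefully. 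Similarly one must rule out the exceptional situation where every $\alpha_i=2$ forces all $\rho(q_i)$ of order dividing $4$ and the representation degenerates; here, as in Lemma \ref{lemma:SFSor}, $\alpha_i=2$ forces $\beta_i$ odd hence $d_i=4$, so $\rho(q_i)$ has order $4$ and trace $0$, which is still enough to generate a non-abelian image together with $\rho(a_1)$. If some direct choice of separating $\delta$ genuinely fails for a sporadic small case, the fallback is to instead use a non-separating curve on the base (such curves always exist on a non-orientable base) together with Theorem \ref{thm:nsep-torus}, checking that the corresponding vertical torus is not $2$-torsion in $H_1(M,\Z)$ or, if it is, that one can still produce a representation of dihedral-incompatible type; this parallels the argument in Lemma \ref{lemma:positive_genus}.
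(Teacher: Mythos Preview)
Your approach is essentially the same as the paper's: reduce to $n=0$, take the separating vertical torus over $q_1q_2$, build $\rho_1$ on the disk-with-two-cone-points side exactly as you describe, and then glue with a representation $\rho_2$ on the M\"obius-band side. The difference is that the paper handles the $M_2$ side more simply and uniformly: it sets $\rho_2(q_i)=\pm I_2$ already for $i\geq 4$ (not $i\geq 5$), chooses $\rho_2(q_3)$ of the required order $d_3$, and then observes that the global relation $q_1\cdots q_k a_1^2=1$ is solved by taking $\rho_2(a_1)$ to be any square root of $\rho_2(q_1\cdots q_k)^{-1}$, which always exists in $\slC$. This single observation dispatches all $k\geq 3$ at once, so your separate worry about $k=3$, the trace-matching involving both $q_3$ and $q_4$, and the fallback to a non-separating torus via Theorem~\ref{thm:nsep-torus} are all unnecessary. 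In other words, the ``subtlety'' you flag about $a_1$ is resolved by the elementary fact that $\slC$ is closed under square roots; once $\rho(h)=-I_2$, the relation $a_1ha_1^{-1}=h^{-1}$ is vacuous and $\rho(a_1)$ is determined (up to sign) by the last relation alone.
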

\begin{proof}
The proof is similar to the proof of Lemma \ref{lemma:SFSor}. We take $\delta$ to be a simple closed curve on the base of $M,$ representing $c_1c_2$, $c_1q_1$ or $q_1q_2,$ (depending whether $n\geq 2,$ $n=1$ or $n=0$). Its fiber is again a separating torus $T$ in $M,$ since this loop is orientation preserving on the base. We construct a representation $\rho:\pi_1(M)\rightarrow \slC$ satisfying Theorem \ref{thm:torsiontorus}, and again without loss of generality we can assume $n=0.$

The two components of $M\setminus T$ are then $M_1$ and $M_2,$ with $M_1$ Seifert fibered over a disk with $2$ cone points, and $M_2$ Seifert fibered over a Möbius band with $k-2$ cone points.

We construct a representation $\rho_1$ of $\pi_1(M_1)$ as before, taking $\rho_1(h)=-I_2,$ and $\rho_1(q_i)$ of order $d_i=\alpha_i$ or $2\alpha_i$ as in the proof of Lemma \ref{lemma:SFSor}, and $\rho_1(q_1q_2)$ of trace $t$ chosen so that $\rho_1$ is irreducible.

Then, we choose $\rho_2$ so that $\rho_2(h)=-I_2, \rho_2(q_1q_2)=\rho_1(q_1q_2)$ and  $\rho_2(q_i)=\pm I_2$ for $i\geq 4,$ the sign being determined by the parities of $\alpha_i,\beta_i,$ and $\rho_2(q_3)$ of order $d_3=\alpha_3$ or $2\alpha_3$ depending on the parities of $\alpha_3,\beta_3,$ and we have again that $d_3$ is at least $3.$ To finish, we just need to take $\rho_2(a_1)$ to be a square root of $\rho_2(q_1q_2q_3\ldots q_k)^{-1}.$ This is possible as square roots always exist in $\slC.$ 
We will have that $\rho_2$ will also be irreducible for most choices of $t=\Tr \rho_1(c_1c_2),$ and Theorem \ref{thm:torsiontorus} applies.
	
\end{proof}
The next lemma complements Lemma \ref{lemma:SFSnor}, but is treated separately since we will rely on Theorem \ref{thm:largeX(M)_nclosed} instead of Theorem \ref{thm:nsep-torus} to find torsion:
\begin{lemma}
	\label{lemma:SFSnor2} Let $M=M(-1,n;(\beta_1,\alpha_1),\ldots ,(\beta_k,\alpha_k))$ where $n+k=2.$ Then $\mathcal{S}(M)$ has $(A\pm 1)$-torsion.
\end{lemma}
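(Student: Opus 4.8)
The relevant manifolds are $M = M(-1,n;(\beta_1,\alpha_1),\ldots,(\beta_k,\alpha_k))$ with $n+k=2$, i.e. the cases $M(-1,2)$ (Seifert over a Möbius band with two boundary components, no exceptional fibers), $M(-1,1;(\beta_1,\alpha_1))$ (Seifert over a once-punctured Möbius band with one exceptional fiber), and $M(-1,0;(\beta_1,\alpha_1),(\beta_2,\alpha_2))$ (closed, over a Möbius band with two exceptional fibers). The plan is to separate the cases with boundary ($n\geq 1$) from the closed case ($n=0$), applying Theorem \ref{thm:largeX(M)_nclosed} to the former and Theorem \ref{thm:infiniteX(M)} to the latter.

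**The bounded cases via Theorem \ref{thm:largeX(M)_nclosed}.** Suppose $n\geq 1$. Then $M$ is obtained from $\F$ (if $n=2$) or from a Seifert manifold over $F_{1,1}$ with one singular fiber (if $n=1$) by possibly adding the exceptional fiber; in either case, by Theorem \ref{thm:strongFinitenessMobius} (and Lemma \ref{lemma:finitenessDehnSurgery} to handle the exceptional fiber as a Dehn surgery), the strong finiteness conjecture holds for $M$: there is an essential curve $c$ on $\partial M$ with $\Sk(M,\Q(A))$ finitely generated over $\Q(A)[c]$, so here $k=1$ in the notation of Theorem \ref{thm:largeX(M)_nclosed}. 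To apply that theorem it remains to check that $t_c$ is non-constant (algebraic independence of a single function just means it is not constant) on $X(M)$ and that $\dim X(M) > 1$. For the latter, I would argue directly from the presentation \eqref{eq:presSFSnonor}: $\pi_1(M)$ contains a genus-one non-orientable surface group quotient (coming from the base $F_{1,1}$ relation $a_1 h a_1^{-1} = h^{-1}$), and one builds a two-parameter family of representations — e.g. sending $h$ to a fixed non-central diagonal matrix $\mathrm{diag}(\mu,\mu^{-1})$, $a_1$ to the antidiagonal involution conjugating $h$ to $h^{-1}$ (one parameter), with the image of $a_1$ allowed to vary by the centralizer, and the remaining generators $c_j, q_l$ tuned (using that $q_l^{\alpha_l}h^{\beta_l}=1$ and traces of products of $SL_2(\mathbb{C})$ matrices are unconstrained) to realize an extra free trace parameter; one then verifies $\mu$ and that extra parameter give two algebraically independent trace functions, and that $t_c$ is non-constant on this family by choosing $c$'s slope so that it has a non-trivial power of $h$ or $q_l$ in its class. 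This yields $(A\pm1)$-torsion by Theorem \ref{thm:largeX(M)_nclosed}.

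**The closed case $n=0$ via Theorem \ref{thm:infiniteX(M)}.** Now $M = M(-1,0;(\beta_1,\alpha_1),(\beta_2,\alpha_2))$ is closed, so Theorem \ref{thm:nsep-torus} and Theorem \ref{thm:largeX(M)_nclosed} are unavailable, but Theorem \ref{thm:infiniteX(M)} applies once we exhibit $\zeta$ with $\dim_{\mathbb{C}} S_\zeta(M) > \dim_{\Q(A)} S(M,\Q(A))$. The cleanest route, following Corollary \ref{cor:infiniteX(M)}, is to show $X(M)$ is positive-dimensional, which gives torsion at every odd-order root of unity and hence in particular $(A\pm1)$-torsion after invoking $S_x \cong S_{-x}$ (Barrett) — but one must be careful that Corollary \ref{cor:infiniteX(M)} as stated gives $A-\zeta$-torsion for $\zeta$ of odd order or order $\equiv 2 \bmod 4$, so to land $(A\pm1)$ itself one uses that $(A+1)$ has order $2$, which is not $\equiv 2\bmod 4$; I would instead argue as in the proof of Corollary \ref{cor:infiniteX(M)} that positive-dimensionality of $X(M)$ forces $\dim S_{-1}(M) = \infty > \dim_{\Q(A)} S(M,\Q(A))$ (finite by Theorem \ref{thm:finiteness}), and apply Theorem \ref{thm:infiniteX(M)} directly at $\zeta = -1$, then transport to $\zeta=+1$ via Barrett. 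So the real task is: \emph{$X(M)$ is positive-dimensional}. Using \eqref{eq:presSFSnonor} with $|g|=1$, $n=0$, $k=2$, the relation $a_1 h a_1^{-1} = h^{-1}$ means $\rho(a_1)$ conjugates $\rho(h)$ to its inverse; take $\rho(h) = \mathrm{diag}(\mu,\mu^{-1})$ with $\mu$ a primitive root of unity of order dividing both $\beta_1$ and $\beta_2$ suitably (to satisfy $q_l^{\alpha_l}h^{\beta_l}=1$ one needs $\rho(h)^{\beta_l}$ to be a power of $\rho(q_l)^{\alpha_l}$; choosing $\rho(q_l)$ diagonal of finite order this is a finite set of conditions on $\mu$), and then $\rho(a_1)$ ranges over an antidiagonal matrix times the centralizer of $\rho(h)$ — a positive-dimensional family — subject only to the single relation $q_1 q_2 a_1^2 = 1$, i.e. $\rho(a_1)^2 = \rho(q_1 q_2)^{-1}$. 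The constraint $\rho(a_1)^2 = \mathrm{const}$ cuts the family down but, by a dimension count (the variety of $a_1$ with prescribed square inside the relevant coset is typically still positive-dimensional, or one gains a dimension by letting $\mu$ itself vary continuously in a case where the $q_l$ are not both present as genuine torsion constraints), one still obtains a curve of characters.

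**Main obstacle.** The delicate point is the closed case: ensuring $X(M)$ is genuinely positive-dimensional despite all the finite-order constraints $q_l^{\alpha_l}h^{\beta_l}=1$ and the squaring relation $a_1^2 = (q_1q_2)^{-1}$. For a closed \emph{small} Seifert manifold the character variety can be zero-dimensional, so this will \emph{not} hold in general — which means I expect the actual proof to split off the genuinely small closed cases and handle them separately, likely by showing directly that such $M$ contains \emph{no} incompressible non-boundary-parallel closed surface (so the hypothesis of Lemma \ref{lemma:SFSnor2} is vacuous), or by falling back on Theorem \ref{thm:torsiontorus}/\ref{thm:nsep-torus} applied to a \emph{horizontal} essential surface's induced structure, or by a direct skein computation at a fourth root of unity in the spirit of the proof of Theorem \ref{thm:rp3}. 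Pinning down exactly which closed $M(-1,0;(\beta_1,\alpha_1),(\beta_2,\alpha_2))$ satisfy the surface hypothesis, and producing the character-variety family (or the alternative torsion argument) precisely in those cases, is the crux.
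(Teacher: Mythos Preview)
Your overall strategy---split off the closed case and apply Theorem~\ref{thm:infiniteX(M)}, handle the bounded cases via Theorem~\ref{thm:largeX(M)_nclosed} using the finiteness results of Section~\ref{sec:finiteness}---is exactly the paper's. However, each case has a genuine gap.

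\textbf{The case $n=2$.} You assert that strong finiteness holds with a \emph{single} boundary curve $c$, but $\F$ has two boundary tori, and Proposition~\ref{prop:f1,2case} only gives finiteness over $\Q(A)[c_1,c_2]$. So you need $\dim X(M)>2$, not $>1$. Your construction with $\rho(h)$ non-central diagonal then fails: the relations $[h,c_j]=1$ force $\rho(c_1),\rho(c_2)$ to be diagonal, and $a_1ha_1^{-1}=h^{-1}$ forces $\rho(a_1)$ antidiagonal, whence $\rho(a_1)^2=-I_2$; after quotienting by conjugation this leaves at most two free parameters. The paper instead kills $h$ entirely: the projection $\pi_1(M)\twoheadrightarrow\pi_1(F_{-1,2})\simeq F_2$ embeds $X(F_2)\simeq\C^3$ into $X(M)$, giving $\dim X(M)\geq 3$ immediately, and the algebraic independence of $t_{c_1},t_{c_2}$ follows from that of $t_a,t_b$ on $X(F_2)$.

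\textbf{The case $n=1$.} The same pathology bites: a non-central $\rho(h)$ forces $\rho(q_1),\rho(c_1)$ diagonal, the torsion relation $q_1^{\alpha_1}h^{\beta_1}=1$ pins $\rho(q_1)$ to finitely many choices for each $\mu$, and $\rho(a_1)^2=-I_2$ determines $\rho(c_1)$; you recover only a one-parameter family. The paper's trick is to take $\rho(h)=-I_2$ \emph{central}, which trivializes all commutator relations and leaves $\tr\rho(c)$ and $\tr\rho(qc)$ genuinely free (using that $X(F_2)=\C^3$ in the trace coordinates), yielding $\dim X(M)\geq 2$.

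\textbf{The closed case $n=0$.} Your worry about small Seifert manifolds is a red herring: a closed orientable Seifert manifold over $\mathbb{RP}^2$ is non-Haken only with at most one exceptional fiber, so $M(-1,0;(\beta_1,\alpha_1),(\beta_2,\alpha_2))$ is always Haken. The paper simply invokes \cite[Proposition~3.1]{DKS2} to conclude $X(M)$ is positive-dimensional, and Theorem~\ref{thm:infiniteX(M)} (at $\zeta=-1$, which \emph{does} have order $2\equiv 2\bmod 4$, contrary to your parenthetical) finishes. No case analysis or fallback to roots of unity other than $\pm1$ is needed.
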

\begin{proof}
If $k=2$ and $n=0,$ then $M$ is a closed Haken manifold and $X(M)$ is infinite by \cite[Proposition 3.1]{DKS2}, hence $\Sk(M)$ has $A\pm 1$ torsion. Thus we restrict to the cases $n=1,k=1$ or $n=2,k=0.$

First we treat the case $n=2,k=0.$ Let $c_1,c_2$ be two simple closed curves on the boundary components of $M$ such that $\Sk(M,\Q(A))$ is a finitely generated $\Q(A)[c_1,c_2]$-module. Those curves are provided by Proposition \ref{prop:f1,2case}, and they are non-vertical. We claim that $t_{c_1}$ and $t_{c_2}$ are algebraically independent in $\C[X(M)].$ Indeed, consider the base $B\simeq F_{-1,2}$ of $M,$ and notice that $\pi_1(M)$ surjects onto $\pi_1(B)$ under the canonical projection. The restrictions of $t_{c_1}$ and $t_{c_2}$ to $X(B)$ are polynomials in $t_a$ and $t_b$ respectively, where $a$ and $b$ are the boundary components of $B.$ Since $\pi_1(B)\simeq F_2,$ and $X(B)=\C[t_a,t_b,t_{ab}],$ we get that $X(B)$ has dimension $3$ and thus $X(M)$ has dimension at least $3.$ Moreover, we get that $t_a$ and $t_b$ are algebraically independent in $\C[X(B)],$ and thus $t_{c_1}$ and $t_{c_2}$ are algebraically independent in $\C[X(M)].$ Therefore we can apply Theorem \ref{thm:largeX(M)_nclosed}, and $\Sk(M)$ has $A\pm 1$ torsion.

Finally, when $n=1$ and $k=1,$ let $(\beta,\alpha)$ be the parameters of the exceptional fiber. By Proposition \ref{prop:f1,1case}, there is a non-vertical simple closed curve $c_1$ on the boundary component of $M,$ such that $\Sk(M,\Q(A))$ is a finitely generated $\Q(A)[c_1]$-module. Notice that $c_1$ is non-trivial in $H_1(M,\Q),$ which implies (looking at abelian characters) that $t_{c_1}$ takes infinitely many distinct values on $X(M).$ This implies that $t_{c_1}$ is transcendental in $\C[X(M)].$ 

Next we recall that, by Theorem \ref{thm:SFSpi1}, we have that \begin{equation}\label{eq:presSFS2}\pi_1(M)=\langle a,q,c,h | aha^{-1}=h^{-1}, [h,c]=[h,q]=1, q^{\alpha}h^{\beta}=1, qca^2=1\rangle.
\end{equation}
Let us show that for any $x,y\in \C,$ there is a representation $\rho:\pi_1(M)\longrightarrow \slC$ such that $\rho(h)=-I_2,$ and $\tr (\rho(c))=x$ and $\tr(\rho(qc))=y,$ which implies that $\dim X(M)\geq 2.$ Note that if $\rho(h)=-I_2$ then the first $3$ relations of the presentation are satisfied. We define $\rho$ on $\langle q,c \rangle$ so that $\tr \rho(q)=\zeta+\zeta^{-1}$ where $\zeta$ is a primitive root of unity of order $\alpha$ if $\beta$ is even, and $2\alpha$ if $\beta$ is odd, and $\tr \rho(c)=x$ and $\tr \rho (q c)=y.$ It is always possible to realize this, since the character variety of a free group $F_2=\langle u,v \rangle$ is $\C[\tr u,\tr v, \tr uv].$ Moreover, note that $\zeta$ has order at least $3$ for any coprime $(\alpha,\beta)$ with $\alpha\geq 2.$ Hence $\zeta \neq \zeta^{-1},$ and the fact that $\tr \rho (q)=\zeta+\zeta^{-1}$ implies that $\rho(q)$ is diagonalizable and the fourth relation of Eq \ref{eq:presSFS2} is verified. Finally, since square roots always exist in $\slC,$ we can choose $\rho(a)$ so that the last equation is verified.

Therefore $\dim X(M)\geq 2$ and we can apply Theorem \ref{thm:largeX(M)_nclosed} again and $\Sk(M)$ has $A\pm 1$ torsion.
\end{proof}
We are now ready to complete the:
\begin{proof}[Proof of Theorem \ref{thm:SFStorsion}]
We note that by \cite[Theorem 1.3]{DKS2}, if $M$ is closed and Haken then $X(M)$ has positive dimension, which by Theorem \ref{thm:infiniteX(M)} implies that $\Sk(M)$ has $A\pm 1$ torsion. We deal with the case of Seifert manifolds with boundary.

Let $M$ be a Seifert manifold with base $B$ and with boundary. It is well-known (see \cite[Proposition 1.12]{Hatcher:notes} for example) that an incompressible surface $S$ in $M$ can be isotoped to be either horizontal (meaning the projection $\pi:M\rightarrow B$ is a local homeomorphism) or vertical (meaning it consists of fibers of $\pi$). In the former case, $S$ can not be closed when $M$ has boundary. In the latter case, if $S$ is closed it must be a vertical torus, and for $S$ to be non-boundary parallel, there must be a simple closed curve in $B$ which is non-boundary parallel and does not bound a disk with at most one cone point on either side. This is possible exactly when $M$ fibers on neither of
\begin{itemize}
	\item[-]a disk with at most two exceptional fibers or
	\item[-]an annulus  or Möbius band with at most one exceptional fiber.
\end{itemize}
Then $M$ has to be in one of the cases covered by Lemma \ref{lemma:positive_genus}, \ref{lemma:SFSor}, \ref{lemma:SFSnor} or \ref{lemma:SFSnor2}, and $\Sk(M)$ has $A\pm 1$ torsion.  
\end{proof}
\section{Torsion detected by the skein module at $A=\sqrt{-1}$}\label{sec:rp3}
In this section we give examples of manifolds where torsion in $\Sk(M)$ is detected not by the character variety, but rather by the computations of $\Sk_{\sqrt{-1}}(M).$

\subsection{The skein module of $L(2p,1)\#L(2,1)$}

In this subsection we provide a presentation for $\Sk_{\sqrt{-1}}\left(L(p,1)\#L(2,1)\right)$ as an explicit quotient of $\Sk_{\sqrt{-1}}(H_2)=\C[x,y,z]$, where the skein module of the genus $2$-handlebody is the polynomial algebra over the three curves $x,y,z$ of Figure \ref{fig:g2curves}.
\begin{figure}
	\centering
	\includegraphics[height=3cm, width=0.5\textwidth]{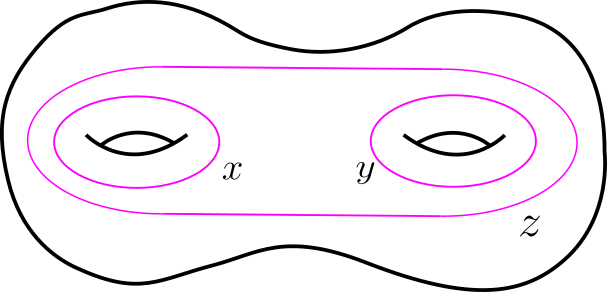}
	\caption{The curves $x,y,z$ which (with their parallels) generate $\Sk(H_2)$}\label{fig:g2curves}
\end{figure}
The calculations rely on two main ingredients: the observation that at a fourth root of unity, switching a crossing is the same as changing the sign of the skein element; and the handle slide presentation for skein modules, which relies on the following result of Hoste and Przytycki.

\begin{theorem}\label{thm:handleslide}\cite{HP93}
	Let $M$ be a $3$-manifold with boundary, let $\alpha$ be a curve in $\partial M$ and let $M'$ be the manifold obtained from  $M$ by attaching a handle to $\alpha$. Then $\Sk(M')=\Sk(M)/J$, where $J$ is the submodule of $\Sk(M)$ generated by elements of the form $\gamma-sl_{\alpha}\gamma$, where $\gamma$ is a framed link is $M$ and $sl_{\alpha}\gamma$ is the result of handle-sliding a component of $\gamma$ over $\alpha$ (see Figure \ref{fig:handleslide}).
\end{theorem}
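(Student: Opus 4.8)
The plan is to run the classical argument, which is the skein-theoretic analogue of proving invariance of the Kauffman bracket under handle slides in Kirby calculus. Write $M' = M \cup_\alpha h$, where $h \cong D^2 \times D^1$ is a $2$-handle attached along an annular neighbourhood $\partial D^2 \times D^1$ of $\alpha$ in $\partial M$, and let $c = \{0\} \times D^1 \subseteq h$ be the co-core arc; note $c$ is properly embedded in $M'$ with endpoints on $\partial M'$ and is disjoint from $M$. Two observations are easy. First, since $h \smallsetminus c = (D^2 \smallsetminus \{0\}) \times D^1$ deformation retracts onto $\partial D^2 \times D^1$, which lies in $M$, any framed link in $M'$ may be put transverse to $c$, made disjoint from it, and then isotoped into $M$; hence the inclusion $M \hookrightarrow M'$ induces a surjection $\Sk(M) \twoheadrightarrow \Sk(M')$. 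Second, a handle slide of a component of $\gamma$ over $\alpha$ is realized by an ambient isotopy of $M'$ that drags the relevant band across the core disk $D^2 \times \{0\}$, so $\gamma$ and $sl_\alpha \gamma$ are isotopic in $M'$ and therefore $J$ lies in the kernel of $\Sk(M) \to \Sk(M')$. We thus obtain a well-defined surjection $\Sk(M)/J \twoheadrightarrow \Sk(M')$, and the entire content of the theorem is its injectivity.

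To prove injectivity I would construct the inverse map $\Phi \colon \Sk(M') \to \Sk(M)/J$ sending a framed link $L \subseteq M'$ to the class of any isotopic copy of $L$ pushed into $M$. Well-definedness requires two things: (a) Kauffman relations in $M'$ are respected — but each one takes place in a ball, which can be isotoped off $c$ and hence into $M$, where it becomes an honest Kauffman relation; and (b) independence of the chosen push-off, which is the crux and reduces to the following \emph{Lemma}: if $L_0, L_1$ are framed links contained in $M$ that are ambient isotopic in $M'$, then they are related by a finite sequence of ambient isotopies in $M$ and handle slides over $\alpha$. Granting the Lemma, any two push-offs of $L$ have the same image in $\Sk(M)/J$ because handle-slid links are identified there, so $\Phi$ is well defined, and it is tautologically a two-sided inverse of $\Sk(M)/J \to \Sk(M')$, yielding the isomorphism.

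The Lemma is the main obstacle and is proved by a transversality argument on the trace of the isotopy. Given an ambient isotopy $(\Phi_t)_{t \in [0,1]}$ of $M'$ with $\Phi_0 = \mathrm{id}$ and $\Phi_1(L_0) = L_1$, consider the trace $\hat F \colon L_0 \times [0,1] \to M'$, $\hat F(x,t) = \Phi_t(x)$, a map of a surface into the $3$-manifold $M'$. After a small perturbation rel $t \in \{0,1\}$ — legitimate since $L_0, L_1 \subseteq M$ are already disjoint from $\mathrm{int}(c)$ — we may assume $\hat F$ is transverse to $c$, so $\hat F^{-1}(c)$ consists of finitely many points occurring at distinct times $0 < t_1 < \cdots < t_k < 1$. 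Choosing regular times $0 = s_0 < s_1 < \cdots < s_k = 1$ with exactly one $t_j$ in each $(s_{j-1}, s_j)$, the links $\Phi_{s_j}(L_0)$ are disjoint from $c$ and so push into $M$ to give framed links $L_0 = N_0, N_1, \dots, N_k = L_1$ in $M$, with consecutive $N_{j-1}, N_j$ obtained from one another by passing a single strand transversally across the co-core exactly once. The final ingredient is the local model computation showing that a single transversal crossing of $c$, translated back into $M$, is precisely a handle slide of that strand over $\alpha$ up to isotopy in $M$: in standard coordinates on $h$ it replaces the strand by its band sum with a parallel copy of the attaching curve $\alpha$, pushed slightly into $M$ using the collar of $\partial M$. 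Care is needed to check that the band and framing data produced this way match those in the definition of $sl_\alpha$ in Figure \ref{fig:handleslide}, and that the two signs of the crossing correspond to a handle slide and its inverse, both of which lie in $J$; once this is verified the Lemma, and hence the theorem, follows.
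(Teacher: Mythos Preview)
The paper does not supply a proof of this statement: Theorem~\ref{thm:handleslide} is quoted from Hoste--Przytycki \cite{HP93} and used as a black box, so there is no ``paper's own proof'' to compare against. Your argument is the standard one and is correct: surjectivity via transversality with the co-core, $J$ in the kernel because handle slides are isotopies in $M'$, and well-definedness of the inverse via a transversality argument on the trace of an isotopy, reducing to the local model that a single crossing of the co-core is a handle slide. This is essentially the proof one finds in the original reference, so nothing is missing.
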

\begin{figure}
\centering
\begin{minipage}{.45\textwidth}
	\centering    
	\includegraphics[width=0.6\textwidth]{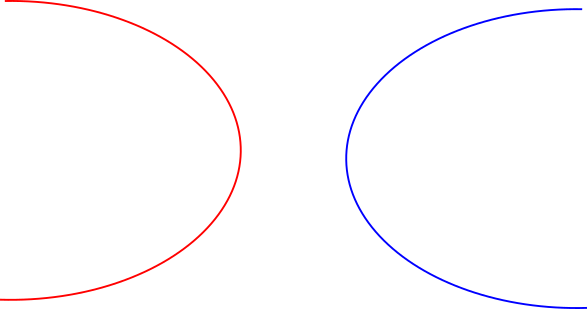}
\end{minipage}  $\longrightarrow$
\begin{minipage}{.45\textwidth}
	\centering    
	\includegraphics[width=0.5\textwidth]{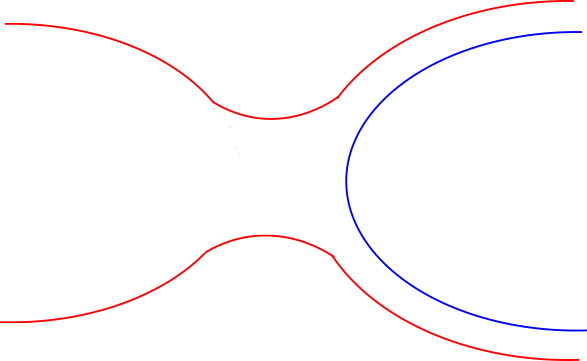}
\end{minipage}
\caption{Handlesliding the red curve over the blue curve}\label{fig:handleslide}
\end{figure}

Notice that when handle-sliding a curve $\gamma$ over a curve $\alpha$, the result will not only depend on $\gamma$ and $\alpha$ but also on an arc used to connect $\gamma$ and $\alpha$; different choices of arc will usually lead to different curves. The first step needed for the calculations is to reduce the number of handle slide that need to be considered. This is contained in the following Lemma.

\begin{figure}
	\centering
	\begin{minipage}{.45\textwidth}
		\centering    
		\includegraphics[width=0.9\textwidth]{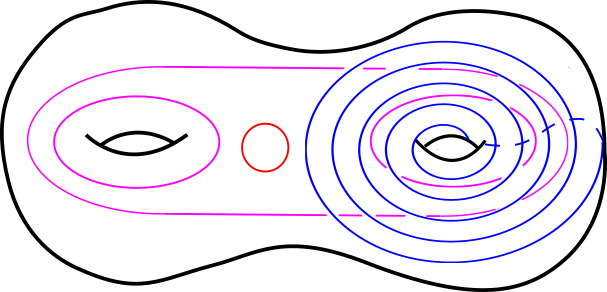}
\label{fig:g2hs1before}
\vspace{0.35cm}
	\end{minipage}  $\longrightarrow$
	\begin{minipage}{.45\textwidth}
		\centering    
		\includegraphics[width=0.9\textwidth]{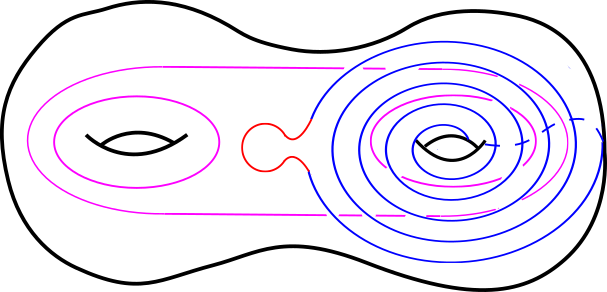}
\label{fig:g2hs1after}
\vspace{0.35cm}
	\end{minipage}
	\vspace{0.35cm}
\begin{minipage}{.45\textwidth}
	\centering    
	\includegraphics[width=0.9\textwidth]{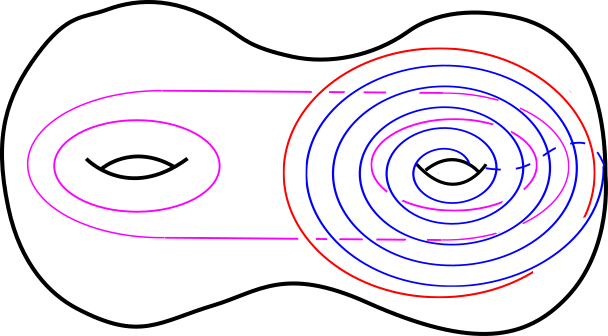}
	\label{fig:g2hs3before}
\end{minipage}  $\longrightarrow$
\begin{minipage}{.45\textwidth}
	\centering    
	\includegraphics[width=0.9\textwidth]{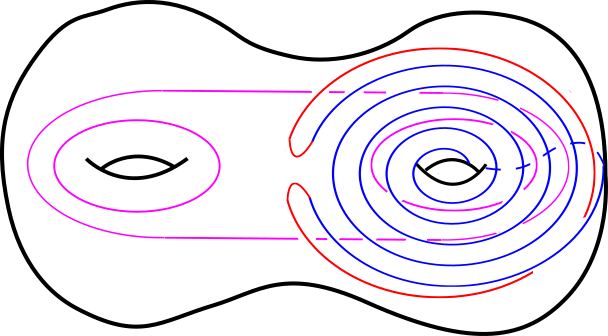}
	\label{fig:g2hs3after}
\end{minipage}
\vspace{0.35cm}
\begin{minipage}{.45\textwidth}
	\centering    
	\includegraphics[width=0.9\textwidth]{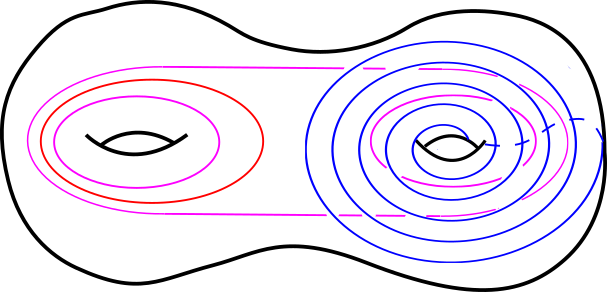}
	\label{fig:g2hs2before}
\end{minipage}  $\longrightarrow$
\begin{minipage}{.45\textwidth}
	\centering    
	\includegraphics[width=0.9\textwidth]{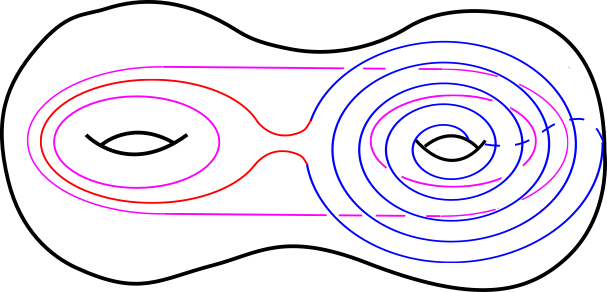}
	\label{fig:g2hs2after}
\end{minipage}

\begin{minipage}{.45\textwidth}
	\centering    
	\includegraphics[width=0.9\textwidth]{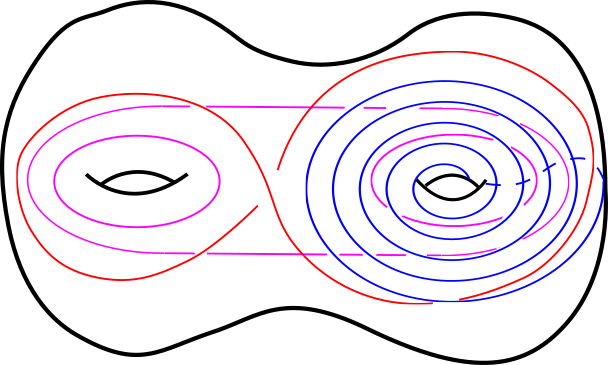}
	\label{fig:g2hs4before}
\end{minipage}  $\longrightarrow$
\begin{minipage}{.45\textwidth}
	\centering    
	\includegraphics[width=0.9\textwidth]{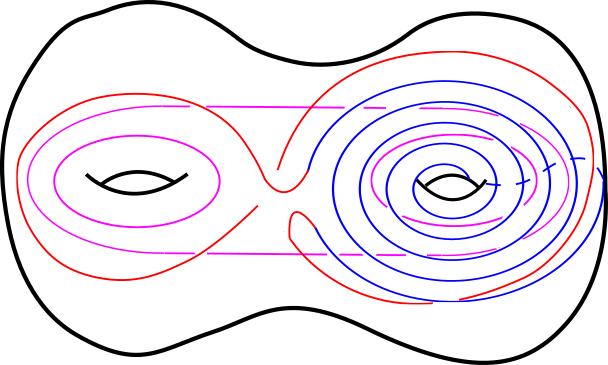}
	\label{fig:g2hs4after}
\end{minipage}
\caption{Handleslides over $\alpha_2$}	\label{fig:g2handleslides}
\end{figure}
\begin{lemma}
	Let $M$ be a $3$-manifold with a genus $2$ Heegaard splitting, call $H_2$ the first handlebody in the decomposition and $\alpha_1$ and $\alpha_2$ two non-parallel, disjoint curves in $H_2$ that bound disks in the other handlebody. Then $\Sk(M)=\Sk(H_2)/J$, where $J$ is the submodule generated by the handleslides pictured in Figure \ref{fig:g2handleslides} (over the curve $\alpha_2$) and the symmetric handleslides over the curve $\alpha_1$.
\end{lemma}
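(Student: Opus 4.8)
The plan is to derive the lemma from the Hoste--Przytycki handleslide presentation of Theorem~\ref{thm:handleslide}. Realising $M$ as $H_2\cup_\Sigma H_2'$, build the complementary handlebody $H_2'$ from $H_2\times[0,1]$ by attaching two $2$-handles along $\alpha_1$ and $\alpha_2$ and then a single $3$-handle. Attaching the $3$-handle caps off a sphere boundary component with a ball and does not change the skein module, so two successive applications of Theorem~\ref{thm:handleslide} give $\Sk(M)=\Sk(H_2)/(J_1+J_2)$, where $J_i\subseteq\Sk(H_2)$ is the submodule spanned by \emph{all} handleslide relations $\gamma-sl_{\alpha_i}\gamma$, over arbitrary framed links $\gamma$ and arbitrary connecting arcs. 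The relations of Figure~\ref{fig:g2handleslides} and their $\alpha_1$-analogues are particular such relations, so the submodule $J$ they generate satisfies $J\subseteq J_1+J_2$; it remains to prove the reverse inclusion. Since the roles of $\alpha_1$ and $\alpha_2$ are interchangeable in the argument, it suffices to show that every handleslide relation over $\alpha_2$ lies in $J$.

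I would establish this in three successive reductions. \emph{(i) Reduction to a single slide of a single strand.} If $\gamma'=sl''\,sl'\,\gamma$ is obtained by two elementary slides then $\gamma-\gamma'=(\gamma-sl'\gamma)+(sl'\gamma-sl''(sl'\gamma))$, so once every single-strand slide of every link is known to lie in $J$, so is every composite slide. \emph{(ii) Reduction to links in standard position.} A crossing of a diagram of $\gamma$ lying away from the region where the slide takes place (a neighbourhood of $\alpha_2$, the connecting arc, and the strand being slid) can be resolved by the Kauffman relation, and this resolution commutes with the slide performed elsewhere; the same holds for a trivial circle disjoint from that region. Inducting on the complexity of the diagram outside the sliding region, one reduces to the case where $\gamma$, away from that region, is a union of parallel standard copies of the curves $x,y,z$ of Figure~\ref{fig:g2curves} with no crossings, so that the strand being slid is a parallel copy of one of $x$, $y$, $z$. \emph{(iii) Reduction to the elementary connecting arcs.} After (ii), the connecting arc runs from that standard copy to $\alpha_2$ through $H_2$; its possible isotopy classes, after allowing the endpoints to slide along the strand and along $\alpha_2$, are governed by the winding of the arc across the meridian disks of $H_2$. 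One shows that adding one such crossing to the arc changes the handleslide relation only by the addition of an elementary handleslide of the corresponding core curve (one of $x$, $y$, $z$, each of which is isotopic to a generator of $\pi_1(H_2)$). An induction on this winding number then reduces every single-strand slide of $x$, $y$ or $z$ over $\alpha_2$ to one of finitely many relations, which are exactly those drawn in Figure~\ref{fig:g2handleslides}. Running the identical argument for $\alpha_1$ yields $J_1+J_2\subseteq J$ and hence the lemma.

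The main obstacle is reduction (iii): a handleslide over an arbitrary connecting arc must be rewritten, modulo relations already in $J$ and isotopies inside $H_2$, in terms of the slide along a shortest arc. The cleanest way I see to organise this is to isotope $\gamma$ into a collar $\Sigma\times[0,1]$ of the Heegaard surface, so that $\gamma$ becomes a diagram on $\Sigma$ with $\alpha_2\subset\Sigma$ and the connecting arc $a\subset\Sigma$, and then to analyse how $a$ can be isotoped within $\Sigma$ relative to the already simplified diagram; each time $a$ is pushed across a handle of $H_2$ one picks up exactly one of the generating relations, and the remaining bookkeeping — which is finite — produces the explicit list in Figure~\ref{fig:g2handleslides}. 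Checking that the four pictured relations (together with the four symmetric ones over $\alpha_1$) are precisely what survives this reduction is the only genuinely computational part of the argument.
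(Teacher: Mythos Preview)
Your overall strategy---invoke Hoste--Przytycki and then show every handleslide relation lies in the submodule $J$ generated by the pictured ones---matches the paper. The gap is in how you carry out the reduction.

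In your step (ii) you resolve crossings \emph{away} from the sliding region, which by your own description contains ``a neighbourhood of $\alpha_2$, the connecting arc, and the strand being slid.'' Resolving crossings outside that region simplifies the spectator components, but it does nothing to the slid component itself; it certainly does not force that component to become a parallel copy of $x$, $y$ or $z$. The conclusion ``so that the strand being slid is a parallel copy of one of $x,y,z$'' is a non sequitur. Your step (iii) then builds on this unproved claim, and its own inductive assertion---that pushing the connecting arc once across a meridian disk alters the handleslide relation by an ``elementary handleslide of the corresponding core curve''---is not justified either: pushing the arc through a handle changes the isotopy class of the slid curve by composition with a generator of $\pi_1(H_2)$, which is not the same thing as adding a separate slide of a core curve.

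The paper bypasses both difficulties with a single move. Take a collar $C$ of $\partial H_2$ and isotope $\gamma$ along the connecting arc so that the arc becomes short and the sliding region sits entirely inside $C$. What remains of $\gamma$ in $H_2\setminus \mathring C$ is then a tangle with two endpoints on $\partial C$, i.e.\ an element of the \emph{relative} skein module of $H_2$ with two marked boundary points. By a result of L\^e \cite[Lemma 5.2]{Le06}, that relative skein module is generated by exactly four standard tangles (the configurations of Figure~\ref{fig:g2rs}, possibly decorated by parallel copies of $x,y,z$). Since the handleslide only modifies the portion inside $C$, and since $sl_{\alpha_2}\gamma$ has the same tangle outside $C$, the relation $\gamma-sl_{\alpha_2}\gamma$ is a linear combination of relations in which the outside tangle is one of those four standard pictures. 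These are precisely the four handleslides of Figure~\ref{fig:g2handleslides}. The citation to L\^e replaces your ad hoc inductions in (ii) and (iii) in one stroke and is the key idea your argument is missing.
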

\begin{proof}
	We show that $J$ is actually the submodule generated by all handle-slide relations. To do so, we take any curve $\gamma\subseteq H_2$, connect it with an arbitrary arc to $\alpha_2$ and handle-slide it over; call the resulting curve $sl_{\alpha_2}\gamma$. Take a collar $C$ of the boundary of $H_2$; isotope $\gamma$ along the arc so that it is close to $\alpha_2$ and so that it intersects $\partial C$ in two points and call $\overline{\gamma}=\gamma\cap H_2\setminus \mathring{C}$. After the slide, the part of $sl_{\alpha_2}\gamma$ coming from $\alpha_2$ can be pushed into $C$ so that  $sl_{\alpha_2}\gamma$ intersects $\partial C$ in two points; call $\overline{sl_{\alpha_2}\gamma}=sl_{\alpha_2}\gamma\cap H_2\setminus \mathring C$. Because of \cite[Lemma 5.2]{Le06}, the relative skein module of $H_2$ with two points on the boundary is generated by the elements of Figure \ref{fig:g2rs} (where the additional closed curves can be comprised of multiple parallel copies), so $\overline{sl_{\alpha_2}\gamma}$ can be modified, using skein relations and isotopies that do not move its endpoints, to be one of the cases of Figure \ref{fig:g2handleslides}.
	
\end{proof}
\begin{figure}
	\centering
	\begin{minipage}{.45\textwidth}
		\centering    
		\includegraphics[width=0.9\textwidth]{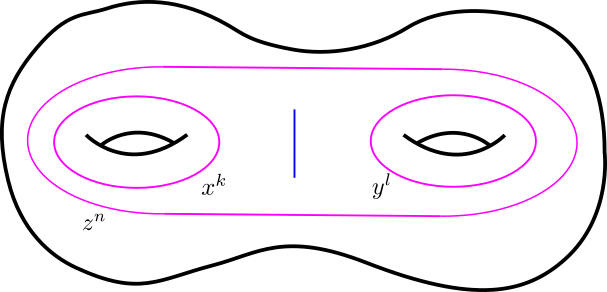}
		\vspace{0.5cm}
	\end{minipage} 
	\begin{minipage}{.45\textwidth}
		\centering    
		\includegraphics[width=0.9\textwidth]{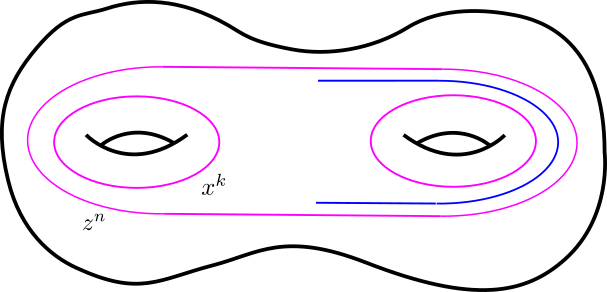}
		\vspace{0.5cm}
	\end{minipage}

	\centering
	\begin{minipage}{.45\textwidth}
		\centering    
		\includegraphics[width=0.9\textwidth]{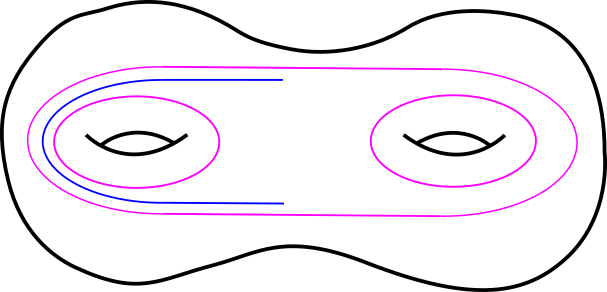}
	\end{minipage}  
	\begin{minipage}{.45\textwidth}
		\centering    
		\includegraphics[width=0.9\textwidth]{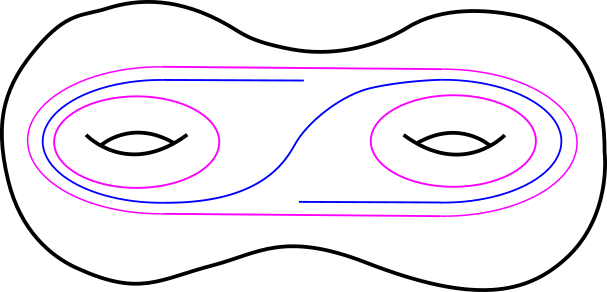}
	\end{minipage}
	\caption{}\label{fig:g2rs}
\end{figure}

Recall the two families of Chebichev polynomials $T_n$ and $S_n$, defined by the following recurrence relations:
\begin{equation*}
	\begin{cases}
		T_0(x)=2;\\
		T_1(x)=x;\\
		xT_n(x)=T_{n+1}+T_{n-1}
	\end{cases}
\end{equation*}
and
\begin{equation*}
\begin{cases}
	S_0(x)=1;\\
	S_1(x)=x;\\
	xS_n(x)=S_{n+1}+S_{n-1}
\end{cases}
\end{equation*}
\begin{proposition}\label{prop:presentationlp1l21}
	The skein module at $\sqrt{-1}$ of $L(2,1)\#L(p,1)$ is $\C[x,y,z]/J$, where $J$ is the submodule generated by the following elements:
	\begin{enumerate}
		\item 
		\begin{equation*}
			\begin{cases}
				x^ky^lz^n\left(2-\sqrt{-1}^pT_p(y)\right) \text{ if }l+n\text{ is even;}\\
				x^ky^lz^n\left(2+\sqrt{-1}^pT_p(y)\right) \text{ if }l+n\text{ is odd;}
			\end{cases}
		\end{equation*}
		
		\item $$
		\begin{cases}
			x^ky^lz^n\left(y+\sqrt{-1}^pT_{p-1}(y)\right) \text{ if }l+n\text{ is even;}\\
			x^ky^lz^n\left(y-\sqrt{-1}^pT_{p-1}(y)\right) \text{ if }l+n\text{ is odd;}
		\end{cases}$$
		\item $$
		\begin{cases}
			x^ky^lz^n\left(x-\sqrt{-1}^p\left(z S_{p-1}(y)-xS_{p-2}(y)\right)\right) \text{ if }l+n\text{ is even;}\\
			x^ky^lz^n\left(x+\sqrt{-1}^p\left(z S_{p-1}(y)-xS_{p-2}(y)\right)\right) \text{ if }l+n\text{ is odd;}
		\end{cases}$$
		\item $$
		\begin{cases}
			x^ky^lz^n\left(\sqrt{-1}z-\sqrt{-1}xy+\sqrt{-1}^{p-1}\left(zS_{p-2}(y)-xS_{p-3}(y)\right)\right) \text{ if }l+n\text{ is even;}\\
			x^ky^lz^n\left(\sqrt{-1}z-\sqrt{-1}xy-\sqrt{-1}^{p-1}\left(zS_{p-2}(y)-xS_{p-3}(y)\right)\right)  \text{ if }l+n\text{ is odd;}
		\end{cases}$$
		\item $$
		\begin{cases}
			x^ky^lz^n\left(x^2\right) \text{ if }k+n\text{ is even;}\\
			x^ky^lz^n\left(4-x^2\right) \text{ if }k+n\text{ is odd;}
		\end{cases}$$
		\item $$
		x^ky^lz^n\left(2x\right) \text{ if }k+n\text{ is odd;}
		$$
		\item $$
		\begin{cases}
			x^ky^lz^n\left(zx\right) \text{ if }k+n\text{ is even;}\\
			x^ky^lz^n\left(2y-z x\right) \text{ if }k+n\text{ is odd;}
		\end{cases}$$
		\item $$
		\begin{cases}
			x^ky^lz^n\left(2\sqrt{-1}z-\sqrt{-1}xy\right) \text{ if }l+n\text{ is even;}\\
			x^ky^lz^n\left(\sqrt{-1}xy\right)  \text{ if }l+n\text{ is odd;}
		\end{cases}$$
	\end{enumerate}
\end{proposition}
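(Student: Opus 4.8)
The plan is to invoke the handleslide presentation. By the genus-$2$ handleslide Lemma above, $\Sk_{\sqrt{-1}}(L(2,1)\#L(p,1))=\C[x,y,z]/J$, where $J$ is the submodule of $\Sk_{\sqrt{-1}}(H_2)=\C[x,y,z]$ generated by the four handleslides of Figure~\ref{fig:g2handleslides} over $\alpha_2$ and the four symmetric handleslides over $\alpha_1$, each multiplied by an arbitrary monomial $x^ky^lz^n$. Here I model $H_2$ as a regular neighborhood of a wedge of two circles, with $x$ and $y$ the curves encircling the two handles and $z$ the curve encircling both, as in Figure~\ref{fig:g2curves}; in the genus-$2$ Heegaard splitting realizing $L(2,1)\#L(p,1)$, the disk-bounding curve $\alpha_1$ winds $p$ times around the $y$-handle (producing the $L(p,1)$ summand) and $\alpha_2$ winds twice around the $x$-handle (producing the $L(2,1)$ summand). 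Thus the whole computation reduces to writing each band sum $x^ky^lz^n\cdot sl_{\alpha_i}\gamma$, for $\gamma$ one of the four slid curves of Figure~\ref{fig:g2handleslides}, in the monomial basis.

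The engine for this is the specialization $A=\sqrt{-1}$, for which: a trivial circle evaluates to $-A^2-A^{-2}=2$; switching a crossing multiplies the skein element by $-1$ (since $A^{\pm2}=-1$ gives $A^{-1}s_0+As_\infty=-(As_0+A^{-1}s_\infty)$ for the two resolutions $s_0,s_\infty$ of that crossing); and inserting a positive (resp.\ negative) kink multiplies by $-A^3=\sqrt{-1}$ (resp.\ $-A^{-3}=-\sqrt{-1}$). With these, simplifying $x^ky^lz^n\cdot sl_{\alpha_i}\gamma$ is purely combinatorial: first realize the band sum as a single strand wrapping $p$ (resp.\ $2$) times around the relevant handle; then push its strands past the monomial, which costs only a sign equal to $(-1)$ raised to the mod-$2$ geometric intersection number — this is exactly the source of the parity alternatives in the statement ($l+n$ even/odd when sliding over $\alpha_1$, since $y$ and $z$ each wind the $y$-handle while $x$ does not; $k+n$ even/odd over $\alpha_2$, since $x$ and $z$ wind the $x$-handle); and finally resolve the self-crossings of the multiply-wrapped strand, which by the standard Chebyshev identities for curves in a solid torus turns it into $T_p(y)$, $T_{p-1}(y)$, or a combination of the form $zS_{p-1}(y)-xS_{p-2}(y)$ (first kind when the slid component closes up around the $y$-handle, second kind when it trails out into the rest of $H_2$). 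The framing corrections accrued in the last step produce the scalars $\sqrt{-1}^{\,p}$ and $\sqrt{-1}^{\,p-1}$ appearing in relations (1)--(4); running the same computation over $\alpha_2$, where the wrapping number is $2$, yields relations (5)--(8), which are literally the "$p=2$" instances (using $S_0=1$, $S_1(t)=t$, $T_1(t)=t$, $T_2(t)=t^2-2$, and $x$ in place of $y$).

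Concretely I would: (a) fix explicit pictures of $x,y,z,\alpha_1,\alpha_2$ on $\partial H_2$ compatible with Figures~\ref{fig:g2curves} and~\ref{fig:g2handleslides}; (b) run the steps above on the four handleslides of Figure~\ref{fig:g2handleslides} over $\alpha_2$ to extract relations (5)--(8); (c) repeat over $\alpha_1$ with wrapping number $p$ to extract relations (1)--(4); and (d) conclude, by the genus-$2$ handleslide Lemma above, that $J$ is precisely the submodule spanned by the elements listed in (1)--(8), which is the claimed presentation. The verification that the four arcs of Figure~\ref{fig:g2handleslides} suffice is already built into that Lemma via Le's description \cite{Le06} of the relative skein module of $H_2$ with two marked boundary points, so nothing further is needed there.

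The main obstacle is the bookkeeping in steps (b)--(c): matching each band sum to the correct curve of $H_2$, reducing the $p$-fold winding to the correct Chebyshev polynomial and index (first vs.\ second kind, and $p$ vs.\ $p-1$ vs.\ $p-2$ vs.\ $p-3$), and tracking the cumulative crossing-change sign together with the framing factor so that the $\sqrt{-1}^{\,p}$ scalars and the even/odd dichotomies emerge exactly as stated. Each individual move is mechanical once one uses that crossing changes are mere signs, but aligning all signs and Chebyshev indices simultaneously across the eight relations is where the care lies.
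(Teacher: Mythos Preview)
Your plan is correct and follows exactly the paper's approach: handleslide presentation, use of the crossing-change $=$ sign-change at $A=\sqrt{-1}$ to push the slid strand past the monomial (producing the $l+n$ or $k+n$ parity), and Chebyshev reduction of the multiply-wrapped curve via the recursions you name (the paper isolates these as two small lemmas, one giving $\gamma_p=A^{p-1}T_p(y)$ and one giving $\gamma'_p=A^p(A^{-1}zS_{p-1}(y)+AxS_{p-2}(y))$ at $A=\sqrt{-1}$). The only discrepancy is cosmetic: your labeling of $\alpha_1,\alpha_2$ is swapped relative to the paper's (there $\alpha_2$ is the $p$-winding curve and relations (5)--(8) are obtained by setting $p=2$ and swapping $x\leftrightarrow y$), which has no effect on the argument.
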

\begin{proof}
	\begin{figure}
		\centering
		\includegraphics[height=3cm, width=0.45\textwidth]{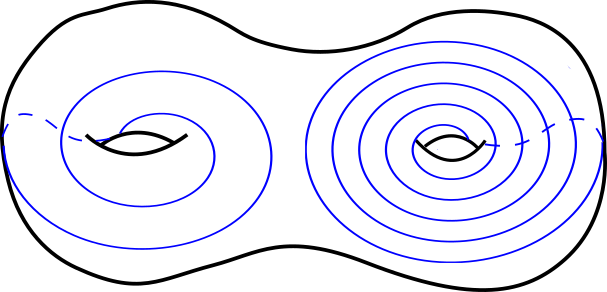}	
		\caption{Handle attachments for $L(p,1)\#\mathbb{RP}^3$}\label{fig:g2handles}	
\end{figure}	
	Take a handlebody $H_2$ of genus $2$, and take the two curves $
	\alpha_1$ and $\alpha_2$ depicted in Figure \ref{fig:g2handles}; attaching handles along those curves (and filling the resulting spherical boundary) gives $L(2,1)\#L(p,1)$. Therefore we can obtain all relations for $\Sk_{\sqrt{-1}}(L(2,1)\#L(p,1))$ by performing the handleslides depicted in Figure \ref{fig:g2handleslides} along either $\alpha_1$ or $\alpha_2$. We perform the calculations for $\alpha_2$; the calculations for $\alpha_1$ are obtained by switching $x$ and $y$ and setting $p=2$.
	
	Consider the first handle-slide relation depicted in Figure \ref{fig:g2handleslides}. The left hand side is equal to $2x^ky^lz^n$ (remember that at $A=\sqrt{-1}$, $A^2+A^{-2}=-2$), whereas the right hand side is equal to $(-1)^{l+n}(-A^3)x^ky^lz^n\gamma_p$, where $\gamma_p$ is the curve of Figure \ref{fig:d2gammaf1}, sitting in a collar of $\partial H_2$ far away from the rest of the link. To compute the right hand side, pull the handle-slid curve through the rest of $x^ky^lz^n$; this introduces a sign of $(-1)^{l+n}$ and a positive framing. Then the following lemma about $\gamma_p$ provides the first set of relations:
	\begin{figure}
		\centering
		\includegraphics[height=3cm, width=0.45\textwidth]{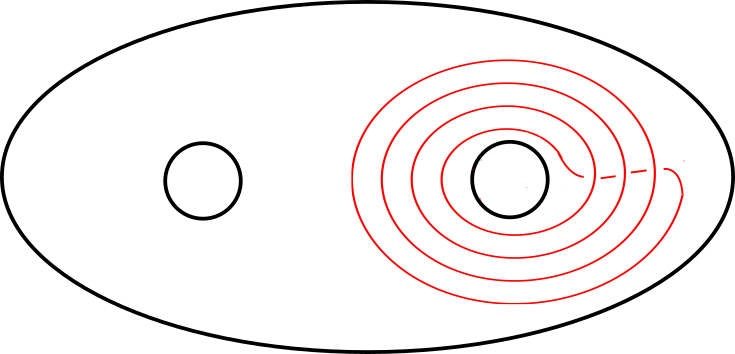}
		\caption{The curve $\gamma_p$ winds $p$ times around the puncture}\label{fig:d2gammaf1}
	\end{figure}
	\begin{lemma}\label{lem:gammap}
		The curve $\gamma_p$ of Figure \ref{fig:d2gammaf1} satisfies the following:
		\begin{enumerate}
		\item $\gamma_1=y$;
		\item $\gamma_2=Ay^2-A-A^{-3}$;
		\item $\gamma_p=Ay\gamma_{p-1}-A^2\gamma_{p-2}$ for all $p\geqslant 3$;
		\item when $A=\sqrt{-1}$, $\gamma_p=A^{p-1} T_p(y)$ for all $p\geqslant 1$.
		\end{enumerate}
	\end{lemma}
	\begin{proof}[Proof of Lemma \ref{lem:gammap}]
		The first two properties are directly computed. 
		The third property comes from the following skein computation:
		\begin{align*}
			\vcenter{\hbox{\includegraphics[width=0.25\textwidth]{figures/d2gammaf1}}}&=A\hspace{1mm}\vcenter{\hbox{\includegraphics[width=0.25\textwidth]{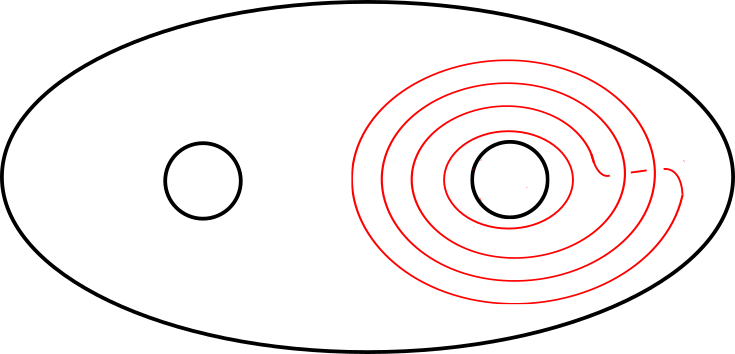}}}+A^{-1}\hspace{1mm}\vcenter{\hbox{\includegraphics[width=0.25\textwidth]{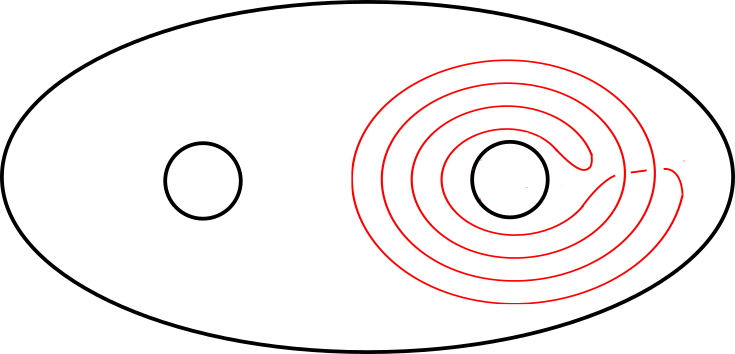}
				}}=\\ &=Ay\gamma_{p-1}-A^2\gamma_{p-2}
		\end{align*}	
	where the curve depicted in the left hand side winds $p$ times around the puncture.
	
	For the last part, define $Q_p=A^{-p+1}\gamma_p$; this new sequence satisfies the same recurrence relation as $T_p(y)$: $Q_p=yQ_{p-1}+Q_{p-2}$. Furthermore, $Q_1=y$ and $Q_2=y^2-1-A^{-4}$ which, when $A=\sqrt{-1}$, is equal to $T_2(y)$.
	\end{proof}

Now consider the second handleslide depicted in Figure \ref{fig:g2handleslides}; the left hand side is simply $x^ky^{l+1}z^n$, whereas the right hand side is, using the same trick as before, equal to $(-1)^{l+n}x^ky^lz^n\gamma_{p-1}$ (notice that there are two opposite framings canceling each other out). Once again Lemma \ref{lem:gammap} provides the next set of relations.

The third handle-slide depicted in Figure \ref{fig:g2handleslides} introduces the curve $\gamma'_p$ of Figure \ref{fig:d2gamma'f1}; the next lemma is the analog of Lemma \ref{lem:gammap} in this case.
\begin{figure}
	\centering
	\includegraphics[height=3cm, width=0.45\textwidth]{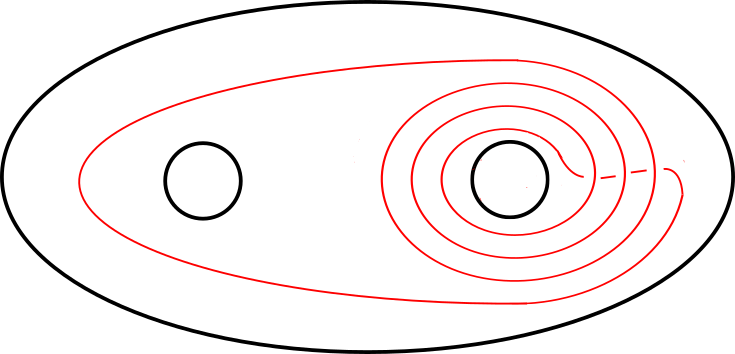}
	\caption{The curve $\gamma'_p$ winds $p$ times around the second puncture}\label{fig:d2gamma'f1}
\end{figure}

\begin{lemma}\label{lem:gamma'p}
	The curve $\gamma'_p$ of Figure \ref{fig:d2gamma'f1} satisfies the following:
	\begin{enumerate}
		\item $\gamma'_1=z$;
		\item $\gamma'_2=Ayz+A^{-1}x$;
		\item $\gamma'_p=Ay\gamma'_{p-1}-A^2\gamma'_{p-2}$ for all $p\geqslant 3$;
		\item when $A=\sqrt{-1}$, $\gamma'_p=A^p\left(A^{-1}zS_{p-1}(y)+AxS_{p-2}(y)\right)$ for all $p\geqslant 1$.
	\end{enumerate}
\end{lemma}
	\begin{proof}[Proof of Lemma \ref{lem:gamma'p}]
	The first two properties are directly computed. 
	The third property comes from the following skein computation:
	\begin{align*}
		\vcenter{\hbox{\includegraphics[width=0.25\textwidth]{figures/d2gamma_f1}}}&=A\hspace*{1mm}\vcenter{\hbox{\includegraphics[width=0.25\textwidth]{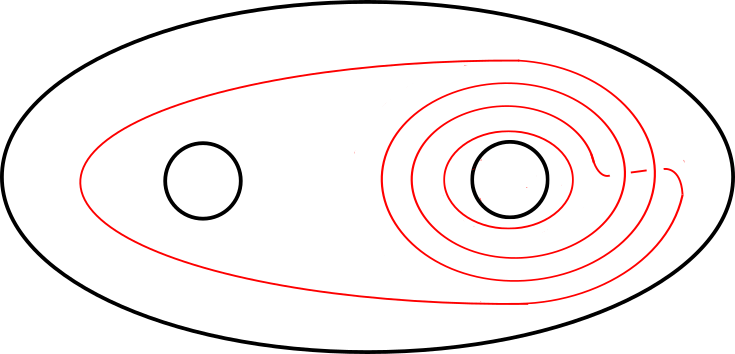}}}+A^{-1}\hspace*{1mm}\vcenter{\hbox{\includegraphics[width=0.25\textwidth]{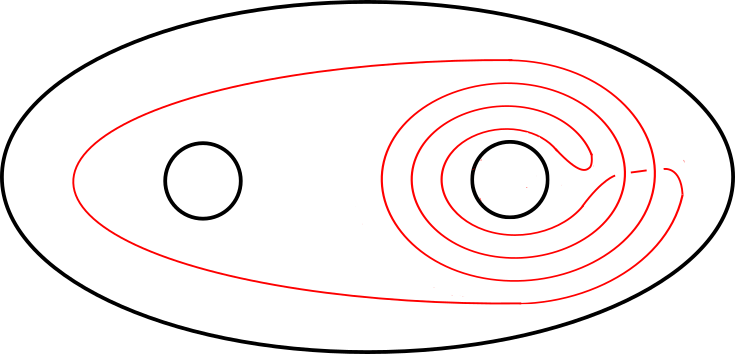}
		}}=\\ &=Ay\gamma_{p-1}-A^2\gamma_{p-2}
	\end{align*}	
	where the curve depicted in the left hand side winds $p$ times around the second puncture.
	
	For the last part, again notice that $R_p=A^{-p}\gamma'_p$ satisfies the same recurrence relation as $S_p(y)$; the difference from Lemma \ref{lem:gammap} comes from the initial conditions. In this case (when $A=\sqrt{-1}$), $R_1=-\sqrt{-1}z=-\sqrt{-1}zS_0(y)+\sqrt{-1}xS_{-1}(y)$ and $Q_2=-\sqrt{-1}yz+\sqrt{-1}x=-\sqrt{-1}zS_1(y)+\sqrt{-1}xS_0(y)$.
\end{proof}
Now if we consider the third handle-slide of Figure \ref{fig:g2handleslides}, we can see that the right hand side is equal to $(-1)^{l+n}\sqrt{-1}x^ky^lz^n\gamma'_{p}$, whereas the left hand side is equal to $x^{k+1}y^lz^n$; then Lemma \ref{lem:gamma'p} gives the next set of relations.

Finally, the left hand side of the fourth handleslide of Figure \ref{fig:g2handleslides} is equal to $\sqrt{-1}x^ky^lz^{n+1}-\sqrt{-1}x^{k+1}y^{l+1}z^n$ (after resolving the crossing), whereas the right hand side is equal to $(-1)^{l+n}\sqrt{-1}x^ky^lz^n\gamma'_{p-1}$ (this requires, as before, to pull a curve through all remaining ones, in addition to an isotopy that unwraps it once around the second puncture). This gives the next set of relations; the remaining relations come from the same argument when $p=2$ (and when $x$ and $y$ are switched).
\end{proof}

\begin{proposition}
	If $p$ is even, $\dim \Sk_{\sqrt{-1}}(L(2,1)\#L(p,1))$ is infinite.
\end{proposition}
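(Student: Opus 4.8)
The plan is to work directly with the presentation $\Sk_{\sqrt{-1}}(L(2,1)\#L(p,1)) \cong \C[x,y,z]/J$ of Proposition \ref{prop:presentationlp1l21} and to exhibit inside the quotient an infinite $\C$-linearly independent family. When $p$ is even we have $\sqrt{-1}^{\,p} = (-1)^{p/2} =: \epsilon \in \{\pm 1\}$ and $\sqrt{-1}^{\,p-1} = -\epsilon\sqrt{-1}$; moreover $T_p = 2T_{p/2}^2 - 2$ has constant term $2\epsilon$, the polynomials $T_{p-1}$, $S_{p-1}$, $S_{p-3}$ are odd (so vanish at $0$), and $S_{p-2}$ is even with $S_{p-2}(0) = -\epsilon$. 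The first step is to substitute these facts into the eight families of generators of $J$ and record the resulting simplified relations; several become purely polynomial identities in $y$ and $z$ once $k = l = 0$.

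The second step is to extract a concrete infinite spanning set for the quotient. Relations (5) and (6) show that a monomial $x^k y^l z^n$ with $k \geq 1$ vanishes when $k+n$ is even and equals $4\,x^{k-2}y^lz^n$ when $k+n$ is odd, so the $x$-degree can always be brought down to $0$ or $1$. Relations (1) and (2) at $k=l=0$, together with the Chebyshev recurrence $T_j = yT_{j-1}-T_{j-2}$, are then used to bound the $y$-degree and to rewrite the odd powers of $z$ in terms of strictly simpler monomials, while relations (7) and (8) at $k=l=0$ relate the few residual $x$- and $y$-decorated classes (such as $xyz^n$) back to powers of $z$. The intended conclusion is that $\C[x,y,z]/J$ is spanned by $\{z^{2n}:n\geq0\}$ together with at most finitely many further families, in any case by an explicitly described infinite set of monomials. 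One must be careful throughout that $J$ is \emph{not} an ideal of $\C[x,y,z]$ — the parity conditions obstruct this — so a relation cannot simply be multiplied through by $x$, $y$ or $z$; instead one combines the relation attached to a monomial $m$ with the relation attached to $ym$, $zm$, and so on.

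The third step, which I expect to be the main obstacle, is to prove that infinitely many of these spanning monomials remain $\C$-linearly independent in the quotient. The plan is to construct a single $\C$-linear functional $\ell\colon \C[x,y,z]\to\C$ that kills every generator of $J$ but is nonzero on infinitely many of the $z^{2n}$ (equivalently, a linear map to an infinite-dimensional target with $J$ in its kernel). Because every generator of $J$ has the shape $x^ky^lz^n\,r$ for one of finitely many fixed polynomials $r$ and an explicit parity condition on $(k,l,n)$, the requirement $\ell|_J=0$ is a highly structured (though infinite) linear system, and one solves it by an $\ell$ supported along the ``$z$-line'' singled out by the simplified forms of relations (1), (3) and (4). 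Such an $\ell$ displays $\Sk_{\sqrt{-1}}(L(2,1)\#L(p,1))$ as surjecting onto an infinite-dimensional $\C$-vector space, which proves the proposition; together with Theorems \ref{thm:finiteness} and \ref{thm:infiniteX(M)} it then gives $(A-\sqrt{-1})$-torsion in $\Sk(L(p,1)\#\mathbb{RP}^3)$, i.e. Theorem \ref{thm:rp3} in the case $r=1$, with general $r$ handled by the analogous handle-slide computation on a genus-$(r+1)$ splitting. The most delicate points are the bookkeeping in step two (tracking how the parity conditions interact with multiplication) and, in step three, checking that the functional is consistent with \emph{all} the relations and not merely with those used to produce the spanning set; the degenerate case $p=2$, where $S_{p-3}=0$ and several coefficients collapse, will likely need a short separate treatment.
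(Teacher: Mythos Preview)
Your proposal has the right ingredients but is both over-engineered and under-specified, and the paper's argument is considerably simpler.

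First, Step~2 (producing a spanning set) is entirely unnecessary: to show a vector space is infinite-dimensional you only need a surjection onto something infinite-dimensional, not a description of generators. Second, in Step~3 your formulation wobbles: a \emph{single} functional $\ell\colon \C[x,y,z]\to\C$ that is ``nonzero on infinitely many $z^{2n}$'' does not prove those classes are linearly independent (e.g.\ $\ell(z^{2n})=1$ for all $n$). What you need---and what your parenthetical correctly says---is a linear map to an infinite-dimensional target killing $J$. You never actually construct this map; you only announce that ``one solves it by an $\ell$ supported along the $z$-line.''

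The paper's proof makes this explicit in one clean stroke. Let $V\subset\C[x,y,z]$ be the span of monomials $x^ky^lz^n$ with $k+n$ and $l+n$ both even (the homologically trivial sector). Because every generator of $J$ has all of its monomials in a single parity class, $V\cap J$ is spanned by exactly those generators whose parity lands in $V$---these are the ``$l+n$ even, $k+n$ even'' cases in each of the eight families. Now let $J'\subset V$ be the span of all monomials $x^ky^lz^n\in V$ with $k>0$ or $l>0$. The Chebyshev evaluations you already computed (for $p$ even: $T_p(0)=2\epsilon$, $T_{p-1}(0)=0$, $S_{p-1}(0)=0$, $S_{p-2}(0)=-\epsilon$) show that each of those eight $V$-generators is divisible by $x$ or by $y$, hence lies in $J'$. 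Thus $V\cap J\subset J'$, so $V/(V\cap J)$ surjects onto $V/J'$, which has basis $\{z^{2n}:n\ge0\}$ and is visibly infinite-dimensional. No spanning-set analysis, no system of functionals to solve, no separate $p=2$ case.

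Finally, your remark about general $r$ via a genus-$(r{+}1)$ handle-slide computation is not what is done; the paper instead proves once and for all that $\Sk_{\sqrt{-1}}(M)\hookrightarrow\Sk_{\sqrt{-1}}(M\#\mathbb{RP}^3)$ and iterates.
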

\begin{proof}
	Consider the subspace $V$ of $\C[x,y,z]$ generated by $x^ky^lz^n$ with $k+n$ and $l+n$ even; this corresponds to the submodule $\Sk_{\sqrt{-1}}^0(L(2,1)\#L(p,1))$ of skein elements with trivial $\Z/2\Z$-homology. Notice that $V\cap J$ is generated by the elements listed in Proposition \ref{prop:presentationlp1l21} which have the appropriate parity, because the monomials that appear in each element all have the same parity with respect to both $k+n$ and $l+n$. Therefore $V\cap J$ is generated by the following elements (with the assumption that $k,l,n$ are always such that the elements belong to $V$):
	\begin{enumerate}
		
		\item $
		x^ky^lz^n\left(2-i^pT_p(y)\right)$
		
		\item $
		x^ky^lz^n\left(y-i^pT_{p-1}(y)\right);$
		\item $
		x^ky^lz^n\left(x-i^p\left(z S_{p-1}(y)-xS_{p-2}(y)\right)\right);$
		\item $
		x^ky^lz^n\left(iz-ixy-i^{p-1}\left(zS_{p-2}(y)-xS_{p-3}(y)\right)\right); $
		\item $
		x^ky^lz^n\left(x^2\right);$
		\item $
		x^ky^lz^n\left(2x\right);
		$
		\item $
		x^ky^lz^n\left(zx\right); $
		\item $
		x^ky^lz^n\left(ixy\right). $
	\end{enumerate}
	As we said, $\Sk_{\sqrt{-1}}^0(L(2,1)\#L(p,1))=V/\left(V\cap J\right)$. Denote now $J'$ the submodule of $\C[x,y,z]$ generated by monomials $x^ky^lz^n$ that satisfy:
	\begin{itemize}
		\item $k+n$ and $l+n$ even;
		\item either $k>0$ or $l>0$.
	\end{itemize}
	
	We now check that all elements in the above list belong to $J'$, (thus implying that $V\cap J\subseteq J'$). This is obvious for the latter four of the elements. The former four rely on the observation that if $p=2k$, the following hold modulo $y$: $T_p(y)\equiv(-1)^k2$, $T_{p-1}(y)\equiv 0$, $S_p(y)\equiv(-1)^k$ and $S_{p-1}(y)\equiv 0$. Therefore, all monomials in the list are divisible by either $x$ or $y$.
	
	This implies that $V/J'$ is a quotient of $V/(V\cap J)=\Sk_{\sqrt{-1}}^0(L(2,1)\#L(p,1))$; however, $V/J'$ is obviously infinite dimensional and generated by $z^{2n}$.
\end{proof}

\begin{remark}
	For the cases $p=3$ and $p=5$, it is possible to write down explicitly the relations of Proposition \ref{prop:presentationlp1l21} to find that $\Sk_{\sqrt{-1}}(L(p,1)\# \rp)$ is infinite dimensional; however we were unable to do so in the general case.
\end{remark}
\subsection{Torsion in $M\# \rp$}
	In this section we prove that $\Sk_{\sqrt{-1}}(M)$ embeds into $\Sk_{\sqrt{-1}}(M\#\mathbb{RP}^3)$; this will allow us to find torsion in a further family of examples.
	
	\begin{figure}
		\centering
		\begin{minipage}{0.45\textwidth}
			\centering
			\includegraphics[height=2.5cm, width=0.65\textwidth]{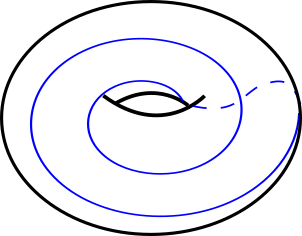}
		\end{minipage}
	\begin{minipage}{0.45\textwidth}
		\centering
	\includegraphics[height=3cm, width=0.9\textwidth]{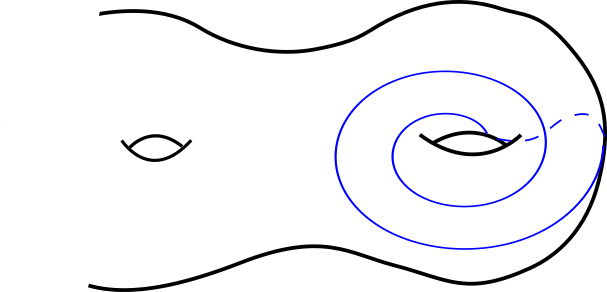}

\end{minipage}
	\caption{Handle attachment for $M$ (left) and $M\#\mathbb{RP}^3$ (right)}\label{fig:g1handle} 
	\end{figure}
	\begin{proposition}
	The map $i_*: \mathcal{S}_{\sqrt{-1}}(M)\ra\mathcal{S}_{\sqrt{-1}}(M\#\rp)$ is injective.
	\end{proposition}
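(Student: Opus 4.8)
The plan is to construct a left inverse to $i_*$. Present $M$ by a Heegaard splitting: by Theorem~\ref{thm:handleslide} one may write $\Sk_{\sqrt{-1}}(M)=\Sk_{\sqrt{-1}}(H)/J$, where $H$ is a handlebody and $J$ is the submodule of $\Sk_{\sqrt{-1}}(H)$ generated by the handleslides over a system of curves $\alpha_1,\dots,\alpha_g\subseteq\partial H$ bounding disks in the complementary handlebody. As in the setup of Proposition~\ref{prop:presentationlp1l21} (see Figure~\ref{fig:g1handle}), a presentation of $M\#\rp$ is then obtained by replacing $H$ with $H'=H\,\natural\,V$, where $V$ is a solid torus, and adjoining one more handleslide curve $\alpha_0$ on $\partial V$ --- the $(2,1)$-curve, i.e. the curve bounding a disk after the $\rp$-handle is attached --- so that $\Sk_{\sqrt{-1}}(M\#\rp)=\Sk_{\sqrt{-1}}(H')/J'$, with $J'$ generated by $J$ together with the handleslides over $\alpha_0$, and $i_*$ is the map induced by the inclusion $H\hookrightarrow H'$. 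It then suffices to produce a $\C$-linear retraction $r\colon\Sk_{\sqrt{-1}}(H')\to\Sk_{\sqrt{-1}}(H)$ restricting to the identity on $\Sk_{\sqrt{-1}}(H)$ and sending $J'$ into $J$: such an $r$ descends to a retraction $\bar r$ of $i_*$, which forces $i_*$ to be injective.

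To construct $r$, the key input is the hypothesis $A^2=-1$: at a fourth root of unity a crossing change merely multiplies a skein class by $-1$, so an element of $\Sk_{\sqrt{-1}}$ is determined up to sign by the underlying $4$-valent graph of a diagram, hence essentially by how its components wind around the solid torus $V$. Using this together with the $\alpha_0$-handleslide relations --- which for the $(2,1)$-curve are precisely the $p=2$ specialisation of those computed in Proposition~\ref{prop:presentationlp1l21} (the analogues of its items (5)--(8)), expressing the square of the core curve $x$ of $V$, and $x$ times the remaining generators, in terms of data not involving $x$ --- one obtains a normal form for elements of $\Sk_{\sqrt{-1}}(H')$ over $\Sk_{\sqrt{-1}}(H)$ in which the dependence on $V$ is confined to a finite list of curves; $r$ is then defined by collapsing that dependence (replacing a core of $V$ by the scalar assigned to it by those relations), and one checks that it annihilates the $\alpha_0$-relations.

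The main obstacle is this last verification --- that $r$ is well defined modulo $J$ and that $r(J')\subseteq J$ --- which amounts to controlling the handleslide submodule $J'$ exactly. A priori the slides over $\alpha_0$ allow sliding arbitrary components, including ones winding around $V$, over the doubled curve $\alpha_0$, so one first reduces, as in the genus-$2$ lemma of this section via L\^e's generation statement for relative skein modules \cite{Le06}, to a finite explicit list of handleslides, and then checks the claim on that list. Concretely this is the computation already carried out for $L(2,1)\#L(p,1)$ in the previous subsection, now performed with $M$ treated as a black box so that only the relations supplied by the $\rp$-handle are used. A byproduct of the argument is that $\Sk_{\sqrt{-1}}(M)$ is in fact a direct summand of $\Sk_{\sqrt{-1}}(M\#\rp)$.
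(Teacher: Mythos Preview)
Your proposal outlines a retraction strategy, whereas the paper proceeds differently: it does not build a left inverse at all, but shows directly that $\Sk_{\sqrt{-1}}(H_g)\cap\tilde I=\tilde I_1$ by splitting the handleslide ideal as $I=I_1+I_2+I_3$ and treating the pieces separately. For slides over $\alpha_1,\dots,\alpha_g$ it uses the ``fill in the last handle'' map $\Psi:\Sk(H_{g+1})\to\Sk(H_g)$ to show $I_2\cap\Sk(H_g)=I_1$; for slides over $\alpha_{g+1}$ it computes the five relevant relations explicitly (exactly the $p=2$ computations you point to) and observes that, once written in the planar-multicurve basis of $\Sk_{\sqrt{-1}}(H_{g+1})$, every monomial appearing meets the last compression disk, whence $\tilde I_3\cap\Sk_{\sqrt{-1}}(H_g)=0$ by the $H_1(H_{g+1},\Z/2\Z)$-grading.

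The genuine gap in your approach is that $r$ is never actually defined. You propose to construct it \emph{from} the $\alpha_0$-handleslide relations, ``replacing a core of $V$ by the scalar assigned to it by those relations''. But those relations are themselves elements of $J'$: using them to simplify elements of $\Sk_{\sqrt{-1}}(H')$ produces a map on the \emph{quotient} $\Sk_{\sqrt{-1}}(H')/\langle\text{those relations}\rangle$, not on $\Sk_{\sqrt{-1}}(H')$, which is what a retraction requires. Moreover the relations do not assign any single scalar to the core $x$ of $V$ --- items (5)--(8) of Proposition~\ref{prop:presentationlp1l21} give different reductions depending on the parities of $k+n$ and $l+n$, so there is no consistent ``value of $x$'' to substitute. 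A retraction must be specified upstairs, on a basis of $\Sk_{\sqrt{-1}}(H')$, \emph{before} any reference to $J'$, and only then can one verify $r(J')\subseteq J$; your write-up never makes such a choice. The natural candidate $\Psi$ handles $I_2$, but it is not at all clear that $\Psi$ annihilates $\tilde I_3$ (sliding over the $(2,1)$-curve leaves a framing contribution after filling), and indeed the paper does not claim this --- it disposes of $\tilde I_3$ by the basis/grading argument instead. So as written your plan reduces to ``do the paper's computation'', but without the map that would make the retraction formulation go through.
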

	\begin{proof}
	Choose a Heegaard splitting of genus $g$ for $M$ and the Heegaard splitting for $\rp$ shown in Figure \ref{fig:g1handle} (left); call $\alpha_1,\dots\alpha_g$ a set of disjoint curves bounding disks for the Heegaard splitting of $M$ and $\alpha_{g+1}$ the curve in Figure \ref{fig:g1handle} (right). Then we can construct a Heegaard splitting of $M\#\rp$ by taking a genus $g+1$ handlebody  $H_{g+1}=H_g\natural H_1$ ($\natural$ is the boundary sum operation) and embedding in its boundary the curves $\alpha_1,\dots,\alpha_g$ (in the $H_g$ part) and $\alpha_{g+1}$ (in the $H_1$ part). Then by \ref{thm:handleslide} $\Sk(M\#\rp)=\Sk(H_{g+1})/I$, where $I$ is the ideal generated by handleslides. Specifically, $I$ is generated by the set $\{\gamma-sl_{\alpha_i}(\gamma), \gamma\subseteq M\#\rp, i=1,\dots,g+1\}$ where $\gamma$ is any curve in $M\#\rp$ and $sl_{\alpha_i}(\gamma)$ is its handleslide over $\alpha_i$.  
	As noted before, for this operation to be well defined, we need to choose an arc connecting $\gamma$ to $\alpha_i$; we do not write it explicitly in the notation but it is assumed that any such arc can be chosen. We decompose $I$ into three parts:
	$I_1\subseteq I$ the subset generated by handleslides of curves in $H_g$ over $\alpha_1,\dots,\alpha_g$; $I_2\subseteq I$ the subset generated by handleslides of curves in $H_{g+1}$ over $\alpha_1,\dots\alpha_g$ (notice that $I_1\subseteq I_2$); and $I_3\subseteq I$ the subset generated by handleslides of curves in $H_{g+1}$ over $\alpha_{g+1}$. Clearly $I=I_1+I_2+I_3$.
	
	Given an ideal $J\subseteq \Sk(M\#\rp)$, we denote with $\tilde{J}$ its image in $\Sk_{\sqrt{-1}}(M\#\rp)$.
	
	By choosing the Heegaard splitting like this, we have essentially also picked an embedding $j:H_g\ra H_{g+1}$ which induces a map $j_*:\Sk(H_g)\ra \Sk(H_{g+1})$; this is known to be an embedding, therefore from now on we identify $\Sk(H_g)$ with $j_*(\Sk(H_g))$. This inclusion clearly also induces an embedding $i: M\setminus B^3\ra M\#\rp$ which induces the map $i_*$ from the statement.
	
	Clearly $I_1\subseteq \Sk(H_g)$ and $\Sk(M)=\Sk(H_g)/I_1$; we now proceed to show that $\Sk_{\sqrt{-1}}(H_g)\cap \tilde{I}=\tilde{I_1}$ which concludes the proof.

	\end{proof}
	\begin{lemma}
	$I_2\cap \Sk(H_g)=I_1$
	\end{lemma}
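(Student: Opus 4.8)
The plan is to establish the two inclusions separately; $I_1\subseteq I_2\cap\Sk(H_g)$ is the easy one. A curve in $H_g$ is a fortiori a curve in $H_{g+1}$, so each handleslide relation $\gamma-sl_{\alpha_j}(\gamma)$ with $\gamma\subseteq H_g$ generating $I_1$ is among those generating $I_2$; hence $I_1\subseteq I_2$. Moreover $\alpha_1,\dots,\alpha_g$ lie in $\partial H_g$ and the slides can be performed with arcs inside $H_g$, so $I_1\subseteq\Sk(H_g)$.

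For the reverse inclusion the idea is to produce a module retraction onto $\Sk(H_g)$ that carries $I_2$ into $I_1$. Write $H_{g+1}=H_g\natural H_1$, regard $H_1$ as a $1$-handle $h$ attached to $H_g$, and let $\beta\subseteq\partial H_{g+1}$ be its belt circle (the boundary of the cocore disk of $h$). Attaching a $2$-handle to $H_{g+1}$ along $\beta$ cancels $h$, so the resulting manifold is homeomorphic to $H_g$; by Theorem~\ref{thm:handleslide} this produces a surjection $q\colon\Sk(H_{g+1})\twoheadrightarrow\Sk(H_{g+1})/I_\beta=\Sk(H_g)$, where $I_\beta$ is generated by handleslides over $\beta$. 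Since the cancellation can be performed without disturbing $H_g$, the map $q$ restricts to the identity on $\Sk(H_g)\subseteq\Sk(H_{g+1})$. Therefore every $x\in I_2\cap\Sk(H_g)$ satisfies $x=q(x)$, and it suffices to prove $q(I_2)\subseteq I_1$.

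To prove $q(I_2)\subseteq I_1$ one only needs to handle a generator $\gamma-sl_{\alpha_i}(\gamma)$ with $1\le i\le g$. Replacing $\gamma$ by an isotopic link changes neither term in $\Sk(H_{g+1})$, so, after first isotoping the component of $\gamma$ being slid to run alongside $\alpha_i$, we may assume the slide is performed with a band disjoint from $h$; then $\gamma$ and $sl_{\alpha_i}(\gamma)$ agree outside a compact region $R\subseteq H_g$ disjoint from $h$ (a neighbourhood of the band together with the parallel copy of $\alpha_i$), and inside $R$ they differ exactly by the band-sum. The key point is that the reduction modulo $I_\beta$ only involves the part of a link near $h$: the generators of $I_\beta$ are handleslides over $\beta$, which (again by the isotopy reduction) may be taken supported in a neighbourhood of $h$, hence disjoint from $R$. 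Thus, writing $\tau=\gamma\cap(H_{g+1}\setminus\mathring R)$ for the common exterior tangle and expanding $q(\tau)=\sum_j c_j\sigma_j$ in the relative skein module of the exterior of $R$ in $H_g$, one gets $q(\gamma)=\sum_j c_j(\sigma_j\cup t)$ and $q(sl_{\alpha_i}(\gamma))=\sum_j c_j(\sigma_j\cup t')$, where $t=\gamma\cap R$ and $t'=sl_{\alpha_i}(\gamma)\cap R$. Since $\sigma_j\cup t'=sl_{\alpha_i}(\sigma_j\cup t)$ and each $\sigma_j\cup t$ is a link in $H_g$, we conclude $q(\gamma-sl_{\alpha_i}(\gamma))=\sum_j c_j\big((\sigma_j\cup t)-sl_{\alpha_i}(\sigma_j\cup t)\big)\in I_1$.

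The step I expect to be the main obstacle is the claim that $q$ acts only away from $R$: formally this is a relative version of the handle-attachment description of Theorem~\ref{thm:handleslide}, applied to the exterior of $R$ in $H_{g+1}$ and compatible with gluing $R$ back in, and one has to verify that the isotopies and Kauffman relations used to simplify the $h$-part of a link can all be kept inside a fixed neighbourhood of $h$, disjoint from $R$. Everything else is bookkeeping.
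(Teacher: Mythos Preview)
Your approach is essentially the paper's: build a $\Z[A^{\pm1}]$-linear retraction $\Sk(H_{g+1})\to\Sk(H_g)$ and show it carries $I_2$ into $I_1$. The difference is in how the retraction is produced. You attach a $2$-handle along the belt circle $\beta$ and invoke Theorem~\ref{thm:handleslide} to get $q\colon\Sk(H_{g+1})\twoheadrightarrow\Sk(H_{g+1})/I_\beta\cong\Sk(H_g)$; the paper instead chooses an embedding of manifolds $\psi\colon H_{g+1}\hookrightarrow H_g$ with $\psi\circ j$ isotopic to the identity (think of collapsing the extra $H_1$ into a ball inside $H_g$), and takes $\Psi=\psi_*$. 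Because $\Psi$ comes from an honest embedding, the key commutation $\Psi\bigl(sl_{\alpha_i}(\gamma)\bigr)=sl_{\alpha_i}\bigl(\Psi(\gamma)\bigr)$ is immediate: a handleslide is a geometric band-sum and is simply transported by $\psi$, with $\psi(\alpha_i)$ isotopic to $\alpha_i$.

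This dissolves exactly the obstacle you flagged. Your relative-skein locality argument can be made to work, but the cleanest way to fill it is to notice that your $q$ \emph{is} $\Psi$: attaching the $2$-handle along $\beta$ yields $H_{g+1}\cup_\beta(\text{2-handle})\cong H_g$, and the quotient map of Theorem~\ref{thm:handleslide} is precisely the map induced by the inclusion $H_{g+1}\hookrightarrow H_g$. Once $q$ is viewed as induced by an embedding of manifolds, $q(\gamma)-q(sl_{\alpha_i}(\gamma))$ is visibly a single handleslide relation over $\alpha_i$ between two links in $H_g$, hence lies in $I_1$, and the tangle decomposition becomes unnecessary.
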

	
	\begin{proof}
	Suppose there is $x\in I_2\cap \Sk(H_g)$; then because $x\in I_2$, we have $x=\sum_{j=1}^k \lambda_j \left(\gamma_j - sl_{\alpha_{i_j}}(\gamma_j)\right)$, with $1\leq i_j\leq g$. Consider now an embedding $\psi: H_{g+1}\ra H_g$ such that $\psi \circ j$ is isotopic to the identity; another way of thinking about $\psi$ is that it is obtained by "filling in" the last handle to obtain $H_g$. This induces a map $\Psi: \Sk(H_{g+1})\ra \Sk(H_g)$ which, by functoriality, must satisfy $\Psi\circ j_*=Id$. We claim that $\Psi\left(sl_{\alpha_{i}}(\gamma)\right)=sl_{\alpha_{i}}(\Psi(\gamma))$ for any $1\leq i\leq g$ and for any curve $\gamma\subseteq H_{g+1}$. This is because taking a curve in $H_{g+1}$, handlesliding it over a curve in $H_g$ and then filling in the $g+1$-th handle gives a curve that is equal, as a subset of $H_{g}$, to first filling in the $g+1$-th handle and subsequently handlesliding it over a curve in $H_g$.
	
On the one hand, if we assume that $x\in \Sk(H_g)$, we must have $\Psi(x)=x$. On the other hand, $\Psi(x)=\sum_{j=1}^k \lambda_j \Psi\left(\gamma_j - sl_{\alpha_{i_j}}(\gamma_j)\right)=\sum_{j=1}^k \lambda_j \left(\Psi(\gamma_j) - sl_{\alpha_{i_j}}(\Psi(\gamma_j))\right)$; because each $\Psi(\gamma_j)$ is in $H_g$, we have by definition that $x=\Psi(x)\in I_1$.
	\end{proof}	
	\begin{lemma}
	$\tilde{I_3}\cap \Sk_{\sqrt{-1}}(H_g)= \{0\}$
	\end{lemma}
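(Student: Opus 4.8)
The plan is to reuse the retraction $\Psi\colon \Sk(H_{g+1})\to \Sk(H_g)$ built in the proof of the previous lemma (the map that attaches a $2$-handle to $H_{g+1}$ along the belt circle of the last $1$-handle, cancelling it and recovering $H_g$), now specialized at $A=\sqrt{-1}$; recall it satisfies $\Psi\circ j_*=\mathrm{id}$. I claim it suffices to prove the stronger statement $\Psi(\tilde I_3)=0$. Indeed, if $x\in \tilde I_3\cap \Sk_{\sqrt{-1}}(H_g)$, then $x=j_*x$ and hence $x=\Psi(j_*x)=\Psi(x)=0$. (Moreover, combined with the previous lemma this gives $\Psi(\tilde I)\subseteq\tilde I_1$, since $\Psi(\tilde I_1)\subseteq\tilde I_1$ and $\Psi$ sends a handleslide over $\alpha_i$, $i\le g$, to a handleslide over $\alpha_i$ in $H_g$; this is exactly the conclusion $\Sk_{\sqrt{-1}}(H_g)\cap\tilde I=\tilde I_1$ needed to finish the Proposition.)

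So the task reduces to checking that $\Psi(\gamma)=\Psi\!\left(sl_{\alpha_{g+1}}(\gamma)\right)$ in $\Sk_{\sqrt{-1}}(H_g)$ for every framed link $\gamma\subseteq H_{g+1}$ and every arc used to perform the handleslide. Geometrically, $sl_{\alpha_{g+1}}(\gamma)$ is $\gamma$ band-summed with a parallel copy of $\alpha_{g+1}$, and $\alpha_{g+1}$ is the attaching curve producing the $\rp=L(2,1)$ summand, hence isotopic in $H_{g+1}$ to a $(2,1)$-curve winding twice around the last handle. Applying $\Psi$ — i.e. filling in that handle — sends the core of the last handle to a curve bounding an embedded disk in $H_g$, so the parallel copy of $\alpha_{g+1}$ together with its band is carried into a ball $B\subseteq H_g$ that meets the rest of $\Psi(\gamma)$ only at the far endpoint of the band, and inside $B$ the image of $\alpha_{g+1}$ is the $(2,1)$-cable of a trivial core, i.e. an unknot. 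Consequently $\Psi\!\left(sl_{\alpha_{g+1}}(\gamma)\right)$ is $\Psi(\gamma)$ with a trivial local tangle spliced into one component, so it equals $\Psi(\gamma)$ up to a single scalar coming from writhes, linking, and framing twists accumulated when simplifying that tangle.

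The one delicate point — and the reason the lemma is stated at $A=\sqrt{-1}$ — is that this scalar is $1$, and this is where I expect the main obstacle to lie. Both features of the $\rp$-section come into play: because $\alpha_{g+1}$ winds an \emph{even} number of times around the last handle, the crossings that must be undone to disentangle its image in $B$ from $\Psi(\gamma)$ (and from itself) occur in even number, so the signs produced by repeated use of the relation ``switching a crossing multiplies a skein element by $-1$'', valid at $A=\sqrt{-1}$, cancel in pairs; and the remaining contribution of the framing twists is a power of $-A^3$ which a direct writhe count shows to be trivial once $A^4=1$. Making this sign- and framing-bookkeeping precise — confirming that the evenness of the winding of $\alpha_{g+1}$ and the identity $A^4=1$ together force the residual scalar to be exactly $1$ — is essentially the whole content of the proof; everything else is formal, given $\Psi\circ j_*=\mathrm{id}$.
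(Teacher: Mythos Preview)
Your approach has a genuine gap: the claim $\Psi(\tilde I_3)=0$ is false. Take $\gamma$ to be a single small $0$-framed unknot near $\alpha_{g+1}$ (so $\Gamma=\emptyset$). Then $sl_{\alpha_{g+1}}(\gamma)$ is a surface-framed pushoff of $\alpha_{g+1}$, and at $A=\sqrt{-1}$ a short computation (e.g.\ from the recursion for $\gamma_p$ at $p=2$) gives $sl_{\alpha_{g+1}}(\gamma)=2-x^{2}$ in $\Sk_{\sqrt{-1}}(H_{g+1})$, where $x$ is the core of the last handle. Hence $\gamma-sl_{\alpha_{g+1}}(\gamma)=x^{2}\in\tilde I_3$, while $\Psi(x^{2})=4\neq 0$ since $\psi(x)$ is a $0$-framed unknot in a ball. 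Seen directly: $\psi(\alpha_{g+1})$ is a $(2,1)$ curve on an unknotted torus, hence an unknot carrying the torus framing, which is framing $2$; at $A=\sqrt{-1}$ its value is $(-A^{3})^{2}\cdot 2=A^{6}\cdot 2=-2$, so $\Psi(\gamma)-\Psi(sl_{\alpha_{g+1}}(\gamma))=2-(-2)=4$. Your assertion that the framing contribution ``is a power of $-A^3$ which a direct writhe count shows to be trivial once $A^4=1$'' is exactly where this breaks: $-A^{3}=\sqrt{-1}$ has order $4$, and the relevant exponent here is $2$, not a multiple of $4$. No choice of the splitting embedding $\psi$ repairs this, since $\Psi(x^2)$ is always a nonzero scalar.

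The paper's proof works quite differently. Rather than killing $\tilde I_3$ with a retraction, it reduces to a finite list of handleslides, computes each explicitly at $A=\sqrt{-1}$, and shows that every generator of $\tilde I_3$ is a $\C$-linear combination of planar multicurves each of which meets the compressing disk of the last handle at least once. Since planar multicurves form a basis of $\Sk_{\sqrt{-1}}(H_{g+1})$ and those lying in $j_*\Sk_{\sqrt{-1}}(H_g)$ are exactly the ones missing that disk, the intersection $\tilde I_3\cap\Sk_{\sqrt{-1}}(H_g)$ is forced to vanish. The point is that elements like $x^{2}$ lie in $\tilde I_3$ but \emph{not} in $j_*\Sk_{\sqrt{-1}}(H_g)$; your retraction $\Psi$ sends $x^{2}$ to $4\cdot\emptyset$, erasing precisely the distinction on which the lemma rests.
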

\begin{figure}
\centering
\begin{minipage}{.19\textwidth}
	\centering    
	\includegraphics[width=0.9\textwidth]{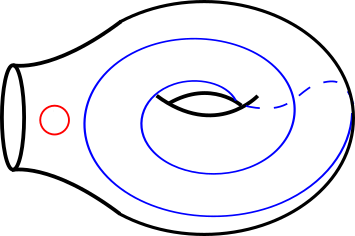}
	\label{fig:g1hs0before}
	\vspace{0.35cm}
\end{minipage}  $\longrightarrow$
\begin{minipage}{.19\textwidth}
	\centering    
	\includegraphics[width=0.9\textwidth]{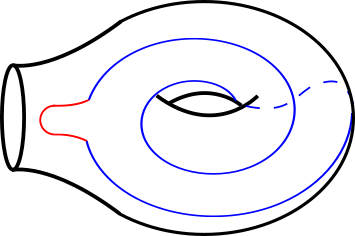}
	\label{fig:g1hs0after}
	\vspace{0.35cm}
\end{minipage}\hspace{1cm}
\begin{minipage}{.19\textwidth}
	\centering    
	\includegraphics[width=0.9\textwidth]{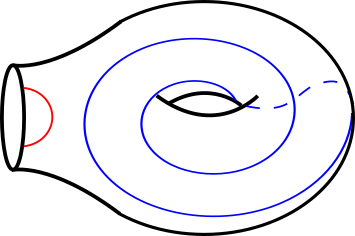}
	\label{fig:g1hs1before}
	\vspace{0.35cm}
\end{minipage}  $\longrightarrow$
\begin{minipage}{.19\textwidth}
	\centering    
	\includegraphics[width=0.9\textwidth]{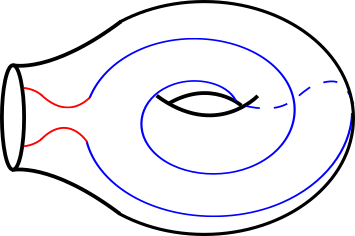}
	\label{fig:g1hs1after}
	\vspace{0.35cm}
\end{minipage}
\begin{minipage}{.19\textwidth}
	\centering    
	\includegraphics[width=0.9\textwidth]{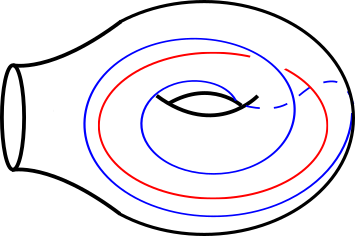}
	\label{fig:g1hs2before}
\end{minipage}  $\longrightarrow$
\begin{minipage}{.19\textwidth}
	\centering    
	\includegraphics[width=0.9\textwidth]{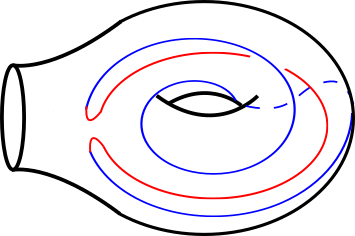}
	\label{fig:g1hs2after}
\end{minipage}
\hspace{1cm}
\begin{minipage}{.19\textwidth}
	\centering    
	\includegraphics[width=0.9\textwidth]{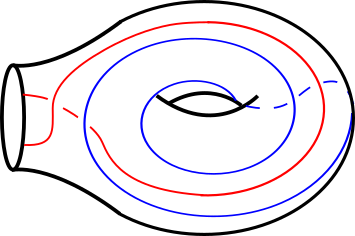}
	\label{fig:g1hs3before}
\end{minipage}  $\longrightarrow$
\begin{minipage}{.19\textwidth}
	\centering    
	\includegraphics[width=0.9\textwidth]{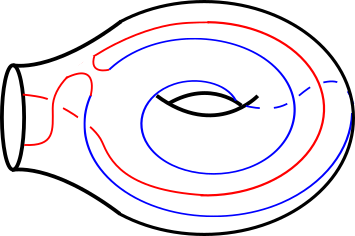}
	\label{fig:g1hs3after}
\end{minipage}
\centering
\begin{minipage}{.19\textwidth}
	\centering    
	\vspace{0.35cm}
	\includegraphics[width=0.9\textwidth]{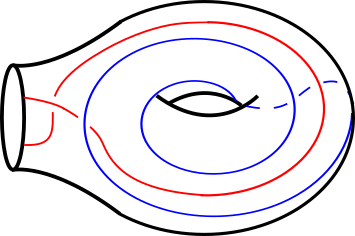}
	\label{fig:g1hs4before}
\end{minipage}  $\longrightarrow$
\begin{minipage}{.19\textwidth}
	\centering    
	\vspace{0.35cm}
	\includegraphics[width=0.9\textwidth]{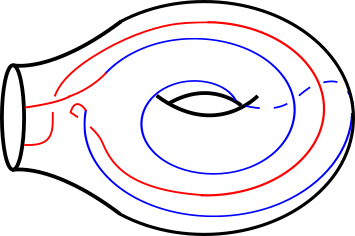}
	\label{fig:g1hs4after}
\end{minipage}
\caption{Handleslides over $\alpha_{g+1}$}\label{fig:g1handleslides}
\end{figure}
\begin{proof}
First we show that $I_3$ can be generated by handleslides of the forms shown in Figure \ref{fig:g1handleslides}, where we only show the portion of the link in $H_{g+1}$ that intersects the last handle and that is getting handle-slid (i.e. the link could have other components, and it could contain other arcs in the last handle).
To show this, take a multicurve $\Gamma$ and a curve $\gamma\subseteq \Gamma$ getting handle-slid over $\alpha_{g+1}$. Isotope $\gamma$ to be close to $\alpha$; then the handleslide replaces the red arc on the left of Figure \ref{fig:handleslide} with the red arc on the right. Now "anchor" the end points of the red arc to the boundary of $H_{g+1}$ and consider the complement of the red arc. We show that, using skein relations, isotopies that do not move the anchored endpoints and adding extra components, we can change the portion of $\gamma$ that intersects the anchored endpoints and the last handle to a linear combination of one of the cases of Figure \ref{fig:d1rel}. There are actually only 4 more planar ways that the arc(s) could be configured, shown in Figure \ref{fig:d1relextra}; we show how to deal with one case and the others are done in a similar fashion.

\begin{figure}
	\centering
	\begin{minipage}{.3\textwidth}
		\centering    
		\includegraphics[width=0.6\textwidth]{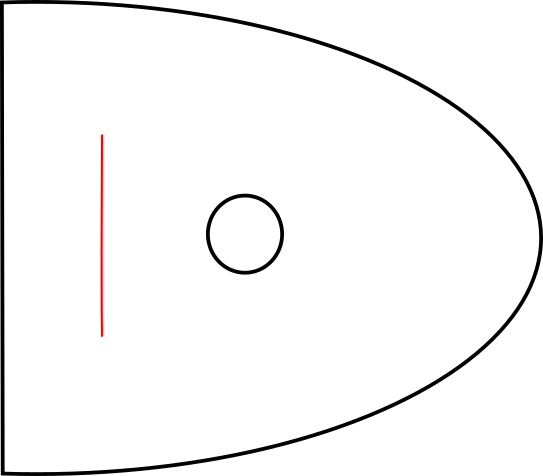}
	\end{minipage} 
	\begin{minipage}{.3\textwidth}
		\centering    
		\includegraphics[width=0.6\textwidth]{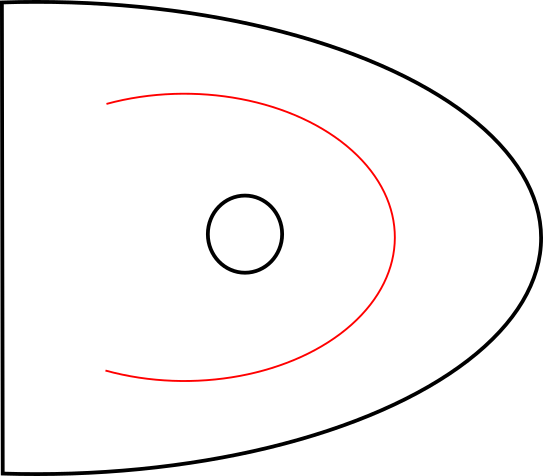}
	\end{minipage}
	
	\centering
	\begin{minipage}{.3\textwidth}
		\centering    
		\includegraphics[width=0.6\textwidth]{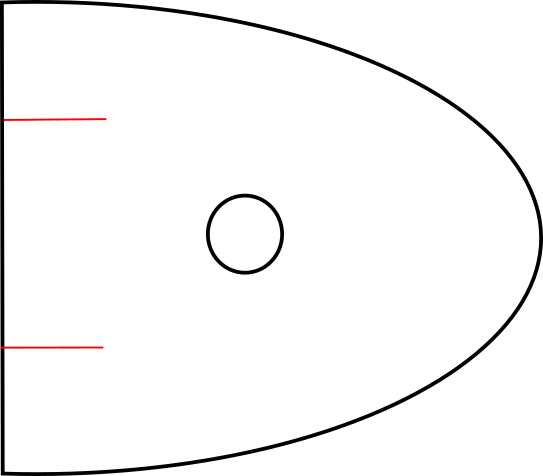}
	\end{minipage}  
	\begin{minipage}{.3\textwidth}
		\centering    
		\includegraphics[width=0.6\textwidth]{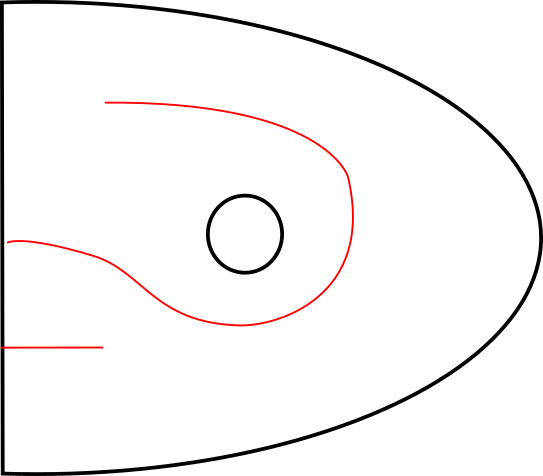}
	\end{minipage}
\begin{minipage}{.3\textwidth}
\centering    
\includegraphics[width=0.6\textwidth]{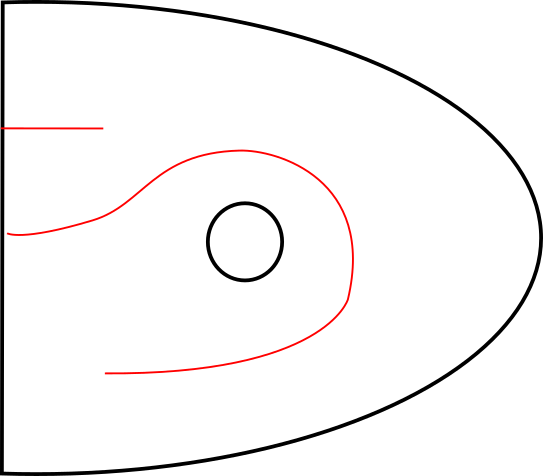}
\end{minipage}
	\caption{}\label{fig:d1rel}
\end{figure}

\begin{figure}
	\centering
	\begin{minipage}{.22\textwidth}
		\centering    
		\includegraphics[width=0.8\textwidth]{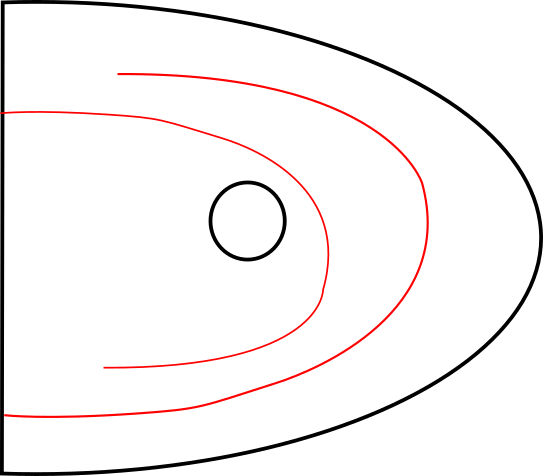}
	\end{minipage} 
	\begin{minipage}{.22\textwidth}
		\centering    
		\includegraphics[width=0.8\textwidth]{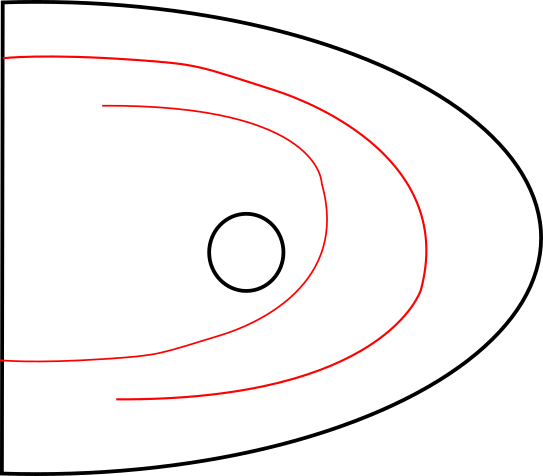}
	\end{minipage}
		\centering
	\begin{minipage}{.22\textwidth}
		\centering    
		\includegraphics[width=0.8\textwidth]{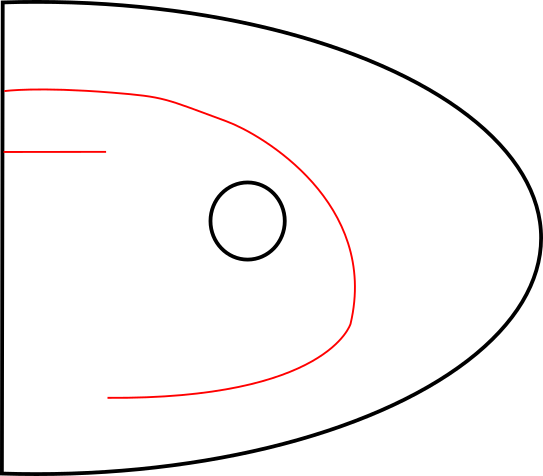}
	\end{minipage}  
	\begin{minipage}{.22\textwidth}
		\centering    
		\includegraphics[width=0.8\textwidth]{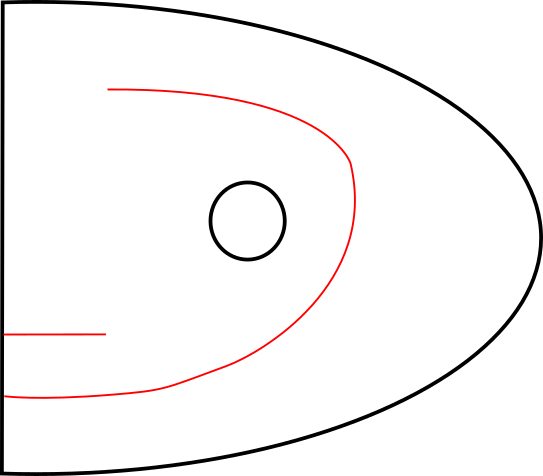}
	\end{minipage}
	\caption{}\label{fig:d1relextra}
\end{figure}
Consider for example the leftmost diagram of Figure \ref{fig:d1relextra}:
		\begin{align*}
	\vcenter{\hbox{\includegraphics[width=0.19\textwidth]{figures/d1relextra1}}}&=\vcenter{\hbox{\includegraphics[width=0.19\textwidth]{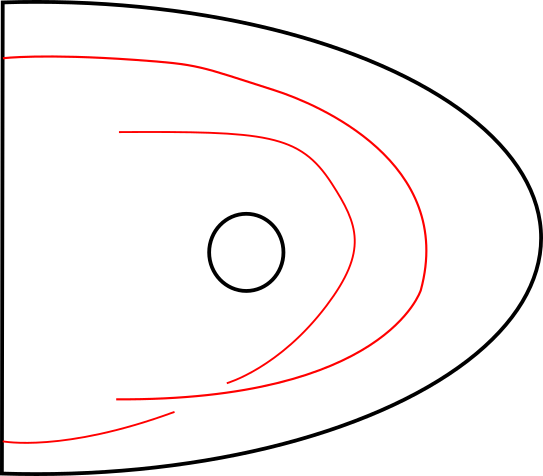}}}=A\hspace*{1mm}\vcenter{\hbox{\includegraphics[width=0.19\textwidth]{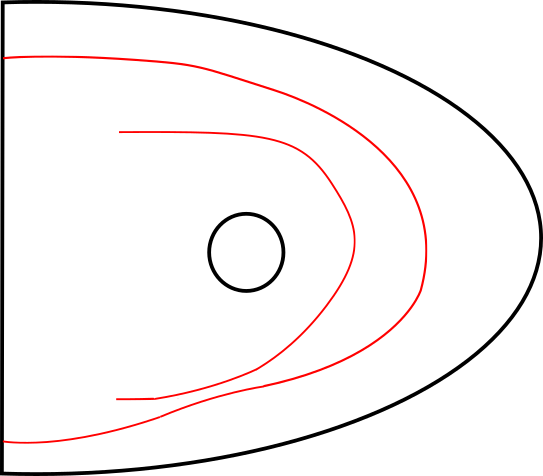}}}+A^{-1}\hspace*{1mm}\vcenter{\hbox{\includegraphics[width=0.19\textwidth]{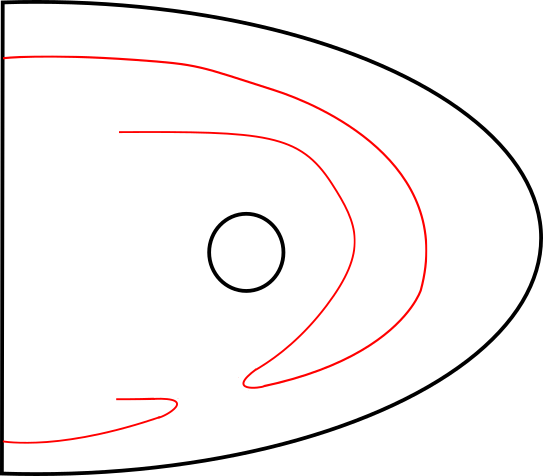}
	}}
\end{align*}
and the latter is a linear combination of elements from Figure \ref{fig:d1rel} with possibly some extra arcs.

Now that we have generators for $I_3$, we can pass to $\tilde{I_3}$ and write down explicitly the handle-slide relations; this follows a similar process as the one used to prove Proposition \ref{prop:presentationlp1l21}. Recall that in $\Sk_{\sqrt{-1}}(H_{g+1})$, a crossing change is the same as a sign change; therefore in all these calculations we push the handle-slid curve to the top and introduce a sign. Throughout the rest of the proof, $k(\Gamma)$ will be the algebraic intersection number of a multicurve $\Gamma\subseteq H_{g+1}$ with a compression disk for the $g+1$-th handle, and a notation of the form $\Gamma D$, with $D$ a planar diagram, denotes the multicurve obtained from $\Gamma$ by adding the curve depicted in $D$ close to $\partial H_{g+1}$.

Consider for example the bottom handleslide of Figure \ref{fig:g1handleslides}, where we are handlesliding a component $\gamma$ (in red) of a planar multicurve $\Gamma\sqcup\gamma$ (there could be other components of the multicurve, or indeed other arcs of the handleslid component, in the last handle).

The left hand side is equal to $\Gamma\left(A\hspace*{1mm}\vcenter{\hbox{\includegraphics[width=0.2\textwidth]{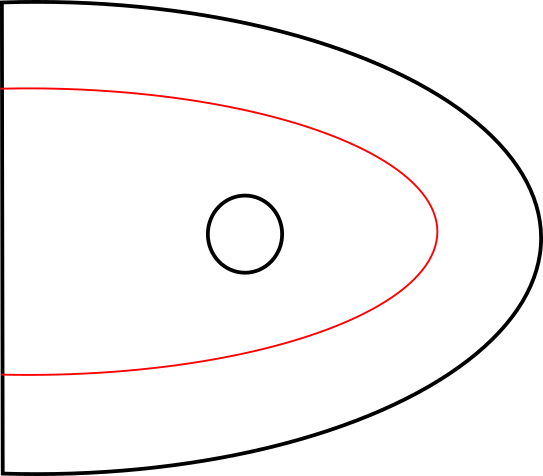}}}+A^{-1}\hspace*{1mm}\vcenter{\hbox{\includegraphics[width=0.2\textwidth]{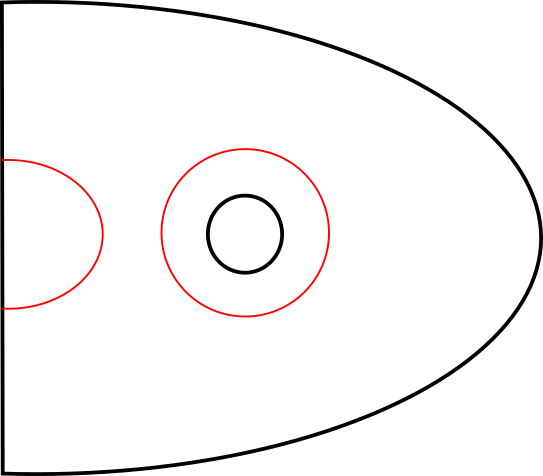}}}\right)$
whereas the right hand side is
$$(-1)^{k(\Gamma)}\Gamma\left(\hspace*{1mm}\vcenter{\hbox{\includegraphics[width=0.2\textwidth]{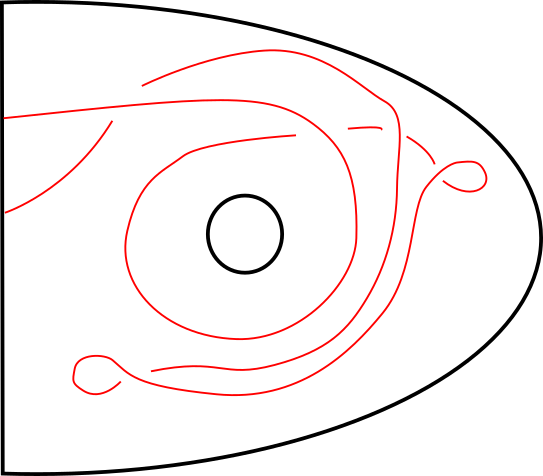}}}\right)=(-1)^{k(\Gamma)+1}A^{-3}\Gamma\left(\hspace*{1mm}\vcenter{\hbox{\includegraphics[width=0.2\textwidth]{figures/d1lhs2}}}\right)$$
after resolving the framings and performing an isotopy. Considering now that $A=\sqrt{-1}$ we have the relation
$$\sqrt{-1}(1+(-1)^{k(\Gamma)})\Gamma\left(\hspace*{1mm}\vcenter{\hbox{\includegraphics[width=0.2\textwidth]{figures/d1lhs2}}}\right)=\sqrt{-1}\Gamma\left(\hspace*{1mm}\vcenter{\hbox{\includegraphics[width=0.2\textwidth]{figures/d1lhs1}}}\right).
$$
We list the other handleslide relations without carrying out the calculations, since they are essentially identical to the ones seen so far.

The first relation of Figure \ref{fig:g1handleslides}  reads:

$$2\Gamma= (-1)^{k(\Gamma)+1}\Gamma\left(\hspace*{1mm}\vcenter{\hbox{\includegraphics[width=0.2\textwidth]{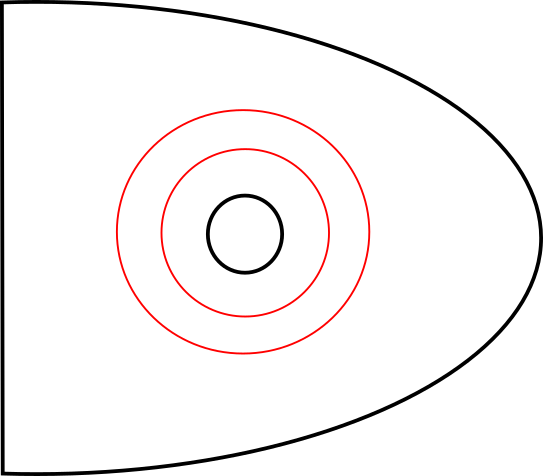}}}-2\right).$$
The second relation of Figure \ref{fig:g1handleslides}  reads:

$$\Gamma\left(\hspace*{1mm}\vcenter{\hbox{\includegraphics[width=0.2\textwidth]{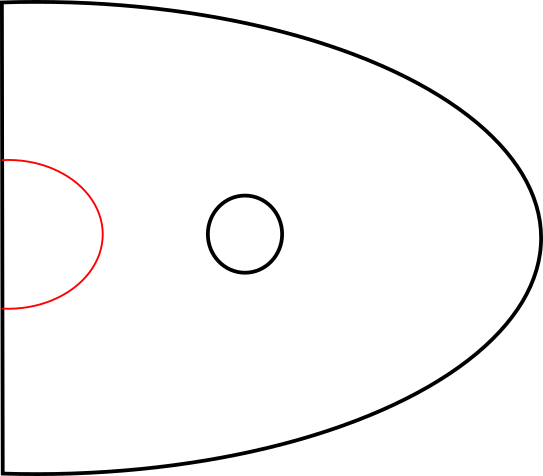}}}\right)= (-1)^{k(\Gamma)+1}\Gamma\left(\hspace*{1mm}\vcenter{\hbox{\includegraphics[width=0.2\textwidth]{figures/d1hs-6}}}-\vcenter{\hbox{\includegraphics[width=0.2\textwidth]{figures/d1lhs4}}}\right).$$
The third relation of Figure \ref{fig:g1handleslides}  reads:

$$\Gamma\left(\hspace*{0.8mm}\vcenter{\hbox{\includegraphics[width=0.16\textwidth]{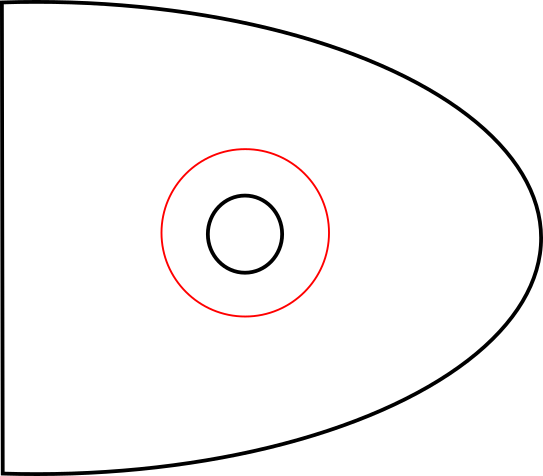}}}\right)=(-1)^{k(\Gamma)}\Gamma\left(\hspace*{0.8mm}\vcenter{\hbox{\includegraphics[width=0.16\textwidth]{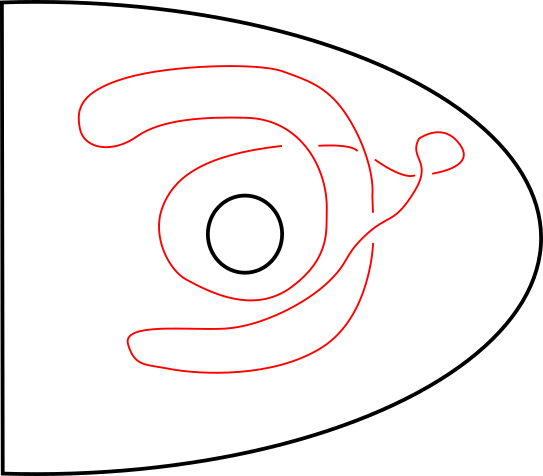}}}\right)= (-1)^{k(\Gamma)}\Gamma\left(\hspace*{0.8mm}\vcenter{\hbox{\includegraphics[width=0.16\textwidth]{figures/d1y}}}\right).$$

Finally, the fourth relation of Figure \ref{fig:g1handleslides}  reads:
$$\sqrt{-1}\Gamma\left(\hspace*{1mm}\vcenter{\hbox{\includegraphics[width=0.16\textwidth]{figures/d1lhs1}}}-\vcenter{\hbox{\includegraphics[width=0.16\textwidth]{figures/d1lhs2}}}\right)=(-1)^{k(\Gamma)+1}\sqrt{-1}\Gamma\left(\hspace*{1mm}\vcenter{\hbox{\includegraphics[width=0.16\textwidth]{figures/d1lhs1}}}\right).$$

Notice that in all the above cases, when there is an even number of intersection points between $\gamma\sqcup \Gamma$ and the compression disk for the last handle, the relation simplifies to a relation of the form $\Gamma'=0$, where $\Gamma'$ is a planar multicurve intersecting the last compression disk at least once. On the other hand, when there is an odd number of intersection points, this applies to all multicurves in the relation.

Now consider a linear combination $x$ of the above elements that belongs to $\Sk_{\sqrt{-1}}(H_g)$; the statement of the Lemma is that it must be equal to $0$. Notice that if $x=\sum_i\lambda_i\Gamma_i$ is a linear combination of multicurves, $x\in \Sk_{\sqrt{-1}}(H_g)$ and each $\Gamma_i$ intersects the last compression disk an odd number of times, then $x=0$; this is because $\Sk_{\sqrt{-1}}(H_{g})$ is graded by $H_1(H_{g+1},\mathbb{Z}/2\mathbb{Z})$ and all the $\Gamma_i$s have a non-trivial last component in $H_1(H_{g+1},\Z/2\Z)$. Similarly, if $x=\sum_i\lambda_i\Gamma_i$ is a linear combination of multicurves, $x\in \Sk_{\sqrt{-1}}(H_g)$ and for $1\leqslant i\leqslant k$, $\Gamma_i$ intersects the last compression disk an odd number of times, then $\sum_{i=1}^k\lambda_i\Gamma_i=0$. Therefore, we can assume that each multicurve in $x$ intersects the last compression disk an even number of times; however as previously noted this means that $x$ is a linear combination of planar multicurves, each intersecting the last compression disk at least once. Now, because planar multicurves form a basis for $\Sk_{\sqrt{-1}}(H_g)$ and $\Sk_{\sqrt{-1}}(H_{g+1})$, this means that $x=0$.
\end{proof}

\begin{corollary}
	If $M$ is such that $\Sk_{\sqrt{-1}}(M)$ is infinite dimensional, then so is $M\# \mathbb{RP}^3$; in particular, $\Sk(N)$ has torsion for $N$ a connected sum of $L(p,1)$ with $p\geq 2$ even and any (positive) number of copies of $\mathbb{RP}^3$.
\end{corollary}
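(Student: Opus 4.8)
The plan is to combine the injectivity of $i_*$ just established with the earlier presentation computation for $L(2,1)\#L(p,1)$ and with the Gunningham--Jordan--Safronov finiteness theorem.

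The first assertion is immediate. By the preceding Proposition, the natural map $i_*\colon \Sk_{\sqrt{-1}}(M)\longrightarrow \Sk_{\sqrt{-1}}(M\#\rp)$ is injective; hence if $\Sk_{\sqrt{-1}}(M)$ is infinite dimensional over $\C$, so is its image, and therefore so is $\Sk_{\sqrt{-1}}(M\#\rp)$.

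For the family of examples, recall that $\rp=L(2,1)$, so the case $r=1$ of $N=L(p,1)\#^r\rp$ is exactly $L(p,1)\#L(2,1)$, whose skein module at $\sqrt{-1}$ is infinite dimensional for $p\geq 2$ even by the Proposition proved above. Writing $L(p,1)\#^{r+1}\rp=\bigl(L(p,1)\#^{r}\rp\bigr)\#\rp$ and applying the first assertion with $M=L(p,1)\#^{r}\rp$, an induction on $r$ shows that $\Sk_{\sqrt{-1}}\bigl(L(p,1)\#^{r}\rp\bigr)$ is infinite dimensional for every $r\geq 1$. Finally, $N=L(p,1)\#^{r}\rp$ is a closed oriented $3$-manifold, so Theorem \ref{thm:finiteness} gives $\dim_{\Q(A)}\Sk(N,\Q(A))<\infty$. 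Since $\dim_{\C}\Sk_{\sqrt{-1}}(N)=\infty>\dim_{\Q(A)}\Sk(N,\Q(A))$, Theorem \ref{thm:infiniteX(M)} applied with $\zeta=\sqrt{-1}$ produces $(A-\sqrt{-1})$-torsion in $\Sk(N)$, which is in particular nontrivial torsion.

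I do not expect a genuine obstacle here: the substantive work lies in the injectivity of $i_*$ and in the presentation of $\Sk_{\sqrt{-1}}(L(2,1)\#L(p,1))$, both already in hand. The only points requiring a little care are the identification $\rp=L(2,1)$, so that the earlier Proposition is literally the base case of the induction; the fact that a connected sum lets one peel off a single $\rp$ summand at a time; and the observation that $N$ is closed, so that both Theorem \ref{thm:finiteness} and Theorem \ref{thm:infiniteX(M)} apply. Note that the character-variety criterion of Corollary \ref{cor:infiniteX(M)} would \emph{not} suffice, since these connected sums have finite $\slC$-character variety --- this is precisely why working at the fourth root of unity $\sqrt{-1}$ is essential.
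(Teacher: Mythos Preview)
Your argument is correct and is exactly the one the paper has in mind: the corollary is stated without proof because it follows immediately from the injectivity of $i_*$, the infinite-dimensionality of $\Sk_{\sqrt{-1}}(L(2,1)\#L(p,1))$ for even $p$, induction on the number of $\rp$ summands, and then Theorem~\ref{thm:infiniteX(M)} at $\zeta=\sqrt{-1}$. Your remark that Corollary~\ref{cor:infiniteX(M)} does not apply here (finite character variety, root of unity of order~$4$) is also on point and matches the motivation spelled out in the introduction.
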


	\bibliographystyle{hamsalpha}
	\bibliography{biblio}
\end{document}